\documentclass[11pt]{article}
\usepackage{graphicx}
\usepackage{algorithm}
\usepackage{algorithmicx}
\usepackage{comment}
\usepackage{algpseudocode}
\usepackage{tikz}
\usepackage[backend=biber,style=alphabetic,maxbibnames=99]{biblatex}
\usepackage[colorlinks=true, citecolor=blue, linkcolor=black, urlcolor=blue]{hyperref}
\usetikzlibrary{chains, arrows.meta, positioning, quotes}
\usepackage{amsfonts,amsmath,amssymb,amsthm}
\usepackage{xcolor}
\usepackage{authblk}
\usepackage{mathrsfs}
\usepackage{mathtools}
\usepackage{cancel}
\usepackage[top=1in, bottom=1in, left=1in, right=1in]{geometry}
\newcommand{\note}[1]
{\textcolor{red}{#1}}
\newcommand{\fixme}[1]
{\textcolor{purple}{#1}}

\newcommand{\blu}[1]
{\textcolor{black}{#1}}
\numberwithin{equation}{section}

\newtheorem{definition}{Definition}[section]
\newtheorem{theorem}[definition]{Theorem}
\newtheorem{assm}[definition]{Assumption}
\newtheorem{lem}[definition]{Lemma}
\newtheorem{proposition}[definition]{Proposition}
\newtheorem{corollary}[definition]{Corollary}
\newtheorem{rem}[definition]{Remark}
\usepackage{amsthm}
\usepackage{enumitem} 

\theoremstyle{definition} 
\newtheorem{algo}{Algorithm}[section]

\usepackage[most]{tcolorbox} 

\newenvironment{algorithm1}[1]{%
  \begin{tcolorbox}[
    breakable,
    colback=white,
    colframe=black,
    boxrule=0.6pt,
    arc=0pt,
    left=6pt,
    right=6pt,
    top=6pt,
    bottom=6pt,
    before skip=10pt,
    after skip=10pt
  ]
  \begin{algo}
  \textbf{(#1)}
  \begin{enumerate}[leftmargin=1.5em,labelsep=0.5em]
}{%
  \end{enumerate}
  \end{algo}
  \end{tcolorbox}
}

\newcommand{\cE}[0]{\mathcal{E}}

\newcommand{\E}[0]{\mathbb{E}}

\newcommand{\R}[0]{\mathbb{R}}

\newcommand{\sT}[0]{\mathsf{T}}

\newcommand{\al}[0]{\alpha}
\newcommand{\be}[0]{\beta}

\newcommand{\de}[0]{\delta}

\newcommand{\ep}[0]{\epsilon} 

\newcommand{\ka}[0]{\kappa}

\newcommand{\La}[0]{\Lambda}

\newcommand{\rh}[0]{\rho}
\newcommand{\te}[0]{\theta}

\newcommand{\nin}[0]{\not\in}

\newcommand{\sub}[0]{\subset}

\newcommand{\subeq}[0]{\subseteq}

\newcommand{\nsubeq}[0]{\not\subseteq}

\newcommand{\bs}[0]{\backslash}
\newcommand{\iy}[0]{\infty}

\newcommand{\rc}[1]{\frac{1}{#1}}
\newcommand{\prc}[1]{\pa{\rc{#1}}}

\newcommand{\fc}[2]{\frac{#1}{#2}}
\newcommand{\sfc}[2]{\sqrt{\frac{#1}{#2}}}

\newcommand{\pl}[0]{\partial}
\newcommand{\nb}[0]{\nabla}

\newcommand{\ab}[1]{\left| {#1} \right|}

\newcommand{\ba}[1]{\left[ {#1} \right]}

\newcommand{\pa}[1]{\left( {#1} \right)}

\newcommand{\ve}[1]{\left\Vert {#1}\right\Vert}

\newcommand{\seta}[2]{\left\{{#1}:{#2}\right\}}

\newcommand{\ol}[1]{\overline{#1}}

\newcommand{\ub}[2]{\underbrace{#1}_{#2}}

\newcommand{\wt}[1]{\widetilde{#1}}

\newcommand{\diam}{\operatorname{diam}}

\newcommand{\Ent}{\operatorname{Ent}}

\newcommand{\poly}{\operatorname{poly}}

\newcommand{\Var}[0]{\operatorname{Var}}
\newcommand{\var}[0]{\text{var}}

\providecommand{\cal}[1]{\mathcal{#1}}
\renewcommand{\cal}[1]{\mathcal{#1}}

\newcommand{\pull}[9]{
#1\ar@/_/[ddr]_{#2} \ar@{.>}[rd]^{#3} \ar@/^/[rrd]^{#4} & &\\
& #5\ar[r]^{#6}\ar[d]^{#8} &#7\ar[d]^{#9} \\}

\newcommand{\cmp}[9]{
\xymatrix{
#1 \ar[r]^{#4}{#5} \ar@/_2pc/[rr]^{#8}_{#9} & #2 \ar[r]^{#6}_{#7} & #3
}
}

\newcommand{\ha}[1]{\ar@{^(->}[#1]}
\newcommand{\ls}[1]{\ar@{-}[#1]}
\newcommand{\sj}[1]{\ar@{->>}[#1]}
\newcommand{\aq}[1]{\ar@{=}[#1]}
\newcommand{\acir}[1]{\ar@{}[#1]|-{\textstyle{\circlearrowright}}}
\newcommand{\acil}[1]{\ar@{}[#1]|-{\textstyle{\circlearrowleft}}}
\newcommand{\ard}[1]{\ar@{.>}[#1]}
\newcommand{\mt}[1]{\ar@{|->}[#1]}
\newcommand{\inm}[1]{\ar@{}[#1]|-{\in}}
\newcommand{\inr}{\ar@{}[d]|-{\rotatebox[origin=c]{-90}{$\in$}}}
\newcommand{\inl}{\ar@{}[u]|-{\rotatebox[origin=c]{90}{$\in$}}}

\newcommand{\sumo}[2]{\sum_{#1=1}^{#2}}

\newcommand{\beq}[1]{\begin{equation}\llabel{#1}}
\newcommand{\eeq}[0]{\end{equation}}
\newcommand{\bal}[0]{\begin{align*}}
\newcommand{\eal}[0]{\end{align*}}
\newcommand{\ban}[0]{\begin{align}}
\newcommand{\ean}[0]{\end{align}}

\newcommand{\llabel}[1]{\label{#1}\text{\fixme{\tiny#1}}}

\newcommand{\arxiv}[1]{\url{http://www.arxiv.org/abs/#1}}

\allowdisplaybreaks[2]

\DeclareFontFamily{U}{wncy}{}
    \DeclareFontShape{U}{wncy}{m}{n}{<->wncyr10}{}
    \DeclareSymbolFont{mcy}{U}{wncy}{m}{n}
    \DeclareMathSymbol{\Sh}{\mathord}{mcy}{"58} 
    
\newcommand{\holden}[1]{\textcolor{purple}{[Holden: #1]}}

\newcommand{\Cd}[0]{C_{LS}^{(d)}}
\newcommand{\CD}[0]{C_{LS}^{(D)}}
\newcommand{\Ctheta}[0]{C_{LS}^{(\theta)}}

\title{Sampling with collective variables: Relative error analysis for umbrella sampling under general conditions}
\author[1]{Holden Lee}
\author[1]{Matheau Santana-Gijzen}
\author[2]{Jonathan Weare}

\affil[1]{Department of Applied Mathematics \& Statistics, Johns Hopkins University}
\affil[2]{Courant Institute of Mathematical Sciences, New York University}
\date{}
\begin{document}
\maketitle

\begin{abstract}
    We consider the problem of estimating the expectation of a function (observable) with respect to a high-dimensional probability distribution, under the assumption that a low-dimensional collective variable map captures all the directions of slow mixing for the distribution. More precisely, under the assumption that the fibers of the collective variable map satisfy a log-Sobolev inequality, we establish non-asymptotic relative-error guarantees for this problem using the eigenvector method for umbrella sampling (EMUS). 
    Our analysis applies to both log-Lipschitz observables and indicator of convex sets, and gives complexity bounds that are polynomial in regularity 
    parameters and inverse relative accuracy, and exponential in the dimension of the collective variable space. As a relative error guarantee, our bounds apply to rare event estimation.
    We consider Gaussian biasing windows defined on a low-dimensional collective variable space and analyze a modified sparse matrix version of EMUS. The bounds quantify the effect of the map through mixing of the conditional distribution and fluctuation of the observable along the fibers. 
\end{abstract}
\tableofcontents
\section{Introduction} 
A key task in statistics, machine learning, engineering, and the natural sciences is to estimate the expectation 
$$\pi[g] = \E_\pi[g] = \int_{\Omega} g(x)\,\pi(dx)$$
under a high-dimensional target distribution $\pi(x) \propto e^{-V(x)}$ on $\Omega\subseteq\mathbb{R}^{D}$, with density known up to normalization constant. 
A standard approach is Markov Chain Monte Carlo (MCMC), which estimates the expectation by drawing approximate samples $X^{(i)}$ produced by a Markov chain and evaluating $\rc N \sumo iN g(X^{(i)})$. 
Simple MCMC methods such as Metropolis-Hastings, Gibbs sampling, and Langevin dynamics effectively estimate the expectation when the underlying Markov chain mixes well and the observable $g$ is not concentrated on a small set. 
However, when either of these conditions fail---for instance, $\pi$ is multimodal or $g$ is the indicator of a rare event---MCMC often requires a prohibitively long running time or large number of samples. 
Although various sequential MCMC and importance-sampling methods have been developed to address multimodality and rare-event estimation, this remains a challenging problem in practice, without a clear understanding of the general conditions under which such methods work.

\blu{Without further structural assumptions or information, sampling from high-dimensional distributions is hard; for instance, exponential query lower bounds exist even for approximately bimodal distributions \cite{ge2018beyond}. We consider the setting where the problem has low-dimensional structure which captures all the directions of slow mixing, and this is given to us in the form of a collective variable mapping \cite{Dinner2020Stratification,lelievre2010free,ThiedeEtAl2016}. Such structure is particularly common in physical problems, where prior knowledge provides the reaction coordinates where mixing is expected to be especially slow.
Based on the eigenvector method for umbrella sampling, we provide the first theoretical guarantees in this setting, which work for a general class of distributions and observables, including both log-Lipschitz observables and indicators of convex sets. Our bounds are non-asymptotic and polynomial in the regularity parameters and inverse relative accuracy, and exponential only in the dimension of the collective variable space.}

\blu{Before detailing our results, we introduce umbrella sampling as a popular algorithm for this setting, situate the collective variable setting within a broad spectrum of weak to strong information, and survey the theoretical work on these settings as well as on estimation of rare events.}

\paragraph{Umbrella sampling.}
For this collective variable setting, umbrella sampling \cite{Torrie1977Umbrella} is a popular method that addresses the dual problems of slow mixing and rare event estimation. 
In this article, we focus on the eigenvector method for umbrella sampling (EMUS) introduced in \cite{ThiedeEtAl2016} and give the first non-asymptotic guarantees that 
addresses both these problems.
EMUS was inspired by umbrella sampling 
and other methods such as the weighted histogram analysis method (WHAM) \cite{Kumar1992WHAM} and the Multistate Bennett Acceptance Ratio (MBAR) \cite{Shirts2008MBAR} method for computing free energies.
Guarantees for EMUS have been given in specific situations or as asymptotic guarantees: 
\cite{ThiedeEtAl2016} derived a central limit theorem and demonstrated the method on free-energy calculations, while  
\cite{Dinner2020Stratification} established broader asymptotic variance results and demonstrated EMUS effectiveness on certain multimodal targets and rare-event estimation problems.  
By giving general conditions under which EMUS is efficient, we significantly broaden the scope of theoretical results, as well as highlight the design and hyperparameter choices that make this method effective. 


Umbrella sampling addresses poor global exploration by replacing one difficult estimation with a family of biased estimations with respect to overlapping distributions. 
It relies on the choice of a collective variable (CV) mapping $\theta: \mathbb{R}^D \rightarrow \mathbb{R}^d$, with $d\ll D$, and window functions $\psi_1, \dots, \psi_L$ which tile the CV space $\R^d$ and localize the target measure in that space. The localized distributions are then defined as $\pi_k \propto \psi_k \pi$.  
The aim of the CV map is to reduce the problem to the directions with slow mixing; localizing the distribution then reduces it to a collection of problems which do not require exploration in the CV directions. 
\blu{What remains is then to combine the samples from individual $\pi_k$ with the correct weights. The EMUS method reduces the problem of estimating the weights to computing the eigenvectors of a window overlap matrix, which can be done efficiently.}
For our analysis, we consider samples to be generated independently within the biased distributions $\pi_k\propto \psi_k\pi|_{K_U}$ restricted to an truncation set $K_U$ and the $\psi_k$ to be Gaussians in the CV space. 

In the EMUS construction, the efficacy of EMUS depends largely on the practitioner's choice of map $\theta$, which can be thought of as a weak form of advice.
In free energy calculations, the collective variable $\theta$ can be chosen based on intuition or scientific principles \cite{Dinner2020Stratification,lelievre2010free,ThiedeEtAl2016}. 
In rare-event sampling or when computing marginals, the problem itself may suggest a particular $\theta$ \cite{Dinner2020Stratification}. 
In practice, the choice of $\theta$ has a substantial effect on the efficacy of the method. 
Our analysis reflects this reality of EMUS by having bounds dependent on how well $\theta$ captures the geometry of $\pi$ and $g$, made rigorous in Section \ref{s: smooth observable}. Specifically, EMUS works best when $\pi$ has good mixing and $g$ has small variation on the level sets of $\te$.  
These quantitative bounds allow our results to apply to a broad choice of $\theta$ and provides insights to when the method works well. 



\paragraph{Sampling with advice.} 
We are interested in the problem of approximately sampling from a density $\pi(x)\propto e^{-V(x)}$ that is multimodal or slowly mixing in certain directions. 
We can categorize different approaches to this problem depending on the strength of extra information, or \emph{advice} that we are given. A collective variable mapping can be thought of as a form of weak advice. We survey what is known theoretically with different levels of advice. For the purpose of theoretical analysis, a common way of formalizing the multimodality \cite{ge2018beyond, pmlr-v291-koehler25a} is to assume that $\pi$ decomposes according to a mixture, 
$\pi = \sum_{i=1}^m w_i\pi_i$ or $\pi = \int \pi_y dw(y)$. 

Under the setting of \emph{no advice},  simulated and parallel tempering are statistical physics-inspired algorithms that 
define a Markov chain on an extended state space involving a temperature variable, using higher temperature for exploration to find the modes. Theoretical guarantees are known under suitable decompositions \cite{madras2002markov,Woodard2009,ge2018beyond,garg2025restricted}. Sequential Monte Carlo (SMC) instead moves particles uni-directionally from high to low temperature and uses resampling of particles for reweighting, with earlier guarantees requiring mode separation
\cite{paulin2018error,mathews2024finite} and later guarantees for more general mixtures \cite{lee2024convergence,han2025polynomial,han2025convergencesequentialmontecarlo}. However, all these methods require an unavoidable and restrictive assumption that weights of different modes do not become too small at intermediate temperatures, creating a bottleneck. Simple distributions such as a mixture of two distributions with different variances violate this assumption and lead to lower bounds, as observed by \cite{woodard2009sufficient} for simulated and parallel tempering and \cite{ge2018beyond}, which gives an exponential lower bound for all algorithms.

This motivates the study of what is possible with stronger advice. A weak form of advice is knowing approximate mode locations (also called warm starts), while a strong form of advice is having a few samples from the target distributions, with the goal of drawing fresh samples. Having a collective variable mapping can be thought of as an especially weak form of advice, as the low-dimensional projection is chosen based on the directions where slow mixing occur; this gives the directions but not the exact locations of different modes.
In the setting of strong advice, sampling is generically solvable with almost optimal sample complexity via running the standard Markov chain from data-based initialization \cite{koehler2023sampling,pmlr-v291-koehler25a, gay2025sampling}. In the setting of weak advice, the reweighted Annealed Leap Point Sampler (ReALPS) \cite{lee2025sampling}, which adds a iterative reweighting scheme to the ALPS algorithm \cite{Roberts_Rosenthal_Tawn_2022,tawn2021annealed} can provably sample under a general assumption that tilting towards warm start points retains a constant portion of the mass.
In the collective variable (CV) setting, a common technique is \emph{stratification}, which seeks to uniformize the amount of sampling across the CV. Algorithms include umbrella sampling \cite{torrie1977nonphysical,thiede2016eigenvector}, 
the Wang--Landau algorithm \cite{wang2001efficient}, and adaptive biasing force \cite{darve2001calculating}, and weighted ensemble simulation \cite{huber1996weighted,chong2025weighted}, a form of SMC that stratifies with the CV. Rigorous theoretical results are limited in this setting, which motivates our work.

Because we do not constrain the distribution in the CV space other than with minimal regularity assumptions, an exponential dependence on the CV dimension $d$ is expected and unavoidable. This is reasonable in light of results in the weak advice setting which have polynomial dependence on the number of modes; the ``modes" in our setting can be thought of as tiling the CV space. Those results do not apply to our setting, as the CV map induces a different structure and we seek relative error guarantees, which are significantly stronger.



\paragraph{Relative error guarantees and rare event analysis.} 
\blu{Assume a non-negative observable $g$. Our goal is to efficiently compute an estimate $\hat{\pi}[g]$ with small relative error, i.e., small error measured on the logarithmic scale,
$$\left|\log \hat{\pi}[g] - \log \pi[g]\right|.$$
Relative accuracy is the appropriate metric for analyzing rare event sampling, and an effective method for relative accuracy should require complexity that does not depend on the scale of $\pi[g]$. 
Simple Monte Carlo does not achieve this goal because it
requires a number of samples $O\prc{\ep^2}$ to achieve \emph{absolute} error $\ep$, and a small absolute error may incur a large multiplicative error. Obtaining relative error guarantees is challenging even when $\pi$ is a simple high-dimensional distribution such as a Gaussian.}

This is especially important for the analysis of rare events.
There are two categories of approaches to the problem of estimating probabilities of and expectations under rare events: simulation-based \cite{au2001estimation,cerou2007adaptive,cerou2012sequential}, which depend on drawing samples (as in umbrella sampling), and asymptotic expansions. Simulation-based techniques provide increased accuracy with more compute, while asymptotic expansions provide an efficient explicit formula that incurs a fixed error, which vanishes in an appropriate limit. The two types of methods can be combined by using asymptotic expansions to provide a proposal distribution for sampling.
Asymptotic methods rely on approximating the density and boundary of the rare-event set around the highest-density point. 
In particular, second-order reliability methods (SORMs) \cite{hu2021second}, assuming a known way to transform $\pi$ into a standard Gaussian density, approximate the transformed boundary by a quadratic function. Building on the theory of Laplace integral approximations \cite{bleistein1975asymptotic,tang2025laplace,katsevich2024laplace}, \cite{katsevich2025asymptotic} give the first asymptotic high-dimensional analysis of such methods, with explicit error bounds. Large deviation theory provides another asymptotic framework \cite{grafke2019numerical}.

Our methods are also geared towards high dimensions, but work with general densities rather than known transformations of Gaussians. Instead of requiring smoothness of the boundary, we allow more general sets/functions and assume a low-dimensional structure through collective variables.

We also note the related problem of computing the volume of a convex body, which can be done efficiently by using MCMC with simulated annealing \cite{dyer1991computing}; a long sequence of works have since then improved the running time \cite{lovasz1993random,lovasz2006simulated,cousins2018gaussian,jia2026reducing}.

\subsection{Problem Statement \& Assumptions}

In the following, we will identify measures with their densities. 
Given a target distribution $\pi(x)\propto e^{-V(x)}$ on $\mathbb{R}^D$, the goal is to estimate the average
\[
\pi[g] = \int g(x) \pi(dx)
\]
for some non-negative observable $g: \mathbb{R}^D \rightarrow [0,\infty)$, with small relative error.
The state space dimension $D$ is often very large. \blu{We assume that a smooth collective variable (CV) map $\te:\R^D \to \R^d$ is given, where the dimension $d<D$ is usually very small. 
We now make several assumptions to make the problem tractable, and to formalize what it means for $\te$ to capture the low-dimensional structure of the distribution. 
First, we assume that most of the mass of $\pi$, and $g$ under $\pi$, is within a bounded set (Assumption \ref{a: good truncation}). Next, we assume regularity properties of the CV map $\te$, in terms of its effect on the measure $\pi$ (Assumption \ref{d: target distribution pushforward}) as well as the map itself (Assumption \ref{a: collective variable}). We formalize the key property that $\te$ captures all directions of slow mixing in terms of a functional inequality satisfied by the conditional measures $\pi(\cdot \mid \te(x)=z)$ (Assumption \ref{a:lsi-cond}). Finally, we assume $g$ is either a log-Lipschitz function or an indicator of a well-conditioned convex set, in terms of the collective variables (Assumption \ref{a:g}). 
In the full statement of the theorem, we consider a more general case where $g$ can vary on the fibers.}

For our analysis, we will need to restrict the distributions to a bounded set. Assume we are given a bounded convex set in the CV space $\mathcal{A} \subset \mathbb{R}^d$, and define its pullback by 
$$K_U \vcentcolon = \theta^{-1}(\mathcal{A}) = \{x \in \mathbb{R}^D : \theta(x) \in \mathcal{A}\}.$$
We first make the following assumption about the set $K_U$ as it relates to $\pi$ and $g$. This ensures that the umbrellas capture both the mass of the distribution $\pi$ and the bulk of the observable $g$.
\begin{assm}[Good truncation]\label{a: good truncation} For a set $K_U \subseteq U \vcentcolon = \{x \in \mathbb{R}^D : S(x) > 0 \} \subseteq 
\R^D$ there exist constants $\delta_\pi, \delta_g$ such that 
                \begin{enumerate} 
            \item $\pi(K_U) \geq 1 - \delta_\pi$, and
            \item $\int_{K_U} g(x)\pi(dx) \geq (1-\delta_g)\pi(g)$.
            \end{enumerate}
            \end{assm}

Next, we introduce regularity assumptions on the the marginal (push-forward) measure $\mu = \theta_\#\pi: \mathbb{R}^{d} \rightarrow \mathbb{R}$.


\begin{assm}[Regularity of push-forward measure] \label{d: target distribution pushforward}
Let $\mu = \theta_\#\pi$ be the push-forward measure on the collective variable space $\mathbb{R}^d$. We assume $\mu(z) \propto e^{-A(z)}$, where $A(z)$ is the potential in the CV space and satisfies: 
\begin{enumerate}
    \item (Smooth potential) The target distribution $\mu(z) \propto e^{-A(z)}$ has a $\ell$-smooth potential, i.e. the Hessian $\nabla^2A(z)$ satisfies: 
    $$-\ell\cdot I_d \preceq \nabla^2A(z) \preceq \ell \cdot I_d.$$
    \item (Bounded potential gradient) The target potential $A(z) = -\log \mu(z)$ is differentiable on the set $\mathcal{A}$, and there exists a constant $G \geq 0$ such that 
    $$\sup_{z \in \mathcal{A}} \|\nabla_z A(z)\| \leq G.$$
\end{enumerate}
\end{assm}
Lipschitzness of the push-forward target potential is necessary to ensure success of the EMUS algorithm:  
Without a gradient bound on $A(z)$, the target measure $\mu(z)$ can decay arbitrarily fast. 
This creates regions where adjacent windows have small overlap, causing inaccurate eigenvector estimation. Note that if $\mathcal A$ contains a critical point of $A$, then $G$ can be bounded in terms of $\ell$ and the diameter of $A$.

To introduce our next assumptions, for any $z \in \mathcal{A}$, define the fiber $\Sigma_z$ to be the submanifold defined by $\Sigma_z = \{ x \in \mathbb{R}^D \mid \theta(x) = z\}.$ Along this submanifold, we define the conditional measure as $\pi_z = \pi(\cdot  \mid \theta(x) = z)$. 

We characterize the mapping $\theta$ through its Jacobian $J(x) = \nabla \theta(x) \in \mathbb{R}^{d \times D}$ and the Gram matrix $G(x) = J(x)J(x)^T \in \mathbb{R}^{d\times d}$.
Let 
\begin{align}
    P(x) &= I_D - J(x)^\sT G(x)^{-1}J(x)
\label{e:proj}
\end{align} be the orthogonal projection onto the tangent space $T_x\Sigma_z$.
The following assumptions are made to be consistent with the setting of \cite{lelievre2009general}.

\begin{assm}[Regularity of CV map]\label{a: collective variable} Let $\pi(x) \propto e^{-V(x)}$. Given $\mathcal{A} \subset \mathbb{R}^d$ and $K_U = \{x \in \mathbb{R}^D \mid \te(x) \in \mathcal{A}\}$, suppose the following assumptions hold:
\begin{enumerate}
    \item The mapping $\theta: \mathbb{R}^d \rightarrow \mathbb{R}^d$ is smooth and $\det(G) \neq 0$. Moreover, there exists a constant $m_\theta > 0$ such that: 
    $$\lambda_{\min}(G(x)) \geq m_\theta, \;\; \forall x \in K_U.$$
    \item  Let $F: \mathbb{R}^D \rightarrow \mathbb{R}^d$ be the local mean force defined as
    $$ F_\alpha = \sum_{\beta=1}^d (G^{-1})_{\alpha, \beta} \nabla \theta_\beta \cdot \nabla V - \textup{div} \left( \sum_{\beta=1}^d G^{-1}_{\alpha, \beta} \nabla \theta_\beta \right). $$
    Defining $P$ as in \eqref{e:proj}, there exists a constant $\Lambda > 0$ bounding the fluctuations of $F$ along the fibers
    $$\Lambda = \sup_{x\in K_U}\|\nabla F(x) P(x)\|_{op}.$$
\end{enumerate}
\end{assm}
As a simple example, a linear projection $\theta(x) = Qx$ satisfies these assumptions with $m_\theta=1$ and if \(
\sup_{x\in K_U}\|\nabla^2V(x)\|_{\mathrm{op}}\le K
\) then $\Lambda \leq K$. 

Next, we encode the key assumption that $\te$ captures all directions of slow mixing, by assuming a log-Sobolev inequality on the conditional measures $\pi_z$.
\begin{assm}[Log-Sobolev inequality for conditional measures]
\label{a:lsi-cond}
    For every $z\in \mathcal A$, the conditional measure $\pi_z$ satisfies an LSI with constant $\Ctheta$: for all functions $f$ such that the RHS is finite,
    $$ \Ent_{\pi_{z}}(f^2) \le 2 C_{LS}^{(\theta)} \int_{\Sigma_z} |P \nabla_x f|^2 d\pi_{z}, \quad \forall z \in \mathcal{A}. $$
\end{assm}

Finally, we make one of two assumptions on the observable $g$.
\begin{assm}[Regularity of $g$]
\label{a:g}
The observable $g:\R^D\to [0,\iy)$ factors as $g = \tilde g \circ \te$, and one of the following holds. 
\begin{enumerate}
    \item (Log-Lipschitz) 
    $\tilde g$ is $L_g$-Lipschitz.
    \item (Indicator of well-conditioned convex set) Suppose that $\tilde g=\mathbf1_E$ where 
    \[B(c,\rho)\subseteq E\subseteq B(c,R)\]
    for some $c\in \R^d$ and $0<\rh<R$. Define the condition number $\kappa=R/\rho$. 
\end{enumerate}
\end{assm}



\subsection{Algorithm}

We now introduce the version of the EMUS algorithm that we will analyze.
To construct the EMUS estimator we first select a collection of functions $\psi_1,\dots,\psi_L$ and define the corresponding ``window'' distributions $\pi_k \propto \psi_k \pi|_{K_U}$ on the truncation domain $K_U$. (Since sampling occurs in the full space $\mathbb{R}^D$, truncated sampling of $\pi_k \propto \pi\cdot\psi_k|_{K_U}$ occurs with respect to $K_U \subset \mathbb{R}^D$.)
Typically the $\psi_k$ are chosen to factor through the CV map, $\psi_k = \tilde \psi_k (\te(x))$, where $\tilde \psi_k:\R^d \to \R$. 
A common choice in practice for $\tilde\psi_k$ are Gaussian distributions,
$$\tilde{\psi}_k(z) \propto \exp\pa{-{\frac{\beta}{2}}\left\|z-y_k\right\|^2},$$
where the $y_k$ are chosen to discretize the CV space.
The corresponding truncated biased measure is then defined in $\mathbb{R}^D$ as
$$\pi_k(dx) \vcentcolon = \frac{1_{K_U}(x)\psi_k(x)}{\int_{K_U}\psi_k(x)\pi(dx)}\pi(dx).$$
Define 
\begin{equation}\label{eq:g^*}
  g^\ast(x):= \frac{g(x)}{\sum_{k=1}^L \psi_k(x)}, \quad 
  1^\ast(x) := \frac{1}{\sum_{k=1}^L \psi_k(x)},\quad 
\psi^\ast_j(x) := \frac{\psi_j(x)}{\sum_{k=1}^L \psi_k(x)}
 \end{equation}
The EMUS then algorithm proceeds as in Algorithm \ref{alg:emus}.

\begin{algorithm}[ht!]
\caption{The EMUS algorithm}
\label{alg:emus}
\begin{algorithmic}[1]
  \Require Window distributions $\{\pi_i\}_{i=1}^L$,
sample sizes $\{N_i\}_{i=1}^L$, and a undirected connected
graph $G$ containing $(i,i)$ for every $i$.
  \Ensure Estimate $\bar\pi[g]$.

  \For{$i=1,\ldots,L$}
    \State Generate samples
      $X_1^i,\ldots,X_{N_i}^i\sim\pi_i$.

    \State Compute
    \Statex
    \begin{equation}\label{eq:barg1}
      \bar g_i^\ast
      :=
      \frac{1}{N_i}
      \sum_{t=1}^{N_i}g^\ast(X_t^i),
      \qquad
      \bar 1_i^\ast
      :=
      \frac{1}{N_i}
      \sum_{t=1}^{N_i}1^\ast(X_t^i).
    \end{equation}

    \For{$j=1,\ldots,L$}
      \State Compute the empirical overlap
      \Statex
      \begin{equation}\label{eq:barF}
        \bar F_{ij}
        :=
        \frac{1}{N_i}
        \sum_{t=1}^{N_i}\psi_j^\ast(X_t^i).
      \end{equation}
    \EndFor
  \EndFor

  \State Define the empirical sparse overlap matrix
\[
\bar F^0_{ij} =
\begin{cases}
\bar F_{ij},
  & i\neq j,\ (i,j)\in G,\\
\bar F_{ii}
  +\displaystyle\sum_{\substack{k\neq i\\(i,k)\notin G}}\bar F_{ik},
  & i=j,\\
0,
  & i\neq j,\ (i,j)\notin G.
\end{cases}
\]

  \State Compute the normalized stationary vector
  \[
    \bar w^{\mathsf T}
    =
    \bar w^{\mathsf T}\bar F^0,
    \qquad
    \sum_{i=1}^L\bar w_i=1.
  \]
  \Statex \hspace{\algorithmicindent}
    This may be done using a QR factorization of $I-\bar F^0$.

  \State Compute and return
  \[
    \bar\pi[g]
    :=
    \frac{\sum_{i=1}^L\bar w_i\bar g_i^\ast}
         {\sum_{i=1}^L\bar w_i\bar 1_i^\ast}.
  \]
\end{algorithmic}
\end{algorithm}

A key step in the EMUS algorithm is to learn the eigenvector $w$ of ${F}$, which is exactly the normalizing constant $Z_i = \int_{K_U} \psi_i(x)\pi(dx)$ up to an unknown scalar $c$.
We make a key modification for the purpose of theoretical analysis: 
We define a sparse matrix $F^0$ by zeroing out entries corresponding to windows that have weak overlap, in a way that preserves the eigenvectors. 
This removes entries that are subject to large relative noise from the empirical estimate $\bar F^0$. 
We later show that under mild conditions, the eigenvector $\bar{w}$ of the approximate sparse matrix $\bar{F}^0$, 
yields a good approximation of the true weight vector $w$. For ease of analysis, We specify the entries to keep using a fixed graph $G$, though we note that they can also be chosen based on a threshold.

\subsection{Main theorem}

We now state our main theorem for the relative error of the EMUS estimator. 
This applies to a log-Lipschitz observable $g$ or an indicator observable $g = 1_{E}\circ \theta$, where $E$ is a convex subset of $\mathbb{R}^d$.
We present a simplified version of our results where we assume the observable $g(x)$ is constant along the fibers of the collective variable map.

\begin{theorem}\label{t: main theorem alg1} 

Given a measure $\pi$ on $\R^D$, a CV map $\te:\R^D\to \R^d$, a convex set $\mathcal A\sub \R^d$, suppose that Assumptions \ref{a: good truncation}, \ref{d: target distribution pushforward}, \ref{a: collective variable}, \ref{a:lsi-cond}, and \ref{a:g} are satisfied.
Let $\epsilon > 0$, $0 < \delta < 1$, and suppose $\max\{\delta_\pi, \delta_g\} \leq 1 - e^{-\epsilon/4}.$ 

There exists a collection of $L$ windows with a connected sparse overlap graph $G$ such that the following holds. 
Compute the EMUS estimate $\bar{\pi}[g]$ from Algorithm \ref{alg:emus}, with $N$ independent samples per umbrella, drawn from a reflected Langevin run of continuous time $T$, initialized from $\mu_{k,0}$ with $\chi^2(\mu_{k,0}\| \pi_k) \leq B$, for all $k$. The parameters are specified as follows.
\begin{enumerate}
    \item 
    For $g$ log-Lipschitz, we can take $N$ and $T$ such that 
\[
N = \fc{e^{\wt O(d^3)}}{\ep^2 \de }\poly(\diam(\cal A), \ell, m_\te^{-1}, \La, C_{LS}^{(\theta)}, L_g)^d, \qquad T = O(C_{LS}^{(\theta)} \log(BN)).
\]
     \item 
    For indicator $g$, we can take $N$ and $T$ such that 
\[
N = \fc{e^{\wt O(d^3)}}{\de }\poly\pa{\rc{\ep},\diam(\cal A), \ell, m_\te^{-1}, \La, C_{LS}^{(\theta)}, \ka, \rh^{-1}}^d, \qquad T = O(C_{LS}^{(\theta)} \log(BN)).
\]   
      \end{enumerate}
Then, with probability at least $1-\delta$, 
$$\left|\log \bar{\pi}[g] - \log \pi[g] \right| \le \epsilon.$$
The total continuous diffusion time is $L \cdot N \cdot T$. 
\end{theorem}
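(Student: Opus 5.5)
We decompose the logarithmic error into a deterministic truncation error and a stochastic EMUS error, and bound each by $\ep/2$. First, set $\pi_{K_U}[g] := \int_{K_U} g\,d\pi/\pi(K_U)$. Assumption \ref{a: good truncation} gives $(1-\de_g)\pi[g] \le \pi(K_U)\pi_{K_U}[g] \le \pi[g]$ and $1-\de_\pi\le \pi(K_U)\le 1$. Hence $|\log \pi_{K_U}[g]-\log\pi[g]|\le -\log(1-\de_g)-\log(1-\de_\pi)\le \ep/2$ under the hypothesis $\max\{\de_\pi,\de_g\}\le 1-e^{-\ep/4}$. It then remains to show $|\log\bar\pi[g]-\log\pi_{K_U}[g]|\le\ep/2$ with probability $1-\de$.

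\textbf{Window design.} Place the centers $y_k$ on a grid of $\mathcal A$ with spacing $h\asymp \beta^{-1/2}$, so that $L\approx(\diam(\mathcal A)\sqrt\beta)^d$. Let $G$ be the nearest-neighbour grid graph; it is connected with degree $O(d)$. Write $Z_k=\int_{K_U}\psi_k\,d\pi$. Using Assumption \ref{d: target distribution pushforward} (the bound $\|\nabla A\|\le G$ and $\ell$-smoothness), we will show that for neighbours $(i,j)\in G$ the overlap $F_{ij}=\pi_i[\psi_j^*]$ is bounded below by $e^{-O(d + G h + \ell h^2)}$ divided by the number of windows with non-negligible weight at a point. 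This gives the constant-factor lower bound on $F_{ij}$ for $(i,j)\in G$. Summing out non-edges into the diagonal keeps $w_i\propto Z_i$ as the exact stationary vector of $F^0$, since the row sums are unchanged and $Z_iF_{ij}=Z_jF_{ji}$ implies detailed balance on $G$.

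\textbf{Per-window estimation.} Each $\pi_k$ is the restriction of $\pi$ to $K_U$, tilted by $\psi_k\circ\te$. Combining the fiber LSI (Assumption \ref{a:lsi-cond}) with a Lelièvre-type decomposition of the entropy into a fiber part and a marginal part, using Assumption \ref{a: collective variable} through $\La$ and $m_\te$, shows that $\pi_k$ satisfies an LSI with constant $C_k=\poly(\Ctheta,\La,m_\te^{-1})$. This holds provided the marginal $\propto e^{-A}\tilde\psi_k$ restricted to the convex set $\mathcal A$ is strongly log-concave, which requires $\beta\ge 2\ell$. Reflected Langevin started with $\chi^2\le B$ then reaches $\chi^2\le 1/N$ in time $T=O(C_k\log(BN))$. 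Consequently the $N$ draws are nearly i.i.d. from $\pi_k$, and the bias costs only a constant-factor loss in the concentration.

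\textbf{Relative concentration.} We need relative concentration of $\bar F_{ij}$, $\bar 1^*_i$ and $\bar g_i^*$ around their means. Since $\psi^*_j\in[0,1]$ and $F_{ij}$ is bounded below, Chebyshev or Bernstein with a union bound over the $O(dL)$ entries gives relative error $\eta$ with $N\gtrsim \eta^{-2}\de^{-1}\poly(L)\,e^{O(d)}$. The same argument handles $\bar 1^*_i$, because $1^*$ is comparable to its mean over a window.

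\textbf{Eigenvector perturbation.} For the stationary vector we use a relative perturbation bound: if every kept entry has relative error $\eta$, then the stationary vector of a reversible chain on the graph $G$ changes by relative error $O(\eta\cdot\diam(G))$. We prove this by comparing $\bar w$ to $w$ through ratios of products along paths, for instance via the Markov chain tree theorem or by using $w_j/w_i=\prod F_{ab}/F_{ba}$ along paths. The grid diameter is $O(d\,L^{1/d})$, which is where the factor $\poly(\diam\mathcal A,\ldots)^d$ enters.

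\textbf{Observable terms: the main obstacle.} The hardest part is $\bar g_i^*$, since $g$ may be tiny on a window, and we need the per-window relative variance $\Var_{\pi_i}(g^*)/\pi_i[g^*]^2$ to be bounded.
\begin{itemize}
\item In the log-Lipschitz case, $g=\tilde g\circ\te$ varies by at most a factor $e^{L_g\cdot\mathrm{width}}$ over the effective support of window $i$. The Gaussian tails are handled by the truncation to $\mathcal A$ together with a second-moment bound under the log-concave marginal. This bounds the relative variance by $e^{O(L_g^2/\beta)}$, so taking $\beta\gtrsim L_g^2$ makes it $O(1)$.
\item For indicators, relative error is only needed in aggregate. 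Windows whose intersection with $E$ has small marginal mass contribute little, and the well-conditioning $B(c,\rho)\subseteq E\subseteq B(c,R)$ guarantees that at least one grid window deep inside $E$ carries mass comparable to a $\poly(\ka,\rho^{-1})^{-d}$ fraction of the total. A weighted second-moment bound on $\sum_i w_i\bar g_i^*$ then yields the $\ep^{-1}$-polynomial dependence.
\end{itemize}

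\textbf{Assembly.} Choosing $\beta$ as the maximum of these requirements and multiplying the resulting polynomial factors gives the stated $N$, with the $e^{\wt O(d^3)}$ factor coming from the $d$-dependent overlap constants raised through the perturbation bound. The total diffusion time is $L\cdot N\cdot T$.
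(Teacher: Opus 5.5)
\textbf{The indicator case is missing its key idea.} For log-Lipschitz $g$ your window-by-window argument is essentially fine: Herbst gives $\E_{\pi_k}[(g^*)^2]\le r_S^2e^{2\Cd L_g^2}\E_{\pi_k}[g^*]^2$ uniformly in $k$. For $g=\mathbf{1}_E\circ\te$, however, the reasons you give do not yield relative error. Gaussian windows have full support, so every window has $p_k:=\pi_k(E)>0$. When $E$ is rare, the windows in the bulk of $\mu$ have weights $w_k$ exponentially larger than those near $E$. These windows contribute little to the exact sum $\sum_k w_kp_k$, but not to the errors. With only $\chi^2(\tilde\pi_k\|\pi_k)\le\tilde\ep$ available, the bias of window $k$ can be of order $w_k\sqrt{\tilde\ep\,p_k}$, and its sampling standard deviation of order $w_k\sqrt{\tilde\pi_k(E)/N}$; both can dwarf $w_kp_k$.

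Exhibiting one window deep inside $E$ only lower-bounds $\mathcal S(w,g)$; it does not control $\sum_kw_k\sqrt{p_k}$ against $\sum_kw_kp_k$. The naive weighted second-moment bound gives relative error of order $(\max_kw_k/(S_{\min}\mathcal S(w,g)))^{1/2}N^{-1/2}$, so $N$ would have to scale with the rarity of $E$. What is needed is a comparison showing that Gaussian localization beats the $e^{G\cdot\mathrm{dist}}$ growth of the weights away from $E$, together with a volume-growth estimate for $E$.

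The paper gets this from a monotone sandwich.
\begin{itemize}
\item Upper bound: it dominates $g$ by the log-Lipschitz function $h_\lambda=e^{-\lambda d(E,\te(x))}$, to which the log-Lipschitz analysis applies in every window. It then proves $\int_{\mathcal A\setminus E}h_\lambda\,d\mu\le\frac{a}{\lambda-a}\mu(E)$ with $a=d/\rh+G\ka$, via dilations of $E$ about $c$ and the $G$-Lipschitz potential (Lemma~\ref{l: vol_upper_bound_argument}, Proposition~\ref{p: upper_bound_g_L}).
\item Lower bound: it discards windows with $\pi_k(E)<1/2$ (allowed since $g\ge 0$) and shows the rest capture all but an $\ep$-fraction of $\mathcal S(w,g)$ (Lemmas~\ref{l:good-threshold}--\ref{l:lower bound volume}). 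This is where the finer spacing and the $\ep^{-d}$ come from.
\end{itemize}
Your aggregate route can be rescued with the same tools. Sub-Gaussian concentration of $d(E,\cdot)$ under the window marginal gives $\sqrt{\pi_k(E)}\le e^{\lambda^2/(\be-\ell)}\E_{\pi_k}[h_\lambda]$, and summing over $k$ with the volume lemma bounds $\mathcal Q_{\{\pi\}}(w,g)$. But that comparison is the heart of the rare-event argument, and it is absent from the proposal.

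\textbf{The eigenvector step, and smaller points.} Your perturbation bound $O(\eta\,\diam(G))$ is not justified. The empirical matrix $\bar F^0$ is not reversible, since $\bar F_{ij}$ and $\bar F_{ji}$ come from independent samples in different windows. Hence $\bar w_j/\bar w_i$ is not a product of ratios along a path; such products are path-dependent on a grid with cycles. The Markov chain tree theorem writes $\bar w_i$ as a sum over spanning trees with $L-1$ edges, which only gives the O'Cinneide factor $(1+\eta)^{O(L)}$ (Lemma~\ref{l: O'cinneide bound}). The paper notes that a path-length bound for this estimator is open. The fix is per-entry accuracy $\eta\asymp\ep/L$. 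This costs an extra $L^2$ in $N$, which still fits the $\poly(\cdot)^d$ form.

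Next, your bound gives $T=O(\poly(\Ctheta,\La,m_\te^{-1})\log(BN))$, not the stated $T=O(\Ctheta\log(BN))$. To get the latter you must take $\be\ge\ell+\frac{1}{m_\te\Ctheta}+\La^2\Ctheta$, so that Theorem~\ref{t: LSI_bounds_CV_fiber} gives $\CD\le 2\Ctheta$ (Proposition~\ref{p: mixing time general}). This is how $m_\te^{-1}$ and $\La$ enter $N$ through $L$.

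Finally, centers restricted to a grid inside $\mathcal A$ control neither $S_{\max}/S_{\min}$ nor connectivity of $G$ near thin parts of $\mathcal A$. You need centers on a neighborhood of $\mathcal A$, as in the paper's padded lattice.
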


The only difference between the parameter dependencies in the two cases is a dependency on $\frac{1}{\epsilon^d}$ in the indicator case. 
The additional factor of $\frac{1}{\epsilon^d}$ arise from the finer latter spacing required by the lower bound, which is a consequence of the lower bound in Lemma \ref{l: good set to volume}. We give more precise dependencies in the proof in Section \ref{s:pf}.

Our full results in the log-Lipschitz case, which quantify the effects of the mapping $\theta$ are given in Section \ref{s: collective variable mapping}. 
In the log-Lipschitz case the runtime implicitly depends on the fluctuations of the observable along the fibers of the collective variable. 
This is quantified in Section \ref{s: smooth observable} by the constant $\gamma_0$ with 
\[\Ctheta L_{\textup{fiber}}^2 \leq \frac{\gamma_0}{2} \leq \frac{60d}{\sqrt{\pi}},\]
where $\Ctheta$ and $L_{\textup{fiber}}$ denote the log-Sobolev and log-Lipschitz constants along the fibers. 
For a poorly chosen CV map, which leaves lots of variation along the fibers, this constant can degrade the bounds. 
Theorem \ref{t: Explicit Error for Log-lipschitz observable} makes the dependency on $\gamma_0$ explicit in the log-Lipschitz case. 
In the case of an indicator observable, the observable is constant along the fibers so $L_{\textup{fiber}} = 0$. 



\paragraph{Theory vs. practice.} Our analysis makes several assumptions or algorithmic choices that differ from common implementations of umbrella sampling: a sparse overlap matrix, an explicit truncation region, and independent samples within each window. 
\begin{enumerate}
\setlength{\itemsep}{0pt}
\setlength{\parskip}{0pt}
    \item \textbf{Sparse overlap matrix.} Enforcing a sparse overlap matrix allows us to exclude weak-overlap entries whose relative estimation errors are difficult to control, while approximately preserving the exact stationary weights. 
Our analysis does not establish that this modification is necessary for good practical performance. 
\item \textbf{Truncation and truncated sampling.} Since the window placement cannot be arbitrary, restriction to a covered region in the collective-variable space is intrinsic to any implementation of umbrella sampling.
For any finite collection of Gaussian windows, the sum approaches 0 as we enter the tails of the boundary windows; therefore, any analysis of EMUS with gaussian windows requires explicit treatment of these tails. This restriction means that
we choose reflected Langevin dynamics as our algorithm to sample the truncated window distributions. 
Extending the guarantees to non-truncated sampling would require corresponding mixing and tail-moment bounds. 
\item \textbf{Independent samples.} We assume independent samples within each window to obtain the variance reduction used in both Lemma \ref{l: Sampling Error} and the estimation of overlap-matrix entries. 
Practical implementations typically use correlated samples drawn from some MCMC process. 
MCMC draws can be thinned to obtain approximately independent samples.
\end{enumerate}

\section{Proof overview}
We give an overview of the proof of the main theorem, Theorem~\ref{t: main theorem alg1}.

\paragraph{Error decomposition.} In Section~\ref{s: relative error}, we decompose the 
    relative error for estimating $\E_\pi f$ into the error in the empirical eigenvector weights, the finite-sample error within the windows, and the bias caused by sampling from approximate window distributions, as represented by each step in the sequence
    \[
        \bar{S}(\bar{w},f)
        \longrightarrow
        \bar{S}(w,f)
        \longrightarrow
        \widetilde{S}(w,f)
        \longrightarrow
        S(w,f)
    \]
The remaining comparison between $S(w,g)/S(w,1)$ and $\pi[g]$ accounts for the truncation error.

    Theorem~\ref{t: general error decomposition} shows that if, for each $f\in\{g,1\}$, the empirical estimate with empirically estimated weights satisfies
    \begin{equation}\label{e:relative_factor_overview}
        e^{-\eta_f^-}S(w,f)
        \leq
        \bar{S}(\bar w,f)
        \leq
        e^{\eta_f^+}S(w,f),
    \end{equation}
    then the EMUS estimator satisfies a log-relative-error bound. For the general decomposition, the exponents $\eta_f^-$ and $\eta_f^+$ can be controlled using an upper bound on the weighted relative standard deviation
    \begin{equation}\label{e:rel_variance_overview}
        \mathcal{Q}_{\{\mu\}}(w,f)
        :=
        \frac{\sum_k w_k\sqrt{\operatorname{Var}_{\mu_k}(f^*)}}
        {\sum_k w_k\mathbb{E}_{\mu_k}[f^*]}.
    \end{equation}
    To obtain a general decomposition we make two additional assumptions: first that there exists a good path of windows that have overlap $c_p$, and second that the biased samples from $\tilde{\pi}_k$ satisfy $\chi^2(\tilde{\pi}_k,\pi_k) < \tilde{\epsilon}$, for all $k$.
    We later show that both assumptions are satisfied with appropriate window spacing and when running reflected Langevin for appropriate time.

\paragraph{CV space quantities.} In Section~\ref{s: collective variable mapping}, we work in the collective-variable space and show how the spacing and inverse temperature of the Gaussian windows
    \[
        \widetilde{\psi}_k(z)
        \propto
        \exp\!\left(-\frac{\beta}{2}\lVert z-y_k\rVert^2\right)
    \]
    control the umbrella sum $S$ and the good-path overlap constant $c_p$. If the window functions are sufficiently concentrated and spaced sufficiently densely, then neighboring windows have non-negligible overlap, the umbrella sum is nearly uniform on the truncation set, and each biased marginal is strongly log-concave.

    We also show that the log-Sobolev constant $\CD$ on the full space is controlled by the collective-variable map and the window temperature:
    \begin{equation}\label{eq:cd_overview}
        \CD
        \leq
        \Ctheta
        +\frac{1}{m}\Cd
        +\Lambda^2\Cd(\Ctheta)^2.
    \end{equation}
    Here, $\Ctheta$ is the log-Sobolev constant along the collective-variable fibers, $\Cd$ is the log-Sobolev constant in the collective-variable space, and $m$ and $\Lambda$ are geometric constants determined by the collective-variable mapping.

\paragraph{Concrete bounds for log-Lipschitz and indicator observables.} In Sections~\ref{s: smooth observable} and~\ref{s: indicator}, we make the exponents in~\eqref{e:relative_factor_overview} explicit. Section~\ref{s: smooth observable} treats log-Lipschitz observables, while Section~\ref{s: indicator} treats indicators of the form $g=\mathbf{1}_E\circ\theta$ for a convex set $E$. These cases require different arguments because the regularity of the observable has a significant effect on how the individual windows contribute to the EMUS estimate.

    \begin{enumerate}
        \item For a log-Lipschitz observable, the main reduction is Corollary~\ref{c: main_theorem}, which gives a relative-error bound in terms of a uniform upper bound on~\eqref{e:rel_variance_overview}. Proposition~\ref{p: gamma bound relative variance} and Lemma~\ref{l: Stability of Q, Gaussian} establish such a bound for both the exact window distributions $\pi_k$ and the approximate sampling distributions $\widetilde{\pi}_k$.

        \item For an indicator observable, we treat the upper and lower bounds in~\eqref{e:relative_factor_overview} separately. For the upper bound, we majorize $g=\mathbf{1}_E\circ\theta$ by the log-Lipschitz function
        \(
            h_\lambda(x)
            =
            \exp\bigl(-\lambda d(E,\theta(x))\bigr)
        \)
        and apply the results of the preceding section. For the lower bound, we retain only windows satisfying $\pi_k(E)\geq p_E$ for appropriately chosen $p_E$ and show that these windows still capture nearly all of the exact EMUS contribution.
    \end{enumerate}

\paragraph{Sampling with reflected Langevin.} In Section~\ref{s: sampling}, we turn the tolerance for the biased samples $\tilde{\epsilon}$ into a runtime bound when the approximate samples are generated by reflected Langevin diffusion.
    Reflected Langevin dynamics on $K_U$, with stationary distribution $\pi_k$, has mixing controlled by $\CD$:
    \[
        \chi^2(\mu_T\Vert\pi_k)
        \leq
        \exp\!\left(-\frac{2T}{\CD}\right)
        \chi^2(\mu_0\Vert\pi_k).
    \]
    We then use~\eqref{eq:cd_overview} to express this runtime in terms of the collective-variable mapping and the window temperature. Combining this mixing guarantee with the error bounds from the preceding sections proves Theorem~\ref{t: main theorem alg1}.

\section{Relative Error of the EMUS Estimator} \label{s: relative error}
In this section, we bound the relative error of the EMUS estimator 
$$|\log \bar{\pi}[g] - \log \pi[g]|.$$ 
We organize the analysis by separating four sources of error: the error in the eigenvector weights, the finite-sample statistical error within each window, the bias introduced by sampling from an approximate rather than exact window distribution, and the error caused by restricting the target distribution to the influence set $K_U$. 
Each component is controlled by a different argument and can therefore be analyzed independently before the bounds are combined.
To make this decomposition explicit, for a weight vector $w$ and an observable $f$, define

\begin{align}
    \bar{\mathcal{S}}(w, f) &:= \sum_{k=1}^L w_k \bar{f}^*_k, \quad &&\text{(Empirical Estimate)} \\
    \tilde{\mathcal{S}}(w, f) &:= \sum_{k=1}^L w_k \mathbb{E}_{\tilde{\pi}_k}[f^*], \quad &&\text{(Biased Expectation)} \\
    \mathcal{S}(w, f) &:= \sum_{k=1}^L w_k \mathbb{E}_{\pi_k}[f^*], \quad &&\text{(EMUS Expectation).}
\end{align}
where $\bar{f}^*_k$ denotes the sample average in window $k$. With \(\bar{w}\) denoting the empirical eigenvector weights, the EMUS estimator can be written as
\[
\bar{\pi}[g] = \frac{\bar{\mathcal{S}}(\bar{w}, g)}{\bar{\mathcal{S}}(\bar{w}, 1)}. 
\]
The sequence
\[
\bar{S}(\bar{w},f)
\longrightarrow
\bar{S}(w,f)
\longrightarrow
\widetilde{S}(w,f)
\longrightarrow
S(w,f)
\]
isolates the weight, sampling, and bias errors, respectively.
The remaining comparison between
\[
\frac{S(w,g)}{S(w,1)}
\quad\text{and}\quad
\pi[g]
\]accounts for the truncation error.
Additionally, to analyze the sampling error, we define the \textit{Weighted relative standard deviation} $\mathcal{Q}_{\{\mu\}}(w,f)$ for the distributions $\{\mu_k\}$ as:  
\begin{equation}\label{e: rel variance}
    \mathcal{Q}_{\{\mu\}}(w,f) = \frac{\sum_k w_k\sqrt{\Var_{\mu_k}(f^*)}}{\sum_k w_k \mathbb{E}_{\mu_k}(f^*)}.
\end{equation}
When the context implies the sampling distributions $\{\tilde{\pi}_k\}$ or the target distributions $\{\pi_k\}$, we simply write $\mathcal{Q}(w, f)$.

\subsection{Main Result}
We combine the bounds derived in Section \ref{s: relative error} to provide a general non-asymptotic error bound for the EMUS estimator. 
We will present a general theorem in terms of bounds on the constant $\mathcal{Q}_{\{\mu\}}(w,f)$ \eqref{e: rel variance}. 
Our general theorem assumes access to bounds of the form: 
\begin{equation}\label{e: suff_condition}
    e^{-\eta_f^-}\mathcal S(w,f)
\leq
\bar{\mathcal S}(\bar w,f)
\leq
e^{\eta_f^+}\mathcal S(w,f).
\end{equation}
This allows for skewed upper and lower bounds and allows us to apply \ref{t: general error decomposition} to our results in Section \ref{s: collective variable mapping}.
In Section \ref{ss: decomposed error}, we show that the condition in eq. (\ref{e: suff_condition}) holds in general with symmetric upper and lower bounds.  
We later show that under reasonable assumptions on the windows $\psi_i$ and the geometry of the space such bounds can be made rigorous.

\begin{theorem}[General error decomposition]\label{t: general error decomposition} Let $g : \mathbb{R}^D \rightarrow [0,\infty)$ satisfy $\pi[g] > 0$ and write the EMUS estimator $$\bar{\pi}[g] \vcentcolon = \frac{\bar{\mathcal S}(\bar w, g)}{\bar{\mathcal{S}}(\bar w, 1)}.$$
Let Assumption \ref{a: good truncation} hold on the influence set $K_U = \theta^{-1}(\mathcal{A}) \subset \mathbb{R}^D$. Suppose that with probability $1-\delta_f$, for each $f \in \{g,1\}$, 
\begin{equation}\label{e: condition_main_theorem}
e^{-\eta_f^-}\mathcal S(w,f)
\leq
\bar{\mathcal S}(\bar w,f)
\leq
e^{\eta_f^+}\mathcal S(w,f)
\end{equation}
with $\eta_f^-,\eta_f^+\geq 0$. Then, with probability at least $1-2\delta_f$, 
$$\left|
\log\bar\pi[g]-\log\pi[g]
\right|
\leq
\max\left\{
\eta_g^-+\eta_1^+,
\;
\eta_g^++\eta_1^-
\right\} + \max\left\{\log\frac{1}{1-\delta_\pi}, \log\frac{1}{1-\delta_g}\right\}.$$
\end{theorem}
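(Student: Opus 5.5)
The proof splits $\log\bar\pi[g]-\log\pi[g]$ into a sampling/weight part and a truncation part:
\[
\log\bar\pi[g]-\log\pi[g]
=\underbrace{\log\frac{\bar{\mathcal S}(\bar w,g)}{\mathcal S(w,g)}-\log\frac{\bar{\mathcal S}(\bar w,1)}{\mathcal S(w,1)}}_{(I)}
+\underbrace{\log\frac{\mathcal S(w,g)}{\mathcal S(w,1)}-\log\pi[g]}_{(II)}.
\]

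\textbf{Term (I).} Take a union bound over the two events in \eqref{e: condition_main_theorem}, one for $f=g$ and one for $f=1$. Both then hold with probability at least $1-2\delta_f$. On this event the first logarithm lies in $[-\eta_g^-,\eta_g^+]$ and the second in $[-\eta_1^-,\eta_1^+]$. Hence (I) lies in $[-\eta_g^--\eta_1^+,\;\eta_g^++\eta_1^-]$, so $|(I)|\le\max\{\eta_g^-+\eta_1^+,\eta_g^++\eta_1^-\}$.

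\textbf{Term (II).} This part is deterministic, and the key step is identifying $\mathcal S(w,f)$ exactly. Take $w$ to be the true stationary vector, normalized so that $w_k=c\,Z_k$ with $Z_k=\int_{K_U}\psi_k\,d\pi$; this is the stated property of the eigenvector of $F$ (and of $F^0$, whose construction preserves it). Then
\[
\mathcal S(w,f)=\sum_k cZ_k\int_{K_U}\frac{\psi_k f}{Z_k\sum_j\psi_j}\,d\pi=c\int_{K_U}f\,d\pi,
\]
since $\sum_k\psi_k/\sum_j\psi_j=1$. This requires $\sum_j\psi_j>0$ on $K_U$, which holds for Gaussian windows. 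Therefore $\mathcal S(w,g)/\mathcal S(w,1)=\int_{K_U}g\,d\pi/\pi(K_U)$.

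Using Assumption \ref{a: good truncation} together with $g\ge0$ and $\pi(K_U)\le1$:
\[
\frac{\int_{K_U}g\,d\pi}{\pi(K_U)}\ge(1-\delta_g)\pi[g],
\qquad
\frac{\int_{K_U}g\,d\pi}{\pi(K_U)}\le\frac{\pi[g]}{1-\delta_\pi}.
\]
So (II) lies in $[\log(1-\delta_g),\,\log\frac1{1-\delta_\pi}]$, and $|(II)|\le\max\{\log\frac1{1-\delta_\pi},\log\frac1{1-\delta_g}\}$. The triangle inequality then gives the claimed bound.

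The only delicate point is justifying $w_k\propto Z_k$, i.e.\ that $w$ in \eqref{e: condition_main_theorem} is the exact stationary vector of the true (sparse) overlap matrix. I would verify this by checking $\sum_i Z_iF_{ij}=\int_{K_U}\psi_j\,d\pi=Z_j$. For $F^0$, the same holds because the off-graph mass $F_{ik}$ is moved onto the diagonal entry $F^0_{ii}$, which keeps the matrix stochastic. In the sparse case this identity is exact only when the removed entries satisfy detailed balance: $Z_iF_{ik}=\int\psi_i\psi_k/\sum_j\psi_j\,d\pi$ is symmetric in $(i,k)$, so moving that mass to the diagonal preserves $Z^{\mathsf T}F^0=Z^{\mathsf T}$. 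This symmetry is the step I would check carefully.
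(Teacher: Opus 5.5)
Your proposal is correct and matches the paper's proof: the same triangle-inequality split into the ratio term controlled by \eqref{e: condition_main_theorem} and a deterministic truncation term, with the latter handled as in Lemma \ref{l: truncation error} via $w_k\propto Z_k$. Your extra verification that $Z^{\mathsf T}F^0=Z^{\mathsf T}$ from the symmetry of $Z_iF_{ik}$ supplies the reversibility that Proposition \ref{p: sparse matrix} simply assumes. Your explicit union bound is what the paper's $1-2\delta_f$ implicitly relies on.
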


We make additional assumptions on the quality of the samples used to in the Monte Carlo estimate and the existence of a path of windows which has good overlap. 
There assumptions are used to make Theorem \ref{t: general error decomposition} explicit given a sparse overlap matrix. 

\begin{assm}[Good samples]\label{a: good samples}Let $\{X_t^i\}_{t=1}^{N_i} \sim \tilde{\pi}_i$ be the samples from the EMUS estimate. Then for all $i = 1, \dots, L$, $\tilde{\pi}_i$ is absolutely continuous with respect to $\pi_i$ and 
        $$\chi^2(\tilde{\pi}_i\|\pi_i) \leq \tilde{\epsilon}.$$   
\end{assm}
\begin{assm}[Good path]\label{a: good path} {There is $c_p>0$ such that} for any two windows indexed by $i,j \in [1,L]$, there exist at least one path $(i= k_0,\dots,k_l=j)$ such that
        \[\min\{ F_{k_t,k_{t+1}},{F_{k_{t+1},k_{t}}\} \geq c_p.}
        \]
\end{assm}

The following is a corollary of Theorem \ref{t: general error decomposition} which makes the error decomposition  explicit when the eigenvector is computed from the sparse matrix $\bar{F}^0$ and the weighted relative standard deviation $\mathcal{Q}_{\{\mu\}}(w,f)$ is bounded. 

\begin{corollary} \label{c: main_theorem}
Let Assumptions \ref{a: good truncation},  \ref{a: good samples} and \ref{a: good path} hold. Let $\bar{\pi}[g]$ be the EMUS estimator computed on $L$ window using the sparse matrix $\bar F^0$, defined on $G$, with $m = |G|$.
For each $f \in \{g,1\}$, suppose  
$$\max\left\{Q_{\{\pi\}}(w,f),Q_{\{\widetilde\pi\}}(w,f)\right\}\le C_{\mathcal Q_*}.$$
Suppose every window contains $N_k \ge N$ samples. Choose $\epsilon_w>0$, a weight failure probability $\delta_w>0$, and an observable sampling failure probability $\delta_f>0$. If 
\begin{align*}
N&\ge \max\left\{\frac{4C_{\mathcal Q_*}^2}{\delta_f},\frac{32mL^2}{\epsilon_w^2c_p\delta_w}\right\}\\
\shortintertext{and}
\tilde\epsilon
&\le
\min\left\{\frac{1}{4C_{\mathcal Q_*}^2},\frac{\epsilon_w^2c_p}{16L^2}\right\},
\end{align*}
then defining 
$$\eta \vcentcolon =2\epsilon_w + \frac{2C_{\mathcal Q_*}}{\sqrt{N\delta_f}} + 2\sqrt{\widetilde\epsilon}\,C_{\mathcal Q_*},$$
yields, for each $f \in \{g,1\}$, with probability $1 - \delta_w- \delta_f $,
$$e^{-\eta}\mathcal S(w,f)
\leq
\bar{\mathcal S}(\bar w,f)
\leq
e^{\eta}\mathcal S(w,f)$$
and consequently,  with probability $1 - \delta_w- 2\delta_f $,
$$\left|
\log\bar\pi[g]-\log\pi[g]
\right|
\leq
2\eta
+
\max\left\{
\log\frac{1}{1-\delta_\pi},
\log\frac{1}{1-\delta_g}
\right\}.$$
\end{corollary}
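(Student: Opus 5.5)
The plan is to establish the two-sided bound \eqref{e: suff_condition} with $\eta_f^\pm=\eta$ for each $f\in\{g,1\}$, and then invoke Theorem~\ref{t: general error decomposition}; its conclusion with $\eta_g^\pm=\eta_1^\pm=\eta$ is exactly the final display. The failure probabilities add up as stated: one weight event of probability $\delta_w$ is shared by both $f$, and each $f$ contributes its own $\delta_f$. Following the chain $\bar S(\bar w,f)\to\bar S(w,f)\to\tilde S(w,f)\to S(w,f)$, I would prove three multiplicative comparisons and multiply them.

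\textbf{Step 1 (bias).} Write $\E_{\tilde\pi_k}f^*-\E_{\pi_k}f^* = \E_{\pi_k}[(\tfrac{d\tilde\pi_k}{d\pi_k}-1)(f^*-\E_{\pi_k}f^*)]$ and apply Cauchy--Schwarz. This gives $|\E_{\tilde\pi_k}f^*-\E_{\pi_k}f^*|\le\sqrt{\tilde\epsilon\Var_{\pi_k}(f^*)}$ under Assumption~\ref{a: good samples}. Summing with weights $w_k$ and using the definition of $\mathcal Q_{\{\pi\}}$ yields $|\tilde S(w,f)-S(w,f)|\le\sqrt{\tilde\epsilon}\,C_{\mathcal Q_*}S(w,f)$. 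The hypothesis $\tilde\epsilon\le 1/(4C_{\mathcal Q_*}^2)$ makes this ratio at most $1/2$, and $|\log(1+x)|\le 2|x|$ for $|x|\le1/2$ gives the factor $e^{\pm2\sqrt{\tilde\epsilon}C_{\mathcal Q_*}}$.

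\textbf{Step 2 (sampling).} With $w$ fixed, $\bar S(w,f)$ is an unbiased estimator of $\tilde S(w,f)$ built from independent samples. By the triangle inequality for standard deviations, its standard deviation is at most $\sum_k w_k\sqrt{\Var_{\tilde\pi_k}(f^*)/N}\le C_{\mathcal Q_*}\tilde S(w,f)/\sqrt N$, using the $\mathcal Q_{\{\tilde\pi\}}$ bound. Chebyshev's inequality then gives relative deviation at most $C_{\mathcal Q_*}/\sqrt{N\delta_f}$ with probability at least $1-\delta_f$. The condition $N\ge4C_{\mathcal Q_*}^2/\delta_f$ makes this at most $1/2$, so again we get the factor $e^{\pm 2C_{\mathcal Q_*}/\sqrt{N\delta_f}}$.

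\textbf{Step 3 (weights), the main obstacle.} The goal is $e^{-2\epsilon_w}\le\bar w_k/w_k\le e^{2\epsilon_w}$ for every $k$ simultaneously with probability at least $1-\delta_w$. Since every $\bar f_k^*\ge0$, this gives $e^{\pm2\epsilon_w}$ for $\bar S(\bar w,f)/\bar S(w,f)$. The approach has three parts.
\begin{enumerate}
\item First check that $w^\sT F^0=w^\sT$. This is a detailed-balance statement, since $w_iF_{ij}=\int\psi_i\psi_j^*\,d\pi$ is symmetric in $(i,j)$, and moving the off-graph mass onto the diagonal preserves stationarity of $w$ for the graph-restricted chain.
\item Next use a relative perturbation bound for stationary vectors of reversible chains whose edges carry entries at least $c_p$ (Assumption~\ref{a: good path}). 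If each retained entry satisfies $|\bar F_{ij}-F_{ij}|\le\epsilon' F_{ij}$, then along any path of length at most $L$ the ratios $w_j/w_i$ are perturbed by at most $e^{O(L\epsilon')}$. I would obtain this via a Markov chain tree theorem or ratio-of-path-products argument, using symmetry $w_iF_{ij}=w_jF_{ji}$ and $\bar w_i\bar F^0_{ij}=\bar w_j\bar F^0_{ji}$ in an approximate form.
\item To control the entries, note $F_{ij}=\E_{\pi_i}\psi_j^*$ and $\bar F_{ij}$ is its empirical mean under $\tilde\pi_i$, with $\psi_j^*\in[0,1]$. This gives variance at most $F_{ij}$, up to the $\chi^2$ bias handled as in Step~1; there the condition $\tilde\epsilon\le\epsilon_w^2c_p/(16L^2)$ keeps the bias at most $\epsilon_w F_{ij}/(4L)$ because $F_{ij}\ge c_p$. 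Chebyshev's inequality with a union bound over the $m$ edges and the condition $N\ge32mL^2/(\epsilon_w^2c_p\delta_w)$ then makes the relative error on each edge at most $\epsilon_w/(2L)$.
\end{enumerate}

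Chaining these estimates along good paths and normalizing, since both $w$ and $\bar w$ sum to $1$, gives the factor $e^{\pm2\epsilon_w}$. The delicate point is making the perturbation argument for the sparse, non-exactly-reversible empirical matrix rigorous without losing more than a factor $L$. Multiplying the three factors gives $\eta$, and Theorem~\ref{t: general error decomposition} finishes the proof.
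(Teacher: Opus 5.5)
Your proposal is correct and follows the paper's own route. Your three factors are Lemmas \ref{l: bias error}, \ref{l: Sampling Error} and \ref{l: weight erorr}, combined in Lemma \ref{l: upper/lower}. Your Step 3 is Lemma \ref{l: weight error precise}: Proposition \ref{p: sparse matrix}, then Lemma \ref{l: Fij approx} with a union bound over the $m$ pairs of $G$, giving $C_w=2\epsilon_w$. The final display is Theorem \ref{t: general error decomposition} with $\eta_f^\pm=\eta$.

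The step you flag as delicate is closed in the paper by O'Cinneide's bound (Lemma \ref{l: O'cinneide bound}). That is exactly the Markov chain tree theorem option you list, and it needs no reversibility of $\bar F^0$. Each spanning arborescence uses $L-1$ off-diagonal entries supported on $G$, so after normalization $\bar w_k/w_k$ moves by at most $(1+\epsilon_w/L)^{2L}\le e^{2\epsilon_w}$.

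You should discard the path-product, approximate-detailed-balance alternative. Once $G$ has cycles (the nearest-neighbour lattice for $d\ge 2$), $\bar F^0$ is in general not reversible, so chaining ratios along good paths does not control its stationary vector.
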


\subsection{Error Bounds (Decomposed)}\label{ss: decomposed error}
In this subsection we bound each component of the error explicitly. We later use these bounds to prove Corollary \ref{c: main_theorem}.

\begin{lem} \label{l: upper/lower} Suppose  $\max\left\{
Q_{\{\pi\}}(w,f),Q_{\{\widetilde\pi\}}(w,f)
\right\}
\le C_{\mathcal Q,f}$ and suppose $\max_k\left\{
\frac{\bar w_k}{w_k},
\frac{w_k}{\bar w_k}
\right\}
\le e^{C_w}$, with probability $1 - \delta_w$. Let every window have $N_k \geq N$ samples. If 
\begin{align*}
N&\ge \frac{4C_{\mathcal Q,f}^2}{\delta_f}\\
\shortintertext{and}
\tilde\epsilon
&\le
\frac{1}{4C_{\mathcal Q,f}^2},
\end{align*}
then, with probability at least $1-\delta_f - \delta_w$, 
$$e^{-\eta_f}S(w,f) \le \bar S (\bar w, f) \le e^{\eta_f}S(w,f)$$
where $\eta_f = C_w + \frac{2C_{\mathcal Q,f}}{\sqrt{N\delta_f}} + 2\sqrt{\widetilde\epsilon}\,C_{\mathcal Q,f}.$ Equivalently, the log relative ratio bound is given by $\left|
\log\frac{\bar{\mathcal S}(\bar w,f)}
{\mathcal S(w,f)}
\right|
\le \eta_f.$
\end{lem}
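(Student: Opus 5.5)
We bound the ratio $\bar S(\bar w,f)/S(w,f)$ through the chain $\bar S(\bar w,f)\to\bar S(w,f)\to\tilde S(w,f)\to S(w,f)$, taking logarithms so that the three errors add.

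\textbf{Weight error.} On the event of probability $1-\delta_w$ where $\max_k\{\bar w_k/w_k,w_k/\bar w_k\}\le e^{C_w}$, every term satisfies $e^{-C_w}w_k\bar f^*_k\le \bar w_k\bar f^*_k\le e^{C_w}w_k\bar f^*_k$. Since $f\ge 0$, also $\bar f^*_k\ge0$. Summing over $k$ gives $|\log(\bar S(\bar w,f)/\bar S(w,f))|\le C_w$.

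\textbf{Sampling error.} Each $\bar f^*_k$ is an average of $N_k\ge N$ independent samples from $\tilde\pi_k$, so $\Var(\bar f^*_k)\le \Var_{\tilde\pi_k}(f^*)/N$. By Minkowski's inequality (or the triangle inequality for standard deviations),
\[
\sqrt{\E\big[(\bar S(w,f)-\tilde S(w,f))^2\big]}\le \sum_k w_k\sqrt{\Var_{\tilde\pi_k}(f^*)/N}=\frac{Q_{\{\tilde\pi\}}(w,f)\,\tilde S(w,f)}{\sqrt N}.
\]
Chebyshev's inequality then gives, with probability at least $1-\delta_f$,
\[
|\bar S(w,f)-\tilde S(w,f)|\le \frac{C_{\mathcal Q,f}}{\sqrt{N\delta_f}}\,\tilde S(w,f)=:a\,\tilde S(w,f).
\]
The condition $N\ge 4C_{\mathcal Q,f}^2/\delta_f$ gives $a\le 1/2$. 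For the upper side we use $1+a\le e^{a}$. For the lower side we use $\log(1-a)\ge -2a$, which holds for $a\le 1/2$. Together these give $|\log(\bar S(w,f)/\tilde S(w,f))|\le 2a$.

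\textbf{Bias error.} For each $k$, Cauchy--Schwarz gives
\[
|\E_{\tilde\pi_k}f^*-\E_{\pi_k}f^*|=\Big|\E_{\pi_k}\Big[\Big(\tfrac{d\tilde\pi_k}{d\pi_k}-1\Big)(f^*-\E_{\pi_k}f^*)\Big]\Big|\le\sqrt{\chi^2(\tilde\pi_k\|\pi_k)}\sqrt{\Var_{\pi_k}(f^*)}.
\]
By Assumption~\ref{a: good samples} this is at most $\sqrt{\tilde\epsilon}\sqrt{\Var_{\pi_k}(f^*)}$. Weighting by $w_k$ and summing yields
\[
|\tilde S(w,f)-S(w,f)|\le \sqrt{\tilde\epsilon}\,Q_{\{\pi\}}(w,f)\,S(w,f)\le b\,S(w,f),\qquad b:=\sqrt{\tilde\epsilon}\,C_{\mathcal Q,f}.
\]
The hypothesis $\tilde\epsilon\le 1/(4C_{\mathcal Q,f}^2)$ gives $b\le1/2$, so the same elementary inequality yields $|\log(\tilde S/S)|\le 2b$. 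This is why the bound on $Q$ is required for both families $\{\pi\}$ and $\{\tilde\pi\}$: the $\tilde\pi$ version controls the variance step and the $\pi$ version controls the bias step.

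\textbf{Combining.} A union bound over the weight event and the Chebyshev event gives probability at least $1-\delta_w-\delta_f$. Adding the three log-errors gives
\[
\Big|\log\frac{\bar S(\bar w,f)}{S(w,f)}\Big|\le C_w+\frac{2C_{\mathcal Q,f}}{\sqrt{N\delta_f}}+2\sqrt{\tilde\epsilon}\,C_{\mathcal Q,f}=\eta_f.
\]

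The main delicate step is the sampling error. There we must normalize by $\tilde S$ rather than $S$, and use a lower-tail inequality that is valid because $a\le1/2$; the $\{\tilde\pi\}$ version of the $Q$ bound is what makes this normalization work. The other steps are routine termwise comparisons.
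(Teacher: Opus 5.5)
Your proposal is correct and follows the paper's proof: the paper also chains the weight-error, sampling-error (Chebyshev plus $|\log(1+x)|\le 2|x|$ for $|x|\le 1/2$) and bias-error (Cauchy--Schwarz against $\chi^2$) lemmas, adds the three log-errors, and takes a union bound over the weight and sampling events. Your use of Minkowski's inequality to bound the $L^2$ deviation of $\bar S(w,f)$ is a slightly more explicit justification of the paper's informal Chebyshev step, but the argument is the same.
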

\begin{proof}
    Follows from combining the bounds from Lemmas \ref{l: weight erorr}, \ref{l: Sampling Error} and \ref{l: bias error} with no additional work.
\end{proof}

\begin{lem}[Weight error]\label{l: weight erorr} Assume $\max\{\frac{\bar{w}_k}{w_k},\frac{w_k}{\bar{w}_k}\}\leq e^{C_w}$, with probability $1-\delta_w$, for all $k$. Then, with probability $1-\delta_w$, 
$$e^{-C_w}\bar{\mathcal{S}}({w}, f)\leq\bar{\mathcal{S}}(\bar{w}, f)\leq e^{C_w}\bar{\mathcal{S}}({w}, f).$$
This implies the log relative ratio bound $\left|\log\frac{\bar{\mathcal{S}}(\bar{w}, f)}{\bar{\mathcal{S}}({w}, f)}\right| \leq C_w.$

\end{lem}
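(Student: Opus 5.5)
The proof is a termwise comparison. On the event (of probability at least $1-\delta_w$) where $\max_k\{\bar w_k/w_k,\, w_k/\bar w_k\}\le e^{C_w}$, we have for every $k$
\[
e^{-C_w} w_k \le \bar w_k \le e^{C_w} w_k .
\]
The key observation is that every summand in $\bar{\mathcal S}(\cdot,f)$ is nonnegative. The observable $f\in\{g,1\}$ is nonnegative and each $\psi_k>0$, so $f^*=f/\sum_k\psi_k\ge 0$. The empirical window averages $\bar f^*_k=\frac{1}{N_k}\sum_t f^*(X^k_t)$ are therefore nonnegative as well. We also need $w_k>0$ and $\bar w_k>0$. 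The ratio hypothesis is only meaningful if both vectors are positive; moreover $w_k$ is proportional to the normalizing constant $Z_k>0$.

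Multiplying the displayed inequality by $\bar f^*_k\ge 0$ preserves the direction of both inequalities:
\[
e^{-C_w} w_k\bar f^*_k \le \bar w_k \bar f^*_k \le e^{C_w} w_k\bar f^*_k .
\]
Summing over $k=1,\dots,L$ gives
\[
e^{-C_w}\bar{\mathcal S}(w,f)\le\bar{\mathcal S}(\bar w,f)\le e^{C_w}\bar{\mathcal S}(w,f),
\]
on the same event, hence with probability at least $1-\delta_w$.

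For the log-ratio form, we need $\bar{\mathcal S}(w,f)>0$ in order to take logarithms. If $\bar{\mathcal S}(w,f)=0$, the sandwich forces $\bar{\mathcal S}(\bar w,f)=0$ too, and the ratio statement is understood as vacuous or handled by convention. When it is positive, dividing by it and taking logarithms gives $\bigl|\log\bigl(\bar{\mathcal S}(\bar w,f)/\bar{\mathcal S}(w,f)\bigr)\bigr|\le C_w$.

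There is no real obstacle here. The only points to state explicitly are the nonnegativity of the weights and of the sample averages, which is what lets the inequalities be multiplied and summed without flipping, and the positivity needed to take logarithms.
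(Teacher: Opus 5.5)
Your proposal is correct and matches the paper's proof: the paper writes $\bar{\mathcal S}(\bar w,f)/\bar{\mathcal S}(w,f)$ as a $w_k\bar f^*_k$-weighted average of the ratios $\bar w_k/w_k\in[e^{-C_w},e^{C_w}]$. That is the same termwise comparison you sum directly. Summing before dividing lets you state explicitly the nonnegativity of $w_k\bar f^*_k$ and the possibility $\bar{\mathcal S}(w,f)=0$, which the paper uses without comment.
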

\begin{proof}
      For any $f$, 
      \begin{align*}
\frac{\bar{\mathcal{S}}(\bar{w}, f)}{\bar{\mathcal{S}}({w}, f)} = \frac{\sum_{k=1}^M\bar{w}_k\bar{f}^*_k}{\sum_{k=1}^Mw_k\bar{f}^*_k} &= \frac{\sum_{k=1}^Mw_k\frac{\bar{w}_k}{w_k}\bar{f}^*_k}{\sum_{k=1}^Mw_k\bar{f}^*_k}.\\
\shortintertext{Then $\max\{\frac{\bar{w}_k}{w_k},\frac{w_k}{\bar{w}_k}\}\le e^{C_w}$ implies that $e^{-C_w} \le \frac{\bar{w}_k}{w_k} \le e^{C_w}$; therefore }
e^{-C_w}= e^{-C_w} \frac{\sum_{k=1}^Mw_k\bar{f}^*_k}{\sum_{k=1}^Mw_k\bar{f}^*_k}\leq\frac{\sum_{k=1}^Mw_k\frac{\bar{w}_k}{w_k}\bar{f}^*_k}{\sum_{k=1}^Mw_k\bar{f}^*_k} &\leq e^{C_w} \frac{\sum_{k=1}^Mw_k\bar{f}^*_k}{\sum_{k=1}^Mw_k\bar{f}^*_k} = e^{C_w}.\\
\shortintertext{Hence}
e^{-C_w}\bar{\mathcal{S}}({w}, f)\leq\bar{\mathcal{S}}(\bar{w}, f)&\leq e^{C_w}\bar{\mathcal{S}}({w}, f),
      \end{align*}
      which implies 
$$\left|\log\frac{\bar{\mathcal{S}}(\bar{w}, f)}{\bar{\mathcal{S}}({w}, f)}\right| \leq C_w.$$

\end{proof}
\begin{lem}[Sampling error] \label{l: Sampling Error} Assume  $N_k$ i.i.d samples from $\tilde{\pi}_k$ in each window with $N_k \geq N$, where $N\ge\frac{4\mathcal{Q}_{\{\tilde{\pi}\}}(w,f)^2}{\delta}$. Then with probability at least $1 - \delta$: 
$$e^{-\frac{2}{\sqrt{N\delta}}\mathcal{Q}_{\{\tilde{\pi}\}}(w,f)}\tilde{\mathcal{S}}(w, f) \leq \bar{\mathcal{S}}(w, f) \leq e^{\frac{2}{\sqrt{N\delta}}\mathcal{Q}_{\{\tilde{\pi}\}}(w,f)}\tilde{\mathcal{S}}(w, f).$$
Equivalently, the log relative ratio bound is $\left| \log \frac{\bar{\mathcal{S}}(w, f)}{\tilde{\mathcal{S}}(w, f)}\right|  \leq \frac{2}{\sqrt{N\delta}}\mathcal{Q}_{\{\tilde{\pi}\}}(w,f)$.
\end{lem}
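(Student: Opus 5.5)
The plan is to use Chebyshev's inequality on the single scalar random variable $\bar{\mathcal S}(w,f)$, and then pass from an absolute deviation to a log-ratio bound. Since the samples are independent across windows and i.i.d. within each window, we have $\E[\bar{\mathcal S}(w,f)] = \tilde{\mathcal S}(w,f)$. The variance is
\[
\Var\big(\bar{\mathcal S}(w,f)\big) = \sum_k w_k^2 \frac{\Var_{\tilde\pi_k}(f^*)}{N_k} \le \frac{1}{N}\sum_k w_k^2 \Var_{\tilde\pi_k}(f^*) \le \frac1N\Big(\sum_k w_k \sqrt{\Var_{\tilde\pi_k}(f^*)}\Big)^2 .
\]
The last step holds because $w_k\ge 0$, so a sum of squares of nonnegative terms is at most the square of their sum. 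Writing $\mathcal Q = \mathcal Q_{\{\tilde\pi\}}(w,f)$, this gives $\mathrm{sd}(\bar{\mathcal S}(w,f)) \le \mathcal Q\,\tilde{\mathcal S}(w,f)/\sqrt N$.

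Chebyshev's inequality then gives, with probability at least $1-\delta$,
\[
\big|\bar{\mathcal S}(w,f) - \tilde{\mathcal S}(w,f)\big| \le \frac{\mathcal Q}{\sqrt{N\delta}}\,\tilde{\mathcal S}(w,f).
\]
Set $t := \mathcal Q/\sqrt{N\delta}$. The hypothesis $N \ge 4\mathcal Q^2/\delta$ is exactly $t\le 1/2$. On this event we therefore have $1-t \le \bar{\mathcal S}(w,f)/\tilde{\mathcal S}(w,f) \le 1+t$.

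It remains to convert these bounds to exponential form. For the upper side, $1+t\le e^t\le e^{2t}$ holds trivially. For the lower side, I use $\log(1-t)\ge -2t$ for $t\in[0,1/2]$. This follows from concavity of $\log(1-t)$ together with the endpoint check $\log(1/2)\approx -0.693 \ge -1$, and it gives $1-t \ge e^{-2t}$. Combining the two sides yields the claimed two-sided bound with exponent $2\mathcal Q/\sqrt{N\delta}$.

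The only delicate point is the lower bound: it needs $t$ bounded away from $1$ so that $\bar{\mathcal S}(w,f)$ stays positive, and this is precisely why the sample-size condition is imposed. There is also a degenerate case $\tilde{\mathcal S}(w,f)=0$, where $\mathcal Q$ is undefined. I will dismiss it by assuming $\tilde{\mathcal S}(w,f)>0$ implicitly, as the statement does.
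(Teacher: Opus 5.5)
Your proposal is correct and follows essentially the same route as the paper: Chebyshev applied to the weighted sum $\bar{\mathcal S}(w,f)$ gives a relative deviation of at most $\mathcal Q_{\{\tilde\pi\}}(w,f)/\sqrt{N\delta}\le 1/2$, and a logarithmic perturbation bound then converts this into the exponential form. The paper uses Lemma \ref{l: log_pert_bound}, $|\log(1+x)|\le 2|x|$ for $|x|\le 1/2$, which is exactly your pair of inequalities $1+t\le e^{2t}$ and $1-t\ge e^{-2t}$. The only cosmetic difference is that you compute the variance using independence across windows; the bound $\mathrm{sd}(\bar{\mathcal S}(w,f))\le \frac{1}{\sqrt N}\sum_k w_k\sqrt{\Var_{\tilde\pi_k}(f^*)}$ also follows from Minkowski's inequality without that assumption.
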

\begin{proof}
For any $f$, 
$$ \left| \log \frac{\bar{\mathcal{S}}(w, f)}{\tilde{\mathcal{S}}(w, f)}\right| =  \left| \log \frac{\sum_{k=1}^Mw_k\frac{1}{N_k}\sum_{i=1}^{N_k}f^*(x_k^{(i)})}{\sum_{k=1}^M w_k \mathbb{E}_{\tilde{\pi}_k}(f^*)}\right| = \left| \log 1+\frac{\sum_{k=1}^Mw_k\bigg(\frac{1}{N_k}\sum_{i=1}^{N_k}f^*(x_k^{(i)})-\mathbb{E}_{\tilde{\pi}_k}(f^*)\bigg)}{\sum_{k=1}^M w_k \mathbb{E}_{\tilde{\pi}_k}(f^*)}\right|.$$

Let $A =\frac{\sum_{k=1}^Mw_k\bigg(\frac{1}{N_k}\sum_{i=1}^{N_k}f^*(x_k^{(i)})-\mathbb{E}_{\tilde{\pi}_k}(f^*)\bigg)}{\sum_{k=1}^M w_k \mathbb{E}_{\tilde{\pi}_k}(f^*)}$ then, by Lemma \ref{l: log_pert_bound}, assuming $|A| \leq \frac{1}{2}$, 

$$\left|\log 1+\frac{\sum_{k=1}^Mw_k\bigg(\frac{1}{N_k}\sum_{i=1}^{N_k}f^*(x_k^{(i)})-\mathbb{E}_{\tilde{\pi}_k}(f^*)\bigg)}{\sum_{k=1}^M w_k \mathbb{E}_{\tilde{\pi}_k}(f^*)}\right| \leq 2 \left|\frac{\sum_{k=1}^Mw_k\bigg(\frac{1}{N_k}\sum_{i=1}^{N_k}f^*(x_k^{(i)})-\mathbb{E}_{\tilde{\pi}_k}(f^*)\bigg)}{\sum_{k=1}^M w_k \mathbb{E}_{\tilde{\pi}_k}(f^*)} \right| $$
Applying Chebyshev with $N_k \geq N$ for all $k$ yields, with probability $1-\delta$, 
$$2 \left|\frac{\sum_{k=1}^Mw_k\bigg(\frac{1}{N_k}\sum_{i=1}^{N_k}f^*(x_k^{(i)})-\mathbb{E}_{\tilde{\pi}_k}(f^*)\bigg)}{\sum_{k=1}^M w_k \mathbb{E}_{\tilde{\pi}_k}(f^*)} \right| \leq 2\left|\frac{\frac{1}{\sqrt{N\delta}}\sum_{k=1}^Mw_k\sqrt{\Var_{\tilde{\pi}_k}(f^*)}}{\sum_{k=1}^M w_k \mathbb{E}_{\tilde{\pi}_k}(f^*)}\right| =  \frac{2}{\sqrt{N\delta}}\mathcal{Q}_{\{\tilde{\pi}\}}(w,f).$$
Since $|A| \leq  \frac{1}{\sqrt{N\delta}}\mathcal{Q}_{\{\tilde{\pi}\}}(w,f)$ enforcing $ \frac{1}{\sqrt{N\delta}}\mathcal{Q}_{\{\tilde{\pi}\}}(w,f) \leq \frac{1}{2}$, or equivalently $N \geq \frac{4\mathcal{Q}_{\{\tilde{\pi}\}}(w,f)^2}{\delta}$, is sufficient for $A \leq \frac{1}{2}$.
Given 
$$\left| \log \frac{\bar{\mathcal{S}}(w, f)}{\tilde{\mathcal{S}}(w, f)}\right|  \leq \frac{2}{\sqrt{N\delta}}\mathcal{Q}_{\{\tilde{\pi}\}}(w,f)$$
implies 
$$-\frac{2}{\sqrt{N\delta}}\mathcal{Q}_{\{\tilde{\pi}\}}(w,f) \leq \log \frac{\bar{\mathcal{S}}(w, f)}{\tilde{\mathcal{S}}(w, f)} \leq \frac{2}{\sqrt{N\delta}}\mathcal{Q}_{\{\tilde{\pi}\}}(w,f).$$
Exponentiating both sides and multiplying by $\tilde{\mathcal{S}}(w, f)$ yields 
$$e^{-\frac{2}{\sqrt{N\delta}}\mathcal{Q}_{\{\tilde{\pi}\}}(w,f)}\tilde{\mathcal{S}}(w, f) \leq \bar{\mathcal{S}}(w, f) \leq e^{\frac{2}{\sqrt{N\delta}}\mathcal{Q}_{\{\tilde{\pi}\}}(w,f)}\tilde{\mathcal{S}}(w, f).$$
\end{proof}

\begin{lem}[Bias error]\label{l: bias error} Let $\tilde{\epsilon} >0$ bound the $\chi^2$-divergence between $\tilde{\pi}_k$ and $\pi_k$ as in Assumption \ref{a: good samples}, with $\tilde{\epsilon} \leq \frac{1}{4\mathcal \mathcal{Q}_{\{{\pi}\}}(w,f)^2}$. Then 
$$e^{-2\sqrt{\tilde{\epsilon}}\;\mathcal{Q}_{\{{\pi}\}}(w,f)}{\mathcal{S}}(w, f)\leq {\tilde{\mathcal{S}}(w, f)}\leq e^{2 \sqrt{\tilde{\epsilon}}\;\mathcal{Q}_{\{{\pi}\}}(w,f)}{\mathcal{S}}(w, f).$$
Equivalently, the log relative ratio bound is $\left| \log \frac{\tilde{\mathcal{S}}(w, f)}{{\mathcal{S}}(w, f)}\right| \leq 2\sqrt{\tilde{\epsilon}}\; \mathcal{Q}_{\{{\pi}\}}(w,f).$
\end{lem}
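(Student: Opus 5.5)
We bound the difference $\tilde{\mathcal S}(w,f)-\mathcal S(w,f)=\sum_k w_k\big(\E_{\tilde\pi_k}[f^*]-\E_{\pi_k}[f^*]\big)$ window by window using the $\chi^2$ bound of Assumption \ref{a: good samples}, and then pass to the logarithmic scale with Lemma \ref{l: log_pert_bound}, mirroring the proof of Lemma \ref{l: Sampling Error}.

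\textbf{Step 1 (per-window change of measure).} Since $\tilde\pi_k\ll\pi_k$, write $\E_{\tilde\pi_k}[f^*]-\E_{\pi_k}[f^*]=\E_{\pi_k}\big[(f^*-\E_{\pi_k}f^*)(\tfrac{d\tilde\pi_k}{d\pi_k}-1)\big]$. Here we used that $\tfrac{d\tilde\pi_k}{d\pi_k}-1$ has $\pi_k$-mean zero, which lets us center $f^*$. Cauchy--Schwarz then gives
\[
\big|\E_{\tilde\pi_k}[f^*]-\E_{\pi_k}[f^*]\big|\le \sqrt{\Var_{\pi_k}(f^*)}\sqrt{\chi^2(\tilde\pi_k\|\pi_k)}\le \sqrt{\tilde\epsilon}\,\sqrt{\Var_{\pi_k}(f^*)}.
\]

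\textbf{Step 2 (aggregate).} Multiply by $w_k\ge0$, sum over $k$, and divide by $\mathcal S(w,f)=\sum_k w_k\E_{\pi_k}[f^*]>0$. Writing $\tilde{\mathcal S}(w,f)/\mathcal S(w,f)=1+A$, we obtain
\[
|A|\le\sqrt{\tilde\epsilon}\,\frac{\sum_k w_k\sqrt{\Var_{\pi_k}(f^*)}}{\sum_k w_k\E_{\pi_k}[f^*]}=\sqrt{\tilde\epsilon}\,\mathcal Q_{\{\pi\}}(w,f).
\]

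\textbf{Step 3 (log scale).} The hypothesis $\tilde\epsilon\le 1/(4\mathcal Q_{\{\pi\}}(w,f)^2)$ is exactly what ensures $|A|\le 1/2$. Lemma \ref{l: log_pert_bound} then gives $|\log(1+A)|\le 2|A|\le 2\sqrt{\tilde\epsilon}\,\mathcal Q_{\{\pi\}}(w,f)$. Exponentiating and multiplying by $\mathcal S(w,f)$ yields the two-sided bound in the statement.

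\textbf{Main point of care.} The only delicate step is Step 1. Centering $f^*$ is what produces the standard deviation $\sqrt{\Var_{\pi_k}(f^*)}$ rather than a second moment, and that is what allows the bound to be expressed through $\mathcal Q_{\{\pi\}}$. We must also have $\Var_{\pi_k}(f^*)<\infty$, which is implicit in the finiteness of $\mathcal Q$. No probabilistic statement is needed, since this bound is deterministic.
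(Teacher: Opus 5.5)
Your proposal is correct and matches the paper's proof essentially step for step: a per-window Cauchy--Schwarz bound against the density ratio using $\chi^2(\tilde\pi_k\|\pi_k)\le\tilde\epsilon$, aggregation to $|A|\le\sqrt{\tilde\epsilon}\,\mathcal{Q}_{\{\pi\}}(w,f)$, and then Lemma~\ref{l: log_pert_bound} once the hypothesis on $\tilde\epsilon$ guarantees $|A|\le 1/2$. Your explicit centering of $f^*$, using that $\frac{d\tilde\pi_k}{d\pi_k}-1$ has $\pi_k$-mean zero, is exactly the justification the paper leaves implicit when it passes from $\int(\frac{d\tilde\pi_k}{d\pi_k}-1)f^*\,d\pi_k$ to $\sqrt{\Var_{\pi_k}(f^*)}\sqrt{\chi^2(\tilde\pi_k\|\pi_k)}$.
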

\begin{proof}
    For any f, 
    $$\left| \log \frac{\tilde{\mathcal{S}}(w, f)}{{\mathcal{S}}(w, f)}\right| = \left|\log 1+ \frac{\sum_{k=1}^M w_k \bigg(\mathbb{E}_{\tilde{\pi}_k}(f^*)-\mathbb{E}_{{\pi_k}}(f^*)\bigg)}{\sum_{k=1}^M w_k \mathbb{E}_{{\pi}_k}(f^*)} \right| = \left|\log 1+ \frac{\sum_{k=1}^M w_k\int_\Omega\big(\frac{d\tilde{\pi}_k}{d\pi_k}-1\big)f^*\pi_k(dx)}{\sum_{k=1}^M w_k \mathbb{E}_{{\pi_k}}(f^*)} \right|.$$
    Let $A=\frac{\sum_{k=1}^M w_k\int_\Omega\big(\frac{d\tilde{\pi}_k}{d\pi_k}-1\big)f^*\pi_k(dx)}{\sum_{k=1}^M w_k \mathbb{E}_{{\pi_k}}(f^*)}$ then, by Lemma \ref{l: log_pert_bound}, assuming $A \leq \frac{1}{2}$
    $$\left|\log 1+ \frac{\sum_{k=1}^M w_k\int_\Omega\big(\frac{d\tilde{\pi}_k}{d\pi_k}-1\big)f^*\pi_k(dx)}{\sum_{k=1}^M w_k \mathbb{E}_{{\pi_k}}(f^*)} \right| \leq 2 \left|\frac{\sum_{k=1}^M w_k \bigg(\mathbb{E}_{\tilde{\pi}_k}(f^*)-\mathbb{E}_{{\pi_k}}(f^*)\bigg)}{\sum_{k=1}^M w_k \mathbb{E}_{{\pi}_k}(f^*)}\right|.$$
    Applying Cauchy-schwarz yields, 
    $$2\left|\frac{\sum_{k=1}^M w_k \bigg(\mathbb{E}_{\tilde{\pi}_k}(f^*)-\mathbb{E}_{{\pi_k}}(f^*)\bigg)}{\sum_{k=1}^M w_k \mathbb{E}_{{\pi}_k}(f^*)}\right|\leq 2\frac{\sum_{k=1}^M w_k\sqrt{\Var_{\pi_k}(f^*)}\sqrt{\chi^2(\tilde{\pi}_k \; || \; \pi_k)}}{\sum_{k=1}^M w_k \mathbb{E}_{{\pi_k}}(f^*)} \leq 2 \sqrt{\tilde{\epsilon}}\;\mathcal{Q}_{\{{\pi}\}}(w,f).$$
    Since $|A| \leq   \sqrt{\tilde{\epsilon}}\;\mathcal{Q}_{\{{\pi}\}}(w,f)$ enforcing $ \sqrt{\tilde{\epsilon}}\;\mathcal{Q}_{\{{\pi}\}}(w,f) \leq \frac{1}{2}$, or equivalently $\tilde{\epsilon} \leq \frac{1}{4\mathcal Q (w,f)^2}$, is sufficient for $A \leq \frac{1}{2}$. Given 
    $$\left| \log \frac{\tilde{\mathcal{S}}(w, f)}{{\mathcal{S}}(w, f)}\right|\leq 2 \sqrt{\tilde{\epsilon}}\;\mathcal{Q}_{\{{\pi}\}}(w,f)$$
    implies 
$$-2\sqrt{\tilde{\epsilon}}\;\mathcal{Q}_{\{{\pi}\}}(w,f)\leq \log\frac{\tilde{\mathcal{S}}(w, f)}{{\mathcal{S}}(w, f)}\leq 2 \sqrt{\tilde{\epsilon}}\;\mathcal{Q}_{\{{\pi}\}}(w,f)$$
Exponentiating both sides and multiplying by ${\mathcal{S}}(w, f)$ yields
$$e^{-2\sqrt{\tilde{\epsilon}}\;\mathcal{Q}_{\{{\pi}\}}(w,f)}{\mathcal{S}}(w, f)\leq {\tilde{\mathcal{S}}(w, f)}\leq e^{2 \sqrt{\tilde{\epsilon}}\;\mathcal{Q}_{\{{\pi}\}}(w,f)}{\mathcal{S}}(w, f).$$

\end{proof}

\begin{lem}[Truncation error]\label{l: truncation error}
     Let Assumption \ref{a: good truncation} hold with influence set $K_U \subseteq \Omega$ with $\delta_\pi, \delta_g > 0$ then 
    $$\mathcal{E}_{trunc} = \left|\log \frac{S(w,g)}{S(w,1)} - \log \pi[g] \right| \leq \max\left\{\log\frac{1}{1-\delta_\pi}, \log\frac{1}{1-\delta_g}\right\}.$$
\end{lem}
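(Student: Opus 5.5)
The plan is to first identify $\mathcal S(w,f)$ with an integral over $K_U$. This rests on the key EMUS identity: the eigenvector $w$ of the exact overlap matrix $F$ (and of its sparse version $F^0$, which has the same stationary vector) is proportional to the window normalizing constants. That is, $w_k = Z_k/c$ with $Z_k=\int_{K_U}\psi_k\,d\pi$ and $c=\sum_j Z_j$. Then, for $f\in\{g,1\}$,
\[
\mathcal S(w,f)=\sum_k \frac{Z_k}{c}\int_{K_U}\frac{f\,\psi_k}{Z_k\sum_j\psi_j}\,d\pi=\frac1c\int_{K_U} f\,d\pi .
\]
The unknown constant $c$ cancels in the ratio, so
\[
\frac{S(w,g)}{S(w,1)}=\frac{\int_{K_U}g\,d\pi}{\pi(K_U)}.
\]
I will verify the stationarity claim directly: $\sum_i Z_iF_{ij}=\int_{K_U}\psi_j^*\sum_i\psi_i\,d\pi=Z_j$. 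The sparsification only moves mass from off-diagonal entries of each row onto its diagonal, so this identity is preserved for $F^0$ as well, since $(w^\sT F^0)_j=(w^\sT F)_j$ when $F$ is symmetric in the sense $Z_iF_{ij}=Z_jF_{ji}$. The symmetry holds because both sides equal $\int_{K_U}\psi_i\psi_j/\sum\psi\,d\pi$.

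Next I write the log ratio as the difference of two terms:
\[
\log\frac{S(w,g)}{S(w,1)}-\log\pi[g]=\log\frac{\int_{K_U}g\,d\pi}{\pi[g]}-\log\pi(K_U).
\]
Since $g\ge0$, Assumption \ref{a: good truncation} gives $1-\delta_g\le \int_{K_U}g\,d\pi/\pi[g]\le1$. It also gives $1-\delta_\pi\le\pi(K_U)\le 1$. Hence the first term lies in $[-\log\frac1{1-\delta_g},0]$ and the second term, $-\log\pi(K_U)$, lies in $[0,\log\frac1{1-\delta_\pi}]$.

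The difference of a nonpositive quantity of size at most $a$ and a nonnegative quantity of size at most $b$ (here the sum of a term in $[-a,0]$ and a term in $[0,b]$) has absolute value at most $\max\{a,b\}$. This yields the stated bound with $a=\log\frac1{1-\delta_g}$ and $b=\log\frac1{1-\delta_\pi}$.

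The only real obstacle is justifying $w\propto Z$ for the sparse matrix. This needs the stationary vector to be unique, which follows from irreducibility: $G$ is connected and the retained entries are positive. Once that is checked, everything else is elementary.
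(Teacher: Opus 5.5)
Your proposal is correct and follows the paper's proof. Like the paper, you substitute $w\propto Z$ to get $\mathcal S(w,g)/\mathcal S(w,1)=\int_{K_U}g\,d\pi/\pi(K_U)$, then use Assumption \ref{a: good truncation} to bound $\int_{K_U}g\,d\pi/(\pi(K_U)\pi[g])$ between $1-\delta_g$ and $1/(1-\delta_\pi)$. Your extra check that $Z$ is stationary for both $F$ and $F^0$ (via $Z_iF_{ij}=Z_jF_{ji}$ and the symmetry of $G$) correctly justifies what the paper simply asserts at the start of its proof and covers separately in Proposition \ref{p: sparse matrix}.
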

\begin{proof}
    Given the true weights $w_k = Z_k$ and that $\pi_k \propto \psi_k\cdot\pi(x)|_{K_U}$ we have 
    \begin{align*}
        \left|\log \frac{S(w,g)}{S(w,1)} - \log \pi[g] \right| &= \left|\log \frac{\sum_k Z_k \int_{K_U} \frac{g(x)}{\sum_j\psi_j(x)} \frac{\psi_k(x)\pi(x)}{Z_k}dx}{\sum_k Z_k \int_{K_U} \frac{1}{\sum_j\psi_j(x)} \frac{\psi_k(x)\pi(x)}{Z_k}dx} -\log\pi[g] \right|\\
        &= \left|\log \frac{ \int_{K_U} g(x)\frac{\sum_k\psi_k(x)}{\sum_k\psi_k(x)} \pi(dx)}{\int_{K_U} \frac{\sum_k \psi_k(x)}{\sum_k\psi_k(x)}\pi(dx)} -\log\pi[g] \right|\\
        &= \left|\log \frac{ \int_{K_U} g(x) \pi(dx)}{\int_{K_U}\pi(dx)} -\log\pi[g] \right|
    = \left|\log \frac{ \int_{K_U} g(x) \pi(dx)}{\pi(K_U)\pi[g]}\right|.
    \end{align*}
    Paying careful attention to the sign of $\log$ we will apply Assumptions \ref{a: good truncation}
    to upper bound the quotient and lower bound the quotient:  
 $\frac{ \int_{K_U} g(x) \pi(dx)}{\pi(K_U)\pi[g]} \leq \frac{1}{1-\delta_\pi}$ and $\frac{ \int_{K_U} g(x) \pi(dx)}{\pi(K_U)\pi[g]}\geq 1-\delta_g$. this yields
$$\left|\log \frac{ \int_{K_U} g(x) \pi(dx)}{\pi(K_U)\pi[g]}\right|\leq \max\left\{\log\frac{1}{1-\delta_\pi}, \log\frac{1}{1-\delta_g}\right\},$$
where we note that $-\log(1-x) = \log\frac{1}{1-x}$.
\end{proof}
\subsection{Eigenvector Error} \label{ss: eigenvector error}
In this subsection, we first show that the sparse matrix $F^0$ can be used as a substitute for the full matrix $F$, as both matrices have the same eigenvector. 
We then show that the empirically estimated sparse matrix is a good relative approximate of the true sparse matrix. 
The relative error of the empirical sparse matrix is later used to bound the relative error of the eigenvector.

\begin{proposition}[Sparse matrix] \label{p: sparse matrix}
    Let $F$ be an $L\times L$ irreducible and reversible stochastic matrix. Let $G$ be a set of index pairs $(i,j)$ representing a good path structure, such that: 
\begin{enumerate}
    \item $(i,i) \in G$ for all $i =1, \dots, L$.
    \item if $(i,j) \in G$ then $(j,i) \in G$.
    \item if $(i,j) \in G$ then $F_{ij} \geq c$ and $F_{ji} \geq c$ for some constant $c >0.$ 
    \item The graph defined by $G$ is connected (satisfying Assumption \ref{a: good path}).
\end{enumerate}
We define a new sparse matrix $F^0$ as follows: 
\begin{equation}\label{e: F^0}
\begin{array}{cc}
  F_{ij}^0 = \bigg\{ & 
    \begin{array}{cc}
      F_{ij} & \text{ if } (i,j) \in G \text{ and } i \neq j\\
      F_{ii} + \sum_{k \notin G, k \neq i} F_{ik} & \text{ if } i = j\\
      0 & \text{ if } (i,j) \notin G
    \end{array}.
\end{array}
\end{equation}
Then, the matrix $F^0$ is also an irreducible, reversible and stochastic matrix and it has the same unique stationary distribution as $F$. 
\end{proposition}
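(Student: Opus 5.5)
The plan is to verify the four properties in turn, using reversibility of $F$ with respect to its stationary vector $w$ as the main tool. Throughout, let $w$ be the unique stationary distribution of $F$; it exists and is strictly positive because $F$ is irreducible. Reversibility means the detailed balance relation $w_iF_{ij}=w_jF_{ji}$ holds for all $i,j$.

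\emph{Stochasticity.} The entries of $F^0$ are nonnegative, since each is either an entry of $F$, a sum of entries of $F$, or zero. For the row sums, fix $i$ and compute
\[
\sum_j F^0_{ij}=F_{ii}+\sum_{k\notin G,k\neq i}F_{ik}+\sum_{(i,j)\in G,j\neq i}F_{ij}=\sum_j F_{ij}=1 .
\]
This holds because every off-diagonal index $j$ falls into exactly one of the two sums.

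\emph{Irreducibility.} Take $(i,j)\in G$ with $i\neq j$. Then $F^0_{ij}=F_{ij}\ge c>0$ by condition 3. Hence the directed graph of $F^0$ contains every edge of $G$. Since $G$ is symmetric (condition 2) and connected (condition 4), any two states communicate, so $F^0$ is irreducible.

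\emph{Reversibility with respect to $w$, and stationarity.} We check detailed balance for $F^0$ with the same $w$. For $i\neq j$ with $(i,j)\in G$, symmetry of $G$ gives $(j,i)\in G$, so $w_iF^0_{ij}=w_iF_{ij}=w_jF_{ji}=w_jF^0_{ji}$. For $i\neq j$ with $(i,j)\notin G$, symmetry gives $(j,i)\notin G$, so both sides vanish. The diagonal case $i=j$ is trivial. Summing detailed balance over $i$ then gives
\[
\sum_i w_iF^0_{ij}=w_j\sum_i F^0_{ji}=w_j ,
\]
so $w^{\mathsf T}F^0=w^{\mathsf T}$. Because $F^0$ is irreducible and stochastic, Perron--Frobenius gives uniqueness of its stationary distribution, and that distribution is therefore $w$.

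The only delicate point is that symmetry of $G$ is exactly what makes the pruning preserve detailed balance. The argument would fail for a nonsymmetric pruning pattern, or without the reversibility of $F$ that the statement assumes. Everything else is routine bookkeeping.
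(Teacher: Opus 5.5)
Your proof is correct and follows the same route as the paper: a row-sum check for stochasticity, connectivity of $G$ for irreducibility, and symmetry of $G$ to transfer detailed balance from $F$ to $F^0$, with uniqueness from irreducibility. Your version is slightly more explicit than the paper's, since you invoke condition 3 to ensure the edges of $G$ carry positive entries of $F^0$ and you sum detailed balance to derive $w^{\mathsf T}F^0=w^{\mathsf T}$.
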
  
\begin{proof}
    $F^0$ is stochastic since the row sum satisfies 
    $$\sum_{j=1}^L F^0_{ij} = F_{ii} + \sum_{k \notin G, k \neq i} F_{ik} + \sum_{(i,j) \in G} F_{ij} = \sum_{j=1}^L F_{ij} =1.$$
$F^0$ is irreducible since the graph $G$ is connected. Lastly, $F$ is reversible therefore for stationary distribution $\pi$ $F$ satisfies detailed balance i.e. $\pi_i F_{ij} = \pi_jF_{ji}$. By the symmetry assumption on $(i,j) \in G$ (2), the detailed balance equations hold for $\pi_i F_{ij}^0 = \pi_jF_{ji}^0$, for stationary distribution $\pi$. Therefore, $F^0$ is reversible with stationary distribution $\pi$ and since $F^0$ is irreducible $\pi$ is unique. 
\end{proof}

\begin{lem}\label{l: Fij approx} Let Assumptions \ref{a: good samples} and \ref{a: good path} hold. Define $\bar{F}_{ij} = \frac{1}{N_i} \sum_{t=1}^{N_i}\psi^*_j(X^i_t)$ with i.i.d samples $\{X^i_t\}_{t=1}^{N_i} \sim \tilde{\pi}_i$, $\tilde{F}_{ij} = \mathbb{E}_{\tilde{\pi}_i} \psi^*_j(x)$ and $F_{ij} = \mathbb{E}_{\pi_i} \psi^*_j(x)$. Given $\epsilon > 0$, for any fixed $(i,j) \in G$, if $N_i \geq \frac{32L^2}{\epsilon^2c_p\delta}$ and $\chi^2(\tilde{\pi}_i\;||\;\pi_i)<\tilde{\epsilon}\leq\frac{c_p\epsilon^2}{16L^2}$, then with probability $1-\delta$
$$|\frac{\bar{F}_{ij}}{F_{ij}}-1| \leq \frac{\epsilon}{2L}.$$

\end{lem}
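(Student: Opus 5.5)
We split the relative error of $\bar F_{ij}$ into a bias part and a sampling part:
\[
\frac{\bar F_{ij}}{F_{ij}}-1=\frac{\tilde F_{ij}-F_{ij}}{F_{ij}}+\frac{\bar F_{ij}-\tilde F_{ij}}{F_{ij}}.
\]
Each term will be bounded by $\frac{\epsilon}{4L}$. The basic tool is the pointwise bound $0\le \psi_j^*\le 1$, which holds because $\psi_j^*=\psi_j/\sum_k\psi_k$. It gives
\[
\Var_{\mu}(\psi_j^*)\le \E_\mu[(\psi_j^*)^2]\le \E_\mu[\psi_j^*]
\]
for any probability measure $\mu$. We also use $F_{ij}\ge c_p$, which holds since $(i,j)\in G$ and $G$ consists of good-path edges (Assumption \ref{a: good path}, as in Proposition \ref{p: sparse matrix}).

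\textbf{Bias term.} By Cauchy--Schwarz, exactly as in Lemma \ref{l: bias error},
\[
|\tilde F_{ij}-F_{ij}|=\Big|\int\Big(\tfrac{d\tilde\pi_i}{d\pi_i}-1\Big)\psi_j^*\,d\pi_i\Big|\le \sqrt{\chi^2(\tilde\pi_i\|\pi_i)}\sqrt{\Var_{\pi_i}(\psi_j^*)}\le \sqrt{\tilde\epsilon\,F_{ij}}.
\]
Dividing by $F_{ij}$ gives a relative bias of at most $\sqrt{\tilde\epsilon/F_{ij}}\le\sqrt{\tilde\epsilon/c_p}$. The hypothesis $\tilde\epsilon\le c_p\epsilon^2/(16L^2)$ then makes this at most $\frac{\epsilon}{4L}$. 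As a consequence, $\tilde F_{ij}\le F_{ij}(1+\frac{\epsilon}{4L})$. Assuming $\epsilon\le 4L$ (otherwise the claim needs minor adjustment), this gives $\tilde F_{ij}\le 2F_{ij}$.

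\textbf{Sampling term.} The samples are i.i.d., so Chebyshev's inequality gives, with probability at least $1-\delta$,
\[
|\bar F_{ij}-\tilde F_{ij}|\le \sqrt{\frac{\Var_{\tilde\pi_i}(\psi_j^*)}{N_i\delta}}\le\sqrt{\frac{\tilde F_{ij}}{N_i\delta}}\le \sqrt{\frac{2F_{ij}}{N_i\delta}}.
\]
The relative sampling error is therefore at most $\sqrt{2/(N_i\delta F_{ij})}\le\sqrt{2/(N_i\delta c_p)}$. The hypothesis $N_i\ge 32L^2/(\epsilon^2c_p\delta)$ makes this at most $\sqrt{2\epsilon^2/(32L^2)}=\frac{\epsilon}{4L}$.

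Adding the two bounds yields $|\bar F_{ij}/F_{ij}-1|\le \frac{\epsilon}{2L}$ with probability $1-\delta$. The only step needing care is controlling the variance under $\tilde\pi_i$ rather than $\pi_i$. This is why we pass through $\tilde F_{ij}\le 2F_{ij}$, which follows from the bias bound. The constants $16$ and $32$ in the hypotheses are exactly the ones this argument produces.
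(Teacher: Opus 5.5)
Your proof is correct and follows the paper's argument essentially step for step. Both use the same triangle-inequality split into a Cauchy--Schwarz bias term, bounded via $\Var_{\pi_i}(\psi_j^*)\le F_{ij}$, and a Chebyshev sampling term, bounded via $\Var_{\tilde\pi_i}(\psi_j^*)\le \tilde F_{ij}\le F_{ij}+\sqrt{\tilde\epsilon F_{ij}}$, and then both divide through using $F_{ij}\ge c_p$. Your explicit caveat $\epsilon\le 4L$ is also tacitly needed in the paper's last step, which absorbs the constants $16$ and $32$ by using $1+\sqrt{\tilde\epsilon/c_p}\le 2$.
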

\begin{proof}
    We decompose the error into the error from statistical estimate and the error from the biased distribution
    $$\left|\bar{F}_{ij} - F_{ij}\right| \leq \left|\bar{F}_{ij}-\tilde{F}_{ij}\right|+\left|\tilde{F}_{ij} -F_{ij} \right|.$$
    Then consider the error from the biased distribution
    $$\left|\tilde{F}_{ij} -F_{ij} \right| = \int_\Omega \psi_j^*\big(\frac{d\tilde{\pi}_i}{d\pi_i}-1\big)d\pi_i\leq\sqrt{\Var_{\pi_i(x)}(\psi_j^*)}\cdot\sqrt{\chi^2(\tilde{\pi}_i \; || \; \pi_i)},$$
    by Cauchy-Schwarz. Recalling that $\psi_j^*(x) \in [0,1]$ for all $x \in \Omega$ and $F_{ij} = \mathbb{E}_{\pi_i}\psi_j^*$ yields
    $$\sqrt{\Var_{\pi_i(x)}(\psi_j^*)}\cdot\sqrt{\chi^2(\tilde{\pi}_i \; || \; \pi_i)} \leq \sqrt{F_{ij}}\cdot \sqrt{\chi^2(\tilde{\pi}_i \; || \; \pi_i)}\leq  \sqrt{F_{ij}}\cdot \sqrt{\tilde{\epsilon}},$$
    by Assumption \ref{a: good path}. Therefore we have
    \begin{equation}\label{e: Fij approx}
        \left|\tilde{F}_{ij} -F_{ij} \right| \leq \sqrt{F_{ij}}\cdot \sqrt{\tilde{\epsilon}}.
    \end{equation}
The statistical error can be bounded by Chebyshev's inequality with pr. $1-\delta$, 
    $$\left|\bar{F}_{ij}-\tilde{F}_{ij}\right| = \left|\frac{1}{N_i}\sum_{t=1}^{N_i}\psi_j^*(x_t) - \mathbb{E}_{\tilde{\pi}_i} \psi_j^*\right| \leq \frac{\sqrt{\var_{\tilde{\pi}_i}(\psi_j^*)}}{\sqrt{N_i\delta}} \leq \frac{\sqrt{\tilde{F}_{ij}}}{\sqrt{N_i\delta}}.$$
Using the bound in (\ref{e: Fij approx}) yields, 
$$\left|\bar{F}_{ij}-\tilde{F}_{ij}\right| \leq \frac{\sqrt{F_{ij}+\sqrt{\tilde{\epsilon}F_{ij}}}}{\sqrt{N_i\delta}}.$$
Combining both bounds gives us
$$\left|\bar{F}_{ij} -F_{ij} \right| \leq \sqrt{F_{ij}}\cdot \sqrt{\tilde{\epsilon}}+\frac{\sqrt{F_{ij}+\sqrt{\tilde{\epsilon}F_{ij}}}}{\sqrt{N_i\delta}}.$$
Dividing through by $F_{ij}$ and recalling Assumption \ref{a: good path}  says $F_{ij} \geq c_p$ yields, 
$$\left|\frac{\bar{F}_{ij}}{F_{ij}}-1\right| \leq \sqrt{\frac{\tilde{\epsilon}}{F_{ij}}} + \sqrt{\frac{\frac{1}{F_{ij}}+\sqrt{\frac{\tilde{\epsilon}}{F_{ij}^3}}}{N_i\delta}}\leq \sqrt{\frac{\tilde{\epsilon}}{c_p}} + \sqrt{\frac{\frac{1}{c_p}\big(1+\sqrt{\frac{\tilde{\epsilon}}{c_p}}\big)}{N_i\delta}}.$$
Applying $\tilde{\epsilon} \le \frac{c_p\epsilon^2}{16L^2}$ and $N_i \geq \frac{32L^2}{\epsilon^2c_p\delta}$ yields the desired result.
\end{proof}

\begin{lem}\label{l: O'cinneide bound} \textbf{(O'Cinneide bound)} Let $A$, $\tilde{A}$ be $L \times L$ irreducible stochastic matrices with stationary distributions $\pi, \tilde{\pi}$ satisfying $A_{ij} = 0 \iff \tilde{A}_{ij} = 0$ then 

$$\max_k \max\{\frac{\tilde{\pi}_k}{\pi_k}, \frac{\pi_k}{\tilde{\pi}_k}\} \leq \bigg(\max_{i\neq j}\max\{\frac{\tilde{A}_{ij}}{A_{ij}}, \frac{A_{ij}}{\tilde{A}_{ij}}\} \bigg)^{2L}.$$
\end{lem}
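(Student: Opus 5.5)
We prove the O'Cinneide bound through the Markov chain tree theorem, which writes each stationary distribution as a ratio of sums of positive spanning-tree weights. Let $\Gamma$ be the directed graph on $[L]$ with an edge $i\to j$ ($i\neq j$) whenever $A_{ij}>0$; by hypothesis this is also the edge set of $\tilde A$. For each vertex $k$, let $\mathcal T_k$ be the set of spanning arborescences of $\Gamma$ rooted at $k$, with all edges directed toward $k$. The tree theorem states that, for an irreducible stochastic matrix,
\[
\pi_k=\frac{\sum_{T\in\mathcal T_k}\prod_{(i\to j)\in T}A_{ij}}{\sum_{m=1}^L\sum_{T\in\mathcal T_m}\prod_{(i\to j)\in T}A_{ij}},
\]
and the same formula with the same trees holds for $\tilde\pi_k$ with $\tilde A$ in place of $A$. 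Irreducibility guarantees that each $\mathcal T_k$ is nonempty, so every sum here is strictly positive. Diagonal entries never appear in these products, which is why the bound only involves the ratios with $i\neq j$.

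Set $r=\max_{i\neq j,\,A_{ij}>0}\max\{\tilde A_{ij}/A_{ij},A_{ij}/\tilde A_{ij}\}\ge 1$. Every spanning arborescence has exactly $L-1$ edges, so for each tree $T$
\[
r^{-(L-1)}\prod_{T}A_{ij}\;\le\;\prod_{T}\tilde A_{ij}\;\le\; r^{L-1}\prod_{T}A_{ij}.
\]
Since all terms are positive, summing over trees preserves these inequalities. Hence each numerator $\sum_{T\in\mathcal T_k}\prod\tilde A$ lies within a factor $r^{L-1}$ of its unperturbed counterpart, and so does the common denominator. Taking the ratio gives
\[
\tilde\pi_k/\pi_k\in\bigl[r^{-2(L-1)},\,r^{2(L-1)}\bigr]\quad\text{for every }k,
\]
which is at most $r^{2L}$ in either direction because $r\ge1$. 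This is exactly the claimed bound, and in fact slightly stronger.

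The one real ingredient is the Markov chain tree theorem, which I would cite rather than reprove. If a self-contained argument were wanted, one could show directly that the vector of tree sums $\sigma_k=\sum_{T\in\mathcal T_k}\prod_T A_{ij}$ satisfies $\sum_i\sigma_iA_{ij}=\sigma_j$. This follows from a bijection between pairs (arborescence rooted at $i$, edge $i\to j$) and pairs (arborescence rooted at $j$, edge $j\to i'$), obtained by adding and removing edges to create and break a cycle. Equivalently, $\sigma_k$ equals the $k$-th diagonal cofactor of $I-A$ by the matrix-tree theorem, and the cofactor vector is a left null vector. Everything after that is the elementary counting of $L-1$ edges per tree.
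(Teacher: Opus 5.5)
The paper states this lemma without proof, quoting it by name as O'Cinneide's entrywise perturbation bound, so there is no in-paper argument to compare against. Your Markov chain tree theorem argument is correct and is the standard way this bound is established: the hypothesis $A_{ij}=0\iff\tilde A_{ij}=0$ supplies the common set of trees, and the numerator and the denominator each contribute one factor of $r^{L-1}$.

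It actually yields the sharper exponent $2(L-1)$. Your restriction of the maximum to pairs with $A_{ij}>0$ is the right reading of the statement's $\max_{i\neq j}$, which would otherwise involve $0/0$.
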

\begin{rem}
There is an alternative estimator which takes the product of telescoping ratios between the relative umbrella weights, satisfying Assumption \ref{a: good path}. 
This estimator achieves guarantees with the maximal path length $\ell$  in place of $L$, the total number of windows. 
It remains an open question whether Lemma \ref{l: O'cinneide bound} can be replaced with a bounds dependent only on $\ell$. 
\end{rem}
\begin{lem}\label{l: weight error precise}
    Let $F$ be an irreducible, and reversible stochastic matrix with stationary distribution $w$. Let $\bar{F}^0$ be the empirical sparse matrix define by $G$, with $m = |G|$, and computed from $N>N_i$ (for all $i$) samples per window. Let the assumptions and  conditions of Lemma \ref{l: Fij approx}, $N_i \geq \frac{32mL^2}{\epsilon^2c_p\delta} $ and $\tilde{\epsilon}\leq\frac{\epsilon^2c_p}{16L^2}$, hold. Then, for any $\epsilon > 0$, with probability $1-\delta$:
    
    $$\max_k\max\{\frac{\bar{w}^0_k}{w_k},\frac{w_k}{\bar{w}^0_k}\}\leq e^{2\epsilon} \text{ for all } k \in [1,L].$$
\end{lem}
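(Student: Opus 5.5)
The plan has three steps: control every retained off-diagonal entry of $\bar F^0$ at once, pass this entrywise control to the stationary vector through Lemma~\ref{l: O'cinneide bound}, and identify the stationary vector of $F^0$ with $w$ using Proposition~\ref{p: sparse matrix}.

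\textbf{Step 1: reduce to the sparse pair $(F^0,\bar F^0)$.} By Proposition~\ref{p: sparse matrix}, $F^0$ is irreducible, reversible and stochastic, with the same stationary distribution $w$ as $F$. It therefore suffices to compare the stationary vector $\bar w^0$ of $\bar F^0$ with that of $F^0$. The empirical matrix $\bar F^0$ is stochastic by construction, since its row sums telescope exactly as in the proof of Proposition~\ref{p: sparse matrix}. We also need $\bar F^0$ irreducible and with the same zero pattern off the diagonal. Both follow once every empirical entry $\bar F_{ij}$ with $(i,j)\in G$, $i\neq j$, lies within a factor $1\pm\epsilon/(2L)\in(0,2)$ of $F_{ij}\ge c_p>0$. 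Such entries are then strictly positive, so the off-diagonal zero pattern matches $G$ exactly. Lemma~\ref{l: O'cinneide bound} only involves off-diagonal ratios $i\neq j$, so the diagonal entries need no control.

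\textbf{Step 2: union bound over $G$.} Apply Lemma~\ref{l: Fij approx} to each pair $(i,j)\in G$ with failure probability $\delta/m$ in place of $\delta$. Its sample requirement becomes $N_i\ge 32L^2 m/(\epsilon^2 c_p\delta)$, which is exactly the hypothesis. The $\chi^2$ condition $\tilde\epsilon\le \epsilon^2c_p/(16L^2)$ is unchanged. A union bound over the at most $m$ pairs then gives, with probability at least $1-\delta$,
\[
\left|\frac{\bar F^0_{ij}}{F^0_{ij}}-1\right|\le \frac{\epsilon}{2L}\quad\text{for all } (i,j)\in G,\ i\neq j .
\]

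\textbf{Step 3: convert to a ratio bound and apply Lemma~\ref{l: O'cinneide bound}.} Set $x=\epsilon/(2L)$. On the event of Step 2, each retained off-diagonal ratio is at most $1+x\le e^{x}$. Its reciprocal is at most $1/(1-x)$, and $1/(1-x)\le e^{2x}$ holds for $x\le 1/2$. Hence
\[
\max_{i\neq j}\max\left\{\frac{\bar F^0_{ij}}{F^0_{ij}},\frac{F^0_{ij}}{\bar F^0_{ij}}\right\}\le e^{2x}.
\]
Lemma~\ref{l: O'cinneide bound} then gives
\[
\max_k\max\left\{\frac{\bar w^0_k}{w_k},\frac{w_k}{\bar w^0_k}\right\}\le e^{2x\cdot 2L}=e^{2\epsilon}.
\]

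\textbf{Main obstacle.} The delicate point is the constant in the reciprocal direction. Using $1/(1-x)\le e^{2x}$ requires $\epsilon\le L$, which is harmless but should be stated. If the sharper $e^{\epsilon}$ were wanted, one would instead need the ratio bound $e^{x}$ in both directions, i.e.\ tighter per-entry tolerances. The statement's $e^{2\epsilon}$ matches the looser route, so I would use it.
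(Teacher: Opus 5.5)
Your proof is correct and follows the paper's argument: Lemma~\ref{l: Fij approx} with failure probability $\delta/m$ per entry of $G$ and a union bound, a bound on the reciprocal ratio, and then Lemma~\ref{l: O'cinneide bound} together with Proposition~\ref{p: sparse matrix}. Your explicit checks that $\bar F^0$ is stochastic, irreducible and has the same off-diagonal zero pattern, and your flagging of the implicit restriction $\epsilon\le L$ (which the paper's step $1/(1-\epsilon/(2L))\le 1+\epsilon/L$ also needs), fill in details the paper leaves unstated.
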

\begin{proof}
    The proof can be decomposed as follows: 
    \begin{enumerate}
        \item For each $(i,j) \in G$, by Lemma \ref{l: Fij approx}, Given $N_i \geq \frac{32mL^2}{\epsilon^2c_p\delta}$ samples, with probability $1-\frac{\delta}{m}$, the entries of the empirical sparse matrix satisfy the bound
        $$1 - \frac{\epsilon}{2L}\leq \frac{\bar{F}_{ij}^0}{F_{ij}^0}\leq 1 + \frac{\epsilon}{2L}.$$
        The reciprocal is bounded by 
        $$\frac{F^0_{ij}}{\bar{F}^0_{ij}}\leq \frac{1}{1-\frac{\epsilon}{L}}\leq 1 + \frac{2\epsilon}{2L}.$$
        Therefore the maximum of the two quotients is upper bounded by 
        $$\max_{(i,j) \in G}\max\{\frac{\bar{F}_{ij}^0}{F_{ij}^0},\frac{F_{ij}^0}{\bar{F}_{ij}^0}\}\leq 1 + \frac{\epsilon}{L}.$$
        Note that we can apply Lemma \ref{l: Fij approx} to the sparse matrices since the non-zero entries of the sparse matrix remain unchanged for $i\neq j$.
\item By Proposition \ref{p: sparse matrix}, the sparse matrix $F^0$ and $F$ share the same stationary distribution. Therefore $w(F^0) = w(F)=w$. Applying this to Lemma \ref{l: O'cinneide bound} we have, with probability $1 - m\cdot\frac{\delta}{m}$,
$$\max_k\max\{\frac{\bar{w}^0_k}{w_k},\frac{w_k}{\bar{w}^0_k}\} \leq \bigg(\max_{(i,j) \in G}\max\{\frac{\bar{F}_{ij}^0}{F_{ij}^0},\frac{F_{ij}^0}{\bar{F}_{ij}^0}\} \bigg)^{2L}.$$
\item Combining the bounds yields 
$$\max_k\max\{\frac{\bar{w}^0_k}{w_k},\frac{w_k}{\bar{w}^0_k}\} \leq \bigg(1 + \frac{\epsilon}{L}\bigg)^{2L}\leq e^{2\epsilon}.$$
    \end{enumerate}
\end{proof}
\subsection{Proof of Section Main Result} 
\begin{proof}
    Using that $\bar{\pi}[g] = \frac{\bar{\mathcal S}(\bar w, g)}{\bar{\mathcal{S}}(\bar w, 1)}$ we consider the following:
    \begin{align*}
        \left|\log\bar\pi[g]-\log\pi[g]\right| &= \left| \log\frac{\bar{\mathcal S}(\bar w, g)}{\bar{\mathcal{S}}(\bar w, 1)} - \log\frac{{\mathcal S}(w, g)}{{\mathcal{S}}( w, 1)} + \log\frac{{\mathcal S}(w, g)}{{\mathcal{S}}( w, 1)} - \log\pi[g]\right|\\
        &\leq \left|\log\frac{\bar{\mathcal S}(\bar w, g)}{\bar{\mathcal{S}}(\bar w, 1)} - \log\frac{{\mathcal S}(w, g)}{{\mathcal{S}}( w, 1)} \right| + \left|\log\frac{{\mathcal S}(w, g)}{{\mathcal{S}}( w, 1)} - \log\pi[g] \right|\\
        &=\left|\log\frac{\bar{\mathcal S}(\bar w, g)}{{\mathcal S}(w, g)} - \log\frac{\bar{\mathcal{S}}(\bar w, 1)}{{\mathcal{S}}( w, 1)}\right| + \left|\log\frac{{\mathcal S}(w, g)}{{\mathcal{S}}( w, 1)} - \log\pi[g] \right|
    \end{align*}
     %
    Then by assumption $e^{-\eta_f^-}\mathcal S(w,f)\leq\bar{\mathcal S}(\bar w,f)\leq e^{\eta_f^+}\mathcal S(w,f),$ which implies that $e^{-\eta_f^-}\leq\frac{\bar{\mathcal S}(\bar w,f)}{\mathcal S(w,f)}\leq e^{\eta_f^+}.$ Taking the $\log$ of both sides implies $-\eta_f^- \leq \log \frac{\bar{\mathcal S}(\bar w,f)}{\mathcal S(w,f)} \leq {\eta_f^+}.$
    Therefore 
    $$\left|\log\frac{\bar{\mathcal S}(\bar w, g)}{{\mathcal S}(w, g)} - \log\frac{\bar{\mathcal{S}}(\bar w, 1)}{{\mathcal{S}}( w, 1)}\right| \leq \max\left\{\left|-(\eta_g^- +\eta_1^+)\right|, \eta_g^+ + \eta_1^-\right\} = \max\left\{\eta_g^- +\eta_1^+, \eta_g^+ + \eta_1^-\right\}.$$
    Furthermore, applying Lemma \ref{l: truncation error} yields 
    $$\left|\log \frac{S(w,g)}{S(w,1)} - \log \pi[g] \right| \leq \max\left\{\log\frac{1}{1-\delta_\pi}, \log\frac{1}{1-\delta_g}\right\}.$$
    Combining the two bounds gives the final result. 
\end{proof}
Corollary \ref{c: main_theorem} follows by applying Lemma \ref{l: weight error precise} to show $\max_k\left\{
\frac{\bar w_k}{w_k},
\frac{w_k}{\bar w_k}
\right\}
\le e^{C_w}$, with probability $1 - \delta_w$ is satisfied for $C_w = 2\epsilon$. Then it is direct application of Lemma \ref{l: upper/lower} to Theorem \ref{t: general error decomposition} with $C_w = 2\epsilon$.

\section{Collective Variable Mapping \& Space} \label{s: collective variable mapping}
This section details the geometric framework for the collective variable mapping and the functional inequalities which quantify the mixing on the full space and the collective variable space (Section \ref{ss: log lipschitz constants}). 
Additionally, we quantity $S_{\min}, S_{\max}$ and $c_p$ (Section \ref{ss: gaussian net}).

To quantify the error bounds of the EMUS estimator we will rely on log-Sobolev inequalities to describe the concentration and mixing dynamics of the biased measure $\pi_k \propto \pi\cdot\psi_k$ on the full space $\mathbb{R}^D$.
We denote the LSI constant of the measure on $\mathbb{R}^D$ by $\CD$. 
In our setting, we only have control over this LSI constant by the choice of biasing functions $\psi_k$ in the collective variable space $\mathbb{R}^d$, and the choice of collective variable mapping $\theta: \mathbb{R}^D \rightarrow \mathbb{R}^d$. 

It is then natural to bound the LSI constant on the full space $\CD$ by the LSI constant $\Cd$ on the pushforward marginal $\mu_k$ in the collective variable space $\mathbb{R}^d$ and the LSI constant of the conditional measure $\pi_{k,z}$, along the fibers of the collective variable mapping denoted by $\Ctheta$.
The dependency of $\CD$ on $\Cd$ and $\Ctheta$ is made explicit in \cite{lelievre2009general} and we adapt their main result to our setting while keeping the notation general.

\subsection{Log-Sobolev Constant Bounds} \label{ss: log lipschitz constants}
We define the LSI constants for the decomposed components of a distribution $\pi$. Following \cite{lelievre2009general} we consider the marginal measure $\mu = \theta_\# \pi$ on $\mathbb{R}^d$ and $\pi_{z} = \pi(\cdot |\theta(x)=z)$ on $\Sigma_z$. 
Our goal is to bound the log-Sobolev constant of $\pi$ using the log-Sobolev constants for $\mu$ and $\pi_z$.
We will apply this to the distributions $\pi_k$.

We suppose:
\begin{enumerate}
    \item The marginal measure $\mu$ satisfies an LSI with constant $\Cd$: for all functions $f$ such that the RHS is finite,
    $$ \Ent_{\mu}(f^2) \le 2 C_{LS}^{(d)} \int_{\mathbb{R}^d} |\nabla_z f|^2 d\mu. $$
    \item The conditional measure $\pi_z$ satisfies an LSI with constant $\Ctheta$: for all functions $f$ such that the RHS is finite,
    $$ \Ent_{\pi_{z}}(f^2) \le 2 C_{LS}^{(\theta)} \int_{\Sigma_z} |P \nabla_x f|^2 d\pi_{z}, \quad \forall z \in \mathcal{A}. $$
\end{enumerate}
\begin{rem}\label{r: Bakry-Emery}\textup{(The Bobkov-Ledoux Bound) Because the potential $A(z)$ is $\ell$-smooth and the windows $\tilde{\psi}_k$ are Gaussians with inverse temperature $\beta$, the tilted marginal potential $A(z) + \frac{\beta}{2}\|z-y_k\|^2$ is strongly convex for $\beta > \ell$. Moreover, since $\mathcal{A}$ is convex, by Bobkov-Ledoux (Propositon 3.1 \cite{BobkovLedoux2000}), the LSI inequality holds and the LSI constant is bounded by 
$$C_{LS}^{(d)} \leq \frac{1}{\beta-\ell}.$$}
\end{rem}

 In this subsection, by adapting \cite{lelievre2009general} to our setting, we quantify $\CD$ in terms of $\Ctheta$ and $\Cd$. 
 Additionally, we prove a corollary of their result that applies to observable functions which are constant along the CV fibers. 
Their result is presented using the spectral gap notation $R = \frac{1}{\CD}$, $r = \frac{1}{\Cd}$ and $\rho = \frac{1}{\Ctheta}$.
For best readability we will present their result using the spectral gap notation and the corollary to our result in our notation; this will drastically simplify our later analysis.
\begin{theorem}(\cite[Theorem 1.2]{lelievre2009general})\label{t: LSI_bounds_CV_fiber} Let Assumptions \ref{a: collective variable} hold, 
and suppose $\mu$ satisfies a LSI with constant $C_{LS}^{(d)}$ and each $\pi_z$ satisfies a LSI with constant $C_{LS}^{(\theta)}$.
Then $\pi$ satisfies an LSI constant with constant
\begin{equation}\label{e: spectral gap notation LSI}
    R = \frac{1}{2} \left( rm + \frac{\Lambda^2 m}{\rho} + \rho - \sqrt{ \left( rm + \frac{\Lambda^2 m}{\rho} + \rho \right)^2 - 4 rm \rho } \right).
\end{equation}
Using the notation $R = \frac{1}{\CD}$, $r = \frac{1}{\Cd}$ and $\rho = \frac{1}{\Ctheta}$ it follows that 
\begin{equation}\label{e: LSI constant notation}
    \CD \leq \Ctheta + \frac{1}{m}\Cd + \Lambda^2\Cd(\Ctheta)^2.
\end{equation}
\end{theorem}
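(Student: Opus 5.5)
The first assertion, formula \eqref{e: spectral gap notation LSI}, is \cite[Theorem 1.2]{lelievre2009general} itself, so I will not reprove it. I will only check that our hypotheses match theirs. Assumption \ref{a: collective variable} gives the lower bound $m=m_\theta$ on the Gram matrix and the fiber-fluctuation bound $\Lambda$ on the local mean force. The LSIs for $\mu$ and for each $\pi_z$ are assumed directly, with constants $r=1/\Cd$ and $\rho=1/\Ctheta$. All the new work is in deriving the simplified upper bound \eqref{e: LSI constant notation} on $\CD=1/R$ from the exact expression.

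First I will check that $R$ is a well-defined positive real number. Write
\[
S := rm + \frac{\Lambda^2 m}{\rho} + \rho .
\]
Since $\Lambda^2 m/\rho\ge 0$, we have $S\ge rm+\rho\ge 2\sqrt{rm\rho}$ by AM--GM. Hence $S^2-4rm\rho\ge 0$ and the square root is real. Moreover $\sqrt{S^2-4rm\rho}<S$ because $rm\rho>0$, so $R>0$.

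The key step is to rationalize the expression for $R$:
\[
R=\frac12\bigl(S-\sqrt{S^2-4rm\rho}\bigr)=\frac{2rm\rho}{S+\sqrt{S^2-4rm\rho}}\;\ge\;\frac{2rm\rho}{2S}=\frac{rm\rho}{S}.
\]
Inverting this inequality gives
\[
\CD=\frac1R\le \frac{S}{rm\rho}=\frac{1}{\rho}+\frac{\Lambda^2}{r\rho^2}+\frac{1}{rm}.
\]
Substituting $1/\rho=\Ctheta$ and $1/r=\Cd$ turns the right-hand side into $\Ctheta+\Lambda^2\Cd(\Ctheta)^2+\frac1m\Cd$, which is exactly \eqref{e: LSI constant notation}.

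I do not expect a real obstacle. The one point needing care is the hypothesis matching in \cite{lelievre2009general}: their fiber LSI is stated with the projected gradient $P\nabla_x$ and their $\Lambda$ is a sup over the relevant domain, which here is $K_U$. For the windowed measures $\pi_k$, I will also have to note that the tilt $\psi_k=\tilde\psi_k\circ\theta$ is constant on fibers. Consequently the conditional measures, and hence $\Ctheta$, are unchanged by windowing, and so is $\Lambda$, because the local mean force only shifts by $\nabla_z\log\tilde\psi_k$, which has zero derivative along the fibers.
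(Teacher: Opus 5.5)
Your proposal is correct and takes the same approach as the paper: defer the formula for $R$ to \cite{lelievre2009general}, rationalize to $R=\frac{2rm\rho}{S+\sqrt{S^2-4rm\rho}}$, bound the denominator by $2S$, and invert. Your justification of that step, $\sqrt{S^2-4rm\rho}\le S$, is the correct one, whereas the paper's stated reason $\frac1A\ge\frac1{\sqrt{A^2-4B}}$ has the inequality reversed; your checks that $R$ is real and positive, and that windowing by $\tilde\psi_k\circ\theta$ leaves $\Ctheta$ and $\Lambda$ unchanged, are also sound.
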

\begin{proof} We differ the proof of the spectral gap equality (\ref{e: spectral gap notation LSI}) to \cite{lelievre2009general}. 
We show the upper bound on the spectral gap algebraically as follows. 
Let $A = rm + \frac{\Lambda^2 m}{\rho} + \rho$ and $B = rm\rho$ then 
$$R = \frac{1}{2}\left( A  - \sqrt{A^2 - 4B} \right) = \frac{1}{2}\left( A  - \sqrt{A^2 - 4B} \right)\frac{ A  +\sqrt{A^2 - 4B} }{ A  + \sqrt{A^2 - 4B} } = \frac{1}{2}\frac{4B}{ A  + \sqrt{A^2 - 4B}} \geq \frac{B}{A}.$$
Where the last inequality comes applying $\frac{1}{A}\geq \frac{1}{\sqrt{A^2-4B}}.$ Therefore, 
$$\frac{1}{R} \leq \frac{A}{B} = \frac{rm + \frac{\Lambda^2m}{\rho} + \rho}{mr\rho} = \frac{1}{\rho} + \frac{\Lambda^2}{\rho^2r} + \frac{1}{mr}.$$
Change of notation with $R = \frac{1}{\CD}$, $r = \frac{1}{\Cd}$ and $\rho = \frac{1}{\Ctheta}$ yields 
$$\CD \leq \Ctheta + \frac{1}{m}\Cd + \Lambda^2\Cd(\Ctheta)^2.$$
\end{proof}
\subsection{Constructing a Gaussian Net}\label{ss: gaussian net}
This subsection makes explicit the choice of Gaussian lattice on the collective variable space $\mathbb{R}^d$.
All the analysis of this section is performed within the collective variable space $\mathbb{R}^d$. 
The Gaussian biasing functions $\tilde{\psi}_k \propto e^{-\beta\|x-y_k\|^2}$ are chosen with inverse temperature $\beta$. 
Choosing a high enough inverse temperature relative to the potential of the push-forward measure $\mu(z) \propto e^{-A(z)}$ allows for control over the spectral gap of the windows $\mu_k \propto \tilde{\psi}_ke^{-A(z)}$. 

\begin{center}
    \begin{tikzpicture}[node distance=2.5cm, auto]
  \node (RD) {$\mathbb{R}^D$};
  \node (Rd) [below of=RD] {$\mathbb{R}^d$};
  \node (R) [right of=Rd] {$\mathbb{R}$};

  \draw[->] (RD) to node [swap] {$\theta$ (CV map)} (Rd);
  \draw[->] (Rd) to node [swap] {$\tilde{\psi}_k$} (R);
  \draw[->, dashed] (RD) to node {$\psi_k$} (R);

  \node[right of=R, node distance=4cm, text width=4cm, font=\small] {
    $\psi_k(x) = \tilde{\psi}_k(\theta(x))$ 
  };
\end{tikzpicture}
\end{center}

Additionally, the Gaussian biasing functions are chosen to have lattice spacing $\Delta \propto \frac{1}{d\sqrt{\beta}}$. 
This condition enforces that the Gaussian biasing functions form a net $S(x)$ that fully covers the observable. 
Since, to enforce coverage, the lattice spacing of the biasing functions is chosen inversely proportional to the dimension this causes dimensioning signal in $\frac{\tilde{\psi}_k(z)}{\tilde{S}(z)}$. 
This causes the lower bound $c_p$ from Assumptions \ref{a: good path} to have an exponential dependence on the collective variable dimension $d$. 
However, in practice, $d$ is chosen small ($\leq 3$) and therefore the exponential dependence does not significantly impact our results.

Our results depend on the uniformity of $S(z)$, which we measure by 
\begin{equation}
    r_S := \frac{\max_{z\in\mathcal{A}} S(z)}{\min_{z\in\mathcal{A}}S(z)}.
\end{equation}
This ratio gives us some notion for the variation in the importance factor $\frac{1}{S(z)}$. 
This is necessary to control the relative variance of the quotient observable $\frac{g(z)}{S(z)}$. 
The following lemmas show that the padded Gaussian net can control the ratio $r_S$.

For a set $S$, let  $B_r(S)$ denote the neighborhood of radius $r$ around $S$.
\begin{proposition}
\label{p:sum-integral}
    Let $\La$ be a lattice and $C$ be a set in $\R^d$. Let $D$ be the determinant of the fundamental parallelopiped for $\La$ and $r = \min{r'}{B_{r'}(\La) = \R^d}$ be the covering radius of the lattice (the minimum $r'$ such that every point is within radius $r'$ of a point of $\La$).  
    Let $\psi  = e^{-U}:\R^d\to \R_{>0}$ be differentiable. 
    For a set $C$, let $C_r^+=\seta{x\in C^c}{d(x,\pl C)\le r}$ and $C_r^- = \seta{x\in C}{d(x,\pl C)\le r}$.
Then 
\begin{align*}    
D \sum_{x\in \La \cap C} \psi(x) - \int_C \psi(x)\,dx
&\le  \int_{\R^d} \psi(y) \pa{e^{r\sup_{z\in B_r(y)} \|\nb U(z)\|}-1}\,dy + 
\int_{C_r^+} \psi(x)\,dx\\
D \sum_{x\in \La \cap C} \psi(x) - \int_C \psi(x)\,dx
&\ge  \int_{\R^d} \psi(y) \pa{e^{-r\sup_{z\in B_r(y)} \|\nb U(z)\|}-1}\,dy - 
\int_{C_r^-} \psi(x)\,dx.
\end{align*}
\end{proposition}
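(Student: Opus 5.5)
We compare the lattice sum with the integral through a partition of $\R^d$ into Voronoi cells. For each lattice point $x\in\La$, let $V_x$ be its Voronoi cell, i.e. the points closer to $x$ than to any other lattice point. These cells tile $\R^d$ up to measure zero. Each cell has volume $D$, and each lies in $B_r(x)$ by the definition of the covering radius. Then
\[
D\sum_{x\in\La\cap C}\psi(x)=\sum_{x\in\La\cap C}\int_{V_x}\psi(x)\,dy .
\]
Write $W=\bigcup_{x\in\La\cap C}V_x$. We split
\[
D\sum_{x\in \La\cap C}\psi(x)-\int_C\psi
=\sum_{x\in\La\cap C}\int_{V_x}\bigl(\psi(x)-\psi(y)\bigr)\,dy+\Bigl(\int_W\psi-\int_C\psi\Bigr).
\]
This gives a pointwise comparison term plus a boundary term, and we bound each separately.

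\emph{Pointwise term.} Take $y\in V_x$, so $\|x-y\|\le r$. Integrating $\nb U$ along the segment from $y$ to $x$, which lies in $B_r(y)$, gives
\[
|U(x)-U(y)|\le r\sup_{z\in B_r(y)}\|\nb U(z)\| .
\]
Hence
\[
\psi(y)\bigl(e^{-r s(y)}-1\bigr)\le \psi(x)-\psi(y)\le \psi(y)\bigl(e^{r s(y)}-1\bigr),
\qquad s(y):=\sup_{z\in B_r(y)}\|\nb U(z)\|.
\]
The upper factor is nonnegative and the lower one nonpositive. So when we sum over $x\in\La\cap C$, we may enlarge the region of integration from $W$ to all of $\R^d$ in both directions. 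This yields the $\int_{\R^d}\psi(y)(e^{\pm r s(y)}-1)\,dy$ terms.

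\emph{Boundary term.} We have $\int_W\psi-\int_C\psi=\int_{W\setminus C}\psi-\int_{C\setminus W}\psi$.
\begin{itemize}
\item If $y\in W\setminus C$, then $y\in V_x$ for some $x\in C$ with $\|x-y\|\le r$, and the segment joining them crosses $\pl C$. Thus $d(y,\pl C)\le r$ and $y\in C_r^+$.
\item If $y\in C\setminus W$, then its nearest lattice point $x$ lies outside $C$ and $\|x-y\|\le r$. By the same argument $y\in C_r^-$.
\end{itemize}
Since $\psi\ge0$, the boundary term lies between $-\int_{C_r^-}\psi$ and $\int_{C_r^+}\psi$. Combining this with the pointwise bounds gives both inequalities.

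The main point requiring care is the measure-theoretic bookkeeping: the cells overlap only on a null set, and the cell boundaries can be assigned arbitrarily. There is also the tacit assumption that the integrals are finite; if they are not, the inequalities hold trivially. No step appears to be a real obstacle. The only subtlety is keeping the signs right, so that the extension from $W$ to $\R^d$ is legitimate in each direction.
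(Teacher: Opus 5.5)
Your proposal is correct and follows the paper's proof essentially step for step: the same Voronoi-cell split into a mean-value term and a boundary term, and the same containments $W\setminus C\subseteq C_r^+$ and $C\setminus W\subseteq C_r^-$. You also fill in two details the paper leaves implicit: the sign argument that justifies enlarging the integration region from $W$ to $\R^d$ in each direction, and the fact that the Voronoi cells have volume $D$ (the paper instead calls them translates of the fundamental parallelepiped).
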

Note that in particular, for $C=\R^d$, only the first term remains.
\begin{proof}
For $x\in \La$, $V_x = \seta{y\in \R^d}{d(y,x) = d(y,\La)}$; that is, it is the set of points whose closest lattice point is $x$. Note that each $V_x$ is a translate of the fundamental parallelopiped centered at $x$.
We can write
\begin{align*}
    D \sum_{x\in \La \cap C} \psi(x) - \int_C \psi(x)\,dx
    &= 
     \sum_{x\in \La \cap C} \ba{ \int_{V_x}(\psi(x) -\psi(y))\,dy }  + \ba{\int_{\bigcup_{x\in \La\cap C}V_x} \psi(x)\,dx - \int_C \psi(x)\,dx}.
\end{align*}
By the mean value theorem, the summand in the first term can be upper and lower bounded as
\begin{align*}
    \int_{V_x}(\psi(x) -\psi(y))\,dy
    &= \int_{V_x} \psi(y) (e^{-U(x)+U(y)} - 1)\,dy\\
    &\le \int_{V_x} \psi(y) (e^{\ve{x-y} \sup_{z\in \ol{xy}}\ve{\nb U(z)}}-1) \,dy\\
    &\le \int_{V_x} \psi(y) (e^{r\sup_{z\in B_r(y)}\ve{\nb U(z)}}-1) \,dy\\
    \int_{V_x}(\psi(x) -\psi(y))\,dy
    &\ge \int_{V_x} \psi(y) (e^{-\ve{x-y} \sup_{z\in \ol{xy}}\ve{\nb U(z)}}-1) \,dy\\
    &\ge \int_{V_x} \psi(y) (e^{-r\sup_{z\in B_r(y)}\ve{\nb U(z)}}-1) \,dy.
\end{align*}
Then summing over $x$ gives the first error terms.

For the second term, it suffices to show that 
\begin{align*}
    \pa{\bigcup_{x\in \La \cap C} V_x} \bs C &\subeq C_r^+\\
    C \bs \pa{\bigcup_{x\in \La \cap C} V_x}  &\subeq C_r^-.
\end{align*}
First, consider $y\in \pa{\bigcup_{x\in \La \cap C} V_x} \bs C$. Then $y\in V_x$ for some $x\in \La\cap C$, and $\ve{y-x}\le r$. Hence $d(y,C)\le r$, and because $y\nin C$, we have $y\in C_r^+$. 
Second, consider $y\in C\bs \pa{\bigcup_{x\in \La \cap C} V_x} $. Then $y\in V_x$ for some $x\in \La \bs C$. Since $\ve{y-x}\le r$, we get $d(y,C^c)\le r$ and hence $y\in C_r^-$.
\end{proof}
\[
\sup_{y\in B_r(x)} \ve{\nb \psi(y)}dx
\le \be (R+r) e^{-\fc{\be ( \max\{R-r,0\})^2}2}. 
\]

For the Gaussian windows, we derive a bound for the first error term.
\begin{lem}\label{l:gaussian-error}
    For $\psi(x)=\rc{(2\pi/\be)^{d/2}} e^{-\fc{\ve{x}^2}2}$ (the density of a gaussian with variance $1/\be$), 
    \begin{align*}
        \int_{\R^d} \psi(y) (e^{r\sup_{z\in B_r(y)} \|\nb U(z)\|}-1)\,dy &\le e^{\sqrt{d\be}r + \fc{3}2\be r^2}-1\\
        \int_{\R^d} \psi(y) (e^{-r\sup_{z\in B_r(y)} \|\nb U(z)\|}-1)\,dy &\ge -\sqrt{d\be }r - \be r^2.
    \end{align*}
\end{lem}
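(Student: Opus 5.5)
The plan is to make the gradient term explicit and then reduce both bounds to moment estimates for a Gaussian vector. I read the density as $\psi(x)\propto e^{-\frac{\be}{2}\ve{x}^2}$, which is what "variance $1/\be$" requires; the exponent $-\ve{x}^2/2$ in the statement is presumably a typo for $-\be\ve{x}^2/2$.

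With this reading, $U(z)=\frac{\be}{2}\ve{z}^2+\text{const}$, so $\nb U(z)=\be z$. For $z\in B_r(y)$ this gives
\[
\sup_{z\in B_r(y)}\ve{\nb U(z)}\le \be(\ve{y}+r).
\]
Both integrals are expectations over $Y\sim N(0,\be^{-1}I_d)$, since $\psi$ is exactly the density of $Y$. Write $Y=Z/\sqrt{\be}$ with $Z\sim N(0,I_d)$.

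\emph{Upper bound.} By monotonicity of the exponential,
\[
\int\psi(y)\pa{e^{r\sup_{B_r(y)}\ve{\nb U}}-1}dy\le e^{\be r^2}\,\E\ba{e^{r\sqrt{\be}\ve{Z}}}-1.
\]
The map $z\mapsto\ve{z}$ is $1$-Lipschitz, so Gaussian concentration (the Herbst bound from the Gaussian LSI) gives
\[
\E e^{t\ve{Z}}\le e^{t\E\ve{Z}+t^2/2}.
\]
Jensen gives $\E\ve{Z}\le\sqrt{\E\ve{Z}^2}=\sqrt d$. Taking $t=r\sqrt{\be}$ yields $\E e^{r\sqrt\be\ve{Z}}\le e^{r\sqrt{d\be}+\be r^2/2}$. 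Multiplying by $e^{\be r^2}$ gives the claimed bound $e^{\sqrt{d\be}\,r+\frac32\be r^2}-1$.

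\emph{Lower bound.} Use $e^{-x}-1\ge -x$ for $x\ge 0$. Then the integral is at least
\[
-r\be\,\E(\ve{Y}+r)=-r\sqrt\be\,\E\ve{Z}-\be r^2\ge-\sqrt{d\be}\,r-\be r^2,
\]
again using $\E\ve{Z}\le\sqrt d$.

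The only step that is not routine is the exponential moment of $\ve{Z}$. The concentration (Herbst) inequality handles it cleanly and produces exactly the constant $\tfrac12$ in the $\be r^2$ term, which combines with the $e^{\be r^2}$ factor to give $\tfrac32$. If the normalization in the statement is taken literally, with exponent $-\ve{x}^2/2$, the bounds do not match. So the proof should begin by fixing this interpretation.
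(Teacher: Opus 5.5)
Your proof is correct and follows the paper's own argument. You bound $\sup_{B_r(y)}\|\nabla U\|\le\beta(\|y\|+r)$, then handle the upper bound with the Herbst estimate for the $1$-Lipschitz function $\|y\|$ together with $\mathbb{E}\|y\|\le\sqrt{d/\beta}$, and the lower bound with the linear bound $e^{-x}-1\ge -x$; the paper first applies Jensen, which changes nothing. Your reading of the density as $\propto e^{-\beta\|x\|^2/2}$ is exactly the one the paper's proof implicitly uses.
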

\begin{proof}
We calculate
\begin{align*}
    \int_{\R^d} \psi(y) (e^{r\sup_{z\in B_r(y)} \|\nb U(z)\|}-1)\,dy 
    &\le \int_{\R^d} \psi(y) (e^{\be r (\ve{y}+r)}-1) \,dy \\
\shortintertext{Noting that $\ve{y}$ is 1-Lipschitz, $N(0,1/\be)$ satisfies a log-Sobolev inequality with constant $1/\be$, and using sub-gaussianity of Lipschitz functions (Lemma \ref{l: herbst argument}),}
    &\le e^{\be r \E_{N(0,1/\be)} \ve{y} + \fc{\be r^2}2 + \be r^2}-1\\
\shortintertext{Noting $\E \ve{y}\le (\E \ve{y}^2)^{1/2}\le \sfc{d}{\be}$,}
    &\le e^{\sqrt{d\be}r + \fc{3\be r^2}2} - 1.
\end{align*}
For the second term, by Jensen's inequality,
\begin{align*}
    \int_{\R^d} \psi(y) (e^{-r\sup_{z\in B_r(y)} \|\nb U(z)\|}-1)\,dy
    &\ge \int_{\R^d} \psi(y) (e^{-r(\ve{y}+r)}-1)\,dy\\
    &\ge e^{-r(\E \ve{y}+r)}-1\\
    &\ge e^{-\sqrt{d\be} r - \be r^2}-1
    \ge -\sqrt{d\be} r - \be r^2.
\end{align*}
\end{proof}

\begin{lem}(Net Coverage Bounds Upper \& Lower) \label{l: Gaussian net coverage bounds} Let $S(z) = \sum_k \left(\frac{\beta}{2\pi}\right)^\frac{d}{2}\exp(-\frac{\beta}{2}\left\|z-y_k\right\|^2)$ with $z \in \mathcal{A}$. Moreover, let the Gaussian net form a lattice $A$ with spacing $\Delta = \left\|y_i - y_j\right\|$
then 
$$\left(\frac{\beta}{2\pi}\right)^\frac{d}{2}\exp(-\frac{d\beta\Delta^2}{2}) \leq S(z) \leq \left(\frac{\beta}{2\pi}\right)^\frac{d}{2} \left(1 + \left(\frac{2\pi}{\beta\Delta^2}\right)^\frac{1}{2}\right)^d.$$
\end{lem}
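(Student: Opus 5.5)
The plan is to treat the lattice $A$ as the cubic lattice $\Delta\mathbb{Z}^d$, possibly shifted and possibly finite but covering $\mathcal{A}$. For the upper bound I enlarge the sum to the full infinite lattice and factor it coordinate by coordinate. For the lower bound I keep only the single window closest to $z$.

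\textbf{Upper bound.} Since every summand is nonnegative, replacing the finite collection $\{y_k\}$ by the full lattice $a+\Delta\mathbb{Z}^d$ can only increase $S(z)$. The squared norm splits as $\|z-y\|^2=\sum_{i=1}^d (z_i-a_i-n_i\Delta)^2$ for $y=a+\Delta n$. Hence
\[
S(z)\le \left(\frac{\beta}{2\pi}\right)^{\frac d2}\prod_{i=1}^d \sum_{n\in\mathbb{Z}} \exp\!\left(-\frac{\beta}{2}(t_i-n\Delta)^2\right),\qquad t_i=z_i-a_i .
\]
So it suffices to bound each one-dimensional sum by $1+\left(\frac{2\pi}{\beta\Delta^2}\right)^{1/2}$. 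The function $h(s)=e^{-\beta(t-s)^2/2}$ is unimodal with maximum $1$. For a unimodal function, the standard sum-versus-integral comparison gives
\[
\sum_{n} h(n\Delta)\le \max h+\frac{1}{\Delta}\int_{\mathbb{R}} h(s)\,ds .
\]
To see this, compare each lattice point to the left of the mode with the integral over the spacing interval to its right, and each point to the right of the mode with the interval to its left; at most one point, the one nearest the peak, is left over and is bounded by $\max h$. The Gaussian integral is $\int h=\sqrt{2\pi/\beta}$, which gives $1+\frac{1}{\Delta}\sqrt{2\pi/\beta}=1+\left(\frac{2\pi}{\beta\Delta^2}\right)^{1/2}$. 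Raising to the $d$-th power yields the stated upper bound.

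\textbf{Lower bound.} Every point $z\in\mathcal{A}$ lies in some lattice cell. By the covering assumption, its nearest lattice point $y_{k^*}$ satisfies $\|z-y_{k^*}\|\le \sqrt d\,\Delta/2$, which is the covering radius of the cubic lattice. Dropping all other nonnegative terms gives
\[
S(z)\ge \left(\frac{\beta}{2\pi}\right)^{\frac d2} e^{-\beta d\Delta^2/8}.
\]
This is at least $\left(\frac{\beta}{2\pi}\right)^{\frac d2}e^{-d\beta\Delta^2/2}$, the stated bound. The looser stated constant also leaves room for a padded net in which the nearest window is only guaranteed within distance $\sqrt d\,\Delta$ of $z$, since $e^{-\beta d\Delta^2/2}$ is exactly the value at that distance.

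\textbf{Main obstacle.} The only delicate point is pinning down the geometric hypothesis implicit in the statement: the net must be a cubic lattice of spacing $\Delta$ whose windows cover (pad) $\mathcal{A}$. I will state this explicitly. The lower bound uses only the covering radius, and the upper bound uses only the product structure together with the unimodal sum–integral comparison, which is elementary once set up.
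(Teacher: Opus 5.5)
Your proof is correct. The lower bound is the same as the paper's: keep only the nearest window and use the covering radius $\sqrt d\,\Delta/2$. You are right that this actually gives $e^{-\beta d\Delta^2/8}$, so the stated exponent is just a looser constant. Like the paper, you need the standing assumption that every $z\in\mathcal A$ has a window within that radius.

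For the upper bound you share the paper's reduction (pass to the full lattice, factor into one-dimensional sums), but you bound each one-dimensional sum by a different argument. The paper asserts that $\sum_n e^{-\frac{\beta}{2}(t-n\Delta)^2}$ is maximized ``by symmetry'' when $t$ is a lattice point. It then compares $1+2\sum_{k\ge 1}e^{-\beta k^2\Delta^2/2}$ with $2\int_0^\infty e^{-\beta\Delta^2 s^2/2}\,ds$ by monotonicity. Symmetry only shows that lattice points and midpoints are critical points. The maximization claim is true but needs more, e.g.\ Poisson summation, which writes the sum as a cosine series in $t$ with positive coefficients.

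Your unimodal sum--integral comparison holds uniformly in the shift $t$, so it avoids that claim entirely. It gives the same constant $1+\sqrt{2\pi/(\beta\Delta^2)}$ and works for any unimodal window profile, not only Gaussians.

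One step should be stated precisely. Take the two lattice points $s_a\le t<s_b=s_a+\Delta$ straddling the mode. One of them is charged to the cell $[s_a,s_b]$, which contains the mode, so monotonicity alone does not justify bounding its value by $\frac{1}{\Delta}\int_{s_a}^{s_b}h$. The bound holds because a unimodal function attains its minimum on an interval at an endpoint. So whichever of $s_a,s_b$ has the smaller value of $h$ (for the Gaussian, the one farther from $t$) can be charged to that cell, and the other is the single point bounded by $\max h$. With that sentence added, the argument is complete.
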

\begin{proof}
    We start with the lower bound. Define the covering radius $\eta$ of the set $Y = \{y_1, \dots, y_L\}$ relative to $K_u$ as the maximum distance between any point in $z \in \mathcal{A}$ and $Y$
    $$\eta \vcentcolon= \sup_{z \in \mathcal{A}}\min_{y_k \in Y} \left\|z-y_k \right\|.$$
    Then consider 
    $$\sum_k \left(\frac{\beta}{2\pi}\right)^\frac{d}{2}\exp(-\frac{\beta}{2}\left\|z-y_k\right\|^2) \geq \left(\frac{\beta}{2\pi}\right)^\frac{d}{2}\exp(-\frac{\beta}{2} \sup_{z \in \mathcal{A}}\min_{y_k \in Y} \left\|z-y_k \right\|) = \left(\frac{\beta}{2\pi}\right)^\frac{d}{2}\exp(-\frac{\beta\eta^2}{2}).$$
    If the Gaussian's form a lattice with spacing $\Delta = \|y_i-y_j\|$ then the covering radius is given by 
    $$\eta = \sqrt{\sum_{i=1}^d\left(\frac{\Delta}{2}\right)^2} \leq \sqrt{d\frac{\Delta^2}{4}}.$$
    Therefore 
    $$S(z) \geq \left(\frac{\beta}{2\pi}\right)^\frac{d}{2}\exp(-\frac{d\beta\Delta^2}{2}).$$
    To find the upper bound let $z = (z^{(1)}, \dots, z^{(d)})$ and $y_k = (y_k^{(1)}, \dots, y_k^{(d)})$ and note that 
    $$\left\| z- y_k\right\|^2 = \sum_{i=1}^d (z^{(i)} - y_k^{(i)})^2.$$
    Then $\psi_k(z)$ is expressible as 
    $$\psi_k(z) = \left(\frac{\beta}{2\pi}\right)^\frac{d}{2}\exp(-\frac{\beta}{2}\sum_{i=1}^d (z^{(i)} - y_k^{(i)})^2) = \left(\frac{\beta}{2\pi}\right)^\frac{d}{2}\prod_{i=1}^d\exp(-\frac{\beta}{2}(z^{(i)} - y_k^{(i)})^2).$$
    Now the Gaussian lattice in $\mathbb{R}^d$ the set of centers $Y = \{y_1, \dots, y_k\}$ is expressible as $Y=\{y = (n_1\Delta, \dots, n_d \Delta)\; | \; n_1, \dots, n_d \in \mathbb{Z}\}$. Therefore 
    $$S(z) = \left(\frac{\beta}{2\pi}\right)^\frac{d}{2}\sum_{n_1 \in \mathbb{Z}} \cdots \sum_{n_d \in \mathbb{Z}}\left(\prod_{i=1}^d\exp(-\frac{\beta}{2}(z^{(i)} - n_i\Delta)^2)\right)= \left(\frac{\beta}{2\pi}\right)^\frac{d}{2}\prod_{i=1}^d\left(\sum_{n_i\in \mathbb{Z}} \exp(-\frac{\beta}{2}(z^{(i)} - n_i\Delta)^2)\right).$$
    Each one dimensional sum, by symmetry, is maximized when $z^{(i)}$ is a lattice point hence 
    $$\sum_{n_i\in \mathbb{Z}} \exp(-\frac{\beta}{2}(z^{(i)} - n_i\Delta)^2) \leq 1 + 2\sum_{k=1}^\infty \exp(-\frac{\beta}{2}k^2\Delta^2) \leq 1 + \left(\frac{2\pi}{\beta\Delta^2}\right)^\frac{1}{2},$$
    where the second inequality is given by the integral bound $2\sum_{k=1}^\infty \exp(-\frac{\beta}{2}k^2\Delta^2) \leq 2\int_0^\infty\exp(-\frac{\beta\Delta^2t^2}{2})dt = \left(\frac{2\pi}{\beta\Delta^2}\right)^\frac{1}{2}$. This yields, 
    $$S(z) \leq\left(\frac{\beta}{2\pi}\right)^\frac{d}{2} \left(1 + \left(\frac{2\pi}{\beta\Delta^2}\right)^\frac{1}{2}\right)^d.$$
\end{proof}

The following two Lemmas make use of padding windows to give bounds on the ratio $r_s = \frac{S_{\max}}{S_{\min}}$ and the specify a lower bound on $c_p$.
The padding is taken around the truncation set $\mathcal{A}$ as follows. 
\begin{definition}(Padded window set) Let $\Lambda_{lat} = \Delta \mathbb{Z}^d$ and $N_p \geq 1$ then the padded window set consists of Gaussian windows with centers in 
$$Y = \Lambda_{lat} \cap \bigl(\mathcal A+[-N_p\Delta,N_p\Delta]^d\bigr).$$
\end{definition}
\begin{lem}\label{l: log bound S_max/S_min}(Net Coverage Bounds Ratio) \label{l: Gaussian net coverage bounds} Let $S(z) = \sum_k \left(\frac{\beta}{2\pi}\right)^\frac{d}{2}\exp(-\frac{\beta}{2}\left\|z-y_k\right\|^2)$ with $z \in \mathcal{A}$. Let the Gaussian net form a lattice $A$ so that $\mathcal{A} \subset A$ , with spacing $\Delta = \left\|y_i - y_j\right\|$ and assume $\beta\Delta^2 \leq 1$.  Moreover, assume that the lattice $A$ has $N_A$ interior Gaussians and is padded with $N_p \geq \frac{3}{\sqrt{\beta\Delta^2}}$ exterior Gaussians in each dimension. Then 
$$\log(r_s) = \log\left(\frac{S_{\max}}{S_{\min}}\right) \leq 5d\left(\sqrt{\frac{\beta\Delta^2}{\pi}}+\frac{1}{N_p\sqrt{\pi\beta\Delta^2}}e^{-\frac{9}{2}}\right).$$ 
\end{lem}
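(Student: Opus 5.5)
The plan is to exploit the product structure of the lattice sum and compare each one-dimensional factor with its integral. As in the proof of the previous Net Coverage Bounds lemma, the padded window set is a box of lattice points. Write $S(z) = (\beta/2\pi)^{d/2}\prod_{i=1}^d s_i(z^{(i)})$, where
\[
s_i(t)=\sum_{n=a_i}^{b_i}\exp\!\Big(-\tfrac{\beta}{2}(t-n\Delta)^2\Big)
\]
runs over the lattice indices in coordinate $i$. For $z\in\mathcal A$, each coordinate $t=z^{(i)}$ lies at least $N_p\Delta$ from either end of the index range. Then $\log r_S\le \sum_{i=1}^d \log\big(\max_t s_i(t)/\min_t s_i(t)\big)$. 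It therefore suffices to show that each one-dimensional ratio has logarithm at most $5\big(\sqrt{\beta\Delta^2/\pi}+\frac{e^{-9/2}}{N_p\sqrt{\pi\beta\Delta^2}}\big)$.

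For a fixed coordinate, set $I=\int_{\R}e^{-\beta u^2/2}\,du=\sqrt{2\pi/\beta}$. I will bound $\Delta\,s_i(t)$ above and below by $I$ up to two errors.

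\emph{Discretization error.} Apply Proposition~\ref{p:sum-integral} in one dimension with lattice $\Delta\mathbb Z$ (so $D=\Delta$, covering radius $r=\Delta/2$), taking $C$ to be the interval of padded centers. Alternatively, use the elementary monotone-function comparison for a unimodal Gaussian. Either route shows that the full-lattice sum satisfies $|\Delta\sum_{n\in\mathbb Z}e^{-\beta(t-n\Delta)^2/2}-I|\le \Delta$, since the total variation of the unimodal integrand is $2$. Dividing by $I$, the relative error is at most $\Delta/I=\sqrt{\beta\Delta^2/(2\pi)}$. The hypothesis $\beta\Delta^2\le 1$ keeps this error below $1/2$, so $\log(1\pm x)$ can be linearized with constant $2$, as in Lemma~\ref{l: log_pert_bound}.

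\emph{Truncation error.} The indices missing from the padded box lie at distance at least $N_p\Delta$ from $t$. Their contribution is bounded by a Gaussian tail:
\[
\Delta\sum_{|n\Delta-t|\ge N_p\Delta}e^{-\beta(t-n\Delta)^2/2}\le 2\int_{N_p\Delta-\Delta}^{\infty}e^{-\beta u^2/2}\,du+2\Delta e^{-\beta N_p^2\Delta^2/2}.
\]
I then use the Mills-ratio bound $\int_a^\infty e^{-\beta u^2/2}\,du\le e^{-\beta a^2/2}/(\beta a)$. With $N_p\sqrt{\beta\Delta^2}\ge 3$, this gives $e^{-\beta N_p^2\Delta^2/2}\le e^{-9/2}$. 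Relative to $I$, the error is then of order $\frac{e^{-9/2}}{N_p\sqrt{\pi\beta\Delta^2}}$, up to constants absorbed into the factor $5$.

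Combining the two errors, $\Delta s_i(t)/I\in[1-\epsilon_1-\epsilon_2,\,1+\epsilon_1]$ uniformly in $t$, where $\epsilon_1$ is the discretization error and $\epsilon_2$ the truncation error. Taking logarithms bounds the one-dimensional log-ratio by about $2(2\epsilon_1+\epsilon_2)$. Summing over the $d$ coordinates yields the stated bound.

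The main obstacle is the bookkeeping of constants in the tail step. The boundary shift by one lattice spacing (from $N_p\Delta$ to $(N_p-1)\Delta$) has to be absorbed, and the hypotheses $\beta\Delta^2\le1$ and $N_p\ge 3/\sqrt{\beta\Delta^2}$ have to be used carefully so that the final constant $5$ is achieved. If this gets tight, I would redo the tail sum directly as a geometric-type series bounded by its first term times $1/(1-e^{-3\beta\Delta^2})$ rather than via an integral.
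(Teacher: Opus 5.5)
Your argument is correct, but it is organized differently from the paper's. The paper never compares with the Gaussian integral. In each coordinate it bounds the maximum of the one-dimensional factor by its value at a lattice point, $1+2A+2T$. It bounds the minimum over interior points by $2A$, keeping only the $N_p$ nearest centers on each side. Here $A=\sum_{k=1}^{N_p}e^{-\beta k^2\Delta^2/2}$ and $T$ is the tail beyond $N_p$. Since both bounds share $A$, the ratio is $1+\frac{1}{2A}+\frac{T}{A}$ outright, so only $\log(1+x)\le x$ is needed. The paper then shows $A\ge\frac17\sqrt{2\pi/(\beta\Delta^2)}$ by monotone sum--integral comparison (this is where $\beta\Delta^2\le1$ and $N_p\sqrt{\beta\Delta^2}\ge3$ enter) and bounds $T$ by a Gaussian tail.

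You instead normalize both extremes against $I=\sqrt{2\pi/\beta}$ through the unimodal Riemann-sum estimate. This gives you an upper bound $\Delta s_i(t)\le I+\Delta$ valid for every $t\in\mathbb{R}$, with an elementary proof. So you do not need the claim that the lattice sum is maximized at a lattice point; the paper asserts this ``by symmetry'', and it is true (e.g.\ by Poisson summation) but needs an argument. You also get the absolute statement $S\approx\Delta^{-d}$, and you control $\sup_{\mathbb{R}^d}S$, which is what the later application to $\log S$ actually uses. The price is linearizing $-\log(1-\epsilon_1-\epsilon_2)$, which needs $\epsilon_1+\epsilon_2\le\frac12$. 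This holds because $\epsilon_1\le1/\sqrt{2\pi}$ and $\epsilon_2$ is tiny.

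On the tail step you flagged, your display double-counts. For a decreasing integrand $f$, $\Delta\sum_{k\ge0}f(a+k\Delta)\le\Delta f(a)+\int_a^\infty f$. So you may start the integral at $N_p\Delta$ and keep the boundary term $2\Delta e^{-\beta N_p^2\Delta^2/2}$ you already wrote. Then
$$\epsilon_2\le\sqrt2\,e^{-9/2}\bigl(\sqrt{\beta\Delta^2/\pi}+\frac{1}{N_p\sqrt{\pi\beta\Delta^2}}\bigr),$$
and $4\epsilon_1+2\epsilon_2\le 3\bigl(\sqrt{\beta\Delta^2/\pi}+\frac{e^{-9/2}}{N_p\sqrt{\pi\beta\Delta^2}}\bigr)$, so there is no shift to absorb.

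With the lower limit $(N_p-1)\Delta$, the Mills factor is $e^{-\beta(N_p-1)^2\Delta^2/2}$. This is not bounded by $e^{-9/2}$; it equals $e^{-2}$ when $\beta\Delta^2=1$ and $N_p=3$. The slack in the first term happens to absorb it, but the fix above makes that check unnecessary.

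Your fallback does not work as stated. A geometric series with ratio $e^{-3\beta\Delta^2}$ gives a tail of order $e^{-9/2}/\sqrt{\beta\Delta^2}$, losing the factor $1/N_p$. The ratio you should use is $e^{-\beta N_p\Delta^2}$.
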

\begin{proof}
  Let $z = (z^{(1)}, \dots, z^{(d)})$ and $y_k = (y_k^{(1)}, \dots, y_k^{(d)})$ and note that 
    $$\left\| z- y_k\right\|^2 = \sum_{i=1}^d (z^{(i)} - y_k^{(i)})^2.$$
    Then $\tilde{\psi}_k(z)$ is expressible as 
    $$\tilde{\psi}_k(z) = \left(\frac{\beta}{2\pi}\right)^\frac{d}{2}\exp(-\frac{\beta}{2}\sum_{i=1}^d (z^{(i)} - y_k^{(i)})^2) = \left(\frac{\beta}{2\pi}\right)^\frac{d}{2}\prod_{i=1}^d\exp(-\frac{\beta}{2}(z^{(i)} - y_k^{(i)})^2).$$
    Now the Gaussian lattice in $\mathbb{R}^d$ the set of centers $Y = \{y_1, \dots, y_k\}$ is expressible as $Y=\{y = (n_1\Delta, \dots, n_d \Delta)\; | \; n_1, \dots, n_d \in \mathbb{Z}\}$. Therefore 
    $$S(z) = \left(\frac{\beta}{2\pi}\right)^\frac{d}{2}\sum_{n_1 \in \mathbb{Z}} \cdots \sum_{n_d \in \mathbb{Z}}\left(\prod_{i=1}^d\exp(-\frac{\beta}{2}(z^{(i)} - n_i\Delta)^2)\right)\leq\left(\frac{\beta}{2\pi}\right)^\frac{d}{2}\prod_{i=1}^d\left(\sum_{n_i\in \mathbb{Z}} \exp(-\frac{\beta}{2}(z^{(i)} - n_i\Delta)^2)\right).$$
    For the upper bound, on a finite lattice, each one dimensional sum, by symmetry, is maximized when $z^{(i)}$ is the centered lattice point hence
    \begin{align*}\sum_{k = -(N_p+N_A)}^{N_p+N_A} \exp(-\frac{\beta}{2}(z^{(i)} - k\Delta)^2) &\leq 1 + 2\sum_{k=1}^{N_p+N_A} \exp(-\frac{\beta}{2}k^2\Delta^2) \\
    &\leq 1 + 2\sum_{k=1}^{N_p} \exp(-\frac{\beta}{2}k^2\Delta^2) +2\sum_{k=N_p+1}^{N_p+N_A} \exp(-\frac{\beta}{2}k^2\Delta^2).
    \end{align*}
For the lower bound, given $z_0^{(i)} \in [k\Delta, (k +1)\Delta]$ for $k \in [n_{-},n_{-} + (N_A -1)]$ (an interior point) we have that $z_0^{(i)}$ is expressible as $z_0^{(i)} = (k+ \alpha)\Delta$, for $\alpha \in [0,1]$. Then, for  $j \in [-(N_p-1),N_p]$, consider $(z_0^{(i)} - (k+j)\Delta)^2 = ((k+\alpha)\Delta-(k+j)\Delta)^2 = (\alpha - j)^2\Delta^2.$ Then we have the bound $(\alpha - j)^2\Delta^2 \leq (|j| +1)^2\Delta^2$ for $j \leq 0$ and $(\alpha - j)^2\Delta^2 \leq j^2\Delta^2$ for $j \geq 1$. Then the finite summation, for any $k \in [n_{-},n_{-} + (N_A -1)]$, can be lower bounded by

\begin{align*}\sum_{k = n_{-}-N_p}^{n_{-}+N_p+N_A} \exp(-\frac{\beta}{2}(z^{(i)} - k\Delta)^2) &\geq \sum_{j = -(N_p-1)}^{0} \exp(-\frac{\beta}{2}((|j|+1)\Delta)^2) +\sum_{j = 1}^{N_p} \exp(-\frac{\beta}{2}((j\Delta)^2),\\ 
\shortintertext{by re-indexing,}
&= \sum_{j = 1}^{N_p} \exp(-\frac{\beta}{2}j^2\Delta^2)+\sum_{j = 1}^{N_p} \exp(-\frac{\beta}{2}j^2\Delta^2)\\
&\geq 2\sum_{j = 1}^{N_p} \exp(-\frac{\beta}{2}j^2\Delta^2).
\end{align*}
Where the first inequality follows from noting that all the terms in the original summation are non-negative. 

Applying the upper and lower bound on the one dimensional sums and comparing the ratio yields 
\begin{align*}
    r_S &\leq \frac{\left(\frac{\beta}{2\pi}\right)^\frac{d}{2}\prod_{i=1}^d\left(1 + 2\sum_{k=1}^{N_p} \exp(-\frac{\beta}{2}k^2\Delta^2) +2\sum_{k=N_p+1}^{N_p+N_A} \exp(-\frac{\beta}{2}k^2\Delta^2)\right)}{\left(\frac{\beta}{2\pi}\right)^\frac{d}{2}\prod_{i=1}^d2\sum_{k=1}^{N_p}\exp(-\frac{\beta}{2}\Delta^2k^2)}\\
    &=\left(\frac{1 + 2\sum_{k=1}^{N_p} \exp(-\frac{\beta}{2}k^2\Delta^2) +2\sum_{k=N_p+1}^{N_p+N_A} \exp(-\frac{\beta}{2}k^2\Delta^2)}{2\sum_{k=1}^{N_p}\exp(-\frac{\beta}{2}\Delta^2k^2)}\right)^d\\
    &=\left(\frac{1 + 2\sum_{k=1}^{N_p} \exp(-\frac{\beta}{2}k^2\Delta^2) +2\sum_{k=N_p+1}^{N_p+N_A} \exp(-\frac{\beta}{2}k^2\Delta^2)}{2\sum_{k=1}^{N_p}\exp(-\frac{\beta}{2}\Delta^2k^2)}\right)^d.
\end{align*}
Taking the log of this term yields
$$\log(r_s) \leq d \log\left(\frac{1 + 2\sum_{k=1}^{N_p} \exp(-\frac{\beta}{2}k^2\Delta^2) +2\sum_{k=N_p+1}^{N_p+N_A} \exp(-\frac{\beta}{2}k^2\Delta^2)}{2\sum_{k=1}^{N_p}\exp(-\frac{\beta}{2}\Delta^2k^2)} \right).$$
To simplify notation let $A = \sum_{k=1}^{N_p}\exp(-\frac{\beta}{2}\Delta^2k^2) $ and $T = \sum_{k=N_p+1}^{N_p+N_A} \exp(-\frac{\beta}{2}k^2\Delta^2)$, the right term then reduces to $d\log\left(1 + \frac{1}{2A} + \frac{T}{A}\right)$. To obtain a final bound we first lower bound $A$ and then upper bound $T$. 
\begin{align*}
    A &= \sum_{k=1}^{N_p}\exp(-\frac{\beta}{2}\Delta^2k^2)\geq \int_1^{N_p+1}e^{-\frac{\beta}{2}\Delta^2z^2}dz=\sqrt{\frac{2\pi}{\beta\Delta^2}}\int_1^{N_p+1}\sqrt{\frac{\beta\Delta^2}{2\pi}}e^{-\frac{\beta}{2}\Delta^2z^2}dz
\end{align*}
We convert this to a standard normal by $z = \frac{1}{\sqrt{\beta\Delta^2}}u$ which yields 
$$A \geq \sqrt{\frac{2\pi}{\beta\Delta^2}}\int_{\sqrt{\beta\Delta^2}}^{(N_p+1)\sqrt{\beta\Delta^2}}\frac{1}{\sqrt{2\pi}}e^{-\frac{u^2}{2}}du\geq \sqrt{\frac{2\pi}{\beta\Delta^2}}\int_{1}^{3}\frac{1}{\sqrt{2\pi}}e^{-\frac{u^2}{2}}du \geq\frac{1}{7}\sqrt{\frac{2\pi}{\beta\Delta^2}}.$$
Where the last inequality comes from the fact that $\beta\Delta^2 \leq 1$ and assuming that $N_p \geq \frac{3}{\sqrt{\beta\Delta^2}}$. Next T is upper bounded as 
\begin{align*}
    T = \sum_{k=N_p+1}^{N_p+N_A} \exp(-\frac{\beta}{2}k^2\Delta^2) \leq \int_{N_p}^\infty e^{-\frac{\beta}{2}\Delta^2z^2}dz \leq \frac{1}{2N_p\beta\Delta^2}e^{-\frac{N_p^2\beta\Delta^2}{2}}, 
\end{align*}
where the last inequality is the Gaussian tail bound. Applying the inequality $\log(1+z) \leq z$ to the log ratio bound $log(1 + \frac{1}{2A} + \frac{T}{A})$ yields
\begin{align*}
d\log(1 + \frac{1}{2A} + \frac{T}{A}) &\leq 5d\sqrt{\frac{\beta\Delta^2}{2\pi}}+5d\frac{1}{N_p\sqrt{\pi\beta\Delta^2}}e^{-\frac{N_p^2\beta\Delta^2}{2}}.\\
 \shortintertext{Given that $N_p \geq \frac{3}{\sqrt{\beta\Delta^2}}$ yields}
 &\leq 5d\sqrt{\frac{\beta\Delta^2}{2\pi}}+5d\frac{1}{N_p\sqrt{\pi\beta\Delta^2}}e^{-\frac{9}{2}}.
\end{align*}
\end{proof}

Here we establish a lower bound on $c_p$ if the sparse matrix is constructed by only entries $F_{ij}$ from adjacent Gaussians.

\begin{lem}\label{l: c_p lowerbound} Let $\mathcal{A} \subset \mathbb{R}^d$ be convex and let $G := \sup_{z\in\mathcal{A}}\left\|\nabla A(z)\right\|$. Suppose that the Gaussian centers $y_k$ belong to a translate of $\Delta\mathbb{Z}^d$, with \(\|y_i-y_j\|=\Delta\). Form an \(N_p\)-layer padded lattice around \(\mathcal A\), so that
\(
\max_{1\leq k\leq L}
\operatorname{dist}(y_k,\mathcal A)
\leq
\sqrt d\,N_p\Delta
\).
Then 
\[
\min\{F_{ij},F_{ji}\}
\geq
c_p
:=
 \frac{e^{
-\frac{d\beta\Delta^2N_p^2}{2}
-\frac{(\beta\Delta+G)^2}{4\beta}
}}{{2^d\left(1 + \sqrt{\frac{2\pi}{\beta\Delta^2}}\right)^d}}.
\]
\end{lem}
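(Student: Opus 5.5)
We reduce everything to the collective-variable space. Since $\psi_j=\tilde\psi_j\circ\theta$ and $\pi_i\propto \mathbf 1_{K_U}\psi_i\pi$, the coarea/disintegration $\pi=\int \pi_z\,\mu(dz)$ gives
\[
F_{ij}=\mathbb E_{\pi_i}[\psi_j^*]=\frac{\int_{\mathcal A}\tilde\psi_i(z)\frac{\tilde\psi_j(z)}{S(z)}e^{-A(z)}\,dz}{\int_{\mathcal A}\tilde\psi_i(z)e^{-A(z)}\,dz},
\]
where the fiber integrals cancel because the integrand is constant on fibers. By symmetry it suffices to lower bound $F_{ij}$ for adjacent centers, $\|y_i-y_j\|=\Delta$. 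The plan has three steps: bound $1/S$ from below, control the ratio of $e^{-A}$ against its value at a reference point, and evaluate the resulting Gaussian product integral.

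\textbf{Step 1.} Bound $S$ from above on $\mathcal A$ using the upper estimate of Lemma \ref{l: Gaussian net coverage bounds}:
\[
S(z)\le \Big(\tfrac{\beta}{2\pi}\Big)^{d/2}\Big(1+\sqrt{\tfrac{2\pi}{\beta\Delta^2}}\Big)^d .
\]
This produces the factor $\big(1+\sqrt{2\pi/(\beta\Delta^2)}\big)^{-d}$ in $c_p$, while the prefactor $(\beta/2\pi)^{d/2}$ cancels against one normalization of $\tilde\psi_j$.

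\textbf{Step 2.} Control $e^{-A}$ by linearizing around a reference point $z_0\in\mathcal A$ close to $y_i$. Convexity of $\mathcal A$ and the gradient bound $G$ give, on $\mathcal A$,
\[
e^{-A(z_0)-G\|z-z_0\|}\;\le\; e^{-A(z)}\;\le\; e^{-A(z_0)+G\|z-z_0\|}.
\]
Substituting these into the numerator and denominator leaves Gaussian integrals with linear exponential tilts.

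\textbf{Step 3.} In the numerator, use $\tilde\psi_i\tilde\psi_j\propto e^{-\beta\|z-m\|^2}e^{-\beta\Delta^2/4}$, where $m=(y_i+y_j)/2$. Completing the square in the tilted Gaussian integral should yield the factor $e^{-(\beta\Delta+G)^2/(4\beta)}$: it collects the $\beta\Delta^2/4$ term, the cross term $G\Delta/2$, and $G^2/(4\beta)$. In the denominator, bound $\int_{\mathcal A}\tilde\psi_i e^{-A}$ from above by the full-space integral with the tilt $e^{+G\|z-z_0\|}$.

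The main obstacle is that the windows are centered at padded points $y_k$ possibly outside $\mathcal A$, at distance up to $\sqrt d N_p\Delta$. There the mass of $\tilde\psi_i$ inside $\mathcal A$ is only a Gaussian tail, so the numerator's integral over $\mathcal A$ is not a full Gaussian integral. I will handle this by restricting the numerator to $\mathcal A$ and bounding $\tilde\psi_i\tilde\psi_j$ pointwise from below on the nearest portion of $\mathcal A$. Distances from $m$ are at most $\sqrt d N_p\Delta$, which loses the factor $e^{-d\beta\Delta^2N_p^2/2}$. The remaining task is to lower bound the fraction of the Gaussian mass lying in $\mathcal A$; I expect this to use convexity, taking a half-space through the nearest point to get at least $2^{-d}$ (one orthant per coordinate) of the mass. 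The denominator is not restricted and is bounded by the full Gaussian integral. Combining the three steps gives $F_{ij}\ge c_p$, and swapping $i,j$ gives the same bound for $F_{ji}$.
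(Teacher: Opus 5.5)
\paragraph{Verdict: there is a genuine gap.} Your reduction of $F_{ij}$ to a ratio of CV-space integrals over $\mathcal A$, and your Step 1, match the paper. The argument fails at the step you flag as the main obstacle.

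Once you replace the denominator $\int_{\mathcal A}\tilde\psi_i e^{-A}$ by a full-space Gaussian integral, you need the Gaussian mass of $\mathcal A$ near its nearest point to be a fixed fraction. Convexity does not give this. A convex set is \emph{contained in} the half-space bounded by the supporting hyperplane at its nearest point, but it need not contain any half-space or orthant. So the claimed fraction $2^{-d}$ is false.

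For a counterexample, take $\mathcal A$ a ball of radius $r\ll\beta^{-1/2}$, a thin slab, or a sharp cone near $m$. Then $F_{ij}$ stays bounded below, since it is a weighted average of $\tilde\psi_j/S$ over $\mathcal A$. Your intermediate bound, however, is of order $\beta^{d/2}\operatorname{vol}(\mathcal A)\to 0$. Any chain that compares a restricted numerator with an unrestricted denominator loses a factor depending on the geometry of $\mathcal A$ at scale $\beta^{-1/2}$, and no such factor appears in $c_p$.

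There are two secondary issues:
\begin{enumerate}
\item Bounding the product $\tilde\psi_i\tilde\psi_j\propto e^{-\beta\|z-m\|^2}$ at distance $\sqrt d N_p\Delta$ costs $e^{-d\beta\Delta^2N_p^2}$, which is twice the stated exponent.
\item Norm tilts $e^{\pm G\|z-z_0\|}$ on both sides do not complete the square into $e^{-(\beta\Delta+G)^2/(4\beta)}$. They incur dimension-dependent losses of order $e^{-G\sqrt{d/\beta}}$.
\end{enumerate}

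\paragraph{The missing idea.} Keep both integrals over $\mathcal A$ and compare integrands pointwise through a self-map of $\mathcal A$. Let $q$ be the projection of $y_j$ onto $\mathcal A$ and set $Tz=q+\tfrac12(z-q)$. Convexity gives $T(\mathcal A)\subseteq\mathcal A$. Writing $\phi_k(z)=e^{-\frac\beta2\|z-y_k\|^2}$, you get
\[
\int_{\mathcal A}\phi_i\phi_j e^{-A}\,dz\;\ge\;\int_{T(\mathcal A)}\phi_i\phi_j e^{-A}\,dz\;=\;2^{-d}\int_{\mathcal A}(\phi_i\phi_j e^{-A})(Tz)\,dz .
\]
So the $2^{-d}$ is a Jacobian, not a mass fraction.

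For $z\in\mathcal A$, use three facts:
\begin{enumerate}
\item the identity $\|Tz-y_i\|^2+\|Tz-y_j\|^2-\|z-y_i\|^2=\|q-y_j\|^2-\langle y_j-y_i,z-q\rangle-\tfrac12\|z-q\|^2$;
\item the bound $|\langle y_j-y_i,z-q\rangle|\le\Delta\|z-q\|$;
\item $A(z)-A(Tz)\ge-\tfrac G2\|z-q\|$, since the segment $[z,Tz]$ lies in $\mathcal A$.
\end{enumerate}
Completing the square in $t=\|z-q\|$ then gives
\[
(\phi_i\phi_j e^{-A})(Tz)\;\ge\;e^{-\frac\beta2\operatorname{dist}(y_j,\mathcal A)^2-\frac{(\beta\Delta+G)^2}{4\beta}}\,\phi_i(z)e^{-A(z)} .
\]
Integrating over $\mathcal A$ cancels the denominator exactly. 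Combined with your Step 1 and $\operatorname{dist}(y_j,\mathcal A)\le\sqrt d N_p\Delta$, this yields $c_p$ with no dependence on the shape of $\mathcal A$. Only $\phi_j$ enters the distance penalty, which is why the factor is $e^{-d\beta\Delta^2N_p^2/2}$ rather than the product's $e^{-d\beta\Delta^2N_p^2}$.
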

\begin{proof}
    We consider 
    \[
F_{ij}
:=
\frac{
\displaystyle
\int_{\mathcal A}
\frac{
\psi_j(z)
\psi_i(z)
}{
S(z)
}
e^{-A(z)}\,dz
}{
\displaystyle
\int_{\mathcal A}
\psi_i(z)
e^{-A(z)}\,dz
}.
\]
Normalizing constants cancel in the numerator and denominator so Lemma \ref{l: Gaussian net coverage bounds} gives $S(z) \leq \left(1 + \sqrt{\frac{2\pi}{\beta\Delta^2}}\right)^d$.
Therefore we are left to consider
 \[
F_{ij}
:=
\frac{
\displaystyle
\int_{\mathcal A}
{
\psi_j(z)
\psi_i(z)
}
e^{-A(z)}\,dz
}{
\displaystyle
\int_{\mathcal A}
\psi_i(z)
e^{-A(z)}\,dz
}.
\]
We will show that the numerator is bounded by a constant times the denominator. 
First define the projection mapping of a point $z \in \mathbb{R}^d$ onto $\mathcal{A}$ by 

$$Tz = q + \frac{z - q}{2}, \qquad q = {\arg\min}_{x \in \mathcal{A}} \left\|y_j-x\right\|.$$

Since $\mathcal{A}$ we have that for $q,z \in \mathcal{A}$ their linear combination also exists in $\mathcal{A}$ therefore $T(\mathcal{A}) \subseteq \mathcal{A}$. Then 
$$\int_{\mathcal A}
{
\psi_j(z)
\psi_i(z)
}
e^{-A(z)}\,dz \geq \int_{T(\mathcal A)}
{
\psi_j(z)
\psi_i(z)
}
e^{-A(z)}\,dz = \frac{1}{2^d}\int_{\mathcal A}
{
\psi_j(Tz)
\psi_i(Tz)
}
e^{-A(Tz)}\,dz.$$
Consider 
\begin{align*}
{
\psi_j(Tz)
\psi_i(Tz)
}
e^{-A(Tz)}&=e^{
-\frac{\beta}{2}\|Tz-y_i\|^2
-\frac{\beta}{2}\|Tz-y_j\|^2
-A(Tz)
}
\\
&\quad=
e^{
-\frac{\beta}{2}
\left(
\|Tz-y_i\|^2
+\|Tz-y_j\|^2
-\|z-y_i\|^2
\right)
-A(Tz)+A(z)
}
e^{-\frac{\beta}{2}\|z-y_i\|^2-A(z)}.
\end{align*}
Completing the square on the first term yields 
\[
\|Tz-y_i\|^2
+\|Tz-y_j\|^2
-\|z-y_i\|^2
=
\|q-y_j\|^2-\langle y_j - y_i, z-q\rangle-\frac12\|z-q\|^2.
\]
Because ${\arg\min}_{x \in \mathcal{A}} \left\|y_j-x\right\|$, $\|q-y_j\|= \operatorname{dist}(y_j,\mathcal{A})=D$. Since \(\|y_j-y_i\|=\Delta\), we have
\[
\langle y_j - y_i, z-q\rangle\geq-\Delta\|z-q\|.
\]
Furthermore, \(A\) is \(G\)-Lipschitz and
\[
\|Tz-z\|=\frac12\|z-q\|,
\]
so
\[
-A(Tz)+A(z)\geq -G\|T_z - z\|\geq-\frac{G}{2}\|z-q\|.
\]
Which yields
\begin{align*}
&-\frac{\beta}{2}(\|Tz-y_i\|^2
+\|Tz-y_j\|^2
-\|z-y_i\|^2)-A(Tz)+A(z)\\
&\quad\ge -\frac{\beta}{2}\operatorname{dist}(y_j,\mathcal{A})^2 +\frac{\beta}{4}\|z-q\|^2-\frac{\beta\Delta}{2}\|z-q\|-\frac{G}{2}\|z-q\|
\end{align*}
Completing the square in $\|z-q\|$ yields 
\[
\frac{\beta}{4}\|z-q\|^2
-\frac{\beta\Delta+G}{2}\|z-q\|
=
\frac{\beta}{4}
\left(
\|z-q\|-\frac{\beta\Delta+G}{\beta}
\right)^2
-\frac{(\beta\Delta+G)^2}{4\beta},
\]
and hence
\[
\frac{\beta}{4}\|z-q\|^2
-\frac{\beta\Delta+G}{2}\|z-q\|
\geq
-\frac{(\beta\Delta+G)^2}{4\beta}.
\]
Therefore,
\begin{align*}
e^{
-\frac{\beta}{2}(\|Tz-y_i\|^2
+\|Tz-y_j\|^2-\|z-y_i\|^2)
-A(Tz) + A(z)
}
\geq
e^{
-\frac{\beta}{2}\operatorname{dist}(y_j,\mathcal A)^2
-\frac{(\beta\Delta+G)^2}{4\beta}
}.
\end{align*}
Plugging this back into the integral gives
$$\int_{\mathcal A}
{
\psi_j(Tz)
\psi_i(Tz)
}
e^{-A(Tz)}\,dz \ge e^{
-\frac{\beta}{2}\operatorname{dist}(y_j,\mathcal A)^2
-\frac{(\beta\Delta+G)^2}{4\beta}
}\int_{\mathcal A}
\psi_i(z)
e^{-A(z)}\,dz.$$
Which cancels with the denominator to yield 
    \[
F_{ij}
:=
\frac{
\displaystyle
\int_{\mathcal A}
\frac{
\psi_j(z)
\psi_i(z)
}{
S(z)
}
e^{-A(z)}\,dz
}{
\displaystyle
\int_{\mathcal A}
\psi_i(z)
e^{-A(z)}\,dz
}\ge \frac{e^{
-\frac{\beta}{2}\operatorname{dist}(y_j,\mathcal A)^2
-\frac{(\beta\Delta+G)^2}{4\beta}
}}{{2^d\left(1 + \sqrt{\frac{2\pi}{\beta\Delta^2}}\right)^d}}.
\]
Since \(\max_j\operatorname{dist}(y_j,\mathcal A)
\leq \sqrt{d}N_p\Delta\) we get 
$$F_{ij} \ge \frac{e^{
-\frac{d\beta\Delta^2N_p^2}{2}
-\frac{(\beta\Delta+G)^2}{4\beta}
}}{{2^d\left(1 + \sqrt{\frac{2\pi}{\beta\Delta^2}}\right)^d}}.$$
\end{proof}

The following Corollary makes the lower bound on $c_p$ explicit given an appropriate choice of $\beta$, $\Delta$ and $N_p$. For a log-Lipschitz observable these lower bounds are given in terms of the choice parameter $\gamma_0 > 0$ and the dimension of the CV space $d \ge 1$.
In the case of an indicator observable these lower bounds are given in terms of the choice parameter $\epsilon > 0$ and the dimension of the CV space $d \ge 1$.
We note that while the bounds have an exponential dependence on $d$ which does not pose any degradation to our bounds.
This is because in the practical setting we assume $d$ to be small i.e. $d \le 3$. 

Additionally, our bounds have an exponential dependence on $\gamma_0$. 
This dependence is slightly more nuanced as Theorem \ref{t: Explicit Error for Log-lipschitz observable} imposes a restriction on $\gamma_0$ of the form $\Ctheta L_{fiber}^2 \leq \frac{\gamma_0}{2} \leq \frac{60d}{\sqrt{\pi}}$. 
This implies that the observable fluctuations along the fiber must be well behaved relative to the geometry of the conditional distribution along that fiber, otherwise the bounds are exponentially bad. 
However, this is consistent with the behavior of the algorithm. 
We know that a poor choice of collective variable mapping can lead to catastrophic algorithmic performance.

\begin{corollary}\label{c: c_p explicit log-lipschitz}
    (Explicit Overlap Lower Bound). Suppose the hypotheses of Lemma \ref{l: c_p lowerbound} hold. Fix \(\gamma>0\), and choose
    \[
\beta>G^2,
\qquad
\Delta=\frac{\gamma\sqrt{\pi}}{120d\sqrt{\beta}}, \qquad N_p  =\max\{\frac{3}{\sqrt{\beta\Delta^2}},\frac{20d}{\sqrt{e^9\pi}}\sqrt{\frac{1}{\beta\Delta^2}}\}
\]
then
$$c_{p,LL}\ge\frac{
\exp\left[-\max(\frac{9d}{2},\frac{d^3}{100})
-\frac14-\left(\frac{\gamma\sqrt{\pi}}{120d}\right)^2
-\frac{\gamma\sqrt{\pi}}{240d}
\right]
}{
2^d
\left(
1+\frac{120\sqrt{2}\,d}{\gamma}
\right)^d
}.$$
If instead for $\epsilon > 0$ we require  
$$\Delta = \min\left\{\frac{(1-e^{-\epsilon/11)}}{6d\sqrt{\beta}}, \frac{\sqrt{\pi}}{120d\sqrt{\beta}}\right\}$$
then
$$c_{p,ind}\ge\frac{
\exp\left(
-\max(\frac{9d}{2},\frac{d^3}{100})
-\frac12
\right)
}{
2^d
\left(
1+\frac{72\sqrt{2\pi}d}{\epsilon}
\right)^d
}.$$
\end{corollary}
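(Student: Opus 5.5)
The plan is to substitute the prescribed choices of $\beta$, $\Delta$ and $N_p$ into the bound of Lemma \ref{l: c_p lowerbound},
\[
c_p=\frac{e^{-\frac{d\beta\Delta^2N_p^2}{2}-\frac{(\beta\Delta+G)^2}{4\beta}}}{2^d\left(1+\sqrt{\frac{2\pi}{\beta\Delta^2}}\right)^d},
\]
and bound each of its three pieces separately: the padding exponent $\frac{d}{2}\beta\Delta^2N_p^2$, the overlap exponent $\frac{(\beta\Delta+G)^2}{4\beta}$, and the denominator $\bigl(1+(2\pi/(\beta\Delta^2))^{1/2}\bigr)^d$. In every piece only the dimensionless product $\beta\Delta^2$ enters, apart from the single ratio $G/\sqrt\beta$. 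In the log-Lipschitz case $\sqrt{\beta\Delta^2}=\gamma\sqrt\pi/(120d)$, and in the indicator case $\sqrt{\beta\Delta^2}=\min\{(1-e^{-\epsilon/11})/(6d),\sqrt\pi/(120d)\}$.

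\textbf{Padding term.} Since $N_p\sqrt{\beta\Delta^2}=\max\{3,\,20d/\sqrt{e^9\pi}\}$, we get
\[
\tfrac d2\beta\Delta^2N_p^2=\max\left\{\tfrac{9d}{2},\,\tfrac{200d^3}{e^9\pi}\right\},
\]
and $200/(e^9\pi)\le 1/100$ gives the $\max(\frac{9d}{2},\frac{d^3}{100})$ in the exponent. The same computation applies verbatim in both cases, since it uses only the definition of $N_p$.

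\textbf{Overlap term.} Expand $(\beta\Delta+G)^2/(4\beta)=\frac{\beta\Delta^2}{4}+\frac{G\Delta}{2}+\frac{G^2}{4\beta}$. The hypothesis $\beta>G^2$ gives $G^2/(4\beta)\le 1/4$ and $G\Delta/2\le \sqrt{\beta\Delta^2}/2$. In the log-Lipschitz case these become $\frac14(\gamma\sqrt\pi/(120d))^2+\gamma\sqrt\pi/(240d)+\frac14$. This matches the stated exponent after dropping the harmless factor $\frac14$ on the square term, which only weakens the bound. In the indicator case $\sqrt{\beta\Delta^2}\le\sqrt\pi/120$, so the first two terms are numerically tiny and, together with $\frac14$, sum to at most $\frac12$.

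\textbf{Denominator.} In the log-Lipschitz case $\sqrt{2\pi/(\beta\Delta^2)}=\sqrt{2\pi}\cdot120d/(\gamma\sqrt\pi)=120\sqrt2\,d/\gamma$, which is exactly the stated denominator. In the indicator case the minimum in $\Delta$ makes $1/\sqrt{\beta\Delta^2}=\max\{6d/(1-e^{-\epsilon/11}),\,120d/\sqrt\pi\}$. To put this in the form $72\sqrt{2\pi}d/\epsilon$, use $1-e^{-x}\ge x/(1+x)$, or $1-e^{-x}\ge x/2$ for $x\le 1$, with $x=\epsilon/11$. This gives $6d/(1-e^{-\epsilon/11})\lesssim 132d/\epsilon$ for $\epsilon\lesssim 1$, and then $\sqrt{2\pi}\cdot 6d\cdot 12/\epsilon$ is the target form. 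The branch $120d/\sqrt\pi$ is dominated whenever $\epsilon$ is at most a constant, which is the relevant regime.

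I expect the main obstacle to be this last constant bookkeeping. The inequality $1-e^{-\epsilon/11}\ge \epsilon/12$ only holds for $\epsilon$ below an explicit threshold (roughly $\epsilon\le 2$). For larger $\epsilon$ one must either note that the $\sqrt\pi/(120d\sqrt\beta)$ branch is active and check $\sqrt{2\pi}\cdot120d/\sqrt\pi\le 72\sqrt{2\pi}d/\epsilon$ (which fails for large $\epsilon$), or state the corollary under an implicit restriction such as $\epsilon\le 1$. I would handle this by splitting into $\epsilon\le 1$ and $\epsilon>1$ and, if necessary, adding the restriction $\epsilon\le1$, which is harmless for relative-error guarantees. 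The remaining steps are direct substitution.
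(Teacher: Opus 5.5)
Your proposal is correct and follows the paper's own argument: substitute the parameter choices into the bound of Lemma \ref{l: c_p lowerbound}, then control the padding exponent via $N_p\sqrt{\beta\Delta^2}=\max\{3,20d/\sqrt{e^9\pi}\}$, the three overlap terms via $\beta>G^2$, and the denominator either directly (log-Lipschitz case) or via $1-e^{-\epsilon/11}\ge \epsilon/(11+\epsilon)\ge\epsilon/12$ (indicator case). The restriction $\epsilon\le 1$ you flag is exactly what the paper uses tacitly in that last inequality, and it also makes the $\sqrt{\pi}/(120d\sqrt{\beta})$ branch of the minimum harmless, so stating it explicitly is the right fix.
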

\begin{proof}
    The proof follows by applying the restrictions on $\beta, \Delta$ and $N_p$ to $c_p$ from Lemma \ref{l: c_p lowerbound}.
    First 
    $$c_p
:=
 \frac{\exp\left(
-\frac{d\beta\Delta^2N_p^2}{2}
-\frac{(\beta\Delta+G)^2}{4\beta}\right)
}{{2^d\left(1 + \sqrt{\frac{2\pi}{\beta\Delta^2}}\right)^d}} =  \frac{\exp\left(
-\frac{d\beta\Delta^2N_p^2}{2}
-\frac{\beta\Delta^2}{4}-\frac{G\Delta}{2}-\frac{G^2}{4\beta}\right)
}{{2^d\left(1 + \sqrt{\frac{2\pi}{\beta\Delta^2}}\right)^d}}.$$
Applying $N_p$ to the first term in the exponential and using that $\frac{20}{2\sqrt{e^9\pi}} \leq \frac{1}{10}$ yields, 
$$\exp
(-\frac{d\beta\Delta^2N_p^2}{2}) \geq \exp(-\max(\frac{9d}{2},\frac{d^3}{100})).$$
Applying $\beta> G^2$ to the last term gives
$$\exp(-\frac{G^2}{4\beta}) \ge \exp(-\frac{1}{4}).$$
Using that $\Delta=\frac{\gamma\sqrt{\pi}}{120d\sqrt{\beta}}$ on the second term gives 
$$\exp(-\frac{\beta\Delta^2}{4})\ge \exp(-\left(\frac{\gamma\sqrt{\pi}}{240d}\right)^2).$$
Lastly using  $\Delta=\frac{\gamma\sqrt{\pi}}{120d\sqrt{\beta}}$ and $\beta > G^2$ yields 
$$\exp(-\frac{G\Delta}{2}) \geq \exp(-\frac{\gamma\sqrt{\pi}}{240d}).$$
Putting all these bounds together yields 
$$c_p\ge\frac{
\exp\left[-\max(\frac{9d}{2},\frac{d^3}{100})
-\frac14-\left(\frac{\gamma\sqrt{\pi}}{120d}\right)^2
-\frac{\gamma\sqrt{\pi}}{240d}
\right]
}{
2^d
\left(
1+\frac{120\sqrt{2}\,d}{\gamma}
\right)^d
}.$$
For the bound when $$\Delta = \min\left\{\frac{(1-e^{-\epsilon/11)}}{6d\sqrt{\beta}}, \frac{\sqrt{\pi}}{120d\sqrt{\beta}}\right\}$$ we note that 
$\min\left\{\frac{(1-e^{-\epsilon/11)}}{6d\sqrt{\beta}}, \frac{\sqrt{\pi}}{120d\sqrt{\beta}}\right\} \ge \frac{\epsilon}{72d\sqrt \beta}$. This follows from the fact that 
\(1-e^{-\epsilon/11}
\geq
1-\frac{1}{1+\epsilon/11}
=
\frac{\epsilon}{11+\epsilon}
\geq
\frac{\epsilon}{12}.\) Which implies
\[
\frac{1-e^{-\epsilon/11}}{6d}
\geq
\frac{\epsilon}{72d}.
\]
Using $\Delta = \frac{\epsilon}{72d\sqrt{\beta}}$ yields the bound on the denominator. 
For the numerator we note that 
\[\min\left\{\frac{(1-e^{-\epsilon/11)}}{6d\sqrt{\beta}}, \frac{\sqrt{\pi}}{120d\sqrt{\beta}}\right\} \leq \frac{\sqrt{\pi}}{120d\sqrt{\beta}}\] 
and noting that each of the two terms is less than $\frac{1}{8}$ gives a lower bound of $\exp(-\frac{1}{2})$.
Substitution yields the final bound.
\end{proof}

\section{Analysis of Log-Lipschitz Observable}\label{s: smooth observable}
\subsection{Main Result}
We first collect the parameter choices required for the log-Lipschitz observable analysis into a single algorithm. 
The Gaussian window centers are placed on a padded  lattice in the collective variable space, and the sparse graph $G$ is taken to connect each window to itself and to its nearest-neighbor. 
The probability overlap constant in this construction is given as $c_p^*$, from Corollary \ref{c: c_p explicit log-lipschitz}.
The EMUS estimator is then computed using the corresponding sparse overlap matrix \(F^0\), as defined in Section 1. 
Algorithm \ref{alg:gaussian_net_construction} specifies the inverse temperature \(\beta\), lattice spacing \(\Delta\), and padding \(N_p\). 

To modularize the components controllable by the practitioner, we decompose the analysis into the variance over the fibers and the variance over the CV space.
In doing so we introduce of the conditional expectations $\bar{g}(z) \vcentcolon = \mathbb{E}_{\pi}[g \; |\; \theta = z]$ and $\overline{g^2}(z) \vcentcolon = \mathbb{E}_{\pi}[g^2 \;|\; \theta = z]$ and their corresponding log-Lipschitz constants $L_{\bar{g}}$ and $L_{\overline{g^2}}$. 
These constants cannot be further decomposed without imposing additional restrictions on the mapping $\theta$.
We define 
\begin{equation}\label{e: def L_g}
    L_g = \max\left(L_{\bar{g}},L_{\overline{g^2}}\right)
\end{equation}
as the geometric constant which quantifies the fluctuations of $g$.
\begin{algorithm}[H]
\caption{Gaussian Net \& Sparse Graph Construction (log-Lipschitz Observable)}
\label{alg:gaussian_net_construction}
\begin{algorithmic}
\State \textbf{Inputs:} Target distribution $\pi(x)$ with potential $V$, observable $g$, geometric constants $\ell, G$ (Assumptions \ref{a: collective variable}), $L_g$ (definition \ref{e: def L_g}), targets $\delta_\pi, \delta_g$ (Assumption \ref{a: good truncation}), dimension $d$, and parameter $\gamma_0 \in (0, 120d/\sqrt{\pi}]$.
\State \textbf{Identify:} Compact set $K_U$ such that $$\pi(K_U^c) \leq \delta_\pi \text{ and } \int_{K_U^c} g(x)\pi(dx) \leq \delta_g \pi[g].$$
\State \textbf{Choose:} $\beta$ such that $\beta > \max\left\{\ell + \frac{4}{\gamma_0}L^2_g, 2\ell, G^2\right\}$.
\State \textbf{Choose:} lattice spacing $\Delta =\frac{\gamma_0\sqrt{\pi}}{120 d \sqrt{\beta}}$.
\State \textbf{Choose:} padding $N_p = \max\left\{\frac{3}{\sqrt{\beta\Delta^2}}, \frac{20d}{\sqrt{e^9\pi}}\sqrt{\frac{1}{\beta\Delta^2}}\right\}$.
\State \textbf{Construct:} padded lattice $\Lambda_{pad}$ and graph $\mathcal G := \{(i,j) : \; i = j \text{ or } \|y_i - y_j\| = \Delta\}$
\For{each lattice point $y_k \in \Lambda_{pad}$}
    \State Define ${\psi}_k(x) \gets \left( \frac{\beta}{2\pi} \right)^{d/2} \exp\left( -\frac{\beta}{2} \|\theta(x) - y_k\|^2 \right)$
\EndFor
\State \Return The collection of windows $\{\psi_k\}_{k=1}^L$ and the graph $\mathcal G$.
\end{algorithmic}
\end{algorithm}

\begin{theorem}\label{t: Explicit Error for Log-lipschitz observable}
(Explicit Error for Log-Lipschitz Observables).
Let Assumption \ref{d: target distribution pushforward} hold. Let $\bar{\pi}[g]$ be the EMUS estimator computed on $L$ window using the sparse matrix $F^0$, defined on $G$, with $m = |G|$. Assume the push-forward potential $A(z)$ is $\ell$-smooth. Let $g : \mathbb{R}^D \to \mathbb{R}_{>0}$ be an observable such that the fiber fluctuations satisfy $C_{LS}^{(\theta)}L_{fiber}^2 \le \gamma_0/2$ for some $\gamma_0 \in (0, 120d/\sqrt{\pi}]$. Let $L_g = \max \{L_{\bar{g}}, L_{g^2}\}$.

Suppose the Gaussian net is constructed using parameters $\beta, \Delta,$ and $N_p$ exactly as defined in Algorithm \ref{alg:gaussian_net_construction}. Define the overlap constant lower bound $c_p^*$, for the adjacent Gaussian sparse matrix, according to Corollary \ref{c: c_p explicit log-lipschitz}:
$$ c_p^* = \frac{
\exp\left[-\max(\frac{9d}{2},\frac{d^3}{100})
-\frac14-\left(\frac{\gamma_0\sqrt{\pi}}{120d}\right)^2
-\frac{\gamma_0\sqrt{\pi}}{240d}
\right]
}{
2^d
\left(
1+\frac{120\sqrt{2}\,d}{\gamma_0}
\right)^d
}. $$
Furthermore, define the uniform variance bound $C_{\mathcal{Q}}^*$ as:
$$ C_{\mathcal{Q}}^* := 2(1 + 0.25 e^{-3\gamma_0})^{1/4} \exp(6\gamma_0). $$

For any $\epsilon > 0$ and $\delta > 0$, assume the compact set $K_U$ is chosen such that the truncation parameters satisfy $\max\{\delta_\pi, \delta_g\} \le 1 - e^{-\epsilon/4}$. If the MCMC sampling precision $\tilde{\epsilon}$ and the number of samples per window $N$ satisfy:
$$ \tilde{\epsilon} \le \min\left( \frac{e^{-3\gamma_0}}{4}, \, \frac{\epsilon^2c_p^*}{4\cdot16^2L^2C_{\mathcal Q_*}^2} \right), $$
$$ N \ge \frac{6144mL^2C_{\mathcal Q_*}^2}{\epsilon^2c_p^*\delta} ,$$
so that $
\tilde{\epsilon} = \mathcal{O}\left( \frac{c_p^*\epsilon^2}{L^2}\right)
$ and $
N = \Omega\left( \exp(\gamma_0)\frac{mL^2}{c_p^*\epsilon^2 \delta} \right).
$
Then the EMUS estimator guarantees relative error bounded by $\epsilon$ with probability at least $1 - \delta$:
$$e^{-\frac{3\epsilon}{8}}\mathcal S(w,f)
\leq
\bar{\mathcal S}(\bar w,f)
\leq
e^{\frac{3\epsilon}{8}}\mathcal S(w,f),$$ 
for each $f \in \{g,1\}$, and consequently, 
$$ | \log \bar{\pi}[g] - \log \pi[g] | \le \epsilon. $$
\end{theorem}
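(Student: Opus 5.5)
The plan is to reduce Theorem~\ref{t: Explicit Error for Log-lipschitz observable} to Corollary~\ref{c: main_theorem}. That corollary needs three inputs: the good-path constant $c_p$, a uniform bound $C_{\mathcal Q_*}$ on the weighted relative standard deviations for both $\{\pi_k\}$ and $\{\tilde\pi_k\}$, and the truncation budget.

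\textbf{Good path.} The graph $\mathcal G$ from Algorithm~\ref{alg:gaussian_net_construction} connects nearest lattice neighbours, so it is connected and contains all loops $(i,i)$. The choices $\beta>G^2$, $\Delta=\gamma_0\sqrt\pi/(120d\sqrt\beta)$ and $N_p$ are exactly those of Corollary~\ref{c: c_p explicit log-lipschitz} with $\gamma=\gamma_0$. Hence Assumption~\ref{a: good path} holds with $c_p=c_p^*$, and Proposition~\ref{p: sparse matrix} lets us use $F^0$ in place of $F$.

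\textbf{Relative standard deviation (main step).} I need $\max\{\mathcal Q_{\{\pi\}}(w,f),\mathcal Q_{\{\tilde\pi\}}(w,f)\}\le C_{\mathcal Q}^*$ for $f\in\{g,1\}$. This is the main obstacle, and I would split it into three parts.

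\emph{Reduction to one window.} It suffices to bound each ratio $\sqrt{\Var_{\pi_k}(f^*)}/\E_{\pi_k}f^*$ uniformly in $k$, since a weighted average of ratios is at most their maximum. I would bound $\E f^{*2}/(\E f^*)^2$.

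\emph{Removing the factor $1/S$.} Write $f^*=f/S\circ\theta$. Lemma~\ref{l: log bound S_max/S_min} with the chosen $\Delta$ and $N_p$ gives $\log r_S\le c\,\gamma_0$. This costs a factor $e^{O(\gamma_0)}$, so it remains to bound $\E_{\pi_k}f^2/(\E_{\pi_k}f)^2$.

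\emph{Splitting into fibre and CV-space parts.} Condition on $\theta=z$. On the fibres, Herbst's argument (Lemma~\ref{l: herbst argument}) applies to the LSI of Assumption~\ref{a:lsi-cond}, which is unchanged by the window tilt because $\psi_k$ is constant on fibres. Since $\log g$ is $L_{fiber}$-Lipschitz there, this gives $\overline{g^2}(z)\le e^{C_{LS}^{(\theta)}L_{fiber}^2}\bar g(z)^2\le e^{\gamma_0/2}\bar g(z)^2$. In CV space, the tilted marginal $\mu_k\propto\tilde\psi_k e^{-A}$ on $\mathcal A$ satisfies an LSI with constant at most $1/(\beta-\ell)$ by Remark~\ref{r: Bakry-Emery}. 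Herbst again, applied to the $L_g$-log-Lipschitz functions $\bar g$ and $\overline{g^2}$, gives $\E_{\mu_k}\overline{g^2}/(\E_{\mu_k}\bar g)^2\le \exp(cL_g^2/(\beta-\ell))\le e^{c\gamma_0}$, using $\beta>\ell+4L_g^2/\gamma_0$.

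Collecting the exponents should give the stated $C_{\mathcal Q}^*=2(1+0.25e^{-3\gamma_0})^{1/4}e^{6\gamma_0}$. For $\tilde\pi_k$, I would apply Cauchy--Schwarz to $\E_{\tilde\pi_k}(f^*)^2$ against $\chi^2\le\tilde\epsilon$ and use a fourth-moment bound, which again comes from Herbst; this explains the $(\cdot)^{1/4}$ factor. The condition $\tilde\epsilon\le e^{-3\gamma_0}/4$ then keeps the perturbation small, in the spirit of Lemma~\ref{l: Stability of Q, Gaussian}.

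\textbf{Choosing $\epsilon_w$ and the failure probabilities.} Apply Corollary~\ref{c: main_theorem} with $\epsilon_w=\epsilon/16$ and $\delta_w=\delta_f=\delta/3$. The stated bound $N\ge 6144mL^2C_{\mathcal Q_*}^2/(\epsilon^2c_p^*\delta)$ dominates both requirements $4C_{\mathcal Q_*}^2/\delta_f$ and $32mL^2/(\epsilon_w^2c_p\delta_w)$. The stated bound on $\tilde\epsilon$ dominates $\min\{1/(4C_{\mathcal Q_*}^2),\epsilon_w^2c_p/(16L^2)\}$. Each of the three terms of $\eta$ is then at most $\epsilon/8$: the weight term is $2\epsilon_w=\epsilon/8$, and the two terms involving $C_{\mathcal Q_*}$ are small because $C_{\mathcal Q_*}^2$ appears in both the bound on $N$ and the bound on $\tilde\epsilon$. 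This gives $|\log(\bar{\mathcal S}/\mathcal S)|\le 3\epsilon/8$.

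\textbf{Conclusion.} Finally, $\max\{\delta_\pi,\delta_g\}\le1-e^{-\epsilon/4}$ bounds the truncation term by $\epsilon/4$. Theorem~\ref{t: general error decomposition} then gives $3\epsilon/4+\epsilon/4=\epsilon$, with total failure probability $\delta_w+2\delta_f=\delta$.
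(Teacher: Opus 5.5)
Your proposal is correct and follows the paper's proof. The paper gets $\max\{\mathcal Q_{\{\pi\}}(w,f),\mathcal Q_{\{\tilde\pi\}}(w,f)\}\le C_{\mathcal Q_*}$ by citing Proposition~\ref{p: gamma bound relative variance}, which is proved by the same fibre/CV-space Herbst decomposition and $\chi^2$ Cauchy--Schwarz step you sketch. It then applies Corollary~\ref{c: main_theorem} with exactly your choices $\epsilon_w=\epsilon/16$ and $\delta_w=\delta_f=\delta/3$, uses $C_{\mathcal Q_*}\ge 2$ to absorb the conditions on $N$ and $\tilde\epsilon$, and spends $\epsilon/4$ on truncation.

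The one local difference is how you remove $1/S$: you bound it by a direct factor $r_S^2$, which needs only $S_{\min}\le S\le S_{\max}$ on $\mathcal A$, rather than going through the log-Lipschitz constant of $\bar g/S$ and Lemma~\ref{l: Lipshitz generic bound}. That is slightly cleaner. One caveat: with the Algorithm's $N_p$, Lemma~\ref{l: log bound S_max/S_min} only gives $\log r_S\le\gamma_0/24+\min\{5de^{-9/2}/(3\sqrt\pi),1/4\}$. So your claim $\log r_S\le c\,\gamma_0$ needs the larger, $\gamma_0$-dependent padding assumed in Proposition~\ref{p: gamma bound relative variance}, a mismatch the paper's own proof shares.
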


\begin{rem}\label{rem: L number}
While Theorem \ref{t: Explicit Error for Log-lipschitz observable} treats the total number of windows $L$ as a fixed parameter, $L$ is inherently dependent on the spacing of the Gaussian windows $\Delta$ as they relate to the relevant set $\mathcal{A}$ on the collective variable space. 
A lattice placed over $\mathcal{A}$ of diameter $\text{diam}(\mathcal{A})$ requires roughly $N_A = \text{diam}(\mathcal{A})/\Delta$ interior points per dimension and the number of the padding windows $N_p \geq \frac{3}{\sqrt{\beta\Delta^2}}$ is made explicit in Lemma \ref{l: Gaussian net coverage bounds}. 
   
Since $\Delta \propto \frac{1}{d\sqrt{\beta}}$ as specified in Algorithm \ref{alg:gaussian_net_construction} implies $\frac{1}{\sqrt{\beta\Delta^2}}\propto d$. This gives us that $N_p \propto \frac{1}{\sqrt{\beta\Delta^2}}$ scales as $\Omega(d)$. 
Combined with the scaling of the padding windows $N_p$ the total number of windows $L = (N_a + N_p)^d$ scales as
$$L = \mathcal{O}\left((\text{diam}(\mathcal{A})d\sqrt{\beta} + d)^d\right).$$

This highlights a relevant trade-off in umbrella sampling: choosing a colder temperature (larger $\beta$) to control the log-Lipschitz constant by better capturing the changes in $\pi$ requires a smaller spacing between windows $\Delta$.
Thereby, the total number of windows $L$ we require increases polynomially in $\beta$ and exponentially in $d$. 
Once again the exponential dependence in $d$ is benign as we assume $d$ is small e.g. $d \leq 3$.
Lastly, to better quantify $\text{diam}(\mathcal{A})$ we would need additional assumptions on the mapping $\theta$. 
\end{rem}

\subsection{Stability of Relative Variance}\label{ss: relative variance gaussian}

In this section, we state a main proposition that quantifies a bound on the relative variance in terms of the biasing function temperature $\beta$ and the spacing $\Delta$. 
The proof is decomposed into two supporting lemmas that make explicit a bound on the relative variance in the case of log-Lipschitz observable functions.
The first Lemma (Lemma \ref{l: Gaussian Variance Bound}) provides a bound under the true distribution $\pi_k$. 
The second Lemma (Lemma \ref{l: Stability of Q, Gaussian}) demonstrates that $\mathcal{Q}$ is stable under the biased distribution $\tilde{\pi}_k$, yielding a bound that's applicable in the case of inexact sampling.

Lemma \ref{l: Gaussian Variance Bound} and Lemma \ref{l: Stability of Q, Gaussian} impose the restriction on the fiber fluctuations $L^2_{fiber} = O\left(\frac{1}{\Ctheta}\right)$. 
Since it is unreasonable to assume control over $L_{fiber}$ or $\Ctheta$, this is inherited as an assumption. 
Furthermore, given that $\Cd$ is controllable through choice of temperature $\beta$, in the context of Remark \ref{r: log-lipschitz decomposition} we require that $\frac{1}{\Cd} = O\left(\max\left(L_{\bar{g}},L_{\overline{g^2}}\right)\right)$ and $\frac{1}{\Cd} = O\left(L_S\right)$. 
The following proposition makes these scalings explicit by appropriate choice of temperature $\beta$ and spacing $\Delta$. 

\begin{rem}\label{r: g = 1}
In order to apply our results in this section to the main theorem (Theorem \ref{t: general error decomposition}), we require a uniform upper bound $C_{\mathcal{Q}}$ such that $\max\{\mathcal{Q}_{\{\cdot\}}(w, g), \mathcal{Q}_{\{\cdot\}}(w, 1)\} \le C_{\mathcal{Q}}$.
Note that when $f = 1$, the function is constant. 
Consequently, the conditional expectations $\bar{g}(z)$ and $\overline{g^2}(z)$ are constant and the associated log-Lipschitz constants vanish so $L_{\bar{g}} = 0$ and $L_{\overline{g^2}} = 0$. 
Therefore the bounds in this section for an arbitrary $g$ also apply as upper bounds for $g = 1$.
\end{rem}

\begin{proposition}\label{p: gamma bound relative variance} Let Assumptions \ref{a: good truncation}, \ref{a: good samples} and \ref{a: good path} hold. Assume the push-forward potential $A(z)$ is $\ell$-smooth, let $L_g = \max\left(L_{\bar{g}},L_{\overline{g^2}}\right)$, and the fiber fluctuations satisfy $\mathcal{C}_{LS}^{(\theta)} L_{fiber}^2 \leq \gamma_0 / 2$. For any $\gamma_0 \in (0, 120d/\sqrt{\pi}]$, choose the  parameters as follows: 

\begin{enumerate}
    \item Choose the temperature $\beta$ such that
    $$\beta > \max\left\{2\ell,\ell + \frac{4}{\gamma_0}L^2_g\right\}.$$
    \item Choose the lattice spacing $\Delta$ as a function of $\beta$ such that 
    $$\Delta = \frac{\gamma_0\sqrt{\pi}}{120 d \sqrt{\beta}}.$$
    \item Choose the padding $N_p$ to satisfy 
    \begin{align*}
    N_p \geq \max\left\{\frac{3}{\sqrt{\beta\Delta^2}},\frac{240d}{\sqrt{e^9\pi}\gamma_0}\sqrt{\frac{1}{\beta\Delta^2}}\right\}.
    \end{align*}
\end{enumerate}

Then, the weighted relative standard deviation under the exact window distributions satisfies:

$$\max\{\mathcal{Q}_{\{\pi\}}(w, g), \mathcal{Q}_{\{\pi\}}(w, 1)\} \leq (\exp(7\gamma_0/2) - 1)^{1/2}$$

Furthermore, if the biased sampling distributions satisfy $\chi^2(\tilde{\pi}_k \| \pi_k) \leq \tilde{\epsilon}$ for all $k$, and the bias is sufficiently small such that $\tilde{\epsilon}^{1/2}\exp(3\gamma_0/2) < 1$, then the relative standard deviation under the biased distributions satisfies:

$$\max\{\mathcal{Q}_{\{\tilde{\pi}\}}(w, g),\mathcal{Q}_{\{\tilde{\pi}\}}(w, 1)\} \leq C_{\mathcal{Q}_*}$$
where the constant $C_{\mathcal{Q}_*}$ is defined as: 

$$C_{\mathcal{Q}_*} \vcentcolon= \frac{\exp\left(6\gamma_0\right) (1+\tilde{\epsilon})^{1/4}}{1 - \tilde{\epsilon}^{1/2}\exp(3\gamma_0/2)}.$$

\end{proposition}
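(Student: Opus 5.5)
The proof splits into two parts, matching the two lemmas announced at the start of Section~\ref{ss: relative variance gaussian}: Lemma~\ref{l: Gaussian Variance Bound} for the exact windows $\pi_k$ and Lemma~\ref{l: Stability of Q, Gaussian} for the biased windows $\tilde\pi_k$. The plan is to prove the exact bound first, window by window, and then transfer it.

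\textbf{Exact windows.} Fix a window $k$. We need a relative variance bound of the form $\Var_{\pi_k}(f^*)\le (e^{7\gamma_0/2}-1)\,(\E_{\pi_k}f^*)^2$. Summing with weights $w_k$ then gives $\mathcal Q_{\{\pi\}}(w,f)\le (e^{7\gamma_0/2}-1)^{1/2}$, since $\mathcal Q$ is a ratio of weighted sums. Writing $\Var/\E^2=\E[(f^*)^2]/(\E f^*)^2-1$, it suffices to show
\[
\E_{\pi_k}[(f^*)^2]\le e^{7\gamma_0/2}(\E_{\pi_k}f^*)^2 .
\]
Decompose $\pi_k$ into its CV marginal $\mu_k\propto \tilde\psi_k e^{-A}$ on $\mathcal A$ and the fibers $\pi_z$. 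Since $1/S$ factors through $\theta$, we get $\E_{\pi_k}[(f^*)^2]=\E_{\mu_k}[\overline{g^2}/S^2]$ and $\E_{\pi_k}f^*=\E_{\mu_k}[\bar g/S]$.

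\textbf{Fiber and CV contributions.} On each fiber, the LSI (Assumption~\ref{a:lsi-cond}) and the Herbst argument (Lemma~\ref{l: herbst argument}) applied to $\log g$, which is $L_{fiber}$-Lipschitz along fibers, give $\overline{g^2}(z)\le e^{2C_{LS}^{(\theta)}L_{fiber}^2}\bar g(z)^2\le e^{\gamma_0}\bar g(z)^2$. On the CV side, Remark~\ref{r: Bakry-Emery} gives $C_{LS}^{(d)}\le 1/(\beta-\ell)$. Applying Herbst again, now to the log-Lipschitz functions $\log\bar g$ and $\log \overline{g^2}$ (constants at most $L_g$), bounds the ratio $\E_{\mu_k}[e^{2h}]/(\E_{\mu_k}e^h)^2$ by $e^{4L_g^2/(\beta-\ell)}\le e^{\gamma_0}$, using $\beta>\ell+4L_g^2/\gamma_0$. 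The factor $1/S$ is not log-Lipschitz with a convenient constant, so instead bound it crudely by $r_S$ via Lemma~\ref{l: log bound S_max/S_min}. With $\Delta=\gamma_0\sqrt\pi/(120d\sqrt\beta)$ and the stated $N_p$, the two terms there contribute about $\gamma_0/(24\sqrt2)$ and at most $\gamma_0/48$. This gives $\log r_S\le \gamma_0/4$ with room to spare, and $r_S^2$ contributes at most $e^{\gamma_0/2}$. Collecting the exponents yields $\gamma_0+\gamma_0+\gamma_0/2+$ slack, which fits within $7\gamma_0/2$. The hypothesis $\beta\Delta^2\le1$ needed in Lemma~\ref{l: log bound S_max/S_min} follows from $\gamma_0\le 120d/\sqrt\pi$. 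The case $f=1$ follows by Remark~\ref{r: g = 1}.

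\textbf{Biased windows.} Use the change-of-measure identity $\E_{\tilde\pi_k}h-\E_{\pi_k}h=\E_{\pi_k}[(\frac{d\tilde\pi_k}{d\pi_k}-1)h]$ together with Cauchy--Schwarz. For the denominator, this gives $\E_{\tilde\pi_k}f^*\ge \E_{\pi_k}f^*-\sqrt{\tilde\epsilon}\sqrt{\Var_{\pi_k}f^*}\ge \E_{\pi_k}f^*(1-\sqrt{\tilde\epsilon}e^{3\gamma_0/2})$, using the exact bound (with some slack absorbed). For the numerator, $\sqrt{\Var_{\tilde\pi_k}f^*}\le (\E_{\tilde\pi_k}(f^*)^2)^{1/2}$. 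Bound $\E_{\tilde\pi_k}(f^*)^2\le (1+\tilde\epsilon)^{1/2}(\E_{\pi_k}(f^*)^4)^{1/2}$ by Cauchy--Schwarz with $\E_{\pi_k}(d\tilde\pi_k/d\pi_k)^2=1+\tilde\epsilon$, which accounts for the $(1+\tilde\epsilon)^{1/4}$ factor. The fourth moment is controlled by the same Herbst/LSI argument with $4\log g$ in place of $2\log g$, giving a ratio to $(\E_{\pi_k} f^*)^4$ of order $e^{24\gamma_0}$, hence the factor $e^{6\gamma_0}$ after the square root. Summing over $k$ with weights $w_k$ gives $C_{\mathcal Q_*}$.

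\textbf{Main obstacle.} The hardest step is the exponent bookkeeping in the fourth-moment/Herbst step. One must check that the LSI constants of the fibers and of $\mu_k$, and not merely of the full $\pi_k$, combine multiplicatively through the disintegration so that the constants come out as $7\gamma_0/2$ and $6\gamma_0$. If they do not combine cleanly, I would fall back on Theorem~\ref{t: LSI_bounds_CV_fiber} to obtain a single LSI for $\pi_k$ and apply Herbst once, at the cost of worse constants.
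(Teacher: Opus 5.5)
Your argument is correct. Its skeleton matches the paper: the tower property over the fibers (the conditional of $\pi_k$ given $\theta=z$ is $\pi_z$, since $\psi_k$ and $1_{K_U}$ factor through $\theta$), Herbst on each fiber and on $\mu_k$ with $C_{LS}^{(d)}\le 1/(\beta-\ell)$, and then Cauchy--Schwarz against $\chi^2$ plus a fourth moment for the biased windows. Where you differ is in how you handle the weight $1/S$.

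The paper keeps $1/S$ inside the CV-side Herbst step. It treats $m_1^*=\bar g/S$ and $m_2^*=\overline{g^2}/S^2$ as log-Lipschitz, with $L_{m_1^*}^2\le 2L_{\bar g}^2+2L_S^2$ (Remark \ref{r: log-lipschitz decomposition}). It then bounds $L_S$ through the semiconvexity Lemma \ref{l: Lipshitz generic bound}, $L_S^2\le 2\beta\log r_S$, which is where $\beta\ge 2\ell$ is needed to get $\beta/(\beta-\ell)\le 2$. In the biased part it bounds the $L^4/L^2$ ratio of $g^*$ through $m_2^*$, so it needs $L_{\overline{g^2}}$, and then multiplies by $\gamma^{1/2}$.

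You instead pull $1/S$ out with $S_{\min}\le S\le S_{\max}$ on $\mathcal A$, paying $r_S^2$ (resp.\ $r_S^4$). You also compare $\mathbb E_{\pi_k}(f^*)^4$ directly with $(\mathbb E_{\pi_k}f^*)^4$, so only $L_{\bar g}$ and $L_{\mathrm{fiber}}$ enter. This is more elementary and gives smaller exponents:
\begin{itemize}
\item it bypasses Lemma \ref{l: Lipshitz generic bound} and its extra requirement that $\sup_{\mathbb R^d}S$ be attained on $\mathcal A$;
\item it pays $2\log r_S\le\gamma_0/8$, whereas the paper pays $4C_{LS}^{(d)}L_S^2$, which its own bounds only control by $\frac{8\beta}{\beta-\ell}\log r_S$.
\end{itemize}
It is the same sandwich device the paper itself uses for indicators in Section \ref{s: indicator}. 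The paper's route buys generality: Lemmas \ref{l: Gaussian Variance Bound} and \ref{l: Stability of Q, Gaussian} are stated for arbitrary log-Lipschitz conditional means $m_1^*,m_2^*$, not only for weights with a good two-sided bound.

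Two small points. First, for the denominator factor $1-\tilde\epsilon^{1/2}e^{3\gamma_0/2}$ you genuinely need your sharper intermediate exponent. That exponent is at most $5\gamma_0/2$, so $\sqrt{\operatorname{Var}_{\pi_k}f^*}\le e^{5\gamma_0/4}\,\mathbb E_{\pi_k}f^*$; the stated bound $e^{7\gamma_0/2}$ alone would only give $e^{7\gamma_0/4}$. Second, the worry in your last paragraph is unfounded. The fiber and CV factors multiply exactly through the disintegration, as in your own computation, so no fallback to Theorem \ref{t: LSI_bounds_CV_fiber} is needed. Also, the step $e^{24\gamma_0}\mapsto e^{6\gamma_0}$ is a fourth root, not a square root.
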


\begin{lem}\label{l: Gaussian Variance Bound}(Variance Bound for Exact Sampling)
Let $g: \mathbb{R}^D \to \mathbb{R}_{>0}$ be an observable and write $g^*(x) = g(x)/S(x)$. Define the conditional expectation $m_1^*(z) := \mathbb{E}_{\pi_k}[g^* \mid \theta = z]$ on the CV space. Suppose:
\begin{enumerate}
\item $m_1^*$ is $L_{m_1^*}$-log-Lipschitz on $\mathcal{A} \subset \mathbb{R}^d$,
\item For every $z \in \mathcal{A}$, the conditional map $x \mapsto g^*(x)$ restricted to the fiber $\Sigma_z$ is $L_{\text{fiber}}$-log-Lipschitz with respect to the fiber-tangent metric,
\item The marginal $\mu_k$ on the CV space satisfies an LSI with constant $C_{LS}^{(d)}$,
\item For every $z \in \mathcal{A}$, the conditional measure $\pi_k(\cdot \mid \theta = z)$ satisfies an LSI on the fiber $\Sigma_z$ with constant $C_{LS}^{(\theta)}$.
\end{enumerate}
Then
\[
\frac{\mathbb{E}_{\pi_k}[(g^*)^2]}{\mathbb{E}_{\pi_k}[g^*]^2} \;\leq\; \exp\!\left( 2 C_{LS}^{(\theta)} L_{\text{fiber}}^2 \right) \cdot \exp\!\left( 2 C_{LS}^{(d)} L_{m_1^*}^2 \right),
\]
and consequently
\[
\mathrm{Var}_{\pi_k}(g^*) \;\leq\; \left[ \exp\!\left( 2 C_{LS}^{(\theta)} L_{\text{fiber}}^2 + 2 C_{LS}^{(d)} L_{m_1^*}^2 \right) - 1 \right] \mathbb{E}_{\pi_k}[g^*]^2.
\]
Defining
\[
\gamma \;:=\; \exp\!\left( 2 C_{LS}^{(\theta)} L_{\text{fiber}}^2 +2 C_{LS}^{(d)} L_{m_1^*}^2 \right),
\]
the weighted relative standard deviation under $\{\pi_k\}$ satisfies $Q_{\{\pi\}}(w, g) \leq (\gamma-1)^{1/2}$.
\end{lem}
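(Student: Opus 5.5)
The plan is to split the second moment by the law of total expectation along the CV map and bound each layer with the standard exponential-moment consequence of an LSI for Lipschitz functions (the Herbst argument, Lemma \ref{l: herbst argument}). Write $\pi_k = \int \pi_{k,z}\,\mu_k(dz)$, with $\pi_{k,z}=\pi_k(\cdot\mid\theta=z)$. Then
\[
\mathbb{E}_{\pi_k}[(g^*)^2] = \int_{\mathcal A} \mathbb{E}_{\pi_{k,z}}[(g^*)^2]\,\mu_k(dz), \qquad \mathbb{E}_{\pi_k}[g^*] = \int_{\mathcal A} m_1^*(z)\,\mu_k(dz).
\]
The ratio will be bounded in two stages: first the fiber ratio $\mathbb{E}_{\pi_{k,z}}[(g^*)^2]/m_1^*(z)^2$, uniformly in $z$, and then the CV-space ratio $\int (m_1^*)^2\,d\mu_k / (\int m_1^*\,d\mu_k)^2$.

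\textbf{Key inequality.} If a measure $\nu$ satisfies an LSI with constant $C$ and $\log h$ is $L$-Lipschitz (in the metric for which the LSI is stated), then $\mathbb{E}_\nu[h^2] \le e^{2CL^2}\,\mathbb{E}_\nu[h]^2$. To prove it, set $u=\log h$. Herbst gives $\mathbb{E}_\nu e^{\lambda(u-\mathbb{E}u)} \le e^{C\lambda^2L^2/2}$. Taking $\lambda=2$ yields $\mathbb{E} h^2 \le e^{2\mathbb{E}u + 2CL^2}$. Jensen gives $e^{\mathbb{E}u}\le \mathbb{E}h$, so $\mathbb{E}h^2 \le e^{2CL^2}(\mathbb{E}h)^2$. (Using $\lambda=-1$ would give a lower bound on $\mathbb{E}h$ and a sharper constant, but the crude route already gives the stated exponent $2CL^2$.)

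\textbf{Fiber stage.} On $\Sigma_z$ with $\nu=\pi_{k,z}$, apply the key inequality with hypotheses 2 and 4. The LSI is stated with the projected gradient $P\nabla_x$, which matches the fiber-tangent Lipschitz assumption. Conditioning on $\theta=z$ makes $\psi_k$ constant on $\Sigma_z$, so $\pi_{k,z}=\pi_z$ and Assumption \ref{a:lsi-cond} applies. This gives $\mathbb{E}_{\pi_{k,z}}[(g^*)^2]\le e^{2\Ctheta L_{\rm fiber}^2}\, m_1^*(z)^2$. Integrating against $\mu_k$ then gives $\mathbb{E}_{\pi_k}[(g^*)^2]\le e^{2\Ctheta L_{\rm fiber}^2}\int (m_1^*)^2\,d\mu_k$.

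\textbf{CV stage.} Apply the key inequality again on $\mathbb{R}^d$ with $\nu=\mu_k$ and $h=m_1^*$, using hypotheses 1 and 3. This gives $\int (m_1^*)^2\,d\mu_k \le e^{2\Cd L_{m_1^*}^2}\big(\mathbb{E}_{\pi_k} g^*\big)^2$. Combining the two stages yields the first claimed ratio bound, and subtracting $1$ gives the variance bound.

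\textbf{Weighted bound.} For $\mathcal Q_{\{\pi\}}$, each window satisfies $\sqrt{\Var_{\pi_k}(g^*)}\le(\gamma-1)^{1/2}\,\mathbb{E}_{\pi_k}g^*$, because $\gamma$ is uniform in $k$ once the constants are uniform. Summing with weights $w_k$ gives $\mathcal Q_{\{\pi\}}(w,g)\le(\gamma-1)^{1/2}$.

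The main obstacle is technical rather than conceptual. It lies in justifying Herbst for merely Lipschitz $\log h$, via a truncation/approximation argument with finite moments, and in checking that the fiber LSI is applicable to $\pi_{k,z}$ on the truncated set $K_U$. Truncation by $\theta^{-1}(\mathcal A)$ does not affect the fibers for $z\in\mathcal A$, so the latter check should go through.
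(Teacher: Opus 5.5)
Your proposal is correct and is essentially the paper's proof: the same tower-property split into a fiber ratio bounded uniformly in $z$ and a CV-space ratio for $m_1^*$ under $\mu_k$, each controlled by the Herbst argument. Your explicit Jensen step, $\mathbb{E}h\ge e^{\mathbb{E}\log h}$, is exactly what yields the exponent $2CL^2$; the lower half of Corollary \ref{c: Herbst} at $t=1$ alone would only give $3CL^2$. One correction to your parenthetical: the $\lambda=-1$ route only gives $\mathbb{E}h\ge 1/\mathbb{E}[h^{-1}]\ge e^{\mathbb{E}\log h - CL^2/2}$, which is weaker than Jensen, so it cannot sharpen the constant (a sharper $e^{CL^2}$ would instead come from integrating the Herbst inequality $\frac{d}{d\lambda}\bigl(\lambda^{-1}\log\mathbb{E}e^{\lambda\log h}\bigr)\le CL^2/2$ over $\lambda\in[1,2]$).
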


\begin{proof}
We bound the ratio $\mathbb{E}_{\pi_k}[(g^*)^2] / \mathbb{E}_{\pi_k}[g^*]^2$. By the tower property,
\[
\mathbb{E}_{\pi_k}[(g^*)^2] \;=\; \mathbb{E}_{\mu_k}\!\left[ \mathbb{E}_{\pi_k}[(g^*)^2 \mid \theta] \right], \qquad
\mathbb{E}_{\pi_k}[g^*] \;=\; \mathbb{E}_{\mu_k}\!\left[ \mathbb{E}_{\pi_k}[g^* \mid \theta] \right] = \mathbb{E}_{\mu_k}[m_1^*(\theta)].
\]
Applying the above, we get that 
\begin{equation}\label{eq:decomp-56}
\frac{\mathbb{E}_{\pi_k}[(g^*)^2]}{\mathbb{E}_{\pi_k}[g^*]^2} \;=\; \underbrace{\frac{\mathbb{E}_{\mu_k}\!\left[ \mathbb{E}_{\pi_k}[(g^*)^2 \mid \theta] \right]}{\mathbb{E}_{\mu_k}\!\left[ \mathbb{E}_{\pi_k}[g^* \mid \theta]^2 \right]}}_{=\vcentcolon T_1} \cdot \underbrace{\frac{\mathbb{E}_{\mu_k}\!\left[ \mathbb{E}_{\pi_k}[g^* \mid \theta]^2 \right]}{\mathbb{E}_{\mu_k}\!\left[ \mathbb{E}_{\pi_k}[g^* \mid \theta] \right]^2}}_{=\vcentcolon T_2}.
\end{equation}
First we bound $T_1$ so that
\begin{equation}\label{eq:T1-bound}
T_1  \;=\; \frac{\mathbb{E}_{\mu_k}\!\left[ \mathbb{E}_{\pi_k}[(g^*)^2 \mid \theta] \right]}{\mathbb{E}_{\mu_k}\!\left[ \mathbb{E}_{\pi_k}[g^* \mid \theta]^2 \right]} \;\leq\; \sup_{z \in \mathcal{A}} \frac{\mathbb{E}_{\pi_k}[(g^*)^2 \mid \theta = z]}{\mathbb{E}_{\pi_k}[g^* \mid \theta = z]^2} \;\leq\; e^{2 C_{LS}^{(\theta)} L_{\text{fiber}}^2},
\end{equation}
where the last inequality follows from Corollary \ref{c: Herbst}. Next we bound $T_2$, again by Corollary \ref{c: Herbst},
\begin{equation}\label{eq:T2-bound}
T_2 \;=\;\frac{\mathbb{E}_{\mu_k}\!\left[ \mathbb{E}_{\pi_k}[g^* \mid \theta]^2 \right]}{\mathbb{E}_{\mu_k}\!\left[ \mathbb{E}_{\pi_k}[g^* \mid \theta] \right]^2}\;=\; \frac{\mathbb{E}_{\mu_k}[(m_1^*)^2]}{\mathbb{E}_{\mu_k}[m_1^*]^2} \;\leq\; e^{2 C_{LS}^{(d)} L_{m_1^*}^2}.
\end{equation}
Noting $\mathrm{Var}_{\pi_k}(g^*) = \mathbb{E}_{\pi_k}[(g^*)^2] - \mathbb{E}_{\pi_k}[g^*]^2$ gives
\[
\frac{\mathrm{Var}_{\pi_k}(g^*)}{\mathbb{E}_{\pi_k}[g^*]^2} \;\leq\; \exp\!\left( 2 C_{LS}^{(\theta)} L_{\text{fiber}}^2 + 2 C_{LS}^{(d)} L_{m_1^*}^2 \right) - 1.
\]
Then
\[
Q_{\{\pi\}}(w, g) \;=\; \frac{\sum_k w_k \sqrt{\mathrm{Var}_{\pi_k}(g^*)}}{\sum_k w_k \mathbb{E}_{\pi_k}[g^*]} \;\leq\; (\gamma-1)^{1/2} \cdot \frac{\sum_k w_k \mathbb{E}_{\pi_k}[g^*]}{\sum_k w_k \mathbb{E}_{\pi_k}[g^*]} \;=\; (\gamma-1)^{1/2}.
\]

\end{proof}

\begin{lem}\label{l: Stability of Q, Gaussian}(Stability of $\mathcal{Q}$)  Let Assumption \ref{a: good samples} hold.  Let $\{\tilde{\pi}_k\}$ be the collection of biased distributions with $\chi^2(\tilde{\pi}_k \,\|\, \pi_k) \leq \tilde{\epsilon}$ for all $k$. Suppose:
\begin{enumerate}
\item $m_2^*(z) := \mathbb{E}_{\pi_k}[(g^*)^2 \mid \theta = z]$ is $L_{m_2^*}$-log-Lipschitz on $\mathcal{A}$,
\item For every $z \in \mathcal{A}$, the function $x \mapsto g^*(x)$ restricted to the fiber $\Sigma_z$ is $L_{\text{fiber}}$-log-Lipschitz,
\item $\mu_k$ satisfies an LSI on $\mathbb{R}^d$ with constant $C_{LS}^{(d)}$, and $\pi_k(\cdot \mid \theta = z)$ satisfies an LSI on $\Sigma_z$ with constant $C_{LS}^{(\theta)}$ for every $z$.
\end{enumerate}
If the sampling bias is sufficiently small that $\tilde{\epsilon}^{1/2} \gamma^{1/2} < 1$ (with $\gamma$ from Lemma \ref{l: Gaussian Variance Bound}), then
\[
Q_{\{\tilde{\pi}\}}(w, g) \;\leq\; \left[ \frac{ \exp\!\left( 3 C_{LS}^{(\theta)} L_{\text{fiber}}^2 + \tfrac{1}{2} C_{LS}^{(d)} L_{m_2^*}^2 \right) (1+\tilde{\epsilon})^{1/4}}{(1 - \tilde{\epsilon}^{1/2} \gamma^{1/2})}\right]\gamma^{1/2}.
\]

\end{lem}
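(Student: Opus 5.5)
We bound the numerator and the denominator of $Q_{\{\tilde\pi\}}(w,g)$ separately, window by window, and compare each with the exact quantities controlled in Lemma \ref{l: Gaussian Variance Bound}.

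\textbf{Numerator.} Since $\Var_{\tilde\pi_k}(g^*)\le \E_{\tilde\pi_k}[(g^*)^2]$, Cauchy--Schwarz with $h_k=d\tilde\pi_k/d\pi_k$ gives
\[
\E_{\tilde\pi_k}[(g^*)^2]=\E_{\pi_k}[h_k (g^*)^2]\le \pa{\E_{\pi_k}[h_k^2]}^{1/2}\pa{\E_{\pi_k}[(g^*)^4]}^{1/2}\le (1+\tilde\epsilon)^{1/2}\,\E_{\pi_k}[(g^*)^4]^{1/2}.
\]
Taking square roots produces the factor $(1+\tilde\epsilon)^{1/4}$. It remains to bound the fourth moment relative to $\E_{\pi_k}[g^*]^2$, so that $\E_{\pi_k}[(g^*)^4]^{1/4}\le C\,\E_{\pi_k}[g^*]$.

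\textbf{Fourth moment.} We follow the $T_1,T_2$ decomposition of Lemma \ref{l: Gaussian Variance Bound}, but condition on $\theta$ first, with $m_2^*$ in place of $m_1^*$:
\[
\E_{\pi_k}[(g^*)^4]=\E_{\mu_k}\ba{\E_{\pi_k}[(g^*)^4\mid\theta]}\le \sup_z\frac{\E[(g^*)^4\mid\theta=z]}{m_2^*(z)^2}\cdot \E_{\mu_k}[(m_2^*)^2].
\]
\begin{enumerate}
\item On each fiber, apply Corollary \ref{c: Herbst} (Herbst) to $\log (g^*)^2$, which is $2L_{\text{fiber}}$-Lipschitz. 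This gives $\E[(g^*)^4\mid z]/\E[(g^*)^2\mid z]^2\le e^{8C_{LS}^{(\theta)}L_{\text{fiber}}^2}$, up to the constant convention in the corollary. After the fourth root this is $e^{2C_{LS}^{(\theta)}L_{\text{fiber}}^2}$.
\item On the CV space, Herbst for $m_2^*$ (log-Lipschitz with constant $L_{m_2^*}$) gives $\E_{\mu_k}[(m_2^*)^2]\le e^{2C_{LS}^{(d)}L_{m_2^*}^2}\E_{\mu_k}[m_2^*]^2$. After the fourth root this contributes $e^{\frac12 C_{LS}^{(d)}L_{m_2^*}^2}$.
\item Note that $\E_{\mu_k}[m_2^*]=\E_{\pi_k}[(g^*)^2]\le \gamma\,\E_{\pi_k}[g^*]^2$ by Lemma \ref{l: Gaussian Variance Bound}, which contributes $\gamma^{1/2}$ after the fourth root.
\end{enumerate}
Combining these, and loosening the fiber exponent to $3C_{LS}^{(\theta)}L_{\text{fiber}}^2$ to absorb any slack in the Herbst constants, gives
\[
\sqrt{\Var_{\tilde\pi_k}(g^*)}\le (1+\tilde\epsilon)^{1/4}\exp\pa{3C_{LS}^{(\theta)}L_{\text{fiber}}^2+\tfrac12 C_{LS}^{(d)}L_{m_2^*}^2}\gamma^{1/2}\,\E_{\pi_k}[g^*].
\]

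\textbf{Denominator.} Cauchy--Schwarz again gives
\[
\ab{\E_{\tilde\pi_k}[g^*]-\E_{\pi_k}[g^*]}\le \sqrt{\tilde\epsilon}\,\sqrt{\Var_{\pi_k}(g^*)}\le \sqrt{\tilde\epsilon}\,(\gamma-1)^{1/2}\E_{\pi_k}[g^*].
\]
Hence $\E_{\tilde\pi_k}[g^*]\ge (1-\tilde\epsilon^{1/2}\gamma^{1/2})\E_{\pi_k}[g^*]$, which is positive by the hypothesis $\tilde\epsilon^{1/2}\gamma^{1/2}<1$.

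\textbf{Conclusion.} Summing with weights $w_k$, the common factor $\sum_k w_k\E_{\pi_k}[g^*]$ cancels between numerator and denominator. This yields exactly the claimed bound on $Q_{\{\tilde\pi\}}(w,g)$.

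\textbf{Main obstacle.} The delicate step is the fourth-moment bound. The fiber Herbst constant for the fourth moment must land within $e^{3C_{LS}^{(\theta)}L_{\text{fiber}}^2}$ after the fourth root. If the corollary's constant is too large, we would instead use Hölder with the split $\E[(g^*)^4\mid z]\le e^{cL^2}\E[(g^*)^2\mid z]^2$ and track exponents more carefully. We also need that $m_2^*$, rather than $(m_2^*)^{1/2}$, is the function whose log-Lipschitz constant appears, which fixes the factor $\tfrac12$ in the CV exponent.
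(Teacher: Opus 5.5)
Your proposal is correct and follows the paper's proof essentially step for step: Cauchy--Schwarz against $d\tilde\pi_k/d\pi_k$ to reach $\E_{\pi_k}[(g^*)^4]^{1/2}$, the tower decomposition into a fiber ratio and a CV-space ratio for $m_2^*$ (each handled by Corollary \ref{c: Herbst}), the second-moment bound $\E_{\pi_k}[(g^*)^2]\le\gamma\,\E_{\pi_k}[g^*]^2$ from Lemma \ref{l: Gaussian Variance Bound}, and Cauchy--Schwarz for the denominator. Your stated ``main obstacle'' does not arise: using Jensen for the lower bound gives $e^{8C_{LS}^{(\theta)}L_{\text{fiber}}^2}$, and even using both sides of Corollary \ref{c: Herbst}, as the paper does, gives $e^{12C_{LS}^{(\theta)}L_{\text{fiber}}^2}$, whose fourth root is exactly the stated $e^{3C_{LS}^{(\theta)}L_{\text{fiber}}^2}$.
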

\begin{proof} We begin by bounding the following ratio using Cauchy--Schwarz  
\begin{equation}\label{eq:var-ratio-bound}
\frac{\mathbb{E}_{\tilde{\pi}_k}[(g^*)^2]}{\mathbb{E}_{{\pi}_k}[(g^*)^2]}\;\leq\; \frac{\mathbb{E}_{\pi_k}\!\left[ (g^*)^4 \right]^{1/2}}{\mathbb{E}_{\pi_k}\!\left[ (g^*)^2 \right]} \cdot (1 + \tilde{\epsilon})^{1/2}.
\end{equation}
We then $L^4/L^2$ component as
\[
(*)^2 \;:=\; \frac{\mathbb{E}_{\pi_k}[(g^*)^4]}{\mathbb{E}_{\pi_k}[(g^*)^2]^2}.
\]
Again we decompose using the tower property to get 
\begin{equation}\label{eq:decomp-57}
(*)^2 \;=\; \underbrace{\frac{\mathbb{E}_{\mu_k}\!\left[ \mathbb{E}_{\pi_k}[(g^*)^4 \mid \theta] \right]}{\mathbb{E}_{\mu_k}\!\left[ \mathbb{E}_{\pi_k}[(g^*)^2 \mid \theta]^2 \right]}}_{=: T_1'} \cdot \underbrace{\frac{\mathbb{E}_{\mu_k}\!\left[ m_2^*(\theta)^2 \right]}{\mathbb{E}_{\mu_k}\!\left[ m_2^*(\theta) \right]^2}}_{=: T_2'},
\end{equation}
with $\mathbb{E}_{\mu_k}\!\left[ m_2^*(\theta)^2 \right] = \mathbb{E}_{\mu_k}\!\left[ \mathbb{E}_{\pi_k}[(g^*)^2 \mid \theta]^2 \right]$. We proceed in a similar fashion to Lemma \ref{l: Gaussian Variance Bound} so that 

\[
T_1' \;\leq\; \sup_{z \in \mathcal{A}} \frac{\mathbb{E}_{\pi_k}[(g^*)^4 \mid \theta = z]}{\mathbb{E}_{\pi_k}[(g^*)^2 \mid \theta = z]^2} \; \leq\; e^{12C_{LS}^{(\theta)} L_{\text{fiber}}^2},
\]
where the last line follows from Corollary \ref{c: Herbst}. Similarly, we use Corollary \ref{c: Herbst} to bound $T_2'$, 
\[T_2' \;=\; \frac{\mathbb{E}_{\mu_k}\!\left[ m_2^*(\theta)^2 \right]}{\mathbb{E}_{\mu_k}\!\left[ m_2^*(\theta) \right]^2} \; \leq \; e^{2 C_{LS}^{(d)} L_{m_2^*}^2}.\]
Therefore combining the two bounds yields 
\[
(*) \;=\; \sqrt{T_1' \cdot T_2'} \;\leq\; \exp\!\left( 6C_{LS}^{(\theta)} L_{2\text{fiber}}^2 + C_{LS}^{(d)} L_{m_2^*}^2 \right).
\]
Lastly, we apply Cauchy--Schwarz to get the following
\[
\left| \mathbb{E}_{\tilde{\pi}_k}[g^*] - \mathbb{E}_{\pi_k}[g^*] \right| \;\leq\; \mathbb{E}_{{\pi}_k}[(g^*)^2]^{1/2} \cdot \chi^2(\tilde{\pi}_k \,\|\, \pi_k)^{1/2} \;\leq\; \mathbb{E}_{{\pi}_k}[(g^*)^2]^{1/2} \cdot \tilde{\epsilon}^{1/2}.
\]
Using the second moment bound from Lemma \ref{l: Gaussian Variance Bound} yields
\begin{equation}\label{eq:exp-tilde-pi}
\mathbb{E}_{\tilde{\pi}_k}[g^*] \;\geq\; \mathbb{E}_{\pi_k}[g^*]\!\left( 1 - \tilde{\epsilon}^{1/2} \gamma^{1/2} \right).
\end{equation}
Combining the above bounds we get 
\begin{align*}
Q_{\{\tilde{\pi}\}}(w, g)
&= \frac{\sum_k w_k \sqrt{\mathrm{Var}_{\tilde{\pi}_k}(g^*)}}{\sum_k w_k \mathbb{E}_{\tilde{\pi}_k}[g^*]}\leq \frac{\sum_k w_k \sqrt{\frac{\mathbb{E}_{\tilde{\pi}_k}[(g^*)^2]}{\mathbb{E}_{{\pi}_k}[(g^*)^2]}\mathbb{E}_{{\pi}_k}[(g^*)^2]}}{\sum_k w_k \mathbb{E}_{\tilde{\pi}_k}[g^*]}  \\
&\leq \frac{ \exp\!\left( 3C_{LS}^{(\theta)} L_{\text{fiber}}^2 + \tfrac{1}{2} C_{LS}^{(d)} L_{m_2^*}^2 \right) (1+\tilde{\epsilon})^{1/4} \sum_k w_k \sqrt{\mathbb{E}_{{\pi}_k}[(g^*)^2]}}{ (1 - \tilde{\epsilon}^{1/2} \gamma^{1/2}) \sum_k w_k \mathbb{E}_{\pi_k}[g^*] }\\
&= \frac{ \exp\!\left( 3C_{LS}^{(\theta)} L_{\text{fiber}}^2 + \tfrac{1}{2} C_{LS}^{(d)} L_{m_2^*}^2 \right) (1+\tilde{\epsilon})^{1/4}}{(1 - \tilde{\epsilon}^{1/2} \gamma^{1/2})}\gamma^{1/2}.
\end{align*}
Where we note that the second inequality follows from the proof of Lemma \ref{l: Gaussian Variance Bound} and the last line follows from the Lemma \ref{l: Gaussian Variance Bound} bound. 
\end{proof}
\begin{rem}\label{r: log-lipschitz decomposition} Since the umbrella sum $S(x) = \sum_k (\frac{\beta}{2\pi})^\frac{d}{2}\exp(-\beta\|\theta(x) - y_k\|^2/2)$ is constant along the fibers $\theta(x) = z$. It follows that 
$$m^*_1(z) = \mathbb{E}_{\pi_k}[g^* \mid \theta = z] = \frac{1}{S(z)}\mathbb{E}_{\pi_k}[g \mid \theta = z] = \frac{\bar{g}(z)}{S(z)}.$$
Given Lemma \ref{l: Gaussian Variance Bound}, the log-Lipchitz constant for $m^*_1$ is then bounded so that $L_{m^*_1}^2 \leq 2L_{\bar{g}}^2 + 2L_{S}^2$. The same clearly holds for $m^*_2$ with $L^2_{m^*_2} \leq 2L_{\overline{g^2}}^2 + 2L_{S^2}^2 \leq 2L_{\overline{g^2}}^2 + 8L_{S}^2$, where the last inequality follows from $L_{S^2} \leq 2 L_S$. 
Note that in the case where $g = 1$, the log-Lipschitz constants $L_{\bar{g}} = 0$ and $L_{\overline{g^2}} = 0$. 

The log-Lipschitz constants $L_{\bar{g}}$ and $L_{\overline{g^2}}$ depend on the mapping $\theta$ and the function $g$, both which the practitioner has little control over. However, the log-Lipschitz constant $L_{S}$ can be controlled through the window temperature $\beta$ and spacing $\Delta$. 
\end{rem}

\begin{proof}(Proof of Proposition \ref{p: gamma bound relative variance})
We consider the exponential terms from Lemma \ref{l: Gaussian Variance Bound} and Lemma \ref{l: Stability of Q, Gaussian}, given by $\exp\!\left( 2 C_{LS}^{(\theta)} L_{\text{fiber}}^2 +2 C_{LS}^{(d)} L_{m_1^*}^2 \right)$ and $\exp\!\left( 3 C_{LS}^{(\theta)} L_{\text{fiber}}^2 + \tfrac{1}{2} C_{LS}^{(d)} L_{m_2^*}^2 \right)$, respectively. 
Using the bounds on $L_{m_1^*}^2$ and $L_{m_2^*}^2$ from Remark \ref{r: log-lipschitz decomposition} we have 
\begin{align*}
\exp\!\left( 2 C_{LS}^{(\theta)} L_{\text{fiber}}^2 +2 C_{LS}^{(d)} L_{m_1^*}^2 \right) &\leq \exp\!\left( 2 C_{LS}^{(\theta)} L_{\text{fiber}}^2 +4 C_{LS}^{(d)} L_{\bar{g}}^2 + 4 C_{LS}^{(d)} L_{S}^2 \right)\\ 
\exp\!\left( 3C_{LS}^{(\theta)} L_{\text{fiber}}^2 + \tfrac{1}{2} C_{LS}^{(d)} L_{m_2^*}^2 \right) &\leq \exp\!\left( 3 C_{LS}^{(\theta)} L_{\text{fiber}}^2 + C_{LS}^{(d)}L^2_{\overline{g^2}} + 4C_{LS}^{(d)}L^2_S\right)
\end{align*}
Each term can be upper bounded by:
\begin{equation}
\exp\!\left( 3C_{LS}^{(\theta)} L_{\text{fiber}}^2 + 4C_{LS}^{(d)}L^2_{g} + 4C_{LS}^{(d)}L^2_S\right).
\end{equation}
By Remark \ref{r: Bakry-Emery} yields the upper bound $\Cd \leq \frac{1}{\beta-\ell}$. 
    Since $\beta$ is chosen as $\beta > \ell + \frac{4}{\gamma_0}L^2_g$ this yields an upper bound on the first term of
    $$\Cd L^2_g \leq \frac{L^2_g}{\ell + \frac{4}{\gamma_0}L^2_g - \ell} = \frac{L^2_g}{\frac{4}{\gamma_0}L^2_g}= \frac{\gamma_0}{4}.$$
    To upper bound the $\Cd L^2_S$ term note that Lemma \ref{l: Lipshitz generic bound} applied to $f(x) = \log S(x)$ yields $L^2_s \leq 2\beta\log(r_s),$ where $r_s = \frac{S_{\max}}{S_{\min}}$. 
    Applying this to Lemma \ref{l: log bound S_max/S_min} yields 
    $$\Cd L^2_s \leq \frac{10d\beta}{\beta-\ell}\left(\sqrt{\frac{\beta\Delta^2}{\pi}}+\frac{1}{N_p\sqrt{\pi\beta\Delta^2}}e^{-\frac{9}{2}}\right).$$
    We note that the bound from Lemma \ref{l: log bound S_max/S_min} can be applied to Lemma \ref{l: Lipshitz generic bound}, since $S_{\max}(z)$ for $z \in \mathbb{R}^d$ is obtained for $z \in \mathcal{A}$.    
    Let $\beta \geq 2\ell$ and apply $\Delta =  \frac{\gamma_0\sqrt{\pi}}{120 d \sqrt{\beta}}$ to the first, this yields
    $$\frac{10d\beta}{\beta-\ell}\sqrt{\frac{\beta\Delta^2}{\pi}}\leq \gamma_0\frac{120d\beta}{\beta-\ell}\sqrt{\frac{\beta\frac{\pi}{120^2d^2\beta}}{\pi}} = \gamma_0\frac{10d\beta}{120d\beta}\frac{1}{\beta-\ell}=\gamma_0\frac{\beta}{12(\beta-\ell)}\leq \frac{\gamma_0}{12}.$$
    Where the last inequality comes from applying $\beta \geq 2\ell$ as $\beta - \ell \geq \frac{\beta}{2}$. Lastly applying $ N_p \geq \frac{240d}{\sqrt{e^9\pi}\gamma_0}\sqrt{\frac{1}{\beta\Delta^2}}$ to the second term yields
    $$\frac{10d\beta}{\beta-\ell}\frac{1}{N_p\sqrt{\pi\beta\Delta^2}}e^{-\frac{9}{2}}\leq\frac{10d\beta}{\beta-\ell}\frac{1}{\frac{240d}{\sqrt{e^9\pi}\gamma_0}\sqrt{\frac{1}{\beta\Delta^2}}\sqrt{\pi\beta\Delta^2}}e^{-\frac{9}{2}} \leq \frac{10\gamma_0}{240}\frac{\beta}{(\beta-\ell)} \leq \frac{\gamma_0}{12}.$$
    Combining the bounds gives that 
    $$\exp\!\left( 3C_{LS}^{(\theta)} L_{\text{fiber}}^2 + 4C_{LS}^{(d)}L^2_{g} + 4C_{LS}^{(d)}L^2_S\right) \leq \exp(3\frac{\gamma_0}{2} + 4\frac{\gamma_0}{4} + 4\frac{2\gamma_0}{12}) < \exp(\frac{7\gamma_0}{2}).$$
    
Substituting this bound back into Lemma \ref{l: Gaussian Variance Bound} yields $\gamma \leq\exp(\frac{7\gamma_0}{2})$ and
$$Q_{\{\pi\}}(w, g) \leq (\exp\left(\frac{7\gamma_0}{2}\right)-1)^{1/2}.$$
Similarly, substitution into Lemma \ref{l: Stability of Q, Gaussian} yields
$$Q_{\{\tilde{\pi}\}}(w, g) \;\leq\; \left[ \frac{ \exp\left(\frac{7\gamma_0}{2}\right)(1+\tilde{\epsilon})^{1/4}}{(1 - \tilde{\epsilon}^{1/2} \exp\left(\frac{3\gamma_0}{2}\right))}\right]\gamma^{1/2}.$$
Therefore we have that 
\begin{align*}
    \max\left\{Q_{\{\tilde{\pi}\}}(w, g), Q_{\pi}(w, g)\right\} &\leq \left[ \frac{\exp\left(\frac{7\gamma_0}{2}\right)(1+\tilde{\epsilon})^{1/4}}{(1 - \tilde{\epsilon}^{1/2} \exp\left(\frac{3\gamma_0}{2}\right))}\right]\exp\left(\frac{7\gamma_0}{4}\right)\\
&<\frac{\exp\left(6\gamma_0\right)(1+\tilde{\epsilon})^{1/4}}{(1 - \tilde{\epsilon}^{1/2} \exp\left(\frac{3\gamma_0}{2}\right))}. 
\end{align*}
Letting $C_{\mathcal Q} = \frac{\exp\left(6\gamma_0\right)(1+\tilde{\epsilon})^{1/4}}{(1 - \tilde{\epsilon}^{1/2} \exp\left(\frac{3\gamma_0}{2}\right))}$ yields the result. 
By Remark \ref{r: g = 1} these bounds are also applicable to $g = 1$. 
    
\end{proof}

\subsection{Proof of Section Main Result}

\begin{proof} First we use Proposition \ref{p: gamma bound relative variance} with $\tilde{\epsilon} < \exp(-3\gamma_0)$ to enforce for $f \in \{g,1\}$
$$\max\{\mathcal{Q}_{\{{\pi}\}}(w, f),\mathcal{Q}_{\{\tilde{\pi}\}}(w, f)\} \leq C_{\mathcal{Q}_*},$$
with $C_{\mathcal{Q}_*} \vcentcolon= \frac{\exp\left(6\gamma_0\right) (1+\tilde{\epsilon})^{1/4}}{1 - \tilde{\epsilon}^{1/2}\exp(3\gamma_0/2)}.$

The rest follows from Corollary \ref{c: main_theorem}, which says that if $N\ge \max\left\{\frac{4C_{\mathcal Q_*}^2}{\delta_f},\frac{8mL^2}{\epsilon_w^2c_p\delta_w}\right\}$ and $\tilde\epsilon\le\min\left\{\frac{1}{4C_{\mathcal Q_*}^2},\frac{\epsilon_w^2c_p}{4L^2}\right\}$ then,  with probability $1 - \delta_w- 2\delta_f $,
$$\left|
\log\bar\pi[g]-\log\pi[g]
\right|
\leq
4\epsilon_w + \frac{4C_{\mathcal Q_*}}{\sqrt{N\delta_f}} + 4\sqrt{\widetilde\epsilon}\,C_{\mathcal Q_*}
+
\max\left\{
\log\frac{1}{1-\delta_\pi},
\log\frac{1}{1-\delta_g}
\right\}.$$
We will proceed by choosing $\delta_w = \delta_f = \frac{\delta}{3}$, and showing that each component in the previous summation can be bounded by $\frac{\epsilon}{4}$.
This choice of $\delta$ means that we require $N\ge \max\left\{\frac{12C_{\mathcal Q_*}^2}{\delta},\frac{24mL^2}{\epsilon_w^2c_p^*\delta}\right\}$. 
To bound the first term we have free choice over $\epsilon_w$, therefore choosing $\epsilon_w = \frac{\epsilon}{16}$ gives the desired $\frac{\epsilon}{4}$ bound. 
This means we require $N\ge \max\left\{\frac{12C_{\mathcal Q_*}^2}{\delta},\frac{24\cdot16^2mL^2}{\epsilon^2c_p^*\delta}\right\}$ and $\tilde\epsilon\le\min\left\{\frac{1}{4C_{\mathcal Q_*}^2},\frac{\epsilon^2c_p^*}{4\cdot 16^2L^2}\right\}$.

For the second term,  $\frac{4C_{\mathcal Q_*}}{\sqrt{N\cdot\frac{\delta}{3}}} \leq \frac{\epsilon}{4}$ is implied by $N \geq \frac{3\cdot 16^2 C_{\mathcal Q_*}^2}{\delta \epsilon^2}$. Therefore we require  
$$N\ge \max\left\{\frac{12C_{\mathcal Q_*}^2}{\delta},\frac{4\cdot24\cdot16^2mL^2}{\epsilon^2c_p^*\delta},\frac{3\cdot 16^2 C_{\mathcal Q_*}^2}{\delta \epsilon^2}\right\}.$$ 

For the third term, $4\sqrt{\widetilde\epsilon}\,C_{\mathcal Q_*} \leq \frac{\epsilon}{4}$ is implied by $\tilde\epsilon \leq \frac{\epsilon^2}{16^2C_{\mathcal Q_*}^2}$. Therefore we require
$$\tilde\epsilon\le\min\left\{\frac{1}{4C_{\mathcal Q_*}^2},\frac{\epsilon^2c_p^*}{4\cdot4\cdot 16^2L^2}, \frac{\epsilon^2}{16^2C_{\mathcal Q_*}^2}\right\}.$$

This means that $2\eta \leq \frac{3\epsilon}{4}$. So consequently we have that $\eta \leq \frac{3\epsilon}{8}$ which implies $$e^{-\frac{3\epsilon}{8}}\mathcal S(w,f)
\leq
\bar{\mathcal S}(\bar w,f)
\leq
e^{\frac{3\epsilon}{8}}\mathcal S(w,f).$$
The assumption $\max\{\delta_\pi, \delta_g\} \le 1 - e^{-\epsilon/4}$ directly implies that the last term $\max\left\{
\log\frac{1}{1-\delta_\pi},
\log\frac{1}{1-\delta_g}
\right\} \leq \frac{\epsilon}{4}$. 

Lastly, we note that $C^2_{\mathcal{Q}^*} \ge 4$ and to to get that, $\frac{24\cdot16^2mL^2C_{\mathcal Q_*}^2}{\epsilon^2c_p^*\delta} \geq \max\left\{\frac{12C_{\mathcal Q_*}^2}{\delta},\frac{4\cdot24\cdot16^2mL^2}{\epsilon^2c_p^*\delta},\frac{3\cdot 16^2 C_{\mathcal Q_*}^2}{\delta \epsilon^2}\right\}$ as a requirement for the number of samples $N$ and $\frac{\epsilon^2c_p^*}{4\cdot16^2L^2C_{\mathcal Q_*}^2} \le \min\left\{\frac{1}{4C_{\mathcal Q_*}^2},\frac{\epsilon^2c_p^*}{4\cdot4\cdot 16^2L^2}, \frac{\epsilon^2}{16^2C_{\mathcal Q_*}^2}\right\}$ as a requirement for $\tilde{\epsilon}$. 
Recalling that we also require $\tilde{\epsilon} < \exp(-3\gamma_0)$ yields the final bound. 
\end{proof}


   
\section{Analysis of Indicator Observable}\label{s: indicator}

    
In Section \ref{s: smooth observable}, we relied on the log-Lipschitzness of the observable to directly bound the weighted relative standard deviation $\mathcal{Q}_{\{\mu\}}(w,f)$. 
In the case of an indicator observable obtaining a direct bound on the weighted relative standard deviation poses numerous problems. 
The most relevant being that the indicator creates an unavoidable variance explosion for windows located far from the target observable set which prevents us from applying Chebyshev's inequality globally. 

Instead, to find bounds for the indicator observable we show that the condition in Theorem \ref{t: general error decomposition} eq. (\ref{e: condition_main_theorem}) holds for $\bar{S}(\bar w, g)$.
To formalize the observable in this section we define
$$g(x) := 1_E\circ \theta(x),$$
for a convex set $E$. 
This defines $f: \mathbb{R}^D \rightarrow \mathbb R$ but allows the set to be defined on the collective variable space $E \subseteq \mathcal{A} \subseteq \mathbb{R}^d$.
To show that eq. (\ref{e: condition_main_theorem}) holds 
We define well-behaved estimators that serve as upper and lower bounds, 
$$ \bar{\mathcal{S}}(\bar w, (g_k)_{k\in W})=:\bar{\mathcal S}^{\textup{lower}}(\bar w, g)
        \le 
        \bar{\mathcal S}(\bar w, g)\le 
        \bar{\mathcal S}^{\textup{upper}}(\bar w, g):=\bar{\mathcal{S}}(\bar{w}, e^{-\lambda d(E,\theta(x))}).$$
Where for the lower bound we define $W= \left\{ k : \E_{\pi_k}[g]\ge p_E\right\}$, so that the
empirical average is taken over a good set of windows. 
Upper and lower bounding $ \bar{\mathcal S}^{\textup{upper}}(\bar w, g)$ and $\bar{\mathcal S}^{\textup{lower}}(\bar w, g)$ by $S(w,g)$, respectively.
Allows us to show that eq. (\ref{e: condition_main_theorem}) is satisfied. 

To apply Theorem $\ref{t: general error decomposition}$ we require eq. (\ref{e: condition_main_theorem}) to be met for $f \in \{g, 1\}$.
However, we note that $f = 1$ is log-Lipschitz with constant $0$. 
Therefore, the upper and lower bounds follow from the first part of Theorem \ref{t: Explicit Error for Log-lipschitz observable}. Since in this case $L_g = L_{fiber} = 0$, Theorem \ref{t: Explicit Error for Log-lipschitz observable} applies with a simplified choice of parameters $\beta$ and $\Delta$. 
Moreover, this gives full range to the parameter choice $\gamma_0$, which can be taken so that $\gamma_0 = 1$. 
Thereby, the work in this section is dedicated to finding upper and lower bounds for when $f  = g$ with $g(x) = 1_E \circ \theta$.  

Note that we can bound the error of the numerator and denominator $\bar{\mathcal S}(\bar w,g)$ and $\bar{\mathcal S}(\bar w,1)$ separately, in terms of $\mathcal S( w, g)$ and $\mathcal S( w, 1)$, respectively. This is important because we use a different argument for the upper and lower bounds (as an upper bound for the ratio requires an upper bound for the numerator and a lower bound for the denominator).

The structure of our argument is the following. We define more well-behaved estimators that serve as upper and lower bounds, 
    \begin{align*}
    \bar{\mathcal S}^{\textup{lower}}(\bar w, g)
        \le 
        \bar{\mathcal S}(\bar w, g)\le 
        \bar{\mathcal S}^{\textup{upper}}(\bar w, g)
    \end{align*}
    where this is an inequality of random variables that always holds. Using the well-behavedness of these estimators, we can use the bounds for $\mathcal{E}_{\text{weight}},\mathcal{E}_{\text{sample}},\mathcal{E}_{\text{bias}}$ to show they are close to the exact upper and lowerbound quantities. Then we show that the upper and lower bounds don't miss too much: 
    \begin{align*}
        \bar{\mathcal S}^{\textup{upper}}(\bar w, g)
        \le_{\text{whp,\,by $\mathcal E$ bounds}} (1+\ep) {\mathcal S}^{\textup{upper}}(w, g)
        \le (1+\ep)^2{\mathcal S}(w, g).
    \end{align*}
\subsection{Main Result}
We first collect the parameter choices required for the indicator observable analysis into a single algorithm. 
The Gaussian window centers are placed on a padded  lattice in the collective variable space, and the sparse graph \(\mathcal G\) is taken to connect each window to itself and to its nearest-neighbor. 
The probability overlap constant in this construction is given as $c_p^*$, from Corollary \ref{c: c_p explicit log-lipschitz}.
The EMUS estimator is then computed using the corresponding sparse overlap matrix \(F^0\), as defined in Section 1. 
Algorithm \ref{alg:indicator_net_construction} specifies the inverse temperature \(\beta\), lattice spacing \(\Delta\), and padding \(N_p\) so that the upper- and lower-bound arguments of Sections \ref{ss:upper bound for number} and \ref{ss:lower bound for numerator} apply simultaneously to the same Gaussian net. 
The additional requirements on the number of samples per window \(N\) and the sampling accuracy \(\widetilde\epsilon\) are stated explicitly in Theorem \ref{t:indicator final bound}.
\begin{algorithm}[H]
\caption{Gaussian Net \& Sparse Graph Construction (Indicator Observable)}
\label{alg:indicator_net_construction}
\begin{algorithmic}
\State \textbf{Inputs:} Target distribution $\pi(x)$ with potential $V$, observable $g = 1_E\circ\theta$, so that $B(c,\rho) \leq E \leq B(c, R)$, geometric constants $\ell$, $G_+ = \max\{1, G\}$, $L_g =  2\left(\frac d\rho+G^+\kappa\right)
\left(1+\frac{22}{5\varepsilon_0}\right), \kappa = \fc R\rho$ and $d$, targets $\delta_\pi, \delta_g$ and hyperparameter $\epsilon>0$.
\State \textbf{Identify:} Compact set $K_U$ such that $$\pi(K_U^c) \leq \delta_\pi \text{ and } \int_{K_U^c} g(x)\pi(dx) \leq \delta_g \pi[g].$$
\State \textbf{Choose:} $\beta$ such that $\beta > \max\left\{2l, \; {G_+^2}, \; \ell +{4L_g^2}, \; \frac{17908G_+^4d\kappa^2\log\left(\frac{2}{1-e^{-\epsilon/11}}\right)}{\rho_-^2\epsilon^2}, \;\frac{71632G_+^2d^3}{\rho_-^2\epsilon^2}\right\}$.
\State \textbf{Choose:} lattice spacing $\Delta =\min\left\{\frac{(1-e^{-\epsilon/11)}}{6d\sqrt{\beta}}, \frac{\sqrt{\pi}}{120d\sqrt{\beta}}\right\}$.
\State \textbf{Choose:} padding $N_p = \max\left\{\frac{3}{\sqrt{\beta\Delta^2}}, \frac{20d}{\sqrt{e^9\pi}}\sqrt{\frac{1}{\beta\Delta^2}}\right\}$.
\State \textbf{Construct:} padded lattice $\Lambda_{pad}$ and graph $\mathcal G := \{(i,j) : \; i = j \text{ or } \|y_i - y_j\| = \Delta\}$
\For{each lattice point $y_k \in \Lambda_{pad}$}
    \State Define ${\psi}_k(x) \gets \left( \frac{\beta}{2\pi} \right)^{d/2} \exp\left( -\frac{\beta}{2} \|\theta(x) - y_k\|^2 \right)$
\EndFor
\State \Return The collection of windows $\{\psi_k\}_{k=1}^L$ and the graph $\mathcal G$.
\end{algorithmic}
\end{algorithm}
\begin{theorem}\label{t:indicator final bound}
    Let Assumptions \ref{a: good truncation}, \ref{a: good samples}, \ref{a: good path} and \ref{a: collective variable} hold. 
    Let $\bar{\pi}[g]$ be the EMUS estimator computed on $L$ window using the sparse matrix $F^0$, defined on $G$, with $m = |G|$.
Let $E \subset \mathcal{A} \subseteq \mathbb{R}^d$ be a convex set with geometric condition number $\kappa = \frac{R}{\rho}$. Let $\mu \propto e^{-A(z)}$ have a  $G$-Lipschitz potential. 
Define $g := 1_E \circ \theta$. Suppose the Gaussian net is constructed using parameters $\beta, \Delta,$ and $N_p$ exactly as defined in Algorithm \ref{alg:indicator_net_construction}. Define the overlap constant lower bound $c_p^*$, for the adjacent Gaussian sparse matrix, according to Corollary \ref{c: c_p explicit log-lipschitz}:
$$ c_p^* =\frac{
\exp\left(
-\max(\frac{9d}{2},\frac{d^3}{100})
-\frac12
\right)
}{
2^d
\left(
1+\frac{72\sqrt{2\pi}d}{\epsilon}
\right)^d
}). $$
Furthermore, define the uniform variance bound $C_{\mathcal{Q}}^*$ as:
$$ C_{\mathcal{Q}}^* := 2(1 + 0.25 e^{-3})^{1/4} \exp(6). $$
Let $r_s \leq r_{s,*} := \exp\left(
\frac1{24}
+
\frac{5d}{3\sqrt\pi}e^{-9/2}
\right)$, as in Remark \ref{rem: ratio}. For any $\epsilon > 0$ and $\delta > 0$, assume the compact set $K_U$ is chosen such that the truncation parameters satisfy $\max\{\delta_\pi, \delta_g\} \le 1 - e^{-\epsilon/2}$. If the MCMC sampling precision $\tilde{\epsilon}$ and the number of samples per window $N$ satisfy:
$$ \tilde{\epsilon} \le \min\left\{
\frac1{64},
\frac{3}{32r_{s,*}^2},
\frac{\epsilon^2c_p}{69696L^2},
\frac{\epsilon^2}{11616r_{s,*}^2}, \frac{e^{-3}}{4}, \, \frac{\epsilon^2c_p^*}{7744L^2C_{\mathcal Q_*}^2}
\right\}, $$
$$ N \ge \max\left\{
\frac{64r_{s,*}^2}{\delta},
\frac{836,352mL^2}{\epsilon^2c_p^*\delta},
\frac{69,696r_{s,*}^2}{\epsilon^2\delta},
\frac{139,392mL^2C_{\mathcal Q_*}^2}{\epsilon^2c_p^*\delta}
\right\},$$
so that $
\tilde{\epsilon} = \mathcal{O}\left(\exp(-d)\frac{\epsilon^2c_p^*}{L^2C_{\mathcal Q_*}^2}\right)
$ and $
N = \Omega\left(\exp(d)\frac{mL^2C_{\mathcal Q_*}^2}{\epsilon^2\delta c_p^*}\right).
$
Then the EMUS estimator guarantees relative error, with probability at least $1 - \delta$, 
$$\left|
\log\bar\pi[g]-\log\pi[g]
\right| < \epsilon.$$
\end{theorem}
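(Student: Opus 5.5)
The plan is to apply Theorem \ref{t: general error decomposition}. The denominator $f=1$ is handled by Theorem \ref{t: Explicit Error for Log-lipschitz observable} with $\gamma_0=1$, and the numerator $f=g=1_E\circ\theta$ by a sandwich argument. For $f=1$ we have $L_g=L_{\rm fiber}=0$. The choices of $\beta,\Delta,N_p$ in Algorithm \ref{alg:indicator_net_construction} dominate those of Algorithm \ref{alg:gaussian_net_construction} with $\gamma_0=1$: $\beta>\max\{2\ell,G_+^2\}$, and $\Delta\le \sqrt\pi/(120d\sqrt\beta)$ only makes $r_s$ smaller. Hence $r_s\le r_{s,*}$ (Remark \ref{rem: ratio}) and $\max\{\mathcal Q_{\{\pi\}}(w,1),\mathcal Q_{\{\tilde\pi\}}(w,1)\}\le C_{\mathcal Q}^*$. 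Corollary \ref{c: main_theorem} with $\epsilon_w$ a fixed fraction of $\epsilon$ then gives $e^{-\eta_1}\mathcal S(w,1)\le\bar{\mathcal S}(\bar w,1)\le e^{\eta_1}\mathcal S(w,1)$ with $\eta_1\le \epsilon/11$ (or a similar fraction). This fixes the terms in the $\tilde\epsilon$ and $N$ conditions that involve $C_{\mathcal Q_*}$, and $c_p^*$ comes from the indicator branch of Corollary \ref{c: c_p explicit log-lipschitz}.

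\textbf{Upper bound for $g$.} Since $g\le h_\lambda:=e^{-\lambda d(E,\theta(x))}$ pointwise and all weights are nonnegative, $\bar{\mathcal S}(\bar w,g)\le\bar{\mathcal S}(\bar w,h_\lambda)$ holds deterministically. The function $h_\lambda$ is constant on fibers and $\lambda$-log-Lipschitz on CV space. I would apply the log-Lipschitz machinery (Proposition \ref{p: gamma bound relative variance} and Corollary \ref{c: main_theorem}) with $L_g$ replaced by $\lambda$; the requirement $\beta>\ell+4L_g^2$ is exactly what makes $\gamma_0=1$ admissible. This yields $\bar{\mathcal S}(\bar w,h_\lambda)\le e^{\eta}\mathcal S(w,h_\lambda)$.

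It then remains to show $\mathcal S(w,h_\lambda)\le e^{\epsilon/11}\mathcal S(w,g)$, which I would do deterministically in CV space. Write $\mathcal S(w,h_\lambda)=\int_{\mathcal A}h_\lambda\,d\mu$ and $\mathcal S(w,g)=\mu(E)$, up to the common factor. Decompose $\int h_\lambda\,d\mu=\mu(E)+\int_{E^c}e^{-\lambda d(E,z)}\mu(dz)$. The shell integral is bounded by the coarea formula over the sets $\{d(E,z)=t\}$. Convexity together with $B(c,\rho)\subseteq E\subseteq B(c,R)$ gives that $E_t=E+B_t$ has volume at most $(1+t/\rho)^d|E|$. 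The bound $\|\nabla A\|\le G$ controls the change of density along the shell by $e^{G t\kappa}$, via the homothety centred at $c$. Together these give a shell mass at most $\int_0^\infty \lambda e^{-\lambda t}\bigl((1+t/\rho)^d e^{G\kappa t}-1\bigr)dt\cdot\mu(E)$. This is $\le (e^{\epsilon/11}-1)\mu(E)$ once $\lambda\gtrsim (d/\rho+G_+\kappa)(1+22/(5\epsilon_0))$, which is the stated $L_g$.

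\textbf{Lower bound for $g$.} I would restrict to the set $W=\{k:\E_{\pi_k}[g]\ge p_E\}$. Dropping nonnegative terms gives $\bar{\mathcal S}(\bar w,g)\ge\sum_{k\in W}\bar w_k\bar g^*_k$. For $k\in W$ the variable $g^*=1_E/S$ takes values in $\{0\}\cup[1/S_{\max},1/S_{\min}]$, so its relative variance is at most $r_{s,*}^2/p_E$. Chebyshev (Lemma \ref{l: Sampling Error}), the bias bound (Lemma \ref{l: bias error}) and the weight bound (Lemma \ref{l: weight error precise}) on this subcollection then give the lower bound $e^{-\eta'}\sum_{k\in W}w_k\E_{\pi_k}[g^*]$. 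This is where the conditions $N\ge 64r_{s,*}^2/\delta$ and $\tilde\epsilon\le 3/(32r_{s,*}^2)$ come from, with $p_E$ fixed (e.g.\ $p_E=1/2$ or similar).

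The step I expect to be the \textbf{main obstacle} is showing that the discarded windows contribute little:
\[
\sum_{k\notin W}w_k\E_{\pi_k}[g^*]\le(1-e^{-\epsilon/11})\mathcal S(w,g).
\]
The approach I would take is as follows. Windows centred at depth more than $r=O(\sqrt{d\log(2/(1-e^{-\epsilon/11}))/\beta})$ inside $E$ have $\mu_k(E)\ge p_E$. This follows from Gaussian concentration for the strongly log-concave tilted marginal (Remark \ref{r: Bakry-Emery}, Lemma \ref{l: herbst argument}), since the drift $G/\beta$ is small when $\beta\ge G_+^2$. Hence the windows in $W^c$ only see the shell $E\setminus E_{-r}$, whose $\mu$-mass relative to $\mu(E)$ is $O((d/\rho+G\kappa)r)$ by the same homothety/volume argument as in the upper bound (this is Lemma \ref{l: good set to volume}). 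The large lower bounds on $\beta$ in Algorithm \ref{alg:indicator_net_construction} make this $\le\epsilon/11$.

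Finally, I would sum all the pieces, each at most a fraction of $\epsilon$ (hence the constants $69696=11^2\cdot 576$ etc.), together with the truncation term $\le\epsilon/2$. A union bound over the failure events gives probability $1-\delta$, and Theorem \ref{t: general error decomposition} yields $|\log\bar\pi[g]-\log\pi[g]|<\epsilon$.
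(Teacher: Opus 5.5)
Your overall route is the paper's. The denominator goes through Theorem \ref{t: Explicit Error for Log-lipschitz observable} with $\gamma_0=1$. The numerator's upper bound uses the majorant $h_\lambda$ and the layer-cake/homothety estimate of Lemma \ref{l: vol_upper_bound_argument}. Its lower bound discards the windows outside $W$. Everything is then combined by Theorem \ref{t: general error decomposition} and a union bound. One small slip in the upper bound: the constant fed into the log-Lipschitz machinery is $\max\{L_{\bar h_\lambda},L_{\overline{h_\lambda^2}}\}=2\lambda$, not $\lambda$, because $h_\lambda^2$ is $2\lambda$-log-Lipschitz. That is the source of the factor $2$ in the $L_g$ of Algorithm \ref{alg:indicator_net_construction}.

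The step you flagged as the main obstacle does not go through as you justify it. It is false that the windows in $W^c$ ``only see the shell''. A window centred at depth less than $r$, or just outside $E$, puts Gaussian mass on every point of $E$. The discarded contribution is proportional to $\int_E \frac{\sum_{k\notin W}\psi_k}{S}\,d\mu$. To bound it by the mass of a boundary layer, you must show that the fraction $\sum_{k\notin W}\psi_k(z)/S(z)$ is small at interior points.

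The paper does this pointwise in Lemma \ref{l: good set to volume}. For $z\in E_{2r}^-$ one has $B_r(z)\subseteq E_r^-$, so every lattice point in $B_r(z)$ is good by Lemma \ref{l:good-threshold}. The lattice sums $\sum_{y\in\Lambda\cap B_r(z)}\psi(y-z)$ and $\sum_{y\in\Lambda}\psi(y-z)$ are compared with the corresponding Gaussian integrals (Proposition \ref{p:sum-integral}, Lemma \ref{l:gaussian-error}), and a $\chi^2$ tail bound is added. This gives $\sum_{k\in W}\psi_k(z)\ge(1-\epsilon_r)S(z)$ with $\epsilon_r=2\sqrt{d\beta}\,\eta+\frac52\beta\eta^2+e^{-\beta r^2/32}$.

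Only after this does the homothety argument enter. It is Lemma \ref{l:lower bound volume}, not Lemma \ref{l: good set to volume}, that gives $\mu(E_{2r}^-)\ge(1-2r/\rho)^d e^{-2G\kappa r}\mu(E)$. Note also that the layer has width $2r$, not $r$.

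This missing lattice-sum step is exactly what forces the covering radius $\eta\lesssim(1-e^{-\epsilon/4})/\sqrt{d\beta}$. That condition produces the $\epsilon$-dependent spacing $\Delta$ in Algorithm \ref{alg:indicator_net_construction}, and hence the $\epsilon^{-d}$ in $L$ and $c_p^*$. In your sketch the lattice spacing plays no role in the lower bound, which is the symptom of the gap.
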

The following remark makes the upper bound on $r_s$ clear. 

\begin{rem} \label{rem: ratio}
    The geometric regularity constant \(r_s=S_{\max}/S_{\min}\) is controlled explicitly by the construction of the Gaussian net. Indeed, Lemma \ref{l: c_p lowerbound} gives
\[
\log r_s
\le
5d\left(
\sqrt{\frac{\beta\Delta^2}{\pi}}
+
\frac{e^{-9/2}}
{N_p\sqrt{\pi\beta\Delta^2}}
\right).
\]The parameter choices in Algorithm \ref{alg:indicator_net_construction} imply
\[
\Delta =\min\left\{\frac{(1-e^{-\epsilon/11)}}{6d\sqrt{\beta}}, \frac{\sqrt{\pi}}{120d\sqrt{\beta}}\right\},
\qquad
N_p  =\max\{\frac{3}{\sqrt{\beta\Delta^2}},\frac{20d}{\sqrt{e^9\pi}}\sqrt{\frac{1}{\beta\Delta^2}}\}.
\]Consequently,
\[
\log r_s
\le
\frac1{24}
+
\frac{5d}{3\sqrt\pi}e^{-9/2},
\]and hence
\[
r_s
\le
r_{s,d}^*
:=
\exp\left(
\frac1{24}
+
\frac{5d}{3\sqrt\pi}e^{-9/2}
\right).
\]In particular, when \(d\le3\),
\[
r_s^2\le (r_{s,*})^2<2.
\]
\end{rem}
\subsection{Upper bound for numerator}\label{ss:upper bound for number}
For $g=1_E\circ \theta$ (an indicator function in the collective variable space), consider $h_\lambda=e^{-\lambda d(E, \theta(x))}$, which is $\lambda$-log-Lipschitz in the CV space. Choose $\lambda$ large enough so that $\bar{\mathcal{S}}(\bar w, h_\lambda)\le (1+\ep)\bar{\mathcal{S}}(\bar w, g)$ whp. (This can follow from the corresponding estimate with $w$ plus a ratio bound between $\bar w$ and $w$.) Then whp (using the bounds for log-Lipschitz $h_\lambda$)
\[
\bar{\mathcal{S}}(\bar w, g) \le 
\bar{\mathcal{S}}(\bar w, h_\lambda)\le 
(1+\ep)\mathcal{S}(w, h_\lambda)
\le (1+\ep)^2 \mathcal{S}(w, g).
\]

\begin{proposition}\label{p: upper_bound_g_L} Let Assumptions \ref{a: good samples}, \ref{a: good path} and \ref{d: target distribution pushforward} hold. 
Let $E \subset \mathcal{A} \subseteq \mathbb{R}^d$ be a convex set with geometric condition number $\kappa = \frac{R}{\rho}$. Let $\mu \propto e^{-A(z)}$ have a  $G$-Lipschitz potential. 
Define $g := 1_E \circ \theta$.
Suppose the Gaussian net is constructed using parameters $\beta, \Delta,$ and $N_p$ as defined in Algorithm \ref{alg:gaussian_net_construction}, with $L_g = 2\left(\frac{d}{\rho}+G\kappa\right)(1 + \frac{8}{5\epsilon})$. 
Assume the setting and requirements for $N$ and $\tilde \epsilon$ of Theorem \ref{t: Explicit Error for Log-lipschitz observable}. Then, with probability $1-\delta$,
$$\bar{S}(\bar{w},g) \leq e^\epsilon S(w,g).$$
\end{proposition}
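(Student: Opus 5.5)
We majorize $g=\mathbf 1_E\circ\theta$ by $h_\lambda(x)=e^{-\lambda d(E,\theta(x))}$ and chain
\[
\bar{\mathcal S}(\bar w,g)\le \bar{\mathcal S}(\bar w,h_\lambda)\le e^{\epsilon/2}\mathcal S(w,h_\lambda)\le e^{\epsilon}\mathcal S(w,g).
\]
The first inequality is deterministic. Since $g\le h_\lambda$ pointwise, we have $g^*\le h_\lambda^*$ pointwise, so every empirical average $\bar g^*_k\le \bar h^*_{\lambda,k}$. Because $\bar w_k\ge 0$, the sum inherits the inequality.

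For the second inequality we apply Theorem \ref{t: Explicit Error for Log-lipschitz observable} to the observable $h_\lambda$, which factors through $\theta$. It is therefore constant on fibers, so $L_{\text{fiber}}=0$ and the hypothesis $C_{LS}^{(\theta)}L_{\text{fiber}}^2\le\gamma_0/2$ holds trivially. On the CV side, $\bar h_\lambda=h_\lambda$ is $\lambda$-log-Lipschitz and $\overline{h_\lambda^2}=h_\lambda^2$ is $2\lambda$-log-Lipschitz. Hence $L_g\le 2\lambda$, and the choice of $\beta$ in Algorithm \ref{alg:gaussian_net_construction} with the stated $L_g$ is admissible once we set $\lambda=(\frac d\rho+G\kappa)(1+\frac{8}{5\epsilon})$. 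Under the sample and bias requirements of that theorem, with probability $1-\delta$ we get $\bar{\mathcal S}(\bar w,h_\lambda)\le e^{3\epsilon/8}\mathcal S(w,h_\lambda)$, which is within the $e^{\epsilon/2}$ budget.

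The main obstacle is the third, deterministic inequality $\mathcal S(w,h_\lambda)\le e^{\epsilon/2}\mathcal S(w,g)$. By the cancellation computation in Lemma \ref{l: truncation error}, $\mathcal S(w,f)=\int_{K_U} f\,d\pi$ up to the common normalization. So it suffices to show, in CV space,
\[
\int_{\mathcal A\setminus E} e^{-\lambda d(E,z)}\mu(dz)\le (e^{\epsilon/2}-1)\,\mu(E).
\]
I would handle this with a layer-cake decomposition in $t=d(E,z)$. The level sets $\{d(E,z)=t\}$ are boundaries of the parallel bodies $E_t=E+B(0,t)$, which are convex. Steiner's formula together with $B(c,\rho)\subseteq E\subseteq B(c,R)$ gives $|\partial E_t|\le |\partial E|(1+t/\rho)^{d-1}$ and $|\partial E|\le \frac{d}{\rho}|E|$. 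The latter uses the inner ball: the cone volume formula gives $|E|\ge\frac{\rho}{d}|\partial E|$.

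For the density, $G$-Lipschitzness of $A$ lets us compare $\mu$ at distance $t$ from $E$ with $\mu$ on $E$. A point $z$ at distance $t$ can be contracted toward $c$ into $E$ by moving a distance at most $(R+t)$-proportional, which gives factors of the form $e^{G(t+\text{const}\cdot\kappa t)}$. To avoid the $e^{GR}$ blow-up, I would compare only the scaling $z\mapsto c+s(z-c)$. This map sends $E_t$ into $E$ for $s\approx(1+t/\rho)^{-1}$ and changes $A$ by at most $G\,t\kappa$-type amounts, since $|z-c|\le R+t$.

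Combining these, the outside integral is bounded by
\[
\mu(E)\int_0^\infty \Big(\tfrac d\rho+G\kappa\Big)e^{-(\lambda-\frac d\rho-G\kappa)t}\,dt\approx \mu(E)\,\frac{\frac d\rho+G\kappa}{\lambda-\frac d\rho-G\kappa},
\]
which with the chosen $\lambda$ equals $\frac{5\epsilon}{8}$ times a constant, and is at most $e^{\epsilon/2}-1$ after adjusting constants. Getting this geometric/density comparison cleanly, with the exponent linear in $\frac d\rho+G\kappa$, is the step I expect to require the most care. Multiplying the three bounds then yields the claim.
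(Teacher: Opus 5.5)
Your first two links (monotonicity, and Theorem \ref{t: Explicit Error for Log-lipschitz observable} applied to $h_\lambda$ with $L_{\mathrm{fiber}}=0$ and $L_h=2\lambda$) are exactly the paper's. The gap is in the third, deterministic link, which is the real content of the proposition.

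A level-set (coarea) decomposition needs a \emph{weighted} surface-to-volume bound, roughly $\int_{\{d(E,\cdot)=t\}}e^{-A}\,d\mathcal H^{d-1}\lesssim a\,e^{at}\int_E e^{-A}$ with $a=\frac d\rho+G\kappa$. The facts you cite, $|\partial E_t|\le(1+t/\rho)^{d-1}|\partial E|$ and $|\partial E|\le\frac d\rho|E|$, only control unweighted measure. Transferring them to $e^{-A}$ by $\sup/\inf$ costs $e^{G\operatorname{diam}(E_t)}$. Your scaling map $z\mapsto c+s(z-c)$ does not fix this. It compares $e^{-A}$ pointwise, but it sends the level surface $\partial E_t$ to another hypersurface inside $E$. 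You are then still comparing a surface integral with the volume $\mu(E)$, which is the same weighted problem. Hence the displayed bound $\mu(E)\int_0^\infty a\,e^{-(\lambda-a)t}\,dt$ does not follow from what you list.

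The surface route can be rescued, but only with extra work. One way is a weighted cone inequality along rays from $c$, $\int_{\partial K}e^{-A}\,d\mathcal H^{d-1}\le\frac{d+GR_K}{\rho_K}\int_K e^{-A}$, obtained from $\int_0^1 s^{d-1}e^{-b(1-s)}\,ds\ge\frac1{d+b}$, followed by Gronwall in $t$.

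The missing idea is that surfaces are unnecessary. Write $e^{-\lambda d(E,z)}=\int_0^\infty\lambda e^{-\lambda t}\mathbf 1\{d(E,z)\le t\}\,dt$ and use Tonelli. This gives $\int_{\mathcal A\setminus E}e^{-\lambda d(E,z)}\mu(dz)=\int_0^\infty\lambda e^{-\lambda t}[\mu(E_t^+)-\mu(E)]\,dt$ with $E_t^+=(E+B(0,t))\cap\mathcal A$, which involves only \emph{volumes}.

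Your own scaling map, applied to volumes rather than level surfaces, then finishes the argument:
\begin{itemize}
\item Since $B(0,t)\subseteq\frac t\rho(E-c)$ and $E-c$ is convex, $E_t^+\subseteq c+(1+\frac t\rho)(E-c)$.
\item The change of variables $z=c+(1+\frac t\rho)(y-c)$, $y\in E$, has Jacobian $(1+\frac t\rho)^d\le e^{dt/\rho}$.
\item It moves each point by at most $\frac t\rho R=\kappa t$, so $\mu(E_t^+)\le e^{at}\mu(E)$.
\item The integral is therefore at most $\int_0^\infty\lambda e^{-\lambda t}(e^{at}-1)\,dt\,\mu(E)=\frac{a}{\lambda-a}\mu(E)$. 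This is Lemma \ref{l: vol_upper_bound_argument}.
\end{itemize}

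Also fix the bookkeeping. With $\lambda=a(1+\frac{8}{5\epsilon})$ one gets $\frac a{\lambda-a}=\frac{5\epsilon}8$, and $1+\frac{5\epsilon}8\le e^{\epsilon/2}$ fails for small $\epsilon$. Since $\lambda$ is pinned by the prescribed $L_g$, you cannot ``adjust constants.'' You don't need to: split the budget as $e^{3\epsilon/8}\cdot e^{5\epsilon/8}$, as the paper does.
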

The proof of this result is deferred till after proving an intermediary lemma. The intermediary lemma gives a sufficient results for proving the final upper bound. 

\begin{lem}\label{l: vol_upper_bound_argument}
    Let $E \subset \mathcal{A} \subseteq \mathbb{R}^d$ be a convex set with geometric condition number $\kappa = \frac{R}{\rho}$. Let $\mu \propto e^{-A(z)}$ have a  $G$-Lipschitz potential. Define $a := \frac{d}{\rho}+G\kappa$ and assume $\lambda > a$. Then
$$\int_{\mathcal{A}\smallsetminus E} e^{-\lambda d(E, z)} \mu(dz) \leq \frac{a}{\lambda - a}\mu(E).$$
\end{lem}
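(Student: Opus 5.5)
We use shell (layer-cake) decomposition around $E$. Write $E_t := \{z : d(E,z)\le t\}$ for the $t$-enlargement. Then
\[
\int_{\mathcal{A}\smallsetminus E} e^{-\lambda d(E,z)}\mu(dz) = \int_0^\infty \lambda e^{-\lambda t}\,\mu\big((E_t\smallsetminus E)\cap\mathcal A\big)\,dt,
\]
so it suffices to show $\mu(E_t\cap\mathcal A)\le e^{at}\mu(E)$. Granting this, the inner mass is at most $(e^{at}-1)\mu(E)$, and
\[
\int_0^\infty \lambda e^{-\lambda t}(e^{at}-1)\,dt = \frac{\lambda}{\lambda-a}-1 = \frac{a}{\lambda-a},
\]
which gives the claim exactly when $\lambda>a$.

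The key step is the growth bound $\mu(E_t\cap\mathcal A)\le e^{at}\mu(E)$. We prove it by a homothety about the center $c$. Since $B(c,\rho)\subseteq E$ and $E$ is convex, a standard argument gives $E_t\subseteq c+(1+t/\rho)(E-c)$. Indeed, any point $z$ at distance at most $t$ from some $x\in E$ can be written as $z=c+s\big(\tfrac{x-c}{s}\cdot\ldots\big)$, which reduces to showing $x+u\in(1+t/\rho)(E-c)+c$ for $\|u\|\le t$. Writing $x+u = c+(1+t/\rho)\Big[\tfrac{1}{1+t/\rho}(x-c)+\tfrac{t/\rho}{1+t/\rho}\cdot\tfrac{\rho}{t}u\Big]$ exhibits a convex combination of $x-c\in E-c$ and $\tfrac{\rho}{t}u\in B(0,\rho)\subseteq E-c$.

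Set $s=1+t/\rho$ and $\Phi(y)=c+s(y-c)$. Then
\[
\mu(E_t)\le\int_E s^d\, e^{-A(\Phi(y))+A(y)}\,\mu(dy).
\]
Since $\|\Phi(y)-y\|=(s-1)\|y-c\|\le (t/\rho)R=t\kappa$ and $A$ is $G$-Lipschitz, the exponential factor is at most $e^{G\kappa t}$. Together with $s^d\le e^{dt/\rho}$, this gives $\mu(E_t)\le e^{(d/\rho+G\kappa)t}\mu(E)=e^{at}\mu(E)$.

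The main obstacle is that the Lipschitz bound $G$ is only assumed on $\mathcal A$, while the segment from $y$ to $\Phi(y)$ may leave $\mathcal A$. Since the integral in the statement only involves $\mathcal A$, the fix is to restrict to $\{y\in E:\Phi(y)\in\mathcal A\}$. For such $y$, convexity of $\mathcal A$ keeps the whole segment $[y,\Phi(y)]$ inside $\mathcal A$, because both endpoints lie in $\mathcal A$ (using $E\subset\mathcal A$). Hence the mean value estimate for $A$ is valid there, which yields $\mu(E_t\cap\mathcal A)\le e^{at}\mu(E)$ as required. Finally, the normalization of $\mu$ cancels, so we may work with the unnormalized density $e^{-A}$ throughout.
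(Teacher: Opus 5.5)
Your proposal is correct and follows the paper's proof essentially step for step. It uses the same layer-cake identity reducing everything to $\mu(E_t^+)\le e^{at}\mu(E)$, the same inclusion of the $t$-enlargement into the homothetic copy $c+(1+t/\rho)(E-c)$, the same change of variables with Jacobian $(1+t/\rho)^d\le e^{dt/\rho}$ together with the displacement bound $\|\Phi(y)-y\|\le\kappa t$, and the same final integral $\lambda/(\lambda-a)-1$. The one difference is in your favor: the paper extends the integral to all of $E$ before applying the Lipschitz bound, which implicitly treats $A$ as globally $G$-Lipschitz. You apply it only on $\{y\in E:\Phi(y)\in\mathcal A\}$, where convexity of $\mathcal A$ keeps the segment inside $\mathcal A$, so your argument needs only the gradient bound on $\mathcal A$ from Assumption~\ref{d: target distribution pushforward}.
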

\begin{proof} First define the set 
$$E_t^+
:=
\{z\in\mathcal A:d(z,E)\le t\} = (E+B(0,t))\cap\mathcal A.$$
    We first consider the LHS integral, which can be rewritten as
    \begin{align*}
\int_{\mathcal{A}\smallsetminus E} e^{-\lambda d(E, z)} \mu(dz)  &= \int_{\mathcal A\setminus E}
\int_0^\infty
\lambda e^{-\lambda t}1_{\{d(z,E)\le t\}}
\,dt\,\mu(dz).\\
\shortintertext{Tonelli's Theorem allows us to exchange the order of integration so that}
&= \int_0^\infty\lambda e^{-\lambda t}\left[
\int_{\mathcal A\setminus E}
1_{\{d(z,E)\le t\}}\,\mu(dz)
\right]dt\\
\shortintertext{Rewriting the indicator gives}
&=  \int_0^\infty\lambda e^{-\lambda t}\left[\mu(E_t^+\setminus E)
\right]dt\\
&= \int_0^\infty\lambda e^{-\lambda t}\left[\mu(E_t^+) - \mu(E)
\right]dt.
    \end{align*}
Therefore to continue from
\begin{equation}\label{e: continue from}
    \int_{\mathcal{A}\smallsetminus E} e^{-\lambda d(E, z)} \mu(dz) \leq \int_0^\infty\lambda e^{-\lambda t}\left[\mu(E_t^+) - \mu(E)
\right]dt
\end{equation}
we first need to find an upper bound of the form 
\begin{equation}\label{e: suff_vol_arg}
    \mu(E_t^+) \leq c\cdot\mu(E). 
\end{equation}
To do this we first define the set $E$ stretch by a factor $\gamma_t$, about the center $\vec{c}$ of $E$ as 
$$D_t(E) := \left\{\vec{c} +  \gamma_t(z - \vec{c}): z \in E \right\} = \vec{c} + \gamma_t(E-\vec{c}).$$
Take $\gamma_t = \left(1 + \frac{t}{\rho}\right)$ then for $z \in E_t^+$ since $E_t^+\subseteq E+B(0,t)\subseteq c+(E-c)+\frac{t}{\rho}(E-c)= c+\left(1+\frac{t}{\rho}\right)(E-c)=D_t(E),$ we have that $z \in D_t(E)$. So the choice of $\gamma_t = \left(1 + \frac{t}{\rho}\right)$ implies  $E_t^+ \subseteq D_t(E)$ and hence 
$$\mu(E_t^+) \leq \mu(D_t(E)).$$
We now proceed by bounding the following integral 
\begin{align*}
     \mu(D_t(E)) &= \frac{1}{Z_\mathcal{A}}\int_{D_t(E) \cap \mathcal{A}} e^{-A(z)}dz.\\
     \shortintertext{Using the change of variable $U_t(z) = \vec{c} + \gamma_t(z-\vec{c})$ with the change of volume $\det DU_t(z) = \gamma_t^d$ we have that}
     &=\frac{\gamma_t^d}{Z_\mathcal{A}}\int_{\{z \in E \;:\; U_t(x) \in \mathcal{A}\}} e^{-A(U_t(z))}dz. \\
     &\leq \frac{\gamma_t^d}{Z_\mathcal{A}}\int_{E} e^{-A(U_t(z))}dz.
\end{align*}
Using that $\{z \in E \;:\; U_t(x) \in \mathcal{A}\}\subseteq E$ gives
\begin{equation}\label{e: intermediate_step}
    \mu(E_t^+) \leq \frac{\gamma_t^d}{Z_\mathcal{A}}\int_{E} e^{-A(U_t(z))}dz.
\end{equation}
We consider that 
$$\left\|U_t(z) - z\right\| = \left\|\vec{c}+(1+\frac{t}{\rho})(z - \vec{c}) - z\right\| = \frac{t}{\rho}\left\|z - \vec{c}\right\|.$$
Since $\vec{c}$ is the center of $E$, $\left\|z - \vec{c}\right\|$ this is less than the outer radius $R$ and therefore 
$$\left\|U_t(z) - z\right\| \leq \frac{tR}{\rho} = \kappa t.$$
Lastly, recalling that the potential $A(z)$ is $G$-Lipschitz gives us that 
$$\left\|A(U_t(z)) - A(z)\right\| \leq G \left\|U_t(z) - z\right\| \leq G\kappa t.$$
In practicular, this gives $A(U_t(z)) \geq A(z) - G\kappa t$ which implies 
$$e^{-A(U_t(z))} \leq e^{G\kappa t}e^{-A(z)}.$$ 
Returning to eq. (\ref{e: intermediate_step}) gives 
$$\mu(E_t^+) \leq \frac{\gamma_t^d}{Z_\mathcal{A}}\int_{E} e^{-A(U_t(z))}dz. \leq \frac{\gamma_t^d}{Z_\mathcal{A}}e^{G\kappa t}\int_{E}e^{-A(U_t(z))} dz = \gamma_t^de^{G\kappa t}\mu(E),$$
as required by eq. (\ref{e: suff_vol_arg}). Using that 
$$\gamma_t^d = (1+\frac{t}{\rho})^d = e^{d\log(1+\frac{t}{\rho})} \leq e^{\frac{dt}{\rho}}$$
gives the bound 
$$\mu(E_t^+) \leq e^{\frac{dt}{\rho}+G\kappa t} \mu(E).$$
To finish the proof we continue from eq. (\ref{e: continue from}). Using the previous bound yields 
\begin{align*}
     \int_{\mathcal{A}\smallsetminus E} e^{-\lambda d(E, z)} \mu(dz) &\leq \int_0^\infty\lambda e^{-\lambda t}\left[\mu(E_t^+) - \mu(E)
\right]dt\\
&\leq \mu(E)\int_0^\infty\lambda e^{-\lambda t}\left[e^{\frac{dt}{\rho}+G\kappa t}- 1
\right]dt.\\
\shortintertext{Letting $a = \frac{d}{\rho}+ G \kappa$ gives}
&= \mu(E)\int_0^\infty\lambda e^{-(\lambda - a)t}dt - \mu(E)\int_0^\infty\lambda e^{-\lambda t}dt\\
&=\left(\frac{\lambda}{\lambda-a} - 1\right)\mu(E)\\
&= \frac{a}{\lambda-a}\mu(E).
\end{align*}
 \end{proof}

\begin{proof}(Proof of Proposition \ref{p: upper_bound_g_L})
    Define $h_\lambda=e^{-\lambda d(E, \theta(x))}$. Now we want to show that we can establish the following string of inequalities: 
    \[
\bar{\mathcal{S}}(\bar w, g) \le 
\bar{\mathcal{S}}(\bar w, h_\lambda)\le 
e^{\eta_{h_\lambda}^+}\mathcal{S}(w, h_\lambda)
\le e^{\eta_{h_\lambda}^+}e^{\eta_{g}^+} \mathcal{S}(w, f).
\]
The first inequality holds by monotonicity of $g \leq h_\lambda$ (Lemma \ref{l: monotonicity of S}). For the second inequality, since $g_\lambda$ is a log-Lipschitz function we can directly apply the first part of Theorem \ref{t: Explicit Error for Log-lipschitz observable} with the appropriate algorithmic constants to get: 
$$\bar{\mathcal{S}}(\bar w, h_\lambda) \le  e^{\frac{3\epsilon}{8}}\mathcal{S}(w,h_\lambda).$$
Moreover, since $h_\lambda$ is defined on the collective variable space the constants $L_{\bar h_\lambda}=\lambda$,  
$L_{\overline{h}_\lambda^2}=2\lambda$ and $L_{\mathrm{fiber}}=0$. Therefore $L_h= \max\{L_{\bar h_\lambda},L_{\overline{h}_\lambda^2}\} = 2\lambda$ in the algorithmic parameter choices. 
However, $\lambda$ is an analytical device to which we will pick the optimal $\lambda$ once the full bound is established. 

The third inequality requires some work. First we note the following: 
\begin{align*}
    \frac{\mathcal S(w,h_\lambda)}{\mathcal S(w,g)} &= \frac{\sum_k w_k \int_\mathcal{A}\frac{1}{S(z)} \frac{\tilde{\psi}_k(z)}{Z_k}e^{-\lambda d(E,z)}\mu(dz)}{\sum_k w_k \int_\mathcal{A}\frac{1}{S(z)} 1_E\frac{\tilde{\psi}_k(z)}{Z_k}\mu(dz)}= \frac{\sum_k cZ_k \int_\mathcal{A}\frac{1}{S(z)} e^{-\lambda d(E,z)}\frac{\tilde{\psi}_k(z)}{Z_k}\mu(dz)}{\sum_k cZ_k \int_\mathcal{A}\frac{1}{S(z)} 1_E\frac{\tilde{\psi}_k(z)}{Z_k}\mu(dz)}\\
    &= \frac{c\int_\mathcal{A}\frac{S(z)}{S(z)}e^{-\lambda d(E,z)}\mu(dz)}{c\int_\mathcal{A}\frac{S(z)}{S(z)}1_E\mu(dz)} = \frac{\mu(E) + \int_{\mathcal{A}\smallsetminus E}e^{-\lambda d(E,z)}\mu(dz)}{\mu(E)}\\
     &= 1 + \frac{\int_{\mathcal{A}\smallsetminus E}e^{-\lambda d(E,z)}\mu(dz)}{\mu(E)}
\end{align*}
Therefore its sufficient to find a bound of the form 
$$\int_{\mathcal{A}\smallsetminus E}e^{-\lambda d(E,z)}\mu(dz) \leq \epsilon'\mu(E).$$
This bound is established in Lemma \ref{l: vol_upper_bound_argument} and is given as 
$$\int_{\mathcal{A}\smallsetminus E} e^{-\lambda d(E, z)} \mu(dz) \leq \frac{a}{\lambda - a}\mu(E),$$
with $a = \frac{d}{\rho}+G\kappa$. Therefore 
$$ \frac{\mathcal S(w,h_\lambda)}{\mathcal S(w,g)} \leq 1 + \frac{a}{\lambda - a}.$$
To get a final bound of $e^\epsilon$ we require $1 + \frac{a}{\lambda - a} \leq e^{\frac{5}{8}\epsilon}$. This is equivalent to acquiring the bound $\log(1+\frac{a}{\lambda - a}) \leq \frac{5}{8}\epsilon$. We proceed by considering
\begin{align*}
    \log(1+\frac{a}{\lambda - a}) &\leq \frac{a}{\lambda - a}.\\ 
    \shortintertext{Then assuming $\lambda \geq a(1+\frac{8}{5\epsilon})$ yields }
    &\leq  \frac{5}{8}\epsilon
\end{align*}
as desired. Combining all 3 bounds gives
 \[
\bar{\mathcal{S}}(\bar w, g) \le 
\bar{\mathcal{S}}(\bar w, h_\lambda)\le 
e^{\frac{3}{8}\epsilon}\mathcal{S}(w, h_\lambda)
\le e^{\frac{3}{8}\epsilon} e^{\frac{5}{8}\epsilon} \mathcal{S}(w, g).
\]
Which implies 
$$\bar{\mathcal{S}}(\bar w, g) \leq e^\epsilon \mathcal{S}(w, g).$$
\end{proof}
\subsection{Lower Bound for Numerator}\label{ss:lower bound for numerator}
We state a proposition for the lower bound on $\bar{S}(\bar{w},g)$, and defer the proof until after we establish a few intermediary lemmas. 
\begin{proposition}\label{p:full_lower_bound}
    Let Assumptions \ref{a: good samples} and \ref{a: good path} hold. 
    Let $g = 1_E \circ \theta$ and suppose the assumptions of Lemmas \ref{l: lower bound good set}-\ref{l:lower bound volume} hold. 
    In particular, assume 
    \[
\beta>\ell,
\qquad
\eta_\Lambda\le\frac{1}{2\sqrt{d\beta}},
\]
and define 
\[q(r)
:=
2\sqrt{d\beta}\,\eta_\Lambda
+\frac52\beta\eta_\Lambda^2
+\exp\left(-\frac{\beta r^2}{32}\right).\]
Let $r_s \leq \exp\left(
\frac1{24}
+
\frac{5d}{3\sqrt\pi}e^{-9/2}
\right)$, as in Remark \ref{rem: ratio}. Suppose
\(
B(c,\rho)\subseteq E\subseteq B(c,R),
\) with \(
\kappa=\frac R\rho.
\) 
For $r \in \mathcal{R}$ with 
\[\mathcal{R} := \left\{r>0: r>2\sqrt{\frac d\beta},
\;
2r<\rho,
\;
r\ge \fc{ G+\sqrt{2d(\beta+\ell)}}{\beta}
\;
q(r)<1,\right\},\]
assume the sampling-size and sampling-bias conditions of Lemma \ref{l: lower bound good set} hold with $p_E = p(r) \geq \frac{1}{2}$, as in Lemma \ref{l:good-threshold}, let $ G_+=\max\{1,G\}$ and $\rho_- = \min\{1,\rho\}$.
Then with probability at least $1-\delta_w - \delta_f$, 

\[\overline S(\overline w,g)
\ge
e^{-\Gamma(p(r))}
\bigl(1-q(r)\bigr)
\left(1-\frac{2r}{\rho}\right)^d
e^{-2G_+\kappa r}
S(w,g).\]
In particular, for $\delta_w = \delta_f = \frac{\delta}{2}$ and if 
\[
N\ge
\max\left\{
\frac{64r_s^2}{3\delta},
\frac{36864mL^2}{\epsilon^2c_p\delta},
\frac{3072r_s^2}{\epsilon^2\delta}
\right\},
\]
\[
\widetilde\epsilon
\le
\min\left\{
\frac1{64},
\frac{3}{32r_s^2},
\frac{\epsilon^2c_p}{9316L^2},
\frac{\epsilon^2}{1536r_s^2}
\right\},
\] 
\[\beta > \max\left\{2\ell, \; \frac{2368 \cdot G_+^4d\kappa^2\log\left(\frac{2}{1-e^{-\epsilon/4}}\right)}{\rho_-^2\epsilon^2}, \;\frac{9472G_+^2d^3}{\rho_-^2\epsilon^2}\right\} \quad \text{ and } \quad \eta_\Lambda\le\frac{(1-e^{-\epsilon/4})}{6\sqrt{d\beta}}.\]
Then with probability at least $1-\delta$, 

\[\overline S(\overline w,g)
\ge
e^{-\epsilon}
S(w,g).\]
\end{proposition}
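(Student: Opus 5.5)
The lower bound will go through the restricted estimator $\bar{\mathcal S}^{\textup{lower}}(\bar w,g)=\bar{\mathcal S}(\bar w,(g_k)_{k\in W})$, where $W=\{k:\E_{\pi_k}[g]\ge p_E\}$ with $p_E=p(r)$ from Lemma \ref{l:good-threshold}. Since $g^*\ge 0$, discarding windows outside $W$ only decreases the sum. Hence $\bar{\mathcal S}(\bar w,g)\ge \bar{\mathcal S}^{\textup{lower}}(\bar w,g)$ deterministically, by monotonicity (Lemma \ref{l: monotonicity of S}).

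\textbf{Step 1: concentration on the good windows.} For $k\in W$ the indicator is well behaved, with $\E_{\pi_k}[g^*]\ge p_E/S_{\max}$. Using $0\le g^*\le 1/S_{\min}$, we get
\[
\Var_{\pi_k}(g^*)/\E_{\pi_k}[g^*]^2\le r_s^2/p_E\le 2r_s^2 .
\]
So the restricted relative standard deviation $\mathcal Q$ is $O(r_s)$ under $\pi_k$. Under $\tilde\pi_k$ the same holds, via the $\chi^2$ Cauchy--Schwarz shift of Lemma \ref{l: bias error}, once $\tilde\epsilon\le 3/(32r_s^2)$.

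I would then apply Lemmas \ref{l: weight error precise}, \ref{l: Sampling Error} and \ref{l: bias error} to the family $(g_k)_{k\in W}$. This is what Lemma \ref{l: lower bound good set} packages. It gives, with probability $1-\delta_w-\delta_f$,
\[
\bar{\mathcal S}^{\textup{lower}}(\bar w,g)\ge e^{-\Gamma(p(r))}\mathcal S^{\textup{lower}}(w,g),
\qquad
\mathcal S^{\textup{lower}}(w,g):=\sum_{k\in W}w_k\E_{\pi_k}[g^*].
\]

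\textbf{Step 2: the good windows capture most of $\mathcal S(w,g)$.} This is the deterministic geometric part. With $w_k=cZ_k$, we have $\mathcal S^{\textup{lower}}(w,g)=c\int_E \frac{\sum_{k\in W}\tilde\psi_k}{S}\,d\mu$.

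Let $E_{2r}^-=\{z\in E: d(z,\partial E)\ge 2r\}$. For $z\in E_{2r}^-$ I will show that the windows with centers within distance about $r$ of $z$ belong to $W$. The argument is that such a tilted marginal $\mu_k\propto\tilde\psi_k e^{-A}$ is $(\beta-\ell)$-strongly log-concave. Its mode lies within $(G+\sqrt{2d(\beta+\ell)})/\beta\le r$ of $y_k$, which is where the constraint on $r$ in $\mathcal R$ enters. Gaussian concentration then puts mass at least $p(r)$ in $B(y_k,r)\subset E$.

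These are the good-threshold and volume lemmas preceding the proposition. Combined with Proposition \ref{p:sum-integral} and Lemma \ref{l:gaussian-error} (with covering radius $\eta_\Lambda$), they show that $\sum_{k\in W}\tilde\psi_k\ge(1-q(r))S$ on $E_{2r}^-$. The three terms of $q(r)$ account for the lattice discretization error and the Gaussian tail $e^{-\beta r^2/32}$ of windows centered farther away.

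It then remains to show $\mu(E_{2r}^-)\ge(1-2r/\rho)^d e^{-2G_+\kappa r}\mu(E)$. I would prove this by the contraction analogue of Lemma \ref{l: vol_upper_bound_argument}: take $U(z)=c+(1-2r/\rho)(z-c)$, which maps $E$ into $E_{2r}^-$ by convexity and $B(c,\rho)\subseteq E$. Its Jacobian is $(1-2r/\rho)^d$, and the potential changes by at most $G\cdot 2r\kappa$.

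Assembling Steps 1 and 2 gives the first display of the proposition.

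\textbf{Step 3: parameter choice.} Take $\delta_w=\delta_f=\delta/2$. Choose $r$ so that each of the four factors is at least $e^{-\epsilon/4}$, which amounts to
\[
r\asymp \rho_-\epsilon/(G_+\kappa d),\qquad e^{-\beta r^2/32}\le (1-e^{-\epsilon/4})/2 .
\]
This forces $\beta\gtrsim G_+^4 d\kappa^2\log(2/(1-e^{-\epsilon/4}))/(\rho_-^2\epsilon^2)$. The condition $r\ge(G+\sqrt{2d(\beta+\ell)})/\beta$ forces $\beta\gtrsim G_+^2d^3/(\rho_-^2\epsilon^2)$. The condition $\eta_\Lambda\le(1-e^{-\epsilon/4})/(6\sqrt{d\beta})$ bounds the discretization terms of $q$. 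Finally, the $N$ and $\tilde\epsilon$ thresholds make $\Gamma(p(r))\le\epsilon/4$.

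I expect Step 2, the lower bound $\sum_{k\in W}\tilde\psi_k\ge(1-q)S$ on the shrunken set, to be the main obstacle. It has to control window membership in $W$ uniformly, through the mode-location estimate, and simultaneously control the lattice-sum error. The remaining work is bookkeeping of constants.
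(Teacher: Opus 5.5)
Your Steps 1 and 2 reproduce the paper's chain: monotonicity, then Lemma \ref{l: lower bound good set} with $C_{\mathcal Q_*}=2r_s/\sqrt{3p_E}$, then Lemmas \ref{l:good-threshold} and \ref{l: good set to volume} to reach $(1-q(r))\,\mathcal S(w,1_{E^-_{2r}})$, then Lemma \ref{l:lower bound volume} at radius $2r$. So the first display is fine.

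One correction to your gloss of Lemma \ref{l:good-threshold}. By first-order optimality, the mode of $\mu_k$ lies within $G/\beta$ of $y_k$. The radius $(G+\sqrt{2d(\beta+\ell)})/\beta$ is instead where the deviation $\beta r-G-\sqrt{2d(\beta+\ell)}$ in the paper's concentration bound for $\|\nabla A_k\|$ becomes positive. At that radius you only get $p(r)\ge 0$. Your $N,\tilde\epsilon$ thresholds are the $p_E=\tfrac12$ instances of Lemma \ref{l: lower bound good set}, so Step 3 must impose $r\ge r_{1/2}$, not just $r\ge(G+\sqrt{2d(\beta+\ell)})/\beta$.

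The genuine gap is the choice of $r$ in Step 3. Making each factor at least $e^{-\epsilon/4}$ gives two upper constraints: $r\le\frac\rho2(1-e^{-\epsilon/(4d)})$ and $r\le\epsilon/(8G_+\kappa)$. You only need $r$ below their minimum. Your $r\asymp\rho_-\epsilon/(G_+\kappa d)$ divides by $d$ and by $G_+\kappa$ simultaneously, so it is smaller than necessary by a factor of order $\min\{d,G_+\kappa\}$.

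With your $r$, the requirement $r\ge r_{1/2}\ge\sqrt{2d/\beta}$ forces $\beta\gtrsim G_+^2\kappa^2d^3/(\rho_-^2\epsilon^2)$. The tail requirement forces $\beta\gtrsim G_+^2\kappa^2d^2\log(\cdot)/(\rho_-^2\epsilon^2)$. Neither is one of the two terms you list. The first is not implied by the stated maximum when $\kappa$ and $d^2/(G_+^2\log(\cdot))$ are both large. So for $\beta$ just above the threshold, your $r$ need not lie in $\mathcal R$.

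The paper instead takes any $r$ in
\[
\Bigl[\max\Bigl\{r_{1/2},\,2\sqrt{d/\beta},\,\sqrt{(32/\beta)\log\tfrac{2}{1-e^{-\epsilon/4}}}\Bigr\},\ \min\Bigl\{\tfrac{\rho\epsilon}{16d},\,\tfrac{\epsilon}{8G_+\kappa}\Bigr\}\Bigr]
\]
and checks each lower bound against each upper bound separately. Since $r_{1/2}\lesssim G_+\sqrt{d/\beta}$, the two pairings with $r_{1/2}$ produce exactly the $G_+^4d\kappa^2/\epsilon^2$ and $G_+^2d^3/(\rho^2\epsilon^2)$ terms.

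A caution for when you redo this. Under the even $\epsilon/4$ split, pair the tail bound with $r\le\frac\rho2(1-e^{-\epsilon/(4d)})\le\frac{\rho\epsilon}{8d}$. This requires $\beta\ge 2048\,d^2\log\frac{2}{1-e^{-\epsilon/4}}/(\rho^2\epsilon^2)$. The paper's bookkeeping records this pairing as $512\,d\log(\cdot)/(\rho^2\epsilon^2)$. The displayed maximum does not dominate the correct term in general, for example when $G_+=\kappa=1$, $\rho\le 1$, $d=2$ and $\epsilon$ is very small. You will need to either enlarge the $\beta$ threshold or rebalance the error budget among the four factors.
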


\begin{lem}[Monotonicity]\label{l: monotonicity of S}
    Suppose $f\le g$. Then 
    \[
\bar{\mathcal{S}}(w, f)\le \bar{\mathcal{S}}(w, g),
    \]
    when the same samples are used for the empirical estimates. The same holds for $\mathcal S$.
\end{lem}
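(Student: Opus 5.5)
The plan is to unwind the definitions; everything reduces to linearity and positivity. Fix the weight vector $w$ and the samples $\{X_t^k\}_{t=1}^{N_k}$ in each window. Recall $f^*(x)=f(x)/\sum_j\psi_j(x)$, where the denominator is strictly positive because each $\psi_j$ is a Gaussian window. So $f\le g$ pointwise gives $f^*\le g^*$ pointwise.

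For the empirical quantity, averaging $f^*(X_t^k)\le g^*(X_t^k)$ over $t=1,\dots,N_k$ gives $\bar f^*_k\le \bar g^*_k$ for every window $k$. Here we use that the same samples enter both estimates, which is exactly the hypothesis of the statement.

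Next we multiply by $w_k$ and sum over $k$, using $w_k\ge 0$, to get $\bar{\mathcal S}(w,f)\le\bar{\mathcal S}(w,g)$. The nonnegativity of the weights holds in every case of interest:
\begin{itemize}
\item the true weights are $w_k\propto Z_k>0$;
\item the empirical weights $\bar w$ are the stationary distribution of the stochastic matrix $\bar F^0$, hence a probability vector.
\end{itemize}
Since the argument holds for any nonnegative weight vector, it applies verbatim with $\bar w$ in place of $w$. This is the form used in the proof of Proposition \ref{p: upper_bound_g_L}.

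For $\mathcal S$, the same reasoning applies with sample averages replaced by expectations. Monotonicity of the integral against the probability measure $\pi_k$ gives $\mathbb E_{\pi_k}[f^*]\le\mathbb E_{\pi_k}[g^*]$, and summing with nonnegative weights gives $\mathcal S(w,f)\le\mathcal S(w,g)$. The identical argument covers $\tilde{\mathcal S}$ with $\tilde\pi_k$.

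There is no real obstacle. The only point needing care is that the inequality holds for every realization of the samples, not merely in expectation; this is why the coupling, i.e. using the same samples for both estimates, must be stated explicitly.
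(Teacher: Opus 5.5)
Your argument is correct and matches what the paper intends: the paper gives no separate proof, only the remark that this is an inequality between random variables that always holds, which is exactly your chain $f^*\le g^*$ pointwise, shared samples, then summation against nonnegative weights. The one point worth stating explicitly is your claim that $\bar w\ge 0$. It holds because the full-support Gaussian windows make $\bar F^0_{ij}>0$ for every $(i,j)\in G$. Since $G$ is connected, $\bar F^0$ is then irreducible, so its normalized stationary vector is unique and strictly positive.
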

Note this is an inequality between random variables that always holds.
    More generally, define 
    \[
\bar{\mathcal{S}}(w, (f_k)_{k\in W})
= \sum_{k\in W} w_k (\bar{f_k})^*_k, 
    \]
    that is, we have the freedom to let the functions $f_k$ be different in different windows. Then if $f_k\le g_k$ for all $k\in W$, then 
        \[
\bar{\mathcal{S}}(\bar w, (f_k)_{k\in W})\le \bar{\mathcal{S}}(\bar w, (g_k)_{k\in W}).
    \]
    In particular, this allows us to obtain a lower bound by zeroing out the windows where $\E_{\pi_k}[g]<\ep$; this throws out the terms in the sum that could have large relative variance.
    \begin{align}
    \label{e:sum-stdev-lb}
\bar{\mathcal{S}}^{\text{lower}}(\bar w, g)
:=\sum_{k\in W,\E_{\pi_k}[1_E]\ge \ep} \bar w_k \bar{g}^*_k
\le 
\bar{\mathcal{S}}(\bar w, g).
    \end{align}

\paragraph{Lower bound.}
Choose the windows and $\ep$ small enough so that we can get a good lower bound on the LHS of \eqref{e:sum-stdev-lb}.

TODO: work out explicit bounds when $E$ is a convex set with condition number $\ka$.

Note that if $S_{\min}\le S(x)\le S_{\max}$, then 
\begin{align}
\label{e:rel-std-dev-S}
\fc{\sqrt{\Var_\pi [1_E/S]}}{\E_\pi[1_E/S]} \le
\fc{\sqrt{\E_\pi [1_E/S^2]}}{\E_\pi[1_E/S]}
\le \fc{\sqrt{\E_\pi [1_E]}}{\E_\pi[1_E]}\cdot \fc{1/S_{\min}}{1/S_{\max}} 
= \fc{S_{\max}}{S_{\min}} \cdot \rc{\sqrt{\pi(E)}} 
\end{align}

Say that a window $k$ is good/bad at threshold $p$ if $\E_{\pi_k}[1_E]\ge p$ or $\E_{\pi_k}[1_E]<p$, respectively. Let $E_r^- = \seta{x\in E}{d(x,E^c)\ge r}$. We say that $x$ is good/bad at the threshold if the window centered at $x$ is good/bad.

\begin{rem}\label{r: family_of_functions}
    The error bounds in Lemmas \ref{l: weight erorr}, \ref{l: Sampling Error}, and \ref{l: bias error} continue to hold when the observable is allowed to depend on a window index. Specifically, for a collection of non-negative observes $(g_k)_{k \in W}$ define 
    \[
\bar{\mathcal{S}}(w, (g_k)_{k\in W})
= \sum_{k\in W} w_k (\bar{g}_k)^*. 
    \]   
    With this definition Lemmas \ref{l: Sampling Error}, and \ref{l: bias error} continue to hold as the analysis is performed on each separate window. 
    Additionally, Lemma \ref{l: weight erorr} has no dependence on the observable function. 
    These are the ingredients required to apply the first part of Theorem \ref{t: general error decomposition} and Corollary \ref{c: main_theorem} to a family of functions. 
\end{rem}
\begin{lem}\label{l: lower bound good set}
Let Assumptions \ref{a: good samples} and \ref{a: good path} hold. 
Let $\bar{\mathcal{S}}(\bar w, (g_k)_{k\in W})$ be the estimator of ${\mathcal{S}}( w, (f_k)_{k\in W})$ computed on $L$ windows using the sparse matrix $\bar F^0$, defined on $G$, with $m = |G|$. 
Define the family of functions
$$(g_k)_{k\in W}(x) = \begin{cases}
  g(x) & \text{if } k \in W \\
  0 & \text{otherwise}
\end{cases},$$
for $g:=1_E\circ\theta$ and $W= \left\{ k : \E_{\pi_k}[1_E\circ \theta]\ge p_E\right\}$. Let $r_s = \frac{s_{max}}{s_{min}}$.
Suppose every window contains $N_k \ge N$ samples. Choose $\epsilon_w>0$, a weight failure probability $\delta_w>0$, and an observable sampling failure probability $\delta_f>0$. If 
\begin{align*}
N&\ge \max\left\{\frac{16r_s^2}{3p_E\delta_f},\frac{32mL^2}{\epsilon_w^2c_p\delta_w}\right\}\\
\shortintertext{and}
\tilde\epsilon
&\le
\min\left\{\frac{p_E^2}{16},\frac{3p_E}{16r_s^2},\frac{\epsilon_w^2c_p}{16L^2}\right\}. 
\end{align*}
Defining 
$$\eta \vcentcolon =2\epsilon_w + \frac{4r_s}{\sqrt{3p_EN\delta_f}} + 4\sqrt{\widetilde\epsilon}\,\frac{r_s}{\sqrt{3p_E}},$$
yields, with probability $1 - \delta_w- \delta_f $,
$$\bar{\mathcal{S}}(\bar w, (g_k)_{k\in W}) \ge e^{-\eta}\mathcal{S}(w, (g_k)_{k\in W}) := S^{lower}(w,g).$$
\end{lem}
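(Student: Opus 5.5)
The plan is to mirror the proof of Corollary \ref{c: main_theorem}, replacing the variance bound $C_{\mathcal Q_*}$ with a bound specific to the truncated family $(g_k)_{k\in W}$. We will apply the chain
\[
\bar{\mathcal S}(\bar w,(g_k))\to \bar{\mathcal S}(w,(g_k))\to \tilde{\mathcal S}(w,(g_k))\to \mathcal S(w,(g_k)).
\]
By Remark \ref{r: family_of_functions}, Lemmas \ref{l: weight erorr}, \ref{l: Sampling Error} and \ref{l: bias error} still apply window by window. Only the lower halves of these bounds are needed.

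\textbf{Weight step.} Invoke Lemma \ref{l: weight error precise} with $\epsilon_w$ and failure probability $\delta_w$. Its hypotheses are $N\ge 32mL^2/(\epsilon_w^2c_p\delta_w)$ and $\tilde\epsilon\le \epsilon_w^2c_p/(16L^2)$, both assumed. This gives $\max_k\{\bar w_k/w_k,w_k/\bar w_k\}\le e^{2\epsilon_w}$. Lemma \ref{l: weight erorr} with $C_w=2\epsilon_w$ then yields $\bar{\mathcal S}(\bar w,(g_k))\ge e^{-2\epsilon_w}\bar{\mathcal S}(w,(g_k))$. The weight event depends only on the overlap entries, so it is unaffected by zeroing out windows.

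\textbf{Key step: bounding $\mathcal Q$ for the truncated family.} We need
\[
\max\{\mathcal Q_{\{\pi\}}(w,(g_k)),\mathcal Q_{\{\tilde\pi\}}(w,(g_k))\}\le \frac{2r_s}{\sqrt{3p_E}}.
\]
\begin{enumerate}
\item For $k\in W$, use \eqref{e:rel-std-dev-S} with $\pi_k$ in place of $\pi$. Since $1/S_{\max}\le 1/S\le 1/S_{\min}$,
\[
\sqrt{\Var_{\pi_k}(g^*)}\le \frac{\sqrt{\pi_k(E)}}{S_{\min}}\le \frac{r_s}{\sqrt{p_E}}\,\E_{\pi_k}[g^*].
\]
Windows outside $W$ contribute zero to both the numerator and the denominator of $\mathcal Q$. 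Hence $\mathcal Q_{\{\pi\}}\le r_s/\sqrt{p_E}$.
\item For $\tilde\pi_k$, Cauchy--Schwarz with $\chi^2\le\tilde\epsilon$ gives
\[
\tilde\pi_k(E)\ge \pi_k(E)-\sqrt{\tilde\epsilon\,\pi_k(E)}\ge \pi_k(E)\bigl(1-\sqrt{\tilde\epsilon/p_E}\bigr)\ge \tfrac34\,\pi_k(E),
\]
using $\tilde\epsilon\le p_E^2/16$, so $\sqrt{\tilde\epsilon/p_E}\le \sqrt{p_E}/4\le 1/4$.
\item Repeating the argument of item 1 with $\tilde\pi_k(E)\ge \tfrac34 p_E$ gives $\mathcal Q_{\{\tilde\pi\}}\le r_s/\sqrt{3p_E/4}=2r_s/\sqrt{3p_E}$. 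This constant also dominates the bound in item 1.
\end{enumerate}

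\textbf{Sampling and bias steps.} Apply Lemma \ref{l: Sampling Error} with $\delta=\delta_f$. Its requirement $N\ge 4\mathcal Q^2/\delta_f$ becomes $N\ge 16r_s^2/(3p_E\delta_f)$, which is assumed. The resulting factor is $e^{-4r_s/\sqrt{3p_EN\delta_f}}$. Then apply Lemma \ref{l: bias error}. Its requirement $\tilde\epsilon\le 1/(4\mathcal Q^2)$ becomes $\tilde\epsilon\le 3p_E/(16r_s^2)$, which is assumed. The resulting factor is $e^{-4\sqrt{\tilde\epsilon}\,r_s/\sqrt{3p_E}}$.

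\textbf{Conclusion.} Multiplying the three lower bounds gives exactly $e^{-\eta}$, with $\eta$ as in the statement. A union bound over the weight event and the sampling event gives probability at least $1-\delta_w-\delta_f$.

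\textbf{Main obstacle.} The delicate point is the second step: controlling $\mathcal Q_{\{\tilde\pi\}}$ by keeping the mass $\tilde\pi_k(E)$ bounded below on good windows. This is the only place the threshold $p_E$ and the condition $\tilde\epsilon\le p_E^2/16$ enter. Constants may need slight adjustment if one instead bounds $\mathcal Q_{\{\tilde\pi\}}$ via $\E_{\tilde\pi_k}[g^*]$ directly.
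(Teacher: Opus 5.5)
Your proposal is correct and follows the paper's proof. The paper simply invokes Corollary~\ref{c: main_theorem}, which is exactly your weight/sampling/bias chain, with $C_{\mathcal Q_*}=2r_s/\sqrt{3p_E}$ obtained from the per-window bound $r_s/\sqrt{\nu(E)}$ and from $\tilde\pi_k(E)\ge\frac34 p_E$ on good windows. The only cosmetic difference is that you get the latter from the sharper Cauchy--Schwarz estimate $\tilde\pi_k(E)\ge\pi_k(E)-\sqrt{\tilde\epsilon\,\pi_k(E)}$, whereas the paper uses the additive bound $\tilde\pi_k(E)\ge\pi_k(E)-\sqrt{\tilde\epsilon}$; both give the same constant under $\tilde\epsilon\le p_E^2/16$.
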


\begin{proof}
    Taking note of Remark \ref{r: family_of_functions}, the result follows directly from Corollary \ref{c: main_theorem}. 
    Where we make specific a bound on $C_{\mathcal{Q}_*}$ in the case of $(g_k)_{k\in W}$.
    The bound is given by considering
    \begin{align*}
        \mathcal{Q}_{\{\pi\}}(w,(g_k)_{k\in W}) = \frac{\sum_{k\in W} w_k\sqrt{\Var_{\pi_k}(g^*)}}{\sum_{k\in W}  w_k \mathbb{E}_{\pi_k}(g^*)} = \frac{\sum_{k\in W} w_k\mathbb{E}_{\pi_k}(g^*)\frac{\sqrt{\Var_{\pi_k}(g^*)}}{\mathbb{E}_{\pi_k}(g^*)}}{\sum_{k\in W}  w_k \mathbb{E}_{\pi_k}(g^*)}.
    \end{align*}
    Note that if $S_{\min}\le S(x)\le S_{\max}$ and $r_s = \fc{S_{\max}}{S_{\min}}$, for general $\nu$ and $f(x) = 1_B$ we have the bound:
\begin{align}
\label{e:rel-std-dev-S}
\fc{\sqrt{\Var_\nu [1_B/S]}}{\E_\nu[1_B/S]} \le
\fc{\sqrt{\E_\nu [1_B/S^2]}}{\E_\nu[1_B/S]}
\le \fc{\sqrt{\E_\nu [1_B]}}{\E_\nu[1_B]}\cdot \fc{1/S_{\min}}{1/S_{\max}} 
= \fc{S_{\max}}{S_{\min}} \cdot \rc{\sqrt{\nu(B)}} \leq \fc{r_s}{\sqrt{\nu(B)}}.
\end{align}
Applying this to $\pi$ with $f(x) = 1_E\circ\theta$ for $k \in W$ yields 
$$\fc{\sqrt{\Var_{\pi_k} [1_E\circ \theta/S]}}{\E_{\pi_k}[1_E \circ \theta/S]} \leq \fc{r_s}{\sqrt{\pi_k(E\circ\theta)}} \leq \fc{r_s}{\sqrt{p_E}}.$$
Combining this with the previous bound gives 
$$ \mathcal{Q}_{\{\pi\}}(w,(g_k)_{k\in W}) \leq  \frac{r_s}{\sqrt{p_E}}.$$
To make specific the bound for the approximate measure $\tilde{\pi}_k$ we note the following 
$$\left| \nu(B) - \tilde \nu(B)\right| \leq \int_B \left| \nu(x) - \tilde \nu(x)\right| dx \leq \int_\Omega \left| \nu(x) - \tilde \nu(x)\right| dx  \leq \chi^2(\tilde{\nu}, \nu)^\frac{1}{2}.$$
This implies that $\tilde \nu(B) \geq \nu(B) - \chi^2(\tilde{\nu}, \nu)^\frac{1}{2}$.
Applying this to $\nu = \pi_k$ for each $k \in W$, using Assumption \ref{a: good samples} to note that $\chi^2(\tilde \pi_k, \pi_k) \leq \tilde\epsilon$ and lastly assuming that $\tilde{\epsilon}^\frac{1}{2} \leq \frac{p_E}{4}$ yields 
$$\tilde\pi_k(1_E \circ \theta) \geq \pi_k(1_E \circ \theta) - \tilde \epsilon^\frac{1}{2} \geq p_E -\frac{p_E}{4} = \frac{3}{4}p_E.$$
Therefore
$$\max\{ \mathcal{Q}_{\{\tilde\pi\}}(w,(g_k)_{k\in W}),  \mathcal{Q}_{\{\pi\}}(w,(g_k)_{k\in W})\} \leq \frac{2r_S}{\sqrt{3p_E}}.$$
We note that that the assumption $\tilde{\epsilon}^\frac{1}{2} \leq \frac{p_E}{4}$ means we require $\tilde{\epsilon} \leq \frac{p_E^2}{16}$.
Substituting $C_{\mathcal Q_*} = \frac{2r_S}{\sqrt{3p_E}}$ into the statement of Corollary \ref{c: main_theorem} gives the final bound.
\end{proof}

\begin{lem} \label{l:good-threshold}
If $x\in E_r^-$, then $x$ is good at threshold $p_r=1-\exp\pa{-\fc{(\be - \ell)[\be r - G - \sqrt{2d(\be+\ell)}]^2}{2(\be+\ell)^2}}$. In particular, taking $r_{1/2}= \frac{
G+\sqrt{2d(\beta+\ell)}
+(\beta+\ell)\sqrt{\frac{2\log 2}{\beta-\ell}}
}{\beta}$, this is $\ge 1/2$.

\end{lem}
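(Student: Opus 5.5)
Here $\pi_k(E)=\mu_k(E)$, where $\mu_k\propto \tilde\psi_k e^{-A}$ on $\mathcal A$ is the CV-space marginal of window $k$. The plan is to show that if the window center $y_k=x$ lies in $E_r^-$, then $\mu_k$ is concentrated in $B(x,r)\subseteq E$. It then suffices to bound $\mu_k(B(x,r)^c)$ by $\exp\bigl(-\frac{(\beta-\ell)[\beta r-G-\sqrt{2d(\beta+\ell)}]^2}{2(\beta+\ell)^2}\bigr)$.

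\textbf{Step 1: concentration.} The potential of $\mu_k$ is $U_k(z)=A(z)+\frac\beta2\|z-x\|^2$ restricted to the convex set $\mathcal A$. By Assumption \ref{d: target distribution pushforward}, $\nabla^2U_k$ lies between $(\beta-\ell)I$ and $(\beta+\ell)I$. So $\mu_k$ is $(\beta-\ell)$-strongly log-concave, and by Remark \ref{r: Bakry-Emery} it satisfies an LSI with constant $\frac1{\beta-\ell}$. The function $z\mapsto\|z-z^*\|$ is $1$-Lipschitz, so Herbst (Lemma \ref{l: herbst argument}) gives
\[\mu_k\bigl(\|z-z^*\|\ge \E_{\mu_k}\|z-z^*\|+s\bigr)\le e^{-(\beta-\ell)s^2/2}.\]

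\textbf{Step 2: locate the mode and the mean.} Let $z^*$ be the minimizer of $U_k$ over $\mathcal A$. The first-order optimality condition gives $\langle \nabla U_k(z^*),x-z^*\rangle\ge0$ when $x\in\mathcal A$; this holds because $x\in E\subseteq\mathcal A$. Combining this with $\beta$-strong convexity of the quadratic part and $\|\nabla A\|\le G$ on $\mathcal A$ yields $\beta\|z^*-x\|\le G$, i.e.\ $\|z^*-x\|\le G/\beta$. More crudely, one can write $\beta\|z^*-x\|^2\le\langle \nabla A(z^*),x-z^*\rangle$.

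For the mean deviation $\E_{\mu_k}\|z-z^*\|$, use the standard bound for strongly log-concave measures restricted to a convex set. Since the mode is a constrained minimizer, integration by parts gives $\E\langle\nabla U_k(z),z-z^*\rangle\le d$. This requires checking the boundary term on $\partial\mathcal A$, which has the favorable sign at the constrained minimizer. Strong convexity then gives $(\beta-\ell)\E\|z-z^*\|^2\le d$.

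To match the stated constant, I would instead compare with the Gaussian bound using smoothness $\beta+\ell$, aiming for $\E\|z-z^*\|\le\sqrt{2d(\beta+\ell)}/\beta$ up to the factors appearing in the statement. This constant bookkeeping is where I expect the main friction. The stated form, with $(\beta+\ell)^2$ in the denominator, suggests the authors bound $\E\|z-x\|\le \frac{G+\sqrt{2d(\beta+\ell)}}{\beta}$. With $s=r-\E\|z-x\|$, they then weaken $\frac{\beta-\ell}{2}s^2$ to $\frac{(\beta-\ell)(\beta r-G-\sqrt{2d(\beta+\ell)})^2}{2(\beta+\ell)^2}$, using $\beta\le\beta+\ell$. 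I would follow that route and only need the weaker inequality.

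\textbf{Step 3: conclude.} By the triangle inequality, $\|z-x\|\le\|z-z^*\|+\|z^*-x\|$. Since $B(x,r)\subseteq E$, we get
\[\pi_k(E)\ge 1-\mu_k(\|z-x\|\ge r)\ge p_r,\]
valid when $\beta r> G+\sqrt{2d(\beta+\ell)}$; otherwise the claim is vacuous.

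\textbf{The case $r_{1/2}$.} For $r=r_{1/2}$ we have $\beta r-G-\sqrt{2d(\beta+\ell)}=(\beta+\ell)\sqrt{2\log2/(\beta-\ell)}$. The exponent therefore equals $\log 2$, so $p_r=1/2$. This last step is routine algebra.
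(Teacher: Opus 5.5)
Your argument is sound and takes a different route from the paper. The paper never locates the mode. It applies Herbst to the $(\beta+\ell)$-Lipschitz function $z\mapsto\|\nabla A_k(z)\|$, with $A_k(z)=A(z)+\frac{\beta}{2}\|z-y_k\|^2$, and bounds its mean by $\sqrt{2d(\beta+\ell)}$ through a second-moment estimate $\mathbb{E}_{\mu_k}\|\nabla A_k\|^2\lesssim d(\beta+\ell)$. It then uses $\|\nabla A_k(z)\|\ge\beta\|z-y_k\|-G$ off $B_r(y_k)$. So the $(\beta+\ell)^2$ in $p_r$ is the squared Lipschitz constant of that test function, not the result of weakening $\beta\le\beta+\ell$ as you guessed.

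You instead concentrate the $1$-Lipschitz distance to the constrained mode $z^*$. You locate the mode by first-order optimality, $\|z^*-x\|\le G/\beta$. You control the spread by integration by parts on the convex set $\mathcal A$, giving $(\beta-\ell)\mathbb{E}\|z-z^*\|^2\le d$; the boundary term has the right sign for any point of $\mathcal A$. This yields the sharper tail $\exp\bigl(-\tfrac{\beta-\ell}{2}\bigl(r-\tfrac{G}{\beta}-\sqrt{d/(\beta-\ell)}\bigr)^2\bigr)$. It also handles the truncation to $\mathcal A$ explicitly, whereas the paper quotes its second-moment bound from the untruncated setting. That bound does survive here, since $\langle\nabla A_k,n\rangle\ge\beta r-G>0$ on $\partial\mathcal A$ when $y_k\in E_r^-$.

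The weak point is the step where you match the stated constant. Your bound implies the stated $p_r$ uniformly in $r$ exactly when $\frac{G}{\beta}+\sqrt{\frac{d}{\beta-\ell}}\le\frac{G+\sqrt{2d(\beta+\ell)}}{\beta}$, i.e.\ $\beta^2\le 2(\beta^2-\ell^2)$, i.e.\ $\beta\ge\sqrt2\,\ell$. In that case $r-\mathbb{E}\|z-x\|\ge(\beta r-G-\sqrt{2d(\beta+\ell)})/\beta\ge(\beta r-G-\sqrt{2d(\beta+\ell)})/(\beta+\ell)$, and you are done.

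The fallback you sketch is not available. You propose comparing with a Gaussian ``using smoothness $\beta+\ell$'' to get $\mathbb{E}\|z-z^*\|\le\sqrt{2d(\beta+\ell)}/\beta$. An upper Hessian bound cannot bound the spread from above, and this inequality fails for $\beta$ close to $\ell$. For example, take $A(z)=-\frac{\ell}{2}\|z\|^2$ on a large ball $\mathcal A$ centered at $x=0$; then $\mu_k$ is essentially $N(0,(\beta-\ell)^{-1}I_d)$. Hence for $\ell<\beta<\sqrt2\,\ell$ and $r$ near the threshold, your argument does not give the stated $p_r$, while the paper's gradient-norm test function works for every $\beta>\ell$. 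This does not affect the paper's uses, since Proposition \ref{p:full_lower_bound} and Algorithm \ref{alg:indicator_net_construction} impose $\beta>2\ell$. Still, you should either add $\beta\ge\sqrt2\,\ell$ as a hypothesis or switch test functions.

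One small correction as well: when $\beta r\le G+\sqrt{2d(\beta+\ell)}$ the claim is not vacuous. The squared bracket still makes $p_r>0$, and the inequality can then fail. The condition is an implicit hypothesis of the lemma, needed by both proofs.
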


\begin{proof}
By \eqref{e:rel-std-dev-S} it suffices to bound $\mu_k(E)$, on the collective variable space. Write $\mu(x) \propto e^{-A(z)}$, and let $A_k(x) = A(x) + \fc{\be}{2}\ve{z-y_k}^2$, so that $\pi_k(x)\propto e^{-V_k(x)}$. Note $V_k$ is $(\be + \ell)$-smooth and $(\be - \ell)$-strongly convex. 
By \cite[Lemma 16]{chewi2025analysis}, \[
\E_{\mu_k} \ve{\nabla A_k(z)}
\le [\E_{\mu_k} \ve{\nabla A_k(z)}^2]^{1/2} \le \sqrt{2d(\be + \ell)}.\] 
Note that if $\ve{z-y_k}>r$, then because $\nb A$ is $G$-Lipschitz at $x_k$,
\begin{align*}
\ve{\nb A_k(z)}&\ge 
\ve{\be (z-y_k)} - 
\ve{\nb A(z)} >
\be r - G\\
\ve{\nb A_k(z)} - \E_{\mu_k} \ve{\nb A_k(z)}
&\ge \be r - G - \sqrt{2d(\be+\ell)}.
\end{align*}
Then because $\mu_k$ satisfies a log-Sobolev inequality with constant $\rc{\be-\ell}$ by Bakry-Emery and $\nb A_k$ is $(\be+\ell)$-Lipschitz, by sub-Gaussian concentration of Lipschitz functions under a log-Sobolev inequality,
\begin{align*}
\mu_k(E^c) \le 
    \mu_k (B_r(y_k)^c)
    &\le 
    \mu_k \pa{\ve{\nb A_k(z)} - \E_{\mu_k} \ve{\nb A_k(x)}
\ge \be r - G - \sqrt{2d(\be+\ell)}}
    \\
    & \le 
    \exp\pa{-\fc{(\be - \ell)[\be r - G - \sqrt{2d(\be+\ell)}]^2}{2(\be+\ell)^2}}.
\end{align*}
\end{proof}

The following shows that for any radius $r$ and error $\ep>0$, we can choose the Gaussian variances $\rc{\be}$ and lattice spacing $\eta$ small enough so that the lower estimate captures at least $1-\ep$ of the mass of the set shrunk by $r$.
\begin{lem} \label{l: good set to volume}
Suppose the windows are centered on a lattice $\La$ with covering radius $\eta$.
Suppose that 
$\eta\le\frac{1}{2\sqrt{d\beta}}$
and $r>2\sfc{d}{\be}$. Then 
    $\mathcal S^{\textup{lower}}(w,1_E) \ge (1-\ep_r) \mathcal S(w,1_{E_{2r}^-})$ for
$\epsilon_r:=2\sqrt{d\beta}\,\eta +\frac{5}{2}\beta\eta^2+e^{-\frac{\beta r^2}{32}}.$

\end{lem}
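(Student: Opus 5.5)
The plan is to reduce the claim to a pointwise statement about the umbrella sum in CV space, and then prove that statement by comparing lattice sums with Gaussian integrals via Proposition \ref{p:sum-integral} and Lemma \ref{l:gaussian-error}.

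\textbf{Reduction.} With the true weights $w_k=Z_k$, the same cancellation as in Lemma \ref{l: truncation error} gives
\[
\mathcal S^{\textup{lower}}(w,1_E)=\sum_{k\in W}Z_k\,\E_{\pi_k}[1_E/S]=\int_E \frac{\sum_{k\in W}\tilde\psi_k(z)}{S(z)}\,\mu(dz),\qquad
\mathcal S(w,1_{E_{2r}^-})=\int_{E_{2r}^-}\mu(dz).
\]
Here I take $W$ to contain every window whose centre lies in $E_r^-$; this holds when the threshold $p_E$ is chosen as in Lemma \ref{l:good-threshold}, which I take as given. The integrand is nonnegative, so I restrict the left integral to $E_{2r}^-\subseteq E$. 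It then suffices to show, for every $z\in E_{2r}^-$,
\[
\sum_{k:\,y_k\notin E_r^-}\tilde\psi_k(z)\le \epsilon_r\, S(z).
\]
If $y_k\notin E_r^-$ and $z\in E_{2r}^-$, then $\|y_k-z\|\ge r$, by the triangle inequality applied to $d(\cdot,E^c)$. So it is enough to bound the tail sum over lattice points $y$ with $\|y-z\|\ge r$.

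\textbf{Denominator.} Translate so that $z=0$, and let $\psi$ be the $N(0,1/\beta)$ density. This has the same normalisation as $\tilde\psi_k$, so $S(z)=\sum_{y\in\Lambda}\psi(y-z)$, where the padded lattice covers all relevant points. Apply Proposition \ref{p:sum-integral} with $C=\R^d$ and the lower bound of Lemma \ref{l:gaussian-error}:
\[
D\,S(z)\ge 1-\sqrt{d\beta}\,\eta-\beta\eta^2 .
\]
The hypothesis $\eta\le \frac1{2\sqrt{d\beta}}$ makes this at least $1/4$.

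\textbf{Numerator.} Apply Proposition \ref{p:sum-integral} with $C=B_r(0)^c$. This gives
\[
D\sum_{\|y\|\ge r}\psi(y)\le \int_{\|y\|\ge r-\eta}\psi+\big(e^{\sqrt{d\beta}\eta+\frac32\beta\eta^2}-1\big).
\]
For the Gaussian tail I use Lemma \ref{l: herbst argument}: $\|y\|$ is $1$-Lipschitz, $\E\|y\|\le\sqrt{d/\beta}<r/2$, and $\eta\le r/4$ (which follows from $\eta\le\frac1{2\sqrt{d\beta}}$ and $r>2\sqrt{d/\beta}$). Hence the deviation needed is at least $r/4$, and the tail is at most $e^{-\beta r^2/32}$. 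For the second term, $\sqrt{d\beta}\eta\le 1/2$ gives $e^{x}-1\le x+x^2$, which is bounded by roughly $\sqrt{d\beta}\eta+\frac32\beta\eta^2$ up to lower-order terms.

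\textbf{Combining.} Dividing, the ratio of the tail sum to $S(z)$ is at most
\[
\frac{\sqrt{d\beta}\eta+\frac32\beta\eta^2+e^{-\beta r^2/32}+\text{(small)}}{1-\sqrt{d\beta}\eta-\beta\eta^2}.
\]
I then absorb the denominator using $\frac{a+b}{1-c}\le a+b+\ldots$ to reach the stated $\epsilon_r=2\sqrt{d\beta}\eta+\frac52\beta\eta^2+e^{-\beta r^2/32}$.

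The main obstacle is this final bookkeeping: matching the constants $2$ and $5/2$ exactly. That requires careful use of $\sqrt{d\beta}\eta\le 1/2$, and possibly working with the complementary fraction $\sum_{\|y\|<r}\psi/S$ directly. If the constants do not close, I would instead lower bound the near-sum by the integral over $B_{r-\eta}$ minus the error term, avoiding one division.
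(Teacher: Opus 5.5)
Your reduction to the pointwise inequality on $E_{2r}^-$ matches the paper's. Your Herbst-based estimate $\int_{\|y\|\ge r-\eta}\psi\le e^{-\beta r^2/32}$ is correct, and cleaner than the paper's Laurent--Massart step.

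The gap is the last step, and it is not mere bookkeeping. Bounding the tail sum divided by $S(z)$ by an upper bound on the tail over a lower bound on $S(z)$ cannot give the stated $\epsilon_r$ in part of the admissible range. Write $\epsilon_1=\sqrt{d\beta}\,\eta+\frac32\beta\eta^2$, $\epsilon_2=\sqrt{d\beta}\,\eta+\beta\eta^2$ and $T=e^{-\beta r^2/32}$. Your bound is $(T+e^{\epsilon_1}-1)/(1-\epsilon_2)$. Requiring this to be at most $\epsilon_1+\epsilon_2+T$ is equivalent to $e^{\epsilon_1}-1-\epsilon_1\le\epsilon_2(1-\epsilon_1-\epsilon_2-T)$.

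That inequality fails, for example, at $d=1$ and $\sqrt{\beta}\,\eta=0.3$, which is allowed since $0.3\le 1/2$. Then $\epsilon_1=0.435$ and $\epsilon_2=0.39$. Your bound equals $(T+0.545)/0.61\ge 0.89+T$, which exceeds $\epsilon_r=0.825+T$ for every $r$. The exponential error $e^{\epsilon_1}-1$ sits in the numerator and is then amplified by $1/(1-\epsilon_2)$, so the errors compound rather than add.

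Your fallback is exactly the paper's proof, and it closes once you also switch which side of the total sum you bound.
\begin{itemize}
\item Apply Proposition \ref{p:sum-integral} with $C=B_r(z)$ in the lower direction; its boundary term is the annulus $B_r(z)\setminus B_{r-\eta}(z)$. This gives $D\sum_{y\in\Lambda\cap B_r(z)}\psi(y-z)\ge\int_{B_{r-\eta}(0)}\psi-\epsilon_2\ge 1-\epsilon_2-T$.
\item Apply it with $C=\mathbb R^d$ in the upper direction, together with the first bound of Lemma \ref{l:gaussian-error}. This gives $D\,S(z)\le D\sum_{y\in\Lambda}\psi(y-z)\le e^{\epsilon_1}$.
\item Combining the two, $\sum_{y\in\Lambda\cap B_r(z)}\psi(y-z)\ge e^{-\epsilon_1}(1-\epsilon_2-T)\,S(z)\ge(1-\epsilon_1-\epsilon_2-T)\,S(z)=(1-\epsilon_r)\,S(z)$.
\end{itemize}

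This direction also removes a second weak point in your route. $S(z)$ is a sum over the finite padded lattice, not over all of $\Lambda$. Your lower bound $D\,S(z)\ge 1-\epsilon_2$ is therefore only established for the infinite sum; it is repairable via monotonicity of $t\mapsto t/(n+t)$. The paper's route needs only two facts, both automatic:
\begin{itemize}
\item $S(z)\le\sum_{y\in\Lambda}\psi(y-z)$;
\item every lattice point of $B_r(z)\subseteq E\subseteq\mathcal A$ is a window centre.
\end{itemize}
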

\begin{proof}
We have 
\begin{align*}
    \mathcal S^{\textup{lower}}(w,1_E)
    &= \sum_{k \text{ good at }p_r} \fc{\cancel{\int \pi \psi_k}}{\sum_{k'} \int\pi \psi_k} \int_{E} \fc{\pi \psi_k / \cancel{\int \pi \psi_k}}{\sum_{k'} \psi_{k'}}\\
    &\ge  \sum_{k \text{ good at }p_r} \rc{{\sum_{k'} \int\pi \psi_k}}
    \int_{E_{2r}^-} \fc{\pi \psi_k }{\sum_{k'} \psi_{k'}}\\
    &\ge \rc{\sum_{k'} \int\pi \psi_k}
    \int_{E_{2r}^-} \fc{\pi \sum_{k \text{ good at }p_r} \psi_k }{\sum_{k'} \psi_{k'}}
\end{align*}
If we can show that for all $x\in E_{2r}^-$,  $ \sum_{k \text{ good at }p_r} \psi_k(x) \ge (1-\ep_r) \sum_{k'} \psi_{k'}(x)$, then the conclusion follows.

Consider $x\in E_{2r}^-$. 
By Lemma \ref{l:good-threshold}, 
\begin{align*}
    \sum_{k\text{ good at }p_r}\psi_k(x)
    &\ge \sum_{k:x_k\in E_r^-} \psi_k(x)
    \ge \sum_{k: x_k\in B_r(x)}\psi_k(x)
    =\sum_{y\in B_r(x)\cap \La}\psi(y-x)
\end{align*}
Let $D_{\La}$ be the determinant of the fundamental parallelopiped for $\La$. When $\eta\le \rc{2\sqrt{d\be}}$, letting $\ep_1 =\sqrt{d\be}\eta + \fc32\be \eta^2$, $\ep_2 = \sqrt{d\be}\eta + \be \eta^2$, by Proposition \ref{p:sum-integral} and Lemma \ref{l:gaussian-error},
\begin{align*}
D_\La \sum_{y\in \La} \psi(y-x) - \ub{\int_{\R^n} \psi(y)\,dy}{1}
&\le  e^{\ep_1}-1 \\
\implies
\rc{D_\La} = \rc{D_\La} \int_{\R^n}\psi(y)\,dy 
&\ge e^{-\ep_1}\sum_{y\in \La}\psi(y-x)
\\
D_\La \sum_{x\in \La \cap B_r(x)} \psi(y-x) - \int_{B_r(x)} \psi(y)\,dy
&\ge  
-\ep_2 - 
\int_{B_{r}(0)\bs B_{r-\eta}(0)} \psi(y)\,dy.
\end{align*}
By a standard $\chi^2$ concentration bound \cite{laurent2000adaptive},
\begin{align*}
    \sum_{x\in \La \cap B_r(x)} \psi(y-x) 
    &\ge \rc{D_\La} \pa{\int_{B_r(0)} \psi(y)\,dy - \ep_2 - \int_{B_{r}(0)\bs B_{r-\eta}(0)} \psi(y)\,dy}\\
    &\ge \rc{D_\La} \pa{\int_{B_r(0)} \psi(y)\,dy - \ep_2 - \int_{B_{}(0)\bs B_{r-\eta}(0)} \psi(y)\,dy}\\
    & = \rc{D_\La} \pa{\int_{B_{r-\eta}(0)} \psi(y)\,dy - \ep_2\int_{\R^n} \psi(y)\,dy }\\
    &\ge \rc{D_\La}\pa{(1-e^{-(\sqrt{\rc 2 (\be (r-\eta)^2-\fc d2)} - \fc d2)}-\ep_2)\int_{\R^n} \psi(y)\,dy}\\
    &\ge e^{-\ep_1} (1-e^{-(\sqrt{\rc 2 (\be (r-\eta)^2-\fc d2)} - \fc d2)}-\ep_2) \sum_{y\in \La} \psi(y).
\end{align*}
Let $T = e^{-(\sqrt{\rc 2 (\be (r-\eta)^2-\fc d2)}-\fc{d}{2})}$ then $e^{-\varepsilon_1}(1-T-\varepsilon_2). \ge 1 - \epsilon_1 - \epsilon_2 - T$. For $\epsilon_1 = \sqrt{d\beta}\,\eta+\frac32\beta\eta^2$ and $\epsilon_2 = \sqrt{d\beta}\,\eta+\beta\eta^2$ we get \(e^{-\epsilon_1}(1-T-\epsilon_2)\ge 1 -2\sqrt{d\beta}\,\eta -\frac52\beta\eta^2 -T.\)
Applying $\eta \le \fc{1}{2\sqrt{d\beta}}$ and $r > 2\sqrt{\fc{d}{\beta}}$ yields $T \leq e^{-\frac{\beta r^2}{32}}$. 
This gives the final bound 
\(e^{-\epsilon_1}(1-T-\epsilon_2)\ge 1 -2\sqrt{d\beta}\,\eta -\frac52\beta\eta^2 -e^{-\frac{\beta r^2}{32}}.\)
\end{proof}
It remains to show that the set shrunk by $r$ still has at least $1-\ep$ portion of the mass, where $\ep$ has good dependence on the condition number of the set.
\begin{lem}\label{l:lower bound volume}
Let Assumptions \ref{a: collective variable} hold, with push-forward measure $\mu(dz)\propto e^{-A(z)}\,dz$ and $\sup_{z\in\mathcal A}\|\nabla A(z)\|\le G.$
Let \(E\subseteq\mathcal A\subseteq\mathbb R^d\) be convex, suppose that there exists $c \in E$ so that $B(c,\rho)\subseteq E\subseteq B(c,R)$ and $ \kappa=\frac{R}{\rho}. $ If $0 \leq r \leq \rho$ then 
    \[
\mathcal S(w, 1_{E_r^-}) \ge (1-\ep_r') \mathcal S(w, 1_E), 
    \]
with $\ep_r^- = 1 - e^{d\log(1-\frac{r}{\rho})-G\kappa r}.$
\end{lem}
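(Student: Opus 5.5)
The first step is to reduce the statement to a comparison of $\mu$-masses in the CV space, exactly as in the proof of Proposition \ref{p: upper_bound_g_L}. With the true weights $w_k = cZ_k$, the factors $Z_k$ cancel. Summing over $k$ gives $\sum_k \tilde\psi_k/S = 1$ on $\mathcal A$, so for any $B\subseteq\mathcal A$
\[
\mathcal S(w,1_B\circ\theta) = c\int_{\mathcal A} 1_B(z)\,\mu(dz) = c\,\mu(B).
\]
The claim is therefore equivalent to
\[
\mu(E_r^-) \ge \Bigl(1-\frac r\rho\Bigr)^d e^{-G\kappa r}\,\mu(E).
\]
This equals $(1-\epsilon_r^-)\mu(E)$ with $\epsilon_r^-$ as in the statement.

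For the mass comparison I would mirror Lemma \ref{l: vol_upper_bound_argument}, using a contraction about $c$ instead of a dilation. Set $\gamma = 1-\frac r\rho\in[0,1]$ and $U(z) = c+\gamma(z-c)$.

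The key geometric claim is $U(E)\subseteq E_r^-$. To check it, take $z\in E$ and $u$ with $\|u\|\le r$. Then
\[
U(z)+u = \gamma z + (1-\gamma)\Bigl(c + \tfrac{u}{1-\gamma}\Bigr).
\]
Since $\|u\|/(1-\gamma) = \rho\|u\|/r\le\rho$, the point $c+u/(1-\gamma)$ lies in $B(c,\rho)\subseteq E$. Convexity of $E$ then gives $U(z)+u\in E$, so $B(U(z),r)\subseteq E$, i.e. $U(z)\in E_r^-$.

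Changing variables, with Jacobian $\gamma^d$, gives
\[
\mu(E_r^-) \ge \mu(U(E)) = \frac{\gamma^d}{Z}\int_E e^{-A(U(z))}\,dz.
\]
Next, $\|U(z)-z\| = \frac r\rho\|z-c\|\le \kappa r$, because $E\subseteq B(c,R)$. The segment from $z$ to $U(z)$ lies in $E\subseteq\mathcal A$ by convexity, so the gradient bound $G$ on $\mathcal A$ applies along it. This gives $A(U(z))\le A(z)+G\kappa r$, and hence $\mu(E_r^-)\ge\gamma^d e^{-G\kappa r}\mu(E)$.

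The one step needing care is the geometric inclusion $U(E)\subseteq E_r^-$. It depends on the inner ball $B(c,\rho)$ together with convexity, and the hypothesis $r\le\rho$ keeps $\gamma\ge0$. The edge case $r=\rho$ gives $\gamma=0$, where the bound is trivial. Everything else is routine. One should also note that $E_r^-$, defined via distance to $E^c$ in $\mathbb R^d$, sits inside $E\subseteq\mathcal A$, so no truncation issues arise.
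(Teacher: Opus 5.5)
Your proposal is correct and follows the paper's proof essentially step for step. Like the paper, you reduce $\mathcal S(w,1_{E_r^-})/\mathcal S(w,1_E)$ to $\mu(E_r^-)/\mu(E)$ using $w_k=cZ_k$ and $\sum_k\tilde\psi_k/S=1$, contract $E$ about $c$ by the factor $1-r/\rho$, change variables, and apply the gradient bound with $\|U(z)-z\|\le\kappa r$. Your convexity argument for $U(E)\subseteq E_r^-$ and your remark that the segment $[z,U(z)]$ stays in $E\subseteq\mathcal A$ fill in two points the paper only asserts; note that the case $r=0$, where your division by $1-\gamma$ is undefined, is trivial since $E_0^-=E$.
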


\begin{proof}
Consider the ratio 
\begin{align*}
    \frac{\mathcal S(w,1_{E_r^-})}{\mathcal S(w,1_E)} &= \frac{\sum_k w_k \int_{E_r^-}\frac{1}{S(z)} \frac{\tilde{\psi}_k(z)}{Z_k}\mu(dz)}{\sum_k w_k \int_E\frac{1}{S(z)} \frac{\tilde{\psi}_k(z)}{Z_k}\mu(dz)}= \frac{\sum_k cZ_k \int_{E_r^-}\frac{1}{S(z)}\frac{\tilde{\psi}_k(z)}{Z_k}\mu(dz)}{\sum_k cZ_k \int_E\frac{1}{S(z)} \frac{\tilde{\psi}_k(z)}{Z_k}\mu(dz)}\\
    &= \frac{c\int_{E_r^-}\frac{S(z)}{S(z)}\mu(dz)}{c\int_E\frac{S(z)}{S(z)}\mu(dz)} = \frac{\mu({E_r^-})}{\mu(E)}.
\end{align*}
Therefore its sufficient to find a lower bound on the ratio $\frac{\mu({E_r^-})}{\mu(E)}.$
We decompose this by introducing the contraction $T_r(z) = c + (1-\frac{r}{\rho})(z-c)$ where $c$ is the center of $E$. 
This set satisfies $T_r(E) \subseteq E_r^-$ and consequently $\mu(E_r^-) \geq \mu(T_r(E))$. 
Therefore we have that 
$$\frac{\mu({E_r^-})}{\mu(E)} = \frac{\mu({E_r^-})}{\mu(T_r(E))}\cdot\frac{\mu(T_r(E))}{\mu(E)}\geq 1 \cdot \frac{\mu(T_r(E))}{\mu(E)}$$
and it remains to lower bound the ratio $\frac{\mu(T_r(E))}{\mu(E)}$. 
Thereby we consider the following 
\begin{align*}
    \mu(T_r(E)) &= \frac{1}{Z_\mathcal{A}}\int_{T_r(E)\cap \mathcal{A}}e^{-A(z)}dz\\
    &= \frac{(1-\frac{r}{\rho})^d}{Z_\mathcal{A}} \int_{\{z \in E \; : \; T_r(z) \in \mathcal{A}\} } e^{-A(T_r(z))}dz\\
    &= \frac{(1-\frac{r}{\rho})^d}{Z_\mathcal{A}} \int_E  e^{-A(T_r(z))}dz.
\end{align*}
We consider that 
$$\left\|T_r(z) - z\right\| = \left\|\vec{c}+(1+\frac{r}{\rho})(z - \vec{c}) - z\right\| = \frac{r}{\rho}\left\|z - \vec{c}\right\|.$$
Since $\vec{c}$ is the center of $E$, $\left\|z - \vec{c}\right\|$ this is less than the outer radius $R$ and therefore 
$$\left\|T_r(z) - z\right\| \leq \frac{rR}{\rho} = \kappa r.$$
Lastly, recalling that the potential $A(z)$ is $G$-Lipschitz gives us that 
$$\left\|A(U_t(z)) - A(z)\right\| \leq G \left\|U_t(z) - z\right\| \leq G\kappa r.$$
In particular, this gives us that $A(T_r(z)) \leq A(z) + G\kappa r$ which implies 
$$e^{-A(T_r(z))} \geq e^{-A(z)-G\kappa r}.$$
Substituting this back in yields 
$$  \mu(T_r(E)) = \frac{(1-\frac{r}{\rho})^d}{Z_\mathcal{A}} \int_E  e^{-A(T_r(z))}dz \geq (1-\frac{r}{\rho})^de^{-G\kappa r} \int_E \frac{e^{-A(z)}}{Z_\mathcal{A}}dz = e^{d\log(1-\frac{r}{\rho})-G\kappa r} \mu(E).$$
Which yields 
$$\frac{\mu({E_r^-})}{\mu(E)} \geq e^{d\log(1-\frac{r}{\rho})-G\kappa r}.$$
\end{proof}
\begin{proof}(Proof of Proposition \ref{p:full_lower_bound})
Let $\Gamma(p(r)) = 2\epsilon_w + \frac{4r_s}{\sqrt{3p_EN\delta_f}} + 4\sqrt{\widetilde\epsilon}\,\frac{r_s}{\sqrt{3p_E}}$, as in Lemma \ref{l: lower bound good set}.
    The proof is just the chain of inequalities
\[
\begin{aligned}
\overline S(\overline w,g)
&\ge
\overline S^{\mathrm{lower}}(\overline w,g)
\\
&\ge
e^{-\Gamma(p(r))}
S^{\mathrm{lower}}(w,g)
&&\text{by Lemma \ref{l: lower bound good set}}
\\
&\ge
e^{-\Gamma(p(r))}
(1-q(r))
S\left(w,\mathbf 1_{E^-_{2r}}\right)
&&\text{by Lemmas \ref{l:good-threshold} and \ref{l: good set to volume}}
\\
&\ge
e^{-\Gamma(p(r))}
(1-q(r))
\left(1-\frac{2r}{\rho}\right)^d
e^{-2G\kappa r}
S(w,g)
&&\text{by Lemma \ref{l:lower bound volume} applied with \(2r\).}
\end{aligned}
\]
Then let 
\[ 
\eta^-(r) = \Gamma(p(r)) - \log(1-q(r))-d\log(1-\fc{2r}{\rho})+2G\kappa r
\]
so that 
\[\overline S(\overline w,g) \ge e^{-\eta^-(r)}S(w,g).\]
Minimizing $\eta^-(r)$ over $r \in \mathcal{R}$ gives the optimal lower bound. 
Fixing $p(r) = 1/2$ we can then optimize over 
\[\max\left\{
r_{1/2},\,2\sqrt{\frac d\beta}
\right\}
<r<\frac{\rho}{2},\]
with $r_{1/2}$ as in Lemma \ref{l:good-threshold}.
To get that $\eta^-(r) = \epsilon$, we bound each component of the sum by $\epsilon/4$ and then show that the restricts this imposes on $r$ admits at least one valid $r$. 
for $\Gamma(\frac{1}{2}) = 2\epsilon_w + \frac{4r_s}{\sqrt{\frac{3}{2}N\delta_f}}+4\sqrt{\tilde\epsilon}\frac{r_s}{\sqrt{\frac{3}{2}}}$ we bound each individual component by $\epsilon/12$. 
This imposes additional requirements on $N$ and $\tilde{\epsilon}$. It is sufficient to choose
\[
\epsilon_w\le\frac{\epsilon}{24},
\]\[
N\ge
\frac{1536r_s^2}{\epsilon^2\delta_f},
\]and
\[
\widetilde\epsilon
\le
\frac{\epsilon^2}{1536r_s^2}.
\]These choices give
\[
\Gamma(1/2)\le\frac{\epsilon}{4}.
\]They should be combined with the original requirements of Lemma 5.5 by taking
\[
N\ge
\max\left\{
\frac{32r_s^2}{3\delta_f},
\frac{18432mL^2}{\epsilon^2c_p\delta_w},
\frac{1536r_s^2}{\epsilon^2\delta_f}
\right\},
\]and
\[
\widetilde\epsilon
\le
\min\left\{
\frac1{64},
\frac{3}{32r_s^2},
\frac{\epsilon^2c_p}{9216L^2},
\frac{\epsilon^2}{1536r_s^2}
\right\}.
\]
For the second term to get
\(-\log(1-q(r))\le\frac{\epsilon}{4} \) we need
\[
2\sqrt{d\beta}\,\eta_\Lambda
+\frac52\beta\eta_\Lambda^2
+e^{-\beta r^2/32}
\le 1 - e^{-\epsilon/4}.
\]
We can split this into two parts requiring
\[
2\sqrt{d\beta}\,\eta_\Lambda
+\frac52\beta\eta_\Lambda^2
\le\frac{ 1 - e^{-\epsilon/4}}{2} \quad
\text{ and }
e^{-\beta r^2/32}
\le\frac{ 1 - e^{-\epsilon/4}}{2}.
\]Both conditions are satisfied for 
\[
\eta_\Lambda
\le
\frac{
\sqrt{16d+20( 1 - e^{-\epsilon/4})}-4\sqrt d
}{
10\sqrt\beta
}, \quad
r
\ge
\sqrt{
\frac{32}{\beta}
\log\left(\frac{2}{ 1 - e^{-\epsilon/4}}\right)
}.
\]
For the last term require $2G\kappa r \leq \epsilon/4$ enforces 
$$r \leq \frac{\epsilon}{8G\kappa}.$$

In this construction $r$ is an auxillary variable though, $r$ must satisfy the contraints we've defined throughout the proof. Namely, 

\[r
\ge
\max\left\{
r_{1/2},
\;
2\sqrt{\frac d\beta},
\;
\sqrt{\frac{32}{\beta}
\log\left(\frac{2}{1-e^{-\epsilon/4}}\right)}
\right\}\]
and, 
\[r\le
\min\left\{
\frac{\rho}{2}
\left(1-e^{-\epsilon/(4d)}\right),
\;
\frac{\epsilon}{8G\kappa}
\right\}.\]
Using $\beta \geq 2\ell$,
$$ r_{1/2}= \frac{
G+\sqrt{2d(\beta+\ell)}
+(\beta+\ell)\sqrt{\frac{2\log 2}{\beta-\ell}}
}{\beta} \leq \frac{G}{\beta}+\frac{\sqrt{3d}}{\sqrt{\beta}}+\frac{3\sqrt{\log2}}{\sqrt{\beta}}\leq \frac{G + \sqrt{3d} + 3\sqrt{2}}{\sqrt{\beta}}.$$
There we require
$$r \geq \max\left\{\frac{G + \sqrt{3d} + 3\sqrt{2}}{\sqrt{\beta}}, \;
2\sqrt{\frac d\beta},
\;
\sqrt{\frac{32}{\beta}
\log\left(\frac{2}{1-e^{-\epsilon/4}}\right)} \right\}.$$
Which means we need to take
\begin{equation}\label{eq: lower bound beta}
\beta \geq \max\left\{\frac{(G + \sqrt{3d} + 3\sqrt{2})^2}{r^2}, \; \frac{4d}{r^2}, \; \frac{32\log\left(\frac{2}{1-e^{-\epsilon/4}}\right)}{r^2}  \right\}.
 \end{equation}
Using that $(1-e^{-\epsilon/4d}) \geq \frac{\epsilon}{8d}$ for $0 < \epsilon < 1$ and $d \geq 1$ we have 
 $$r \leq \min \left\{\frac{\rho \epsilon}{16d}, \frac{\epsilon}{8G\kappa}\right\}.$$
The upper bound restrictions on $r$ impose the following lower bound on $\beta$ in eq. (\ref{eq: lower bound beta}): 
$$\beta \geq \max\left\{\frac{2368 \cdot G^4d\kappa^2}{\epsilon^2}, \; \frac{32dG\kappa}{\epsilon^2}, \; \frac{2048G^2\kappa\log\left(\frac{2}{1-e^{-\epsilon/4}}\right)}{\epsilon^2},\;\frac{9472G^2d^3}{\rho^2\epsilon^2},\;\frac{64d^2}{\rho^2\epsilon^2}, \; \frac{512d\log\left(\frac{2}{1-e^{-\epsilon/4}}\right)}{\rho^2\epsilon^2} \right\},$$
where we use $(G + \sqrt{3d} + 3\sqrt{2})^2 \leq 37G,$ assuming $G >1$ and $\rho < 1$. Simplifying the lower bound on $\beta$ gives
\begin{equation}\label{e: beta final}
    \beta \geq \max\left\{\frac{2368 \cdot G^4d\kappa^2\log\left(\frac{2}{1-e^{-\epsilon/4}}\right)}{\rho^2\epsilon^2}, \;\frac{9472G^2d^3}{\rho^2\epsilon^2}\right\}.
\end{equation}
Lastly, we simplify combine our restricts on $\eta_\Lambda$ which are given as

$$\eta_\Lambda \le\max\left\{\frac{\sqrt{16d+20( 1 - e^{-\epsilon/4})}-4\sqrt d}{10\sqrt\beta}, \frac{1}{2\sqrt{d\beta}}\right\}.$$
Simplifying the first constraint gives 
$$\eta_\Lambda \leq \frac{\sqrt{16d+20( 1 - e^{-\epsilon/4})}-4\sqrt d}{10\sqrt\beta} = \frac{20( 1 - e^{-\epsilon/4})}{10\beta(\sqrt{16d+20( 1 - e^{-\epsilon/4})}+4\sqrt d)}.$$
Which is satisfied for $\eta_\Lambda \le \frac{(1-e^{-\epsilon/4)}}{6\sqrt{d\beta}} \leq \frac{1}{2\sqrt{d\beta}}.$
Therefore the final bound is satisfied when $\beta$ is chosen as (\ref{e: beta final}) and $\eta_\Lambda\le\frac{(1-e^{-\epsilon/4)}}{6\sqrt{d\beta}}$. Without loss of generality we replace $G \leq G_+=\max\{1,G\}$ and $\rho_- \geq \min\{1,\rho\}$ in the final bound. 
\end{proof}

\subsection{Proof of Section Main Result}
\begin{proof} We establish upper and lower bounds for the numerator \(\bar S(\bar w,g)\), followed by upper and lower bounds for the denominator \(\bar S(\bar w,1)\).

The upper bound for the numerator by letting $a = \frac{d}{\rho} + G_+\kappa$ and $\lambda = a(1+\frac{8}{5\epsilon_0})$ and defining 
$$h_\lambda := e^{-\lambda d(E, \theta(x))}.$$
We get that for both $h_\lambda(x)$ and the constant observable $1$ that these functions are constant on the fiber 
therefore $L_{fiber} = 0$ and $\gamma_0 >0$ defined in Algorithm \ref{alg:gaussian_net_construction}, to satisfy $\Ctheta L^2_{fiber} \leq \frac{\gamma_0}{2}$, can take any value. 
The log-Lipschitz constant $L_\theta = \max\{L_{\overline h_\lambda},\; L_{\overline{h_\lambda^2}}\}= 2\lambda$. 
Therefore the log-Lipschitz constant in Alg. (\ref{alg:gaussian_net_construction}) is given by 
$$L_g = 2\left(\frac d\rho+G^+\kappa\right)
\left(1+\frac{8}{5\varepsilon_0}\right).$$
and we choose 
\begin{equation}\label{eq:beta_u}
    \beta_U > \max\left\{\ell +\frac{4L_g^2}{\gamma_0}, 2\ell, {G_+^2}\right\}, \quad \Delta =\frac{\gamma_0\sqrt{\pi}}{120 d \sqrt{\beta_U}}, \quad N_p \geq \max\left\{\frac{3}{\sqrt{\beta_U\Delta^2}}, \frac{20d}{\sqrt{e^9\pi}}\sqrt{\frac{1}{\beta_U\Delta^2}}, \right\}
\end{equation}
as in Alg. (\ref{alg:gaussian_net_construction}). 
Then Proposition \ref{p: upper_bound_g_L} is satisfied with 
\begin{equation}\label{eq: parameters upper}
    \tilde{\epsilon} \le \min\left( \frac{e^{-3\gamma_0}}{4}, \, \frac{\epsilon_0^2c_p^*}{1024L^2C_{\mathcal Q_*}^2} \right), \quad N \ge \frac{6144mL^2C_{\mathcal Q_*}^2}{\epsilon_0^2c_p^*\delta_0},
\end{equation}
as in Theorem \ref{t: Explicit Error for Log-lipschitz observable}. This explicit choice of parameters gives,  with probability $1-\delta_0$,
$$\bar{S}(\bar{w},g) \leq e^\epsilon_0 S(w,g).$$
For the lower bound we can apply Proposition \ref{p:full_lower_bound} with parameters 
\begin{equation}\label{eq: paramters lower}
N\ge
\max\left\{
\frac{64r_s^2}{3\delta_0},
\frac{36864mL^2}{\epsilon_0^2c_p^*\delta_0},
\frac{3072r_s^2}{\epsilon_0^2\delta_0}
\right\}, \quad
\tilde\epsilon
\le
\min\left\{
\frac1{64},
\frac{3}{32r_s^2},
\frac{\epsilon_0^2c_p^*}{9216L^2},
\frac{\epsilon_0^2}{1536r_s^2}
\right\},
\end{equation}
and
\begin{equation}\label{eq: beta lower}
\beta_L > \max\left\{2\ell, \; \frac{2368 \cdot G_+^4d\kappa^2\log\left(\frac{2}{1-e^{-\epsilon_0/4}}\right)}{\rho^2\epsilon_0^2}, \;\frac{9472G_+^2d^3}{\rho^2\epsilon_0^2}\right\}, \quad \eta_\Lambda\le\frac{(1-e^{-\epsilon_0/4})}{6\sqrt{d\beta_L}}.
\end{equation}
To get that, with probability at least $1-\delta_0$, 
$$\bar{\mathcal{S}}(\bar w, g) \geq e^{-\epsilon_0} \mathcal{S}(w, g).$$
We note that the padding parameter is the same as in Alg. (\ref{alg:gaussian_net_construction}). 
It is left to establish $\Delta$ as a function of the covering radius. The covering radius is defined by
\[
\eta_\Lambda
:=
\sup_{z\in A}\min_{y_k\in\Lambda}\|z-y_k\|.
\] With coordinate spacing \(\Delta\), every coordinate of \(z-y_k\) has magnitude at most \(\Delta/2\). Therefore,
\[
\eta_\Lambda
\le
\sqrt{\sum_{i=1}^d\left(\frac{\Delta}{2}\right)^2}
=
\frac{\sqrt d}{2}\Delta.
\]
So the spacing $\Delta$ must satisfy 
$$\Delta = \frac{2\eta_\Lambda}{\sqrt{d}} = \frac{2(1-e^{-\epsilon_0/4)})}{6d\sqrt{\beta_L}}.$$
Lastly, we establish parameters for the constant function $f=1$. Again, for the constant function $L_{fiber} = 0$, so we have free choice of $\gamma_0 \in (0, 120d/\sqrt{\pi}]$. 
Moreover, $L_1 = 0$ and this constant disappears in the parameter choice. 
Then taking the parameters from Alg. (\ref{alg:gaussian_net_construction}) we have 
\begin{equation}\label{eq: beta 1}
    \beta_1 > \max\left\{2\ell, \frac{G_+^2+\ell}{2}\right\}, \; \Delta =\frac{\gamma_0\sqrt{\pi}}{120 d \sqrt{\beta_1}}, \; N_p \geq \max\left\{\frac{3}{\sqrt{\beta\Delta^2}}, \frac{20d}{\sqrt{e^9\pi}}\sqrt{\frac{1}{\beta\Delta^2}}\right\}.
\end{equation}
Then Theorem \ref{t: Explicit Error for Log-lipschitz observable} is satisfied with, the same parameters as in (\ref{eq: parameters upper}) 
and yields, with probability at least $1 - \delta_0$,
$$e^{-\frac{3\epsilon_0}{8}}\mathcal S(w,f)
\leq
\bar{\mathcal S}(\bar w,f)
\leq
e^{\frac{3\epsilon_0}{8}}\mathcal S(w,f).$$ 
Taking a maximum of parameters (\ref{eq:beta_u}), (\ref{eq: beta lower}) and (\ref{eq: beta 1}) so that 
\begin{align*}
    \beta > \max\{\beta_1, \beta_U, \beta_L\} &= \max\left\{2l, \; G_+^2, \; \ell +\frac{4L_g^2}{\gamma_0}, \; \frac{2368 G_+^4d\kappa^2\log\left(\frac{2}{1-e^{-\epsilon_0/4}}\right)}{\rho_-^2\epsilon_0^2}, \;\frac{9472G_+^2d^3}{\rho_-^2\epsilon_0^2}\right\},\\
    \Delta &= \min\left\{\frac{(1-e^{-\epsilon_0/4)}}{6d\sqrt{\beta}}, \frac{\gamma_0\sqrt{\pi}}{120d\sqrt{\beta}}\right\},\\
  N_p &\geq \max\left\{\frac{3}{\sqrt{\beta\Delta^2}}, \frac{20d}{\sqrt{e^9\pi}}\sqrt{\frac{1}{\beta\Delta^2}}\right\}
\end{align*}
and choosing the maximum of (\ref{eq: parameters upper}) and (\ref{eq: paramters lower}) so that
\begin{align*}
N&\ge
\max\left\{
\frac{64r_s^2}{3\delta_0},
\frac{36864mL^2}{\epsilon_0^2c_p^*\delta_0},
\frac{3072r_s^2}{\epsilon_0^2\delta_0},
\frac{6144mL^2C_{\mathcal Q_*}^2}{\epsilon_0^2c_p^*\delta_0},
\right\},\\
\tilde\epsilon
&\le
\min\left\{
\frac1{64},
\frac{3}{32r_s^2},
\frac{\epsilon_0^2c_p^*}{9216L^2},
\frac{\epsilon_0^2}{1536r_s^2}, \frac{e^{-3\gamma_0}}{4}, \, \frac{\epsilon_0^2c_p^*}{1024L^2C_{\mathcal Q_*}^2}
\right\}.
\end{align*}
Then applying Theorem \ref{t: general error decomposition} we get, with probability at least $1 - 3\delta_0$, 

$$\left|
\log\bar\pi[g]-\log\pi[g]
\right|
\leq\fc{11}8\epsilon_0
 + \max\left\{\log\frac{1}{1-\delta_\pi}, \log\frac{1}{1-\delta_g}\right\}.$$
 For the truncation term require \(\max\{\delta_\pi,\delta_g\}
\le
1-e^{-\epsilon/2}\) which yields $\max\left\{\log\frac{1}{1-\delta_\pi}, \log\frac{1}{1-\delta_g}\right\} \leq \frac{\epsilon}{2}.$
Letting $\delta_0 = \frac{1}{3}\delta$ and $\epsilon_0 = \frac{4}{11}\epsilon$ substituted into the final bounds yields,  with probability at least $1 - \delta$, 

$$\left|
\log\bar\pi[g]-\log\pi[g]
\right| < \epsilon.$$
The statement of the theorem following by substituting  $\delta_0 = \frac{1}{3}\delta$, $\epsilon_0 = \frac{4}{11}\epsilon$ and $\gamma_0 = 1$ into the parameter choices. 
\end{proof}
\section{Efficient Sampling}\label{s: sampling}
In this section, we establish mixing time guarantees for drawing samples from the truncated target distributions $\pi_k(x) \propto \psi_k(x)\pi(x)1_{K_U}(x)$.
The sampling process established in this section is independent of the observable $g(x)$ so that the analysis in this section applies to both the indicator observable (Section  \ref{s: indicator}) and the log-Lipschitz observable (Section \ref{s: smooth observable}). 
The sampling error depends exclusively on the geometry of the potential $V(x)$ and the biasing functions $\psi_k(x)$. 

This means that we cannot decouple the Log-Sobolev constants $C_{LS}^{(\theta)}$ and $C_{LS}^{(d)}$ as we did in Section \ref{s: 
smooth observable}. 
Instead we rely on Theorem \ref{t: LSI_bounds_CV_fiber} to express the log-Lipschitz constant $C_{LS}^{(D)}$ on the full space $\mathbb{R}^D$ as a function of $C_{LS}^{(\theta)}$ and $C_{LS}^{(d)}$. 
Therefore, our efficient sampling bounds have an unavoidable dependence on $C_{LS}^{(\theta)}$ the log-Lipschitz constant induced by the collective variable mapping. 

The bounds in this section then depend on the temperature choice for the biasing windows $\psi_k(x)$ to control the LSI constant $C_{LS}^{(\theta)}$ and the choice of CV mapping to control $C_{LS}^{(d)}$. 

\subsection{Truncated Windows \& Reflected Sampling}\label{s: truncated windows}

\blu{Because algorithmically we can only choose a finite collection of windows, any implementation of umbrella sampling for an unbounded distribution must address the tail of the distribution. Two options are to define terminal windows, a common approach in practice, or to introduce a truncation region $K_U$. We opt for the latter for ease of theoretical analysis.}
The truncation region $K_U$ must (i) capture a good portion of $\pi$, the target distribution's mass, and, (ii) capture the bulk of $g(x)$, the observable function.

Since Gaussian windows have full support, in the absence of truncation we have $U = \{x \in \Omega: S(x) > 0\}$. 
Taking $K_U = U$ therefore satisfies Assumptions \ref{a: good truncation} trivially with $\delta_\pi = \delta_g = 0$. 
While not strictly required, a well chosen truncation set $K_U$ is essential to the effectiveness of Gaussian EMUS scheme.
Importantly, the truncated set should be chosen to satisfy (Assumption \ref{a: good truncation}) within a well covered lattice of Gaussian windows.
This prevents the relative weights associated with the Gaussian umbrellas from becoming arbitrarily ill-conditioned in the tails. 
Specifically, we desire low variance estimates of the empirical quantities $\bar{F}_{ij}$ and $\bar{g}_i^*$, for each $i$. 


First, we assume $\mathcal{A}$ is a convex set in order to apply Proposition 3.1 of Bobkov-Ledoux \cite{BobkovLedoux2000}. 
Together with the strong convexity of the tilted marginal potential, this gives an LSI for the truncated biased marginal in collective-variable space and an upper bound on its LSI constant.
The marginal and fiber inequalities are subsequently combined in Section 3 to obtain a full-space LSI for $\pi_k$.

Second, in addition to convexity, we assume $K_U = \theta^{-1}(\mathcal{A})$ has $C^2$ boundary. 
Then define the normally reflected over-damped Langevin dynamics as 
$$dX_t = \nabla\log\pi_k(X_t) dt + \sqrt{2}dB_t - n_{K_U}(X_t)dL_t,$$
where $n_{K_U}$ is the outward normal so that
$n_{{K_U}(x)} \cdot \nabla f(x) = 0$ for $x \in \partial {K_U},$
and $L_t$ is the boundary local time.
Wang (\cite[Lemma 3.2]{Wang2014ModifiedCurvatures}) shows that the reflected over-damped Langevin process yields the appropriate Dirichlet form:
$$\mathcal{E}_{\pi_k}(f,g) = \int_{K_U} \nabla f \cdot \nabla g \; d\pi_k.$$
Recovering the appropriate Dirichlet form ensures that mixing is controlled by the aforementioned LSI inequality. 
Therefore our results hold for samples collected via reflected over-damped Langevin, as discussed in Section \ref{s: sampling}.


\subsection{Log-Sobolev Constants}
Since the Markov process for the sampling procedure runs in the full space $\mathbb{R}^D$ we require bounds on the LSI constant $C_{LS}^{(D)}$.
We apply Theorem \ref{t: LSI_bounds_CV_fiber} to bound the LSI on the full space in terms a function of $C_{LS}^{(\theta)}$ and $C_{LS}^{(d)}$, which are explicitly defined in Section \ref{s: collective variable mapping}. 

In the collective variable space $\mathbb{R}^d$, the tilted distribution is given by $\pi_k(x) \propto \exp(-V(x) - \frac{\beta}{2}\|\theta(x) - y_k\|^2)1_{K_U}(x)$. 
Since we assume the push-forward of the potential $A(z)$ is $\ell$-smooth, by choosing the temperature $\beta \geq \ell$, the window distribution on the collective variable space is strongly log-concave. 
By the Bobkov-Ledoux criterion (Remark \ref{r: Bakry-Emery}), the LSI of the marginal distribution on the collective variable space satisfies $C_{LS}^{(d)} \leq \frac{1}{\beta-\ell}$.

The LSI of the conditional measure along the fiber $\Sigma_z$ satisfies a constant $C_{LS}^{(\theta)}$ which cannot be further reduced. 
Therefore we have that the unconstrained tilted distribution $\pi_k$ satisfies the bound

\begin{equation}\label{e: LSI constant notation_sampling_section}
    \CD \leq \Ctheta + \frac{1}{(\beta - \ell)}\left(\frac{1}{m}+\Lambda^2(\Ctheta)^2\right).
\end{equation}

The expression for $\CD$ makes clear that the efficiency of the sampling process is heavily dependent on the choice of the collective variable mapping.
If $\theta(x)$ is poorly chosen so that the fibers have some implicit multimodality the sampling process may suffer from torpid mixing.

\subsection{Mixing Time Bounds}
Given a bound on the LSI constant $\CD$ for  distribution $\pi_k$ on the full space, we can now bound the mixing time for the continuous time sampling procedure. 
For a target distribution satisfying an LSI, the $\chi^2$-divergence converges exponentially fast. 
Since $\pi_k$ is defined over a truncated set $K_U$ we use reflected Langevin dynamics. 

\begin{proposition}\label{p: mixing time general}
    Let $\mu_T$ be the distribution obtained from running the reflected over-damped Langevin process with stationary distribution $\pi_k$, for time $T$, initialized at $\mu_0$. 
    Then to guarantee $\chi^2(\mu_T \; || \; \pi_k ) \leq \tilde{\epsilon}$ it is sufficient to run the Markov process for
    $$T \geq \frac{\CD}{2}\log\left( \frac{\chi^2(\mu_0 \; || \; \pi_k ) }{\tilde{\epsilon}}\right).$$
    Where $\CD \leq \Ctheta + \frac{1}{(\beta - \ell)}\left(\frac{1}{m}+\Lambda^2(\Ctheta)^2\right)$, as in equation \ref{e: LSI constant notation_sampling_section}. Choosing $\beta \geq \ell + \frac{1}{\Ctheta}\left(\frac{1}{m}+\Lambda^2(\Ctheta)^2\right)$ implies $\CD \leq 2\Ctheta$.
\end{proposition}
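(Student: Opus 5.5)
The proof has two parts: exponential decay of $\chi^2$ along the reflected Langevin dynamics at a rate set by $\CD$, and then the algebra on $\CD$ from \eqref{e: LSI constant notation_sampling_section}.

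\textbf{Decay of $\chi^2$.} Let $P_t$ denote the semigroup of the reflected over-damped Langevin process on $K_U$. By \cite[Lemma 3.2]{Wang2014ModifiedCurvatures}, its generator is reversible with respect to $\pi_k$ and has Dirichlet form $\mathcal E_{\pi_k}(f,f)=\int_{K_U}|\nabla f|^2\,d\pi_k$, since the Neumann boundary condition removes the boundary term in the integration by parts. Let $h_t = d\mu_t/d\pi_k$, so that $h_t=P_t h_0$ and $\chi^2(\mu_t\|\pi_k)=\Var_{\pi_k}(h_t)$. Differentiating gives
\[
\frac{d}{dt}\Var_{\pi_k}(h_t) = -2\,\mathcal E_{\pi_k}(h_t,h_t).
\]
By Theorem \ref{t: LSI_bounds_CV_fiber}, $\pi_k$ satisfies an LSI with constant $\CD$; the marginal input is Remark \ref{r: Bakry-Emery} (Bobkov--Ledoux on the convex set $\mathcal A$), and the fiber input is Assumption \ref{a:lsi-cond}. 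An LSI implies a Poincar\'e inequality with the same constant, obtained by linearizing with $f=1+\varepsilon g$. Hence $\Var_{\pi_k}(h)\le \CD\,\mathcal E_{\pi_k}(h,h)$, which gives
\[
\frac{d}{dt}\Var \le -\frac{2}{\CD}\Var.
\]
Gr\"onwall's inequality then yields $\chi^2(\mu_T\|\pi_k)\le e^{-2T/\CD}\chi^2(\mu_0\|\pi_k)$. Setting the right-hand side at most $\tilde\epsilon$ and solving for $T$ gives the stated time $T\ge \frac{\CD}{2}\log(\chi^2(\mu_0\|\pi_k)/\tilde\epsilon)$.

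\textbf{Bound on $\CD$.} Equation \eqref{e: LSI constant notation_sampling_section} follows from \eqref{e: LSI constant notation} with $\Cd\le 1/(\beta-\ell)$. If $\beta-\ell\ge \frac{1}{\Ctheta}\bigl(\frac1m+\Lambda^2(\Ctheta)^2\bigr)$, then the second term is at most $\Ctheta$. Therefore $\CD\le 2\Ctheta$.

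\textbf{Main obstacle.} The delicate step is justifying the energy identity $\frac{d}{dt}\Var=-2\mathcal E$ for the reflected process. This requires enough regularity of $h_t$ and of the $C^2$ boundary of $K_U$, and it requires that the LSI of Theorem \ref{t: LSI_bounds_CV_fiber}, proved in \cite{lelievre2009general} for the untruncated setting, transfers to $\pi_k$ restricted to $K_U=\theta^{-1}(\mathcal A)$. For the transfer, I would check that the decomposition argument of \cite{lelievre2009general} only uses the marginal LSI on $\mathcal A$ (supplied by Bobkov--Ledoux) and the fiber LSIs for $z\in\mathcal A$. The fibers are unaffected by the truncation, because $K_U$ is a union of whole fibers. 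For regularity, I would take the identity from standard Neumann semigroup theory, arguing first for bounded smooth $h_0$ and then by density.
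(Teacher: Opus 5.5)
Your proposal is correct and follows the route the paper takes implicitly. The paper gives no separate proof: it relies on Wang's Dirichlet form, the appendix contraction $\chi^2(\mu_t,\mu)\le e^{-2t/C_{LS}}\chi^2(\mu_0,\mu)$ (Lemma \ref{l: chisq MCMC}), and \eqref{e: LSI constant notation_sampling_section}; you supply the standard Poincar\'e-plus-Gr\"onwall derivation of that contraction and the same algebra on $\CD$. For the transfer to $\pi_k$ that you flag, note also that the window adds $\beta(\theta-y_k)$ to the local mean force and $JP=0$, so $\nabla F\,P$, and hence the constant $\Lambda$ of Assumption \ref{a: collective variable}, is unchanged.
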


The mixing time provided in Proposition \ref{p: mixing time general} can be controlled by a good choice of CV mapping, the inverse temperature $\beta$ and the choice of initial distribution $\mu_0$. 
We previously discussed how the CV mapping and $\beta$ control the LSI constant so lastly we remark on a proper choice of initialization.
We leave this as a remark since it is clear that the $\log \chi^2(\mu_0 \; || \; \pi_k)$ dependence yields polynomial-time convergence.  

\begin{rem}
    While Proposition \ref{p: mixing time general} guarantees polynomial-time convergence, the practical run time is influenced by the initial choice of $\mu_0$, as to control $\chi^2(\mu_0 \;||\; \pi_k)$. 
    Since the Markov process runs over the full space, if the process is initialized naively from some generic prior, the $\log \chi^2(\mu_0 \; || \; \pi_k)$ scales conventionally as $O(D)$. 

    However, the overlap assumption $F_{ij} \geq c_p$, guarantees that each window has at least one neighbor with good overlap. 
    This suggests an a practical sequential initialization procedure by using samples from a neighboring window. 
With additional work its possible to show that this inductive procedure  reduces $\log \chi^2(\mu_0 \; || \; \pi_k)$ to $O(1)$.
\end{rem}
\section{Proof of Main Theorem}
\label{s:pf}
We prove Theorem \ref{t: main theorem alg1} with more precise dependencies. Let
\[
 G_+=\max\{1,G\},\qquad
 H_\theta=\frac{1}{m_\theta C_{LS}^{(\theta)}}
 +\Lambda^2C_{LS}^{(\theta)}, \qquad \rho_-=\min\{1,\rho\}.
\]
For the log-Lipschitz and indicator settings, let
\begin{align*}
 K_{LL}:&=1+\operatorname{diam}(A)
 \sqrt{\ell+G_+^2+L_g^2+H_\theta},\\
 K_{ind}(\epsilon):&=\frac{\sqrt{\log{2/\epsilon}}}{\epsilon^2}\left(1+\operatorname{diam}(A)
 \sqrt{\ell+G_+^2+\frac{G_+^4\kappa^2}{\rho_-^2}+H_\theta}\right).
 \end{align*}
We show that it suffices to choose $N$ and $T$ as follows in the two settings:
\begin{enumerate}
    \item For $g$ log-Lipschitz,
\[
 N=\Omega\!\left(
 \frac{e^{\widetilde O(d^3)}K_{LL}^{3d}}{\epsilon^2\delta}\right),
 \qquad
 T=\Omega\!\left(C_{LS}^{(\theta)}\log\!\left(
 \frac{Be^{\widetilde O(d^3)}K_{LL}^{2d}}{\epsilon^2}\right)\right).
\]
\item For $g=\mathbf1_E\circ\theta$ over a convex $E$,
\[
 N=\Omega\!\left(
 \frac{e^{\widetilde O(d^3)}K_{ind}(\epsilon)^{3d}}{\epsilon^{d+2 }\delta}\right),
 \qquad
 T=\Omega\!\left(C_{LS}^{(\theta)}\log\!\left(
 \frac{Be^{\widetilde O(d^3)}K_{ind}(\epsilon)^{2d}}{\epsilon^{d+2}}\right)\right).
\]
\end{enumerate}  

\begin{proof}(Proof of Theorem \ref{t: main theorem alg1})
    We show that in the log Lipschitz observable case Algorithm \ref{alg:gaussian_net_construction} parameters applied to Theorem \ref{t: Explicit Error for Log-lipschitz observable} yields the result and in the indicator observable case Algorithm \ref{alg:indicator_net_construction} applied to Theorem \ref{t:indicator final bound} yields the result. 

    We first start with $g$ log-Lipschitz:
    in this case, with parameters as in Algorithm \ref{alg:gaussian_net_construction}, Theorem \ref{t: Explicit Error for Log-lipschitz observable} with $N \ge \frac{6144mL^2C_{\mathcal Q_*}^2}{\epsilon^2c_p^*\delta}$ and $\tilde{\epsilon} \leq \min\left( \frac{e^{-3\gamma_0}}{4}, \, \frac{\epsilon^2c_p^*}{4\cdot16^2L^2C_{\mathcal Q_*}^2} \right)$ yields the result. 
 We note that $\gamma_0$ is a hyperparameter so we take it to be a fixed constant in $(0, 120d/\sqrt{\pi}]$, 
which means that as defined 
$$ c_{p,LL} \ge c_p^* =\frac{
\exp\left[-\max(\frac{9d}{2},\frac{d^3}{100})
-\frac14-\left(\frac{\gamma_0\sqrt{\pi}}{120d}\right)^2
-\frac{\gamma_0\sqrt{\pi}}{240d}
\right]
}{
2^d
\left(
1+\frac{120\sqrt{2}\,d}{\gamma_0}
\right)^d
}$$
can be written as $c_{p}^* = e^{-\widetilde{\mathcal{O}}(d^3)}$. Furthermore, 
$$ C_{\mathcal{Q}}^* := 2(1 + 0.25 e^{-3\gamma_0})^{1/4} \exp(6\gamma_0) $$
therefore with $\gamma_0$ taken to be a constant $C_{\mathcal{Q}}^* = O(1)$. Which means that 
$N = \Omega\left(\frac{L^3e^{\widetilde{\mathcal{O}}(d^3)}}{\epsilon^2\delta} \right)$. 
Lastly to connect this to the mixing time of the continuous chain we apply Proposition \ref{p: mixing time general} with $ \beta \geq \ell + \frac{1}{\Ctheta}\left(\frac{1}{m}+\Lambda^2(\Ctheta)^2\right)$ to get $T \geq \Ctheta\log\left( \frac{\chi^2(\mu_0 \; || \; \pi_k ) }{\tilde{\epsilon}}\right).$ 
Using that $\tilde{\epsilon} \leq \min\left( \frac{e^{-3\gamma_0}}{4}, \, \frac{\epsilon^2c_p^*}{4\cdot16^2L^2C_{\mathcal Q_*}^2} \right)$ and  $\chi^2(\mu_{k,0}, \pi_k) \leq B$ gives a required mixing time of  
$$T \geq \max\left\{\Ctheta\log\left( \frac{4\chi^2(\mu_0 \; || \; \pi_k )}{e^{-3\gamma_0}}\right), \Ctheta\log\left( \frac{4\cdot16^2L^2C_{\mathcal Q_*}^2\chi^2(\mu_0 \; || \; \pi_k ) }{\epsilon^2c_p^*}\right)\right\}.$$
Which reduces to $T=\Omega\left(\Ctheta \log \left(\frac{Be^{\widetilde{\mathcal{O}}(d^3)}L^2}{\epsilon^2} \right)\right)$ per window. 

For $g$ an indicator function: with parameters as in Algorithm \ref{alg:indicator_net_construction} applied to Theorem \ref{t:indicator final bound} with 
$$ \tilde{\epsilon} \le \min\left\{
\frac1{64},
\frac{3}{32r_{s,*}^2},
\frac{\epsilon^2c_p}{69696L^2},
\frac{\epsilon^2}{11616r_{s,*}^2}, \frac{e^{-3}}{4}, \, \frac{\epsilon^2c_p^*}{7744L^2C_{\mathcal Q_*}^2}
\right\}, $$
$$ N \ge \max\left\{
\frac{64r_{s,*}^2}{\delta},
\frac{836,352mL^2}{\epsilon^2c_p^*\delta},
\frac{69,696r_{s,*}^2}{\epsilon^2\delta},
\frac{139,392mL^2C_{\mathcal Q_*}^2}{\epsilon^2c_p^*\delta}
\right\},$$
we first consider $c_p^*$ chosen as 
$$c_{p,ind} \ge c_p^* =\frac{
\exp\left(
-\max(\frac{9d}{2},\frac{d^3}{100})
-\frac12
\right)
}{
2^d
\left(
1+\frac{72\sqrt{2\pi}d}{\epsilon}
\right)^d
}.$$
We note that $c_p^*$ differs in this case by a factor of $(\frac{1}{\epsilon})^d$ in the denominator. 
So the dependency is given as $c_p^*=\Omega({e^{-\widetilde{\mathcal{O}}(d)}}{\epsilon^d})$
As specified in Theorem \ref{t:indicator final bound} $r_{s,*} = \mathcal{O}(e^{d})$ removing these constants gives the same dependency of $N = \Omega\left(\frac{e^{\widetilde{\mathcal{O}}(d^3)}L^3}{\epsilon^{d+2}\delta}\right)$. 
Connecting back to the run time through Proposition \ref{p: mixing time general} in the same way as the log-Lipschitz case gives $T=\Omega\left(\Ctheta \log \left(\frac{Be^{\widetilde{\mathcal{O}}(d^3)}L^2}{\epsilon^{d+2}} \right)\right)$.

It remains to express the number of windows in terms of the algorithmic parameters. 
Since $g$ is constant along the fibers taking $\gamma_0 = 1$ suffices. 
Let $G_+ = \max\{1,G\}$, $\kappa = R/\rho$, $\rho_- = \min\{1, \rho\}$, and $H_\theta = \frac{1}{m_\theta \Ctheta}+\Lambda^2\Ctheta$.
Define 
\[
 K_{LL}:=1+\operatorname{diam}(A)
 \sqrt{\ell+G_+^2+L_g^2+H_\theta},
\]
\[
 K_{ind}(\epsilon):=\frac{\sqrt{\log{2/\epsilon}}}{\epsilon^2}\left(1+\operatorname{diam}(A)
 \sqrt{\ell+G_+^2+\frac{G_+^4\kappa^2}{\rho_-^2}+H_\theta}\right).
\]
In Remark \ref{rem: L number} the number of windows $L$ is expressed as 
$$L = \mathcal{O}\left((\text{diam}(\mathcal{A})d\sqrt{\beta} + d)^d\right).$$
Using 
\[
\begin{aligned}
 \beta_{LL}
 &=O\!\left(\ell+G_+^2+L_g^2+H_\theta\right),\\[4pt]
 \beta_{ind}
 &=O\!\left(
 \frac{d^3\log(2/\epsilon)}{\epsilon^2}
 \left(\ell+G_+^2+\frac{G_+^4\kappa^2}{\rho_-^2}+H_\theta\right)
 \right)
\end{aligned}
\]
applied to Remark \ref{rem: L number} yields 
\[
 L_{LL}\le e^{O(d\log(d+1))}
 \left(1+a\sqrt{\ell+G_+^2+L_g^2+H_\theta}\right)^d
 =e^{O(d\log(d+1))}K_{LL}^d
\]
and
\[
 L_{ind}\le e^{O(d\log(d+1))}
 \left(\frac{1+a\sqrt{\beta_{ind}}}{\epsilon}\right)^d
 \le e^{O(d\log(d+1))}K_{ind}(\epsilon)^d.
\]
Substituting these bounds into the previously established dependencies yields the final result.
\end{proof}

\printbibliography

\appendix
\section{Appendix}
\begin{lem}\label{l: herbst argument}(Herbst Argument) Suppose that $\mu$ satisfies a log-Sobolev inequality with constant $C_{LS}$. Let $f: \mathbb{R}^d \rightarrow \mathbb{R}$ be an $L$-Lipschitz function. Then 
$$\mathbb{E}_{\mu}\left[e^{t(f-\mathbb{E}_\mu f)}\right] \leq e^\frac{C_{LS}L^2t^2}{2}.$$
\end{lem}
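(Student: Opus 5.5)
The plan is to use the standard Herbst argument: derive a differential inequality for the log-moment generating function from the log-Sobolev inequality, then integrate it. By replacing $f$ with $f-\mathbb{E}_\mu f$ we may assume $\mathbb{E}_\mu f = 0$. First assume $f$ is bounded and smooth with $\|\nabla f\|\le L$. The general Lipschitz case then follows by a truncation and mollification argument: take $f_n=\max(\min(f,n),-n)$ convolved with a mollifier, all of which are $L$-Lipschitz, and pass to the limit using Fatou's lemma and dominated convergence for the means.

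For $t\in\mathbb{R}$ set $H(t)=\mathbb{E}_\mu[e^{tf}]$, which is finite and smooth in $t$ by boundedness. Apply the log-Sobolev inequality $\Ent_\mu(g^2)\le 2C_{LS}\int|\nabla g|^2\,d\mu$ to $g=e^{tf/2}$. The left side is
\[
\Ent_\mu(e^{tf})=tH'(t)-H(t)\log H(t).
\]
Since $\nabla g=\tfrac t2 e^{tf/2}\nabla f$, the right side is at most $2C_{LS}\cdot\tfrac{t^2}{4}L^2 H(t)=\tfrac{C_{LS}L^2t^2}{2}H(t)$. This gives the key inequality
\[
tH'(t)-H(t)\log H(t)\le \tfrac{C_{LS}L^2t^2}{2}H(t).
\]

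Now set $K(t)=\tfrac1t\log H(t)$ for $t\neq0$. A direct computation gives $K'(t)=\tfrac{tH'-H\log H}{t^2H}$, so the inequality becomes $K'(t)\le \tfrac{C_{LS}L^2}{2}$. As $t\to0$, $\log H(t)=t\,\mathbb{E}_\mu f+O(t^2)=O(t^2)$, so $K(t)\to \mathbb{E}_\mu f=0$. Integrating from $0$ to $t>0$ gives $K(t)\le \tfrac{C_{LS}L^2t}{2}$, that is, $H(t)\le e^{C_{LS}L^2t^2/2}$.

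For $t<0$, apply the same bound to $-f$, which is also $L$-Lipschitz with mean zero, at the parameter $-t>0$. The case $t=0$ is trivial.

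The only delicate step is the approximation argument. The analytic core is routine once we check that $H$ is differentiable and that the LSI applies to $e^{tf/2}$, and both hold for bounded smooth $f$. For general Lipschitz $f$, one first shows $\mathbb{E}_\mu|f|<\infty$, for instance via the bound for truncations combined with Fatou's lemma. Then $\mathbb{E}_\mu f_n\to\mathbb{E}_\mu f$ by dominated convergence, and Fatou's lemma transfers the exponential bound to $f$ itself.
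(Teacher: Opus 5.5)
The paper gives no proof of this lemma; it states it in the appendix as the standard Herbst argument. Your proposal is exactly that argument (LSI applied to $e^{tf/2}$, the bound $K'(t)\le C_{LS}L^2/2$ for $K(t)=t^{-1}\log\mathbb{E}_\mu e^{tf}$, the reflection $f\mapsto -f$ for $t<0$, then approximation), and it is correct.

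The one step to make explicit is in the approximation. Before monotone convergence and Fatou give $\mathbb{E}_\mu|f|<\infty$ and the final bound, you need the means $m_n=\mathbb{E}_\mu f_n$ of the truncations to stay bounded. To see this, note that the uniform tail bound $\mu(|f_n-m_n|\ge s)\le 2e^{-s^2/(2C_{LS}L^2)}$ gives $\mu(|f_n-m_n|<s_0)>1/2$ for a suitable $s_0$. Choose $M$ with $\mu(|f|\le M)>1/2$; for $n\ge M$ you have $f_n=f$ on that set. The two sets therefore intersect, so $|m_n|\le M+s_0$.
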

\begin{corollary}\label{c: Herbst}(Herbst Argument for L log-Lipshitz) Suppose that $\mu$ satisfies a log-Sobolev inequality with constant $C_{LS}$. let $f$ be an L log-Lipshitz function. Then 
$$e^{t\mathbb{E}_\mu\log f - \frac{C_{LS}L^2t^2}{2}}\leq\mathbb{E}_\mu f^t \leq e^{t\mathbb{E}_\mu\log f + \frac{C_{LS}L^2t^2}{2}}$$
\end{corollary}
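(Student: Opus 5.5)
The statement is Corollary \ref{c: Herbst}, the two-sided bound on $\mathbb{E}_\mu f^t$ for an $L$-log-Lipschitz $f$. The plan is to apply Lemma \ref{l: herbst argument} to $\phi := \log f$ and then use Jensen's inequality for the lower bound.

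\textbf{Reduction.} Saying $f$ is $L$ log-Lipschitz means $f>0$ and $\phi=\log f$ is $L$-Lipschitz. I will first check that $\mathbb{E}_\mu \phi$ is finite. Under an LSI, Lipschitz functions have finite first moment; this is the standard fact underlying the Herbst argument, obtained by truncating $\phi$ and letting the truncation go to infinity. Then write
\[
f^t = e^{t\phi} = e^{t\mathbb{E}_\mu\phi}\, e^{t(\phi-\mathbb{E}_\mu\phi)}.
\]

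\textbf{Upper bound.} Take expectations in the identity above and apply Lemma \ref{l: herbst argument} to $\phi$ with parameter $t$. This gives
\[
\mathbb{E}_\mu f^t \le e^{t\mathbb{E}_\mu \log f}\, e^{C_{LS}L^2t^2/2}.
\]
This holds for every real $t$, since the lemma is stated for all $t$.

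\textbf{Lower bound.} Jensen's inequality for the convex map $x\mapsto e^{x}$ gives
\[
\mathbb{E}_\mu e^{t(\phi-\mathbb{E}_\mu\phi)} \ge e^{0}=1,
\]
so $\mathbb{E}_\mu f^t \ge e^{t\mathbb{E}_\mu\log f}$. This is stronger than the claimed lower bound, because $e^{-C_{LS}L^2t^2/2}\le 1$.

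The only step that needs any care is the integrability of $\log f$, which is what makes the centering legitimate. Beyond that the argument is a one-line consequence of Lemma \ref{l: herbst argument}.

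For later use, the ratio form used in Lemmas \ref{l: Gaussian Variance Bound} and \ref{l: Stability of Q, Gaussian} follows by combining the upper bound at exponent $2t$ with the lower bound at exponent $t$:
\[
\frac{\mathbb{E}_\mu f^{2t}}{(\mathbb{E}_\mu f^t)^2} \le e^{2C_{LS}L^2t^2 + C_{LS}L^2t^2}.
\]
If one uses the sharper Jensen lower bound instead, the ratio is at most $e^{2C_{LS}L^2t^2}$.
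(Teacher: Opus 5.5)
Your proposal is correct and follows the paper's proof, which writes $\mathbb{E}_\mu f^t=\mathbb{E}_\mu e^{t\log f}$ and applies Lemma~\ref{l: herbst argument} to the Lipschitz function $\log f$. The paper's proof stops at the upper bound and does not address the integrability of $\log f$; your Jensen step supplies the missing lower bound in the sharper form $\mathbb{E}_\mu f^t\ge e^{t\mathbb{E}_\mu\log f}$, and that sharper form is exactly what gives the constant $e^{2C_{LS}L^2}$ used in Lemma~\ref{l: Gaussian Variance Bound}, where the stated lower bound would only give $e^{3C_{LS}L^2}$.
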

\begin{proof}
    Let $f$ be L log-Lipshitz then consider, 
    $$\mathbb{E}_\mu f^t = \mathbb{E}\left[ e^{t\log f} \right].$$
    Since $g = \log f$ is an L-lipshitz function by Herbst Argument (Lemma \ref{l: herbst argument}),
    $$\mathbb{E}\left[ e^{t\left(\log f-\mathbb{E}_\mu \log f\right)} \right] \leq e^\frac{C_{LS}L^2t^2}{2}.$$
\end{proof}
\begin{lem}\label{l: hypercontractivity}(Hypercontractivity) Suppose that $\mu$ satisfies a log-Sobolev inequality with constant $C_{LS}$. Let $f$ be an L log-Lipshitz function. Then 
$$\frac{\left\| f \right\|_{L^4(\mu)}}{\left\|f\right\|_{L^2(\mu)}} \leq e^{ 3C_{LS}L^2}.$$
\end{lem}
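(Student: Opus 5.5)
The plan is to avoid hypercontractive semigroup estimates and compare both moments to the common quantity $e^{\mathbb{E}_\mu \log f}$, using the two-sided Herbst estimate of Corollary \ref{c: Herbst}. Since $f$ is $L$ log-Lipschitz, $\log f$ is $L$-Lipschitz, so Corollary \ref{c: Herbst} applies to $f$ with any exponent $t>0$. I take $f>0$, so that $\log f$ is defined, and assume $\mathbb{E}_\mu\log f$ is finite; both are implicit in the log-Lipschitz hypothesis.

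\emph{Numerator.} First I bound the fourth moment from above by taking $t=4$ in the upper inequality of Corollary \ref{c: Herbst}:
\[
\mathbb{E}_\mu f^4 \le \exp\!\big(4\,\mathbb{E}_\mu \log f + 8 C_{LS} L^2\big).
\]

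\emph{Denominator.} Next I bound the second moment from below. Jensen's inequality applied to the convex map $u\mapsto e^{2u}$ gives
\[
\mathbb{E}_\mu f^2 = \mathbb{E}_\mu e^{2\log f} \ge e^{2\mathbb{E}_\mu \log f}.
\]
This is slightly stronger than the lower bound in Corollary \ref{c: Herbst} at $t=2$, which would instead give $\exp(2\mathbb{E}_\mu\log f - 2C_{LS}L^2)$. Either version suffices for the stated constant.

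\emph{Combining.} Taking the ratio, the terms involving $\mathbb{E}_\mu \log f$ cancel exactly:
\[
\frac{\mathbb{E}_\mu f^4}{(\mathbb{E}_\mu f^2)^2} \le \frac{e^{4\mathbb{E}_\mu\log f + 8C_{LS}L^2}}{e^{4\mathbb{E}_\mu \log f}} = e^{8 C_{LS}L^2}.
\]
If one uses the Herbst lower bound instead of Jensen, the right-hand side becomes $e^{12 C_{LS}L^2}$. Taking fourth roots then gives
\[
\frac{\|f\|_{L^4(\mu)}}{\|f\|_{L^2(\mu)}} \le e^{2C_{LS}L^2} \le e^{3C_{LS}L^2},
\]
which is the claimed bound.

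There is no real obstacle here. The only point needing care is that Corollary \ref{c: Herbst} is stated as a consequence of Lemma \ref{l: herbst argument} with only the upper half proved, so I rely on Jensen for the lower bound to avoid depending on the unproved half.
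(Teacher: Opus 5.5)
Your proof is correct and follows the paper's argument: both compare $\|f\|_{L^4(\mu)}$ and $\|f\|_{L^2(\mu)}$ to $e^{\mathbb{E}_\mu\log f}$, using Corollary~\ref{c: Herbst} at $t=4$ for the numerator. The only difference is the denominator. The paper uses the lower half of Corollary~\ref{c: Herbst} at $t=2$, which gives exactly $e^{3C_{LS}L^2}$. Your use of Jensen there is legitimate, avoids that unproved half, and gives the sharper constant $e^{2C_{LS}L^2}$.
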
 
\begin{proof}
Consider by Corollary \ref{c: Herbst}, 
$$\frac{\left\| f \right\|_{L^4(\mu)}}{\left\|f\right\|_{L^2(\mu)}} = \frac{\mathbb{E}_\pi\left(f^4\right)^\frac{1}{4}}{\mathbb{E}_\pi\left(f^2\right)^\frac{1}{2}}\leq \frac{e^{\mathbb{E}_\mu\log f + \frac{4C_{LS}L^2}{2}}}{e^{\mathbb{E}_\mu\log f - \frac{2C_{LS}L^2}{2}}} \leq e^{ 3C_{LS}L^2}.$$
\end{proof}
    
\begin{lem}\label{l: gross_hypercontractive}(Gross's Hypercontractivity) Let $P_t$ be a reversible Markov semigroup with stationary measure $\mu$. If $\mu$ satisfies a log Sobolev inequality with constant $C_{LS}$ then 
$$\left\|P_Tf\right\|_4 \leq ||f||_2$$
when $T \geq \frac{C_{LS}}{2}\log(3)$.
\end{lem}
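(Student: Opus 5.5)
The statement is Gross's hypercontractivity specialized to the exponents $p=2$, $q=4$. I will derive it from the standard Gross theorem, which the paper may assume as classical. The theorem says: if $\mu$ satisfies a log-Sobolev inequality with constant $C_{LS}$, normalized as in the paper as $\Ent_\mu(f^2)\le 2C_{LS}\,\mathcal E(f,f)$, then for the associated reversible semigroup
\[
\|P_t f\|_{q(t)}\le \|f\|_p \qquad\text{whenever}\qquad q(t)-1\le (p-1)e^{2t/C_{LS}}.
\]
With $p=2$ and $q=4$ the condition reads $3\le e^{2T/C_{LS}}$, that is, $T\ge \frac{C_{LS}}{2}\log 3$. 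This is exactly the stated threshold. For larger $T$, the contraction $\|P_{T}f\|_4=\|P_{T-T_0}P_{T_0}f\|_4\le\|P_{T_0}f\|_4$ with $T_0=\frac{C_{LS}}{2}\log3$ gives the same bound. That contraction holds because $P_s$ is Markov and hence contracts every $L^q(\mu)$ by Jensen.

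If a self-contained argument is wanted, I would reproduce Gross's differential argument.
\begin{enumerate}
\item Fix $f\ge0$; the general case follows from $|P_tf|\le P_t|f|$. Set $q(t)=1+e^{2t/C_{LS}}$ and $F(t)=\|P_tf\|_{q(t)}$.
\item Differentiate $\log F(t)$. This yields a term $\frac{q'}{q^2}\,\Ent_\mu(u^{q})/\|u\|_q^q$ and a term $-\mathcal E(u,u^{q-1})/\|u\|_q^q$, where $u=P_tf$.
\item Use the inequality $\mathcal E(u,u^{q-1})\ge \frac{4(q-1)}{q^2}\,\mathcal E(u^{q/2},u^{q/2})$. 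This holds for diffusions, where $\mathcal E(f,g)=\int\nabla f\cdot\nabla g\,d\mu$ as for the reflected Langevin Dirichlet form of Section~\ref{s: sampling}.
\item Apply the LSI to $u^{q/2}$. This shows $\frac{d}{dt}\log F\le0$ exactly when $q'\le \frac{2}{C_{LS}}(q-1)$. Our choice of $q$ makes this an equality, so $F(t)\le F(0)=\|f\|_2$.
\end{enumerate}

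The only delicate point is bookkeeping the LSI normalization: the factor $2C_{LS}$ versus $C_{LS}$ determines whether the rate is $e^{2t/C_{LS}}$ and hence whether the threshold is $\frac{C_{LS}}{2}\log 3$. Justifying the differentiation also needs regularity; the standard route is to work with bounded $f$ bounded away from zero and pass to the limit by density. I expect no other obstacle.
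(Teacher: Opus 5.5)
Your proposal is correct and matches how the paper treats this lemma: the paper states it without proof as the classical Gross theorem. Under the paper's normalization $\Ent_\mu(f^2)\le 2C_{LS}\,\mathcal E(f,f)$, your check that the condition $q-1\le (p-1)e^{2T/C_{LS}}$ with $p=2$, $q=4$ reduces to $T\ge \frac{C_{LS}}{2}\log 3$, with $L^4$-contractivity of $P_s$ handling larger $T$, is exactly what is needed. Your differential sketch is the standard one; the only remark is that for a non-diffusion reversible semigroup, step 3 is the Stroock--Varopoulos inequality, which still holds, so the restriction to diffusions there is unnecessary.
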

\begin{lem}\label{l: chisq MCMC}
 Let $P_t$ be a reversible ergodic Markov semigroup with stationary distribution $\mu$. Let $\mu_0$ be the initialization of $P_t$ and $\mu_t = \mu_0P_t$. Then 
    $$\chi^2(\mu_t, \mu) \leq e^\frac{-2t}{C_{LS}}\chi^2(\mu_0, \mu).$$
\end{lem}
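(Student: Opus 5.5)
The plan is the standard $L^2$ semigroup argument: pass from the log-Sobolev inequality to a Poincaré inequality, then differentiate the $\chi^2$-divergence along the flow and apply Gr\"onwall. Write $h_0 = d\mu_0/d\mu$; we may assume $\chi^2(\mu_0,\mu)<\infty$, since otherwise there is nothing to prove, so $h_0\in L^2(\mu)$. Let $L$ be the generator of $P_t$ and $\mathcal E(f,f) = -\int f\,Lf\,d\mu$ its Dirichlet form. For the reflected Langevin dynamics of Section~\ref{s: sampling}, this is $\int_{K_U}|\nabla f|^2\,d\pi_k$ by Wang's result quoted there. By reversibility $P_t$ is self-adjoint in $L^2(\mu)$, so for any test function $\phi$,
\[
\int \phi\, d\mu_t = \int P_t\phi\, h_0\, d\mu = \int \phi\, P_t h_0\, d\mu .
\]
Hence $h_t := d\mu_t/d\mu = P_t h_0$. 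Since $\int h_t\,d\mu = 1$ is preserved, this gives $\chi^2(\mu_t,\mu) = \Var_\mu(h_t) = \|P_t h_0 - 1\|_{L^2(\mu)}^2$.

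Next I would show that the LSI with constant $C_{LS}$ implies a Poincaré inequality $\Var_\mu(f)\le C_{LS}\,\mathcal E(f,f)$, by the usual linearization. Apply the LSI to $1+\varepsilon g$ with $g$ bounded and $\int g\,d\mu=0$. Expanding $\Ent_\mu((1+\varepsilon g)^2) = 2\varepsilon^2\int g^2\,d\mu + o(\varepsilon^2)$, and using that the right-hand side is $2C_{LS}\varepsilon^2\,\mathcal E(g,g)$, we get after dividing by $\varepsilon^2$ and letting $\varepsilon\to0$ that $\Var_\mu(g)\le C_{LS}\,\mathcal E(g,g)$. A density argument then extends this to the form domain.

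Then set $\Phi(t) = \Var_\mu(P_t h_0)$. Since $P_t$ commutes with constants and $\mathcal E$ annihilates them, for $t>0$
\[
\Phi'(t) = 2\int (P_t h_0-1)\, L P_t h_0\, d\mu = -2\,\mathcal E(P_t h_0, P_t h_0) \le -\frac{2}{C_{LS}}\,\Phi(t).
\]
Gr\"onwall gives $\Phi(t)\le e^{-2t/C_{LS}}\Phi(0)$, which is the claim.

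The main obstacle is technical rather than conceptual: justifying the differentiation and the identity $-\int f\,Lf\,d\mu=\mathcal E(f,f)$ when $h_0$ is only in $L^2(\mu)$ and the dynamics are reflected at $\partial K_U$. I would handle this with spectral calculus for the self-adjoint nonpositive operator $L$. The spectral theorem gives $P_t h_0 \in \mathrm{Dom}(L)$ for all $t>0$ and makes $\Phi$ differentiable on $(0,\infty)$, with $\Phi$ continuous at $0$, so no regularity of $h_0$ is needed. The Neumann boundary condition enters only through the identification of the closed form $\mathcal E$ with $\int|\nabla f|^2\,d\pi_k$, which is exactly the content of the cited result of Wang. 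With that identification, the functions to which the LSI of Theorem~\ref{t: LSI_bounds_CV_fiber} applies coincide with the form domain.
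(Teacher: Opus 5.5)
Your argument is correct. The paper states this lemma in the appendix without proof, as a standard fact. Your route is the standard proof it implicitly relies on: linearize the log-Sobolev inequality at $f=1+\varepsilon g$ to get the Poincar\'e inequality $\Var_\mu(g)\le C_{LS}\,\mathcal{E}(g,g)$, use reversibility to write $\chi^2(\mu_t,\mu)=\Var_\mu(P_th_0)$, and apply Gr\"onwall to $\Phi'(t)=-2\mathcal{E}(P_th_0,P_th_0)\le -\tfrac{2}{C_{LS}}\Phi(t)$.

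Your write-up also makes explicit two things the statement leaves implicit. First, the lemma needs the hypothesis that $\mu$ satisfies an LSI with constant $C_{LS}$ with respect to the Dirichlet form of $P_t$, which the statement never says. Second, only the weaker Poincar\'e inequality is actually used. So the same decay holds with the Poincar\'e constant $C_P\le C_{LS}$ in place of $C_{LS}$.
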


\begin{lem}\label{l: Lipshitz generic bound}
    Let $f:\mathbb{R}^d \rightarrow \mathbb{R}$ so that $f(x) \leq f_{\max}$ for $x\in \mathbb{R}^d$, $f(x) \geq f_{\min}$ for all $x \in A$, where $A \subseteq \mathbb{R}^d$ is a bounded, convex domain and $\nabla^2f(x) \succeq -aI_d$. Then 
    $$\left\| \nabla f(x)\right\|^2 \leq 2a\left(f_{\max} - f_{\min}\right).$$
    In particular, $f(x)$ is Lipschitz with $L = \sqrt{2a\left(f_{\max} - f_{\min}\right)}$.
\end{lem}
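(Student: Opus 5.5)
The plan is to prove the bound pointwise for $x\in A$ with a one-dimensional second-order Taylor argument along the gradient direction. I assume the Hessian lower bound $\nabla^2 f\succeq -aI_d$ holds on all of $\mathbb{R}^d$, like the bound $f\le f_{\max}$, since the line I move along may leave $A$. The lower bound $f\ge f_{\min}$ is used only at the starting point $x\in A$.

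Fix $x\in A$ and write $g:=\|\nabla f(x)\|$. If $g=0$ there is nothing to prove, so assume $g>0$ and set $v:=\nabla f(x)/g$. Consider $\phi(t):=f(x+tv)$ for $t\ge 0$. Then $\phi'(0)=\langle \nabla f(x),v\rangle = g$ and $\phi''(t)=v^{\sT}\nabla^2 f(x+tv)v\ge -a$. Integrating twice (Taylor with integral remainder) gives
\[
f(x+tv)\;\ge\; f(x)+tg-\frac{a}{2}t^2 \qquad\text{for all } t\ge 0 .
\]
Using $f(x+tv)\le f_{\max}$ on the left and $f(x)\ge f_{\min}$ on the right yields
\[
f_{\max}-f_{\min}\;\ge\; tg-\frac a2 t^2 \qquad\text{for every } t\ge 0.
\]

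Next I optimize over $t$. If $a>0$, take $t=g/a$, which gives $f_{\max}-f_{\min}\ge g^2/(2a)$, i.e.\ $\|\nabla f(x)\|^2\le 2a(f_{\max}-f_{\min})$. If $a=0$, let $t\to\infty$: the left side is finite, so $g=0$, and the inequality holds trivially.

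For the ``in particular'' claim, the gradient bound holds at every point of $A$, and $A$ is convex. For $x,y\in A$ the segment between them lies in $A$, so the mean value theorem gives $|f(x)-f(y)|\le \sup_{A}\|\nabla f\|\,\|x-y\|\le L\|x-y\|$ with $L=\sqrt{2a(f_{\max}-f_{\min})}$.

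The only delicate point is the domain bookkeeping: the test point $x+tv$ with $t=g/a$ can lie outside $A$. That is why the upper bound $f_{\max}$ and the semiconvexity must be global, or at least hold on a neighborhood containing these rays. This matches how the lemma is used for $\log S$, whose maximum over $\mathbb{R}^d$ is attained in $\mathcal A$. The rest of the argument is routine.
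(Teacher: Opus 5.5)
Your proof is correct and follows essentially the same argument as the paper: restrict $f$ to the ray from $x\in A$ in the gradient direction, integrate $\phi''\ge -a$ twice to get $f_{\max}-f_{\min}\ge tg-\frac{a}{2}t^2$, and take $t=g/a$. Several of your additions fill in details the paper leaves implicit: the separate treatment of the cases $g=0$ and $a=0$, the remark that the Hessian bound and $f\le f_{\max}$ must hold along the ray even outside $A$, and the mean-value-theorem step using convexity of $A$ to get the Lipschitz claim.
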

\begin{proof}
    Let $x_0 \in A$ define the function $\phi(t) = f(x_0 + tu)$, where $u = \frac{\nabla f(x_0)}{\left\|\nabla f(x_0)\right\|}$. Then we have 
    $$\phi'(0) = \nabla f(x_0)\cdot u = \nabla f(x_0) \cdot \frac{\nabla f(x_0)}{\left\|\nabla f(x_0)\right\|} = \left\|\nabla f(x_0)\right\|.$$
    Thereby we reduce the problem to upper bounding $\phi'(0)$. Moreover, we have 
    $$\phi''(t) = u^T\nabla^2f(x_0+tu)u\succeq -a\frac{\nabla f(x_0)^T}{\left\|\nabla f(x_0)\right\|}\frac{\nabla f(x_0)}{\left\|\nabla f(x_0)\right\|} = -a.$$
    Consider the integral, 
    $$\int_{0}^z \phi''(s) ds \geq \int_0^z -a ds \implies \phi'(z) - \phi'(0) \geq -az.$$
    Therefore $\phi'(z) \geq \phi'(0) - az$ and 
    $$\int_0^t\phi'(z) dz \geq \int_0^t\left(\phi'(0) - az\right)dz \implies \phi(t) - \phi(0) \geq \phi'(0)t - \frac{a}{2}t^2.$$
    Recalling that $\phi(t) = f(x_0 + tu)$ and $f(x) \leq f_{\max}$ for all $x\in \mathbb{R}^d$ yields
    $$f_{\max} \geq f(x_0) + \left\| \nabla f(x_0)\right\|t - \frac{a}{2}t^2\geq f_{\min} + \left\| \nabla f(x_0)\right\|t - \frac{a}{2}t^2.$$
    Rearranging terms gives 
    $$g(t) = \frac{a}{2}t^2 - \left\| \nabla f(x_0)\right\|t + (f_{\max} - f_{\min}) \geq 0.$$
    This is a quadratic function in $t$, with $a > 0$, therefore the minimum occurs at $g'(t) = 0$ which is given by 
    $$g'(t) = at - \left\| \nabla f(x_0)\right\| = 0 \implies t = \frac{\left\| \nabla f(x_0)\right\|}{a}.$$
    Substituting this back into $g(t)$ gives 
    $$\frac{\left\| \nabla f(x_0)\right\|^2}{2a} - \frac{\left\| \nabla f(x_0)\right\|^2}{a} + (f_{\max} - f_{\min})  \geq 0.$$
Rearranging terms yields, 
$$\left\| \nabla f(x_0)\right\|^2 \leq 2a(f_{\max} - f_{\min}).$$ 
\end{proof}

\begin{lem}\label{l: log_pert_bound}(Logarithmic perturbation bound) Let \(x\in(-1,1)\). Then

$$
|\log(1+x)|
\le
-\log(1-|x|)
\le
\frac{|x|}{1-|x|}.
$$

In particular, if \(|x|\le 1/2\), then

$$
|\log(1+x)|\le 2|x|.
$$
\end{lem}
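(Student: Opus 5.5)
We prove the logarithmic perturbation bound by elementary calculus on $\log$.

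For the first inequality we split by the sign of $x$. If $x\ge 0$, then $\log(1+x)\ge 0$, and we need $\log(1+x)\le -\log(1-x)$. This is equivalent to $(1+x)(1-x)=1-x^2\le 1$, which holds. If $x<0$, then $|x|=-x$ and $\log(1+x)=\log(1-|x|)\le 0$. Hence $|\log(1+x)|=-\log(1-|x|)$, with equality.

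For the second inequality, set $u=|x|\in[0,1)$ and write
\[
-\log(1-u)=\log\frac{1}{1-u}=\log\Bigl(1+\frac{u}{1-u}\Bigr)\le \frac{u}{1-u},
\]
using the standard bound $\log(1+y)\le y$ for $y\ge 0$. An alternative route is the integral form $-\log(1-u)=\int_0^u \frac{ds}{1-s}\le \frac{u}{1-u}$, since the integrand is at most $\frac{1}{1-u}$ on $[0,u]$.

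For the final claim, $|x|\le 1/2$ gives $1-|x|\ge 1/2$, so $\frac{|x|}{1-|x|}\le 2|x|$, and therefore $|\log(1+x)|\le 2|x|$.

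Every step is routine. The only point needing care is the sign case analysis in the first inequality, where the $x\ge0$ case reduces to the algebraic inequality $1-x^2\le 1$.
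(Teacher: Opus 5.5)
Your proof is correct and follows essentially the same route as the paper. Your sign split with $(1+x)(1-x)\le 1$ is the paper's observation that $1-r\le 1+x\le 1+r$ combined with $-\log(1-r^2)\ge 0$, and your integral alternative for the second inequality is exactly the paper's argument; the $\log(1+y)\le y$ route is an equally valid variant.
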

\begin{proof}

 Set \(r=|x|\). Since \(1-r\le 1+x\le 1+r\),

$$
|\log(1+x)|
\le
\max\{-\log(1-r),\log(1+r)\}.
$$

Moreover,

$$
-\log(1-r)-\log(1+r)
=
-\log(1-r^2)\ge0,
$$

so the maximum is \(-\log(1-r)\). Finally,

$$
-\log(1-r)
=
\int_0^r\frac{dt}{1-t}
\le
\frac{r}{1-r}.
$$

When \(r\le1/2\), \(r/(1-r)\le2r\), which proves the final claim. \(\square\)
\end{proof}

\end{document}